\numberwithin{equation}{section}
\newtheorem{thm}{Theorem}[section]
\newtheorem{lem}[thm]{Lemma}%[section]
\newtheorem{prop}[thm]{Proposition}%[section]
\newtheorem{rem}[thm]{Remark}%[section]
\def \be {\begin{equation}}
\def \ee {\end{equation}}
\def\setQ{{\mathbb Y}}
\def \E {\mathbb{E}}
\def \P {\mathbb{P}}
\def \R {\mathbb{R}}
\def \dd {\llbracket d \rrbracket}
\def \NN {\llbracket N \rrbracket}
\def \N {\mathcal{N}}
\def \cF {\mathcal{F}}
\def \PP {\mathbf{P}}
\def \EE {\mathbf{E}}
\def \fD {\mathfrak{D}}
\def \fd {\mathfrak{d}}
\def \blx{\boldsymbol x}
\def \bly{\boldsymbol y}
\newcommand{\sredm}[1]{\ifmmode\text{\xout{\ensuremath{\displaystyle \textcolor{blue}{#1}}}}\else\sout{\textcolor{blue}{#1}}\fi} 
\def\ud{\mathrm{d}}
\def\E{{\mathbb E}}
\def\EN{\llbracket N \rrbracket}
\def\ES{\llbracket d \rrbracket}
\def\R{{\mathbb R}}
\def\RR{{\mathbb R}}
\def\cP{{\mathcal P}}
\def\Y{{\mathcal Y}}
\def \fD {\mathfrak{D}}
\def \fd {\mathfrak{d}}
\def\Y{{\mathbb Y}}
\def\cS{{\mathcal S}}
\def\one{{\mathbbm 1}}
\def\u{z}
\def\v{w}
\def\mumin{\widetilde\mu^{\sqbullet,\min}}
\def\wt{\widetilde}
\begin{document}
\title[Limits of weighted games]{Finite state mean field games with Wright Fisher common noise as limits of $N$-player weighted games}\thanks{-- This is the final version of the paper. To appear in {\it Mathematics of Operations Research}.}

 \author[E. Bayraktar]{Erhan Bayraktar \address{(E. Bayraktar) Department of Mathematics, University of Michigan, Ann Arbor, Michigan 48109, United States}\email{erhan@umich.edu}} 
 
\author[A. Cecchin]{Alekos Cecchin \address{(A. Cecchin) Centre de Math\'ematiques Appliqu\'ees, \'Ecole Polytechnique,
%\newline 
%\indent Universit\'e C\^ote d'Azur\\
91128 Palaiseau, France}\email{alekos.cecchin@polytechnique.edu}}
%\indent 28 Avenue Valrose, 06108 Nice Cedex 2, France}
%\address{(A. Cecchin) Universit\'e C\^ote d'Azur, CNRS, Laboratoire J.A. Dieudonn\'e, 06108 Nice, France
%}\email{alekos.Cecchin@unice.fr
%}

 \author[A. Cohen]{Asaf Cohen \address{(A. Cohen) Department of Mathematics, University of Michigan, Ann Arbor, Michigan 48109, United States}\email{asafc@umich.edu}}

\author[F. Delarue]{ Fran\c{c}ois Delarue 
\address{(F. Delarue) Universit\'e C\^ote d'Azur, CNRS, Laboratoire J.A. Dieudonn\'e, 06108 Nice, France
}\email{francois.delarue@unice.fr
}}

\thanks{-- E. Bayraktar is partially supported by the National Science Foundation under grant  DMS-2106556 and by the Susan M. Smith chair. 
A. Cecchin benefited from the support of LABEX Louis Bachelier Finance and Sustainable Growth - project 
ANR-11-LABX-0019, ECOREES ANR Project, FDD Chair and Joint Research Initiative
FiME in partnership with Europlace Institute of Finance.	
A. Cecchin and F. Delarue acknowledge the financial support of French ANR project ANR-16-CE40-0015-01 on ``Mean Field Games''.
F. Delarue is also supported by French ANR project 
ANR-19-P3IA-0002 -- 3IA C\^ote d'Azur -- Nice -- Interdisciplinary Institute for Artificial Intelligence. 
A. Cohen acknowledges the financial support of the National Science Foundation (DMS-2006305).
}

\date{\today}

\keywords{Mean-field games, diffusion approximation, convergence problem, Wright--Fischer common noise.}
\subjclass[2010]{ 91A13, %  Games with infinitely many players
91A15, %  Stochastic games
35K65. %  Degenerate parabolic equations 
}

%\author{Alekos Cecchin}
%\address[A. Cecchin, P. Dai Pra, M. Fischer, and G. Pelino]
%{\newline \indent Laboratoire de Mathematiques  ``J. A. Dieudonn\'e'', 
 %Universit\'e C\^ote d'Azur \newline
%\indent 28 Avenue Valrose, 06108, Nice, France
%\newline }
%\email[]{alekos.cecchin@univ-cotedazur.fr}
%\author{}

%\author{}

%\author{}

%\thanks{}

%\date{\today}

%\subjclass{}
%\keywords{}

\begin{abstract}
Forcing finite state mean field games by a relevant form of common noise is a subtle issue, which has been addressed only recently.
Among others, one possible way is to subject the simplex valued dynamics of an equilibrium by a so-called Wright--Fisher noise, very much in the spirit of stochastic models in population genetics. A key feature is that such a random forcing preserves the structure of the simplex, which is nothing but, in this setting, the probability space over the state space of the game. The purpose of this article is hence to elucidate the finite player version and, accordingly, to prove that $N$-player equilibria indeed converge towards the solution of such a kind of Wright--Fisher mean field game. Whilst part of the analysis is made easier by the fact that the corresponding master equation has already been proved to be uniquely solvable under the presence of the common noise, it becomes however more subtle than in the standard setting because the mean field interaction between the players now occurs through a weighted empirical measure. In other words, each player carries its own weight, which hence may differ from $1/N$ and which, most of all, evolves with the common noise.

\end{abstract}

\maketitle

\tableofcontents

\section{Introduction}
\vskip 10pt

In our earlier article \cite{BCCD-arxiv}, we introduced a form of Mean Field Games (MFGs) on a finite state space, with the peculiarity of being driven by a kind of common noise that is reminiscent of the Wright--Fischer model in population genetics. Noticeably, thanks to the diffusive effect of the common noise on the finite dimensional simplex, we succeeded to prove that the master equation associated with this MFG admitted a unique solution and, in turn, that the MFG itself was uniquely solvable without any monotonicity assumption. The purpose of this new work is to address the asymptotic behavior of the finite player counterpart of this mean field game. In a word, we here set up a corresponding finite state many player game with mean field interactions driven by a kind of 
Wright--Fisher common noise and we establish the convergence of its equilibria towards the solution of the MFG 
constructed in \cite{BCCD-arxiv}. Whilst this program looks by now quite classical in the field, the originality of our works lies in the fact that
two (instead of one) attributes 
are here assigned 
 to
each of the $N$ players entering the finite game: Indeed, not only players have their own state (location), but they are also
given a weight (we sometimes say ``a mass''), accounting for their influence onto the rest of the population. In other words, the empirical distribution that enters the definition of the weak interaction between the players is no longer a uniform distribution, but a more general finite probability weighted by the masses (or the weights) assigned to all of them; below, we call it ``the weighted empirical distribution''. Importantly, the weights are subjected to 
some external common noise, and this makes the main feature of our model. In a nutshell, 
the state dynamics of the system should be regarded as a system of controlled interacting Markov chains,  
each of the players controlling the rate of its transition between states. As for the weights, 
which are the second attribute of each player, they indeed jump simultaneously according to the common noise, the form of the latter being inspired by the Wright--Fisher model from population genetics \cite{fis1999,wri1931} consistently with the limiting model addressed in \cite{BCCD-arxiv}. To make it clear, the common noise shuffles the weights of the players at a rate proportional to $N$ according to a multinomial distribution with parameters $N$ and the weighted empirical distribution of the system itself. A key fact, which we make clear in the sequel of the text, is that the mass shared by one player among all the players with the same state is the same after and before the common shuffling. On the top of those dynamics, 
each of the players aims to minimize an expected weighted cost.  
Very importantly, the expected weighted cost to each player includes the own mass of the player, which plays the role of a ``density''. Put it differently, 
each player has its own perception of the uncertainties, depending on its own mass.

Our main result says that the cost per unit weight under the unique Nash equilibrium, or equivalently 
the equilibrium value of each player but renormalized by its own mass, 
 converges to the solution of the master equation. Generally speaking, the latter is a parabolic Partial Differential Equation (PDE) describing the equilibrium cost (or the value) of the MFG analyzed in \cite{BCCD-arxiv}, see Theorem \ref{thm:convergence_value} below.
In our setting, the master equation reads as a PDE driven by a so-called Kimura like operator, which is a second-order operator acting on 
functions defined on the finite dimensional simplex. This Kimura operator should be regarded as (a variant of) the generator of 
a multi-dimensional Wright--Fischer diffusion process. One of the main underlying obstacles is that this operator degenerates near the boundary of the simplex, 
as a consequence of which corresponding harmonic functions may be singular at the boundary. A major reference on the subject is the monograph by Epstein and Mazzeo \cite{EpsteinMazzeo}, 
which is repeatedly cited in 
\cite{BCCD-arxiv}. In order to avoid as much as possible the influence of the boundary, a key step in \cite{BCCD-arxiv} is to force 
the dynamics
by an extra drift that points inward the simplex at the boundary. 
Our finite player model exhibits the same feature and, in the end, the extra drift must be strong enough to ensure that all the results indeed hold true.   
We eventually use the convergence of the renormalized value of the finite game to show that 
the weighted empirical distribution, under the Nash equilibrium, converges to the flow of measures described in the mean field game, see Theorem \ref{thm:convergence} below. 

Before we compare these two results with the existing literature, we feel fair to recall that the theory of MFGs was initiated by the seminal works of Lasry and Lions \cite{Lasry2006,LasryLions2}, and Huang, Malham{\'e}, and Caines \cite{Huang2006,Huang2007}. 
The very first objective of it was precisely to provide an asymptotic formulation for 
 many player games with weak interaction. Mathematically speaking,
 the connection between finite games and mean field games raises many subtle and challenging questions. 
Actually, 
  there are two types of convergence results in the MFG literature for justifying the passage from finite to infinite games: One strand is concerned with constructing asymptotic Nash equilibria in the $N$-player game using solutions of the MFG; The other one is to show that the costs and empirical measure of the $N$-player game under Nash equilibria converge to a mean field equilibrium, which is in fact the more challenging problem between the two. In order to handle the latter problem,    
Cardaliaguet, Delarue, Lasry, and Lions \cite{CardaliaguetDelarueLasryLions} used the master equation. This method was later utilized in a finite state Markov chain setup \cite{bay-coh2019, bayzhang2019, cecdaifispel, cec-pel2019} and our own approach is obviously inspired from it. For further discussion on MFGs and the convergence problem, the reader is referred to Carmona and Delarue \cite{CarmonaDelarue_book_I,CarmonaDelarue_book_II}, Lacker \cite{Lacker2015general, Lacker2017, lac2020closed} and Fischer \cite{Fischer2017}, the latter two authors having successfully developed other strategies based upon tightness arguments. 

In comparison with all the previously cited references, 
one peculiarity of our model is the role of the common noise. Differently from the aforementioned references
where 
 it was already allowed to be present, 
the common noise is fundamental in our work as it guarantees the smoothness of the solution of the master equation and, subsequently, the uniqueness of 
the asymptotic equilibrium. Accordingly, our need to have a smooth solution to the limiting master equation
 dictates the form of the common noise in our finite player game. In this respect, it is fair to say that, independently of the well-posedness of the master equation, adding common noise to finite state mean field games is in fact a subtle issue. 
For instance, 
this difficulty was pointed out by Bertucci, Lasry, and Lions \cite{BertucciLasryLions}, who solved it by 
introducing a common noise in the form of simultaneous jumps governed by a deterministic transformation of the state space. For an alternative model with the simultaneous jumps to the state space see \cite{belak2019continuous}. 
Although it shares some similarities, the idea introduced in \cite{BCCD-arxiv}
is slightly different, since the simultaneous jumps therein obey a stochastic rule given in the form of a  Wright--Fisher noise. 
Accordingly, the finite player version that we construct below must obey some rules that permit to recover asymptotically the same Wright--Fisher noise. 
This is precisely the step where the extra attribute, namely the weight, comes in: Thanks to it, we are able to inject the Wright--Fisher common noise in the finite game, and this results in a random measure in the limit. 
In other words, we shift the simultaneous jumps in locations, as used in \cite{BertucciLasryLions}, to simultaneous jumps in weights\footnote{The interested reader may have a look at \cite{delarue-cetraro} for another construction of a finite state MFG with common noise.}.
To our mind, allowing the empirical distribution to be non-uniform is an interesting idea in its own, independently of the 
precise model that we here address.    
Typically, weakly interacting many player games are indeed formulated such that the weight of each player is fixed and deterministic in time, that is $1/N$. More than often, the population size also remains fixed in time. A few exceptions are the models studied by \cite{cam-fis2018, nut2018, cla-ren-tan2019} and 
\cite{bertucciJMPA},  where the size of the population evolves in time, yet the weights are still homogeneous with respect to the current population. In
contrast, in our model, while the size of the population remains fixed, the common noise yields a stochastic evolution of the weights. 

We now outline the key steps in establishing the convergence of the many player game to the MFG. 
In Section \ref{sec:2},  
we characterize the unique Nash equilibrium of the $N$-player game via a system of non-local differential equations, which requires special care 
due to the possible degeneracies of the weights in the empirical distribution of the system. 
To prove the convergence of the normalized Nash system to the solution of the master equation, which is our first main result, we plug in the solution of the master equation to the normalized Nash system. This results in a remainder; 
the estimates of this remainder are stated in Subsection \ref{sec:5} and proved later on in 
Section \ref{se:7:b}.
In comparison with similar approaches in MFG theory, we face additional difficulties that arise when the weighted empirical distribution approaches the boundary of the simplex. Indeed, the challenge in estimating the remainder comes from the aforementioned fact that the weights are shuffled in a way that 
each player receives a fair share with respect to the players in the same state. Because of this shuffling, the inverse of the weighted empirical distribution shows up in the non-local term in the differential equation describing the renormalized Nash system. This requires us to analyze the boundary behavior of the empirical distribution process. Moreover, %since one of our sufficient statistics is the weighted empirical measure, 
we also need to estimate the moments of the weight process. These preliminary estimates are stated in Subsection \ref{sec:4}
and proved in Section \ref{sec:6:b} by concentration estimates for the multinomial distribution. {Even though boundary estimates were also needed in 
\cite{BCCD-arxiv}
to analyze the master equation, we use here different methods since the stochastic flow of measures is no longer a diffusion but a jump process.
In order to guarantee all these estimates to be true, we need the aforementioned inward pointing drift to be strong enough; once again, this is consistent 
with the analysis carried out in 
\cite{BCCD-arxiv}.} 
Lastly, Section \ref{sec:3} is another preparatory section, where we provide another interpretation of the normalized Nash system, based on an auxiliary game; 
interestingly, this alternative representation supplies us with some very useful uniform bounds on the equilibrium feedbacks. Our second main result concerns the convergence of the weighted empirical distribution towards the solution of the MFG. In order to prove it, we combine the hence established convergence 
of the normalized Nash system together with diffusion approximation arguments. {We emphasize that the limiting process is a diffusive stochastic Fokker--Planck equation and as such has a different structure from the weighted empirical distribution.}

The rest of the paper is organized as follows. Section \ref{sec:2} starts with a description of the MFG model and the master equation studied in \cite{BCCD-arxiv}. Then it provides the $N$-player game with common noise as well as the analysis of the Nash system. It ends with the statements of the main results: Theorems \ref{thm:convergence_value} and \ref{thm:convergence}. In the next two sections, we prepare for the proofs of the main results. Section \ref{sec:3} is devoted to the analysis of the normalized system as well as presenting an auxiliary process associated with it. In Subsection \ref{sec:4}, we state estimates on the weighted empirical distribution and the moments of the weights and, in 
Subsection \ref{sec:5}, we reformulate the solution of the master equation as an approximate solution of the normalized $N$-Nash system. Finally, the proofs of the main results are provided in Section \ref{sec:6}. Sections \ref{sec:6:b}
and \ref{se:7:b} are devoted to the proofs of the auxiliary results stated in Section \ref{sec:4:b}. 
A future outlook is given in Section \ref{sec:7}. In the Appendix, we prove some of the results about uniqueness of the Nash equilibria of the 
$N$-player, that  are technically demanding but less important for the proof. 
\vskip 5pt

In the rest of this section we will list some frequently used notation.

\subsection*{Notation}
Denote $\ES:=\{1,\ldots,d\}$. For an integer $N \geq 1$, a tuple $\blx =(x^{1},\cdots,x^{N}) \in \ES^N$ and another integer $l \in \EN$, we write 
$\blx^{-l}$ {for} the tuple $(x^{1},\cdots,x^{l-1},x^{l+1},\cdots,x^{N})$. 
For some $j \in \ES$, the notation $(j,\blx^{-l})$ is then understood as $(x^{1},\cdots,x^{l-1},j,x^{l+1},\cdots,x^{N})$.
A tuple $\bly = (y^1,\cdots,y^N) \in ({\mathbb R}_{+})^N$ is said to belong to $\setQ$ if 
the entries belong to the set of nonnegative rational numbers $\mathbb{Q}_+$ and sum to $N$. 
For two  tuples $\blx \in \ES^N$ and $\bly = (y^{1},\cdots,y^{N}) \in ({\mathbb R}_{+})^N$, we also introduce the weighted empirical measure:
\[
\mu^N_{\bm{x},\bm{y}}=\frac1N \sum_{l=1}^N y^l \delta_{x^l}. 
%\qquad \cancel{\mu^{N,l}_{\bm{x},\bm{y}}=\frac{1}{N-y^l} \sum_{m\neq l} y^m \delta_{x^m}}.
\]
Clearly, 
$\mu^N_{\bm{x},\bm{y}}$
is a measure on $\ES$, which we may regard as a $d$-tuple; for $i \in \ES$, we let
\begin{equation}
\label{eq:muNi}
\mu^N_{\bm{x},\bm{y}}[i]:= \frac1N \sum_{l=1}^N y^l \one_{\{x^l=i\}}.
\end{equation}
Scalar product between vectors $z$ and $w$ in $\R^d$ is denoted by $z\cdot w$.

Below, we denote by ${\mathcal P}(\ES)$ 
the space of probability measures on $\ES$, which we identify with the 
simplex $\cS_{d-1}:=\{({p}^1,\cdots,{p}^d) \in ({\mathbb R}_{+})^d : \sum_{i=1}^d p^i=1\}$. Also, we define 
$\hat\cS_{d-1}:=\{(x^1,\cdots,x^{d-1}) \in ({\mathbb R}_{+})^d : \sum_{i=1}^{d-1} x^i\le 1\}$.
In particular, we sometimes regard the Dirac mass $\delta_{i}$, for $i \in \ES$, as the $i$th vector of the canonical basis of $\R^d$.
 
Whenever $\mu$ is a probability measure on $\ES$ (i.e. $\mu \in {\mathcal P}(\ES)$), we call ${\mathcal M}_{N,\mu}$ the multinomial distribution 
of parameters $N$, $(\mu[1], \cdots, \mu[d])$, namely 
\begin{equation*}
{\mathcal M}_{N,\mu}({\boldsymbol k}) = \frac{N!}{k^{1}!\cdots k^{d}!} \prod_{i=1}^d \mu[i]^{k^{i}}. 
\end{equation*}
for ${\boldsymbol k} \in {\mathbb N}^d$ with $k^{1}+\cdots+k^{d}=N$.
In order to have another representation of the multinomial distribution, we assume that we are given a probability space, say $(\Xi,{\mathcal G},{\mathbf P})$, equipped with a collection of random variables $(S_{N,\mu})_{\mu \in {\mathcal P}(\ES)}$, all of them with values in the set of multi-indices ${\boldsymbol k} \in {\mathbb N}^d$ that sum to $N$, such that 
${\mathbf P} \circ S_{N,\mu}^{-1} = {\mathcal M}_{N,\mu}$ for all $\mu \in {\mathcal P}(\ES)$. Below, we denote the 
$d$ entries of $S_{N,\mu}$ in the form $(S_{N,\mu}[i])_{i \in \ES}$. Expectation under ${\mathbf P}$ is denoted 
${\mathbf E}$.
When the value of $N$ is fixed and there is no ambiguity, 
we just write $(S_\mu)_{\mu}$ for 
$(S_{N,\mu})_{\mu}$.

For a real-valued function $v$ on $\ES^N \times ({\mathbb Q}_{+})^N$, 
we define the first-order variation (or discrete gradient)
at point $(\bm{x},\bm{y})$ 
and
in the direction $l$
as the tuple
$\Delta^l v(\bm{x},\bm{y})[\bullet] \in\R^d$, defined by 
$\Delta^l v(\bm{x},\bm{y}) [j] = v((j,\bm{x}^{-l}),\bm{y})-v(\bm{x},\bm{y})$.
Quite often in the text, we indeed put a bullet symbol $\bullet$
 to emphasize that the related quantity has to be understood as a $d$-tuple.

Lastly, for a real $x$, the positive part of $x$ is denoted by $x_{+}=\max(x,0)$.

\vspace{6pt}
\noindent{\bf Derivatives on the simplex.}
\label{subsub:derivatives:simplex} The formulation of the master equation is given using intrinsic derivatives. 
Since this is quite common material, we feel more convenient to introduce them now, as part of our notation. For a real-valued function $h$ defined on ${\mathcal S}_{d-1}$, define the functions $\hat h^i:\hat\cS_{d-1}\to\R$, $i\in\ES$, as follows:
\begin{equation}
\notag%\label{eq:check:x}
\begin{split}
\hat{h}^i \bigl(p^{-i}\bigr) &:= h (p)
=  h
\Bigl( p_{1},\cdots,p_{i-1},1- \sum_{k \not = i} p_{k},p_{i+1},\cdots,p_{d} \Bigr),
\\
&\textrm{\rm with}
\ p^{-i} = \Bigl(p_{1},\cdots,p_{i-1},p_{i+1},\cdots,p_{d}\Bigr),
\end{split}
\end{equation}
We then say that $h$ is differentiable on ${\mathcal S}_{d-1}$ if 
$\hat{h}^i$ is differentiable on $\hat{\mathcal S}_{d-1}$ for some (and hence for any) $i \in \ES$.
 The corresponding intrinsic gradient reads in the form ${\mathfrak D} h=(\mathfrak{d}_1 h, \dots, \mathfrak{d}_d h)$, with
\begin{equation*} 
\mathfrak{d}_{i}h(p)= - \frac1d \sum_{j \not =i} \partial_{p_{j}} \hat{h}^i\bigl( p^{-i} \bigr), \quad p \in {\mathcal S}_{d-1},\; i\in\ES.
\end{equation*}
It follows that $\sum_j \mathfrak{d}_{j}h =0$, meaning that the gradient belongs to the tangent space to the simplex. In particular, note that, if $h$ is defined on a neighborhood of ${\mathcal S}_{d-1}$ in $\R^d$, the following holds true
\begin{equation*}
\mathfrak{d}_{i} h(p) - \mathfrak{d}_{j}h(p) =  \partial_{p_{i}} h(p) - \partial_{p_{j}}h(p), 
\end{equation*}
for $i,j \in \ES$ and $p \in {\mathcal S}_{d-1}$. 
We may define in a similar manner the second order derivatives $({\mathfrak d}^2_{i,j} h)_{i,j \in \ES}$ on the simplex. We refer to \cite[\S 3.1.2]{BCCD-arxiv} for more details.

%\sout{As for the second order derivatives, we have }
%\begin{equation}
%\notag%\label{eq:mathfrakd:partiald}
%\cancel{\begin{split}
%\fd^2_{p_ip_j}h(p) &= \partial_{p_{i} p_{j}}^2 h(p) - \frac1d \sum_{k \in \ES} \bigl(\partial_{p_{i} p_{k}}^2 h(p) + \partial_{p_{j} p_{k}}^2 h(p)\bigr) + \frac{1}{d^2}  \sum_{k,l \in \ES} \partial_{p_{k} p_{l}}^2 h(p)
%\\
%&= \frac1{d^2} \sum_{k,l \not = i} \partial^2_{p_{k} p_{l}} \hat{h}^i\bigl(p^{-i} \bigr) 
%- \frac1{d} \sum_{k \not = i } \partial^2_{p_{k} p_{j}} \hat{h}^i\bigl(p^{-i} \bigr),
%\end{split}}
%\end{equation}
%and then the second order term in \eqref{eq:master:equation} 
%(with $h=U^i$, see \eqref{eq:generator:local:coordinates}) 
%becomes
%\[\sum_{j,k \in \ES}(p_j \delta_{jk}-p_{j} p_{k}) \partial_{p_{j} p_{k}}^2 h(p) = \sum_{ j,k \in \ES}(p_j \delta_{jk}-p_{j} p_{k})\fd^2_{p_jp_k} h,\]
%since $\sum_{k \in \ES} (p_j \delta_{jk}-p_{j} p_{k}) =0$ for any $j \in \ES$.

\section{The model and the main results}\label{sec:2}
\subsection{Finite mean field games with Wright--Fisher common noise}
Let us prepare the ground and recall the setup and the main results from the finite state MFG with common noise analyzed in \cite{BCCD-arxiv}.
\subsubsection{Formulation of the mean field game and the master equation}

Consider a filtered probability space $(\Omega,\cF,\{\cF_t\}_{0\le t\le T},\mathbb{P})$, where the filtration $\{\cF_t\}_{0\le t\le T}$ satisfies the usual conditions. The probability space supports a collection of independent standard Brownian motions $((W^{i,j})_{0 \le t \le T})_{i,j \in \ES : i \not = j}$. 
To describe the MFG, we consider a {\it representative player}, whose
statistical state (conditional on the noise 
$((W^{i,j})_{0 \le t \le T})_{i,j \in \ES : i \not = j}$) 
is tracked through a process $(Q_t=(Q^i_t:i\in\ES))_{0\le t\le T}$ with the specific feature that each $Q_{t}$ is a density on the product space $\Omega \times \ES$. 
According to 
\cite{BCCD-arxiv} and consistently with the finite player version given in the next Subsection \ref{subse:2:2}, $Q_{t}$ describes the statistical distribution of the state 
$X_{t}$ of the representative player (which takes values in $\ES$) 
under \textit{its own perception of the 
world}, the latter being formalized through a density $Y_{t}$ on the product space $\Omega \times \ES$.
The formulation of the MFG then relies on:

%\gre{[A: $Q$ is here referred to as the dynamics of the representative player. Should we say it represents instead the  conditional expected mass of the representative player? Also, should we refer here to position $X$ and mass $Y$ of the representative player? Since in the next section we have position and mass for players in the $N$-player game.]}

\begin{itemize}
\item %its own control, the stochastic environment, and the common noise. We now present these components and then provide the dynamics. The 
The own control of the representative player, which is given in feedback form by a (bounded and measurable) function $\beta:[0,T]\times\ES\times\cS_{d-1}\to\R^d$, such that, for any $(t,i,p)\in[0,T]\times\ES\times\cS_{d-1}$,
\begin{align}\notag
\beta_j(t,i,p)\ge 0,\quad z\in\ES\setminus\{i\},\quad\beta_i(t,i,p)=-\sum_{j\ne i}\beta_j(t,i,p);
\end{align}
for $j\ne i$, the value $\beta_j(t,i,p)$ is the rate of transition for the representative player at time $t$ from its state $i$ to state $j$, while the environment is in state $p$.  
\item The {\it stochastic environment} $(P_t=(P^i_t:i\in\ES))_{0\le t\le T}$. This is a progressively-measurable simplex-valued process whose dynamics is given through a fixed point type of argument in the sequel. This is often referred to as the mean field game solution. Essentially, this is the %empirical 
conditional distribution of the system under equilibrium;
\item the common noise, which
is formed by the antisymmetric Brownian motions $((\overline W^{i,j}_{t} := (W_{t}^{i,j}-W_{t}^{j,i})/\sqrt{2})_{0 \leq t \leq T})_{i,j \in \ES : i \not =j}$ 
and a parameter\footnote{Here, the upper bound for $\varepsilon$ is arbitrary and could be replaced by any other finite positive real.} $\varepsilon\in(0,1)$, referred to as the {\it intensity of the common noise}. 
\end{itemize}
The dynamics of the representative player are given explicitly by%\footnote{The input of the controls here is not the same as in the $N$-player game, where we have $(\bx,\by)$, so I use $\beta_i(t,j,P_{t})$ and not $\beta(t,j,P_{t})(i)$.}
\begin{equation}
\label{eq:weak:sde:final:2:b}
\begin{split} 
\ud Q_{t}^i &=   \sum_{j \in \ES} \Bigl( Q_{t}^j \bigl( \varphi(P_{t}^i) + \beta_i(t,j,P_{t}) \bigr) - Q_{t}^i 
\bigl( \varphi(P_{t}^j) 
+  \beta_j(t,i,P_{t}) \bigr)
\Bigr) \ud t
+  {\varepsilon}  
Q_{t}^i \sum_{j \in \ES}
\sqrt{\frac{P_{t}^j}{P_{t}^i}}  
\ud \overline W_{t}^{i,j}, 
\end{split}
\end{equation}
for $t \in [0,T]$, with the same deterministic initial condition as $P$; that is,  $Q_0=P_0$. %=(P_{0}^i=p_{0,i})_{i \in \ES}$. 
Here, $\varphi$ is 
 a non-increasing Lipschitz function from $[0,\infty)$ into itself such that 
\begin{equation}
\label{eq:varphi}
\varphi(r) := \left\{ \begin{array}{l}
\kappa, \quad r \leq \delta,
\\
0, \quad r >2 \delta,
\end{array}
\right.
\end{equation}
where $\delta$ is a fixed arbitrary positive parameter. 
The role of $\varphi$ 
is clarified in the statement of Proposition 
\ref{prop:previous} below. For a sufficiently large value of $\kappa$,  it forces the coordinates of 
the process
$(Q_{t})_{0 \leq t \leq T}$ (and in the end of 
$(P_{t})_{0 \leq t \leq T}$ itself, at least 
whenever $(P_{t})_{0 \leq t \leq T}$ is indeed chosen as the fixed point)
to stay (strictly) positive, and even more to stay away from $0$ with large probability.
In this respect, it is worth recalling from 
\cite{BCCD-arxiv} that  
$(Q_{t})_{0 \leq t \leq T}$
does \textbf{not} take values in the simplex. To put it clear, $Q_{t}$ only defines a density on the product space $\Omega \times \ES$: Using terminologies from statistical mechanics, 
it is an annealed but not a quenched density.

The representative player aims to minimize the following {\it cost function} 
\begin{equation}
\notag%\label{eq:limit:cost:functional:new}
\begin{split}
{\mathcal J}\bigl({\boldsymbol \beta},{\boldsymbol P}\bigr)
&:= \sum_{i \in \ES}
{\mathbb E}\biggl[ Q_{T}^i g (i,P_{T}) 
+ \int_{0}^TQ_{t}^i
 \Bigl(   f \bigl(i,P_{t} \bigr) + \frac12
 \sum_{j \not = i}
\bigl\vert \beta_j(t,i,P_t)  \bigr\vert^2
\Bigr) \ud t
\biggr],
\end{split}
\end{equation}
where $f,g:\ES\times \cS_{d-1}\to\R$  satisfy suitable assumptions given in Proposition \ref{prop:previous}.%\footnote{\blue{While the results in the MFG paper \cite{BCCD-arxiv} allowed for $f$ and $g$ to be time-dependent, we chose in this work to eliminate it in order to simplify the arguments. The results in the paper can be extended to the general case under the same regularity conditions assumed in \cite[Theorem 2.9]{BCCD-arxiv}. }}

Finally, a \emph{solution of the mean field game (with common noise)} is a pair $({\boldsymbol P}, \alpha)$ 
such that
\begin{itemize}
\item[(i)] ${\boldsymbol P}=(P_{t})_{0 \leq t \leq T}$ is a $\cS_{d-1}$-valued process, progressively measurable with respect to $\mathbb{F}^{\boldsymbol W}$, with some fixed $p_{0}=(p_{0,i})_{i \in \ES} \in {\mathcal S}_{d-1}$ as initial condition, and 
$\alpha : [0,T] \times \ES \times \cS_{d-1} \times \ES \rightarrow \RR$ is a bounded feedback strategy;
\item[(ii)] ${\boldsymbol P}$ and $\alpha$ satisfy  in the strong sense the equation
\begin{equation}
\notag\label{eq:weak:sde:final}
\begin{split}
\ud P_{t}^i &=  \sum_{j \in \ES} \Bigl( P_{t}^j \bigl( \varphi(P_{t}^i) + \alpha(t,j,P_{t})(i) \bigr) - P_{t}^i \bigl( \varphi(P_{t}^j) 
+ \alpha(t,i,P_{t})(j) \bigr)
\Bigr) \ud t
 + {\varepsilon}  
\sum_{j \in \ES}
\sqrt{P_{t}^i P_{t}^j}  
\ud \overline W_{t}^{i,j};
\end{split}
\end{equation}
\item[(iii)] ${\mathcal J}\bigl(\alpha,{\boldsymbol P}\bigr)\leq{\mathcal J} \bigl({\boldsymbol \beta},{\boldsymbol P}\bigr)$ for any admissible strategy ${\boldsymbol \beta}$.
\end{itemize}
We say that the solution $({\boldsymbol P},\alpha)$ is unique if given another solution $(\widetilde{\boldsymbol P}, 
 \widetilde{\alpha})$, we have 
$P_t =\widetilde{P}_t$  for any $t\in [0,T]$, $\mathbb{P}$-a.s., and 
$\alpha(t,i,P_{t})(j)=\widetilde \alpha(t,i,P_{t})(j)$
 $dt\otimes\mathbb{P}$-a.e., for each $i,j\in\ES$.

The existence of a unique MFG solution was established in \cite{BCCD-arxiv} by showing that the associated master equation has a unique smooth solution. In this case, the master equation is a system of $d$ parabolic PDEs set on the $(d-1)$-dimensional simplex: %While it is possible to formulate the master equation using regular derivatives, we choose to work with the intrinsic derivatives.

\begin{align}
\notag&\partial_t U^i(t,p) 
-\frac12\sum_{j\ne i}(U^i(t,p)-U^j(t,p))_+^2
%+ H^i\Bigl(\bigl(U^j(t,p\bigr)_{j \in \ES}\Bigr)
+f^i(p) 
+
\sum_{j \in \ES} \varphi(p_{j})
\bigl[ U^j(t,p) -   U^i(t,p) \bigr]
\\\label{eq:master:equation:intrinsic}
&\qquad + 
\sum_{j,k \in \ES} p_{k}\bigl[ \varphi(p_{j}) + \bigl(U^k(t,p)-U^j(t,p)\bigr)_+ \bigr] \left( \fd_{j} U^i(t,p) - \fd_{k} U^i(t,p)\right) \\\notag
&\qquad+ \varepsilon^2\sum_{j  \neq i } p_{j} \left( \fd_{i} U^i(t,p) - \fd_{j} U^i(t,p)\right)
+\frac{\varepsilon^2}{2} \sum_{j,k \in \ES}(p_j \delta_{j,k}-p_{j} p_{k}) \fd^2_{j,k} U^i(t,p) =0,\\\notag
&U^i(T,p)= g^i(p),
\end{align}
for $(t,p) \in [0,T] \times \textrm{\rm Int}(\hat{\mathcal S}_{d-1})$. 
%Of course, the connection between 
%$\hat{U}^i$ in \eqref{eq:master:equation:local}
%and 
%$U^i$ in \eqref{eq:master:equation:intrinsic}
%is the same as in 
%\eqref{eq:check:x}, namely 
%$\hat{U}^i(t,p) = U^i(t,\check x)$ for 
%$(t,x) \in [0,T] \times \hat{\mathcal S}_{d-1}$. 

The statement of the theorem requires that the cost functions are smooth enough in the sense of so-called Wright--Fisher spaces. The reader is referred to 
\cite{EpsteinMazzeo}
(see also \cite[\S 3.2.2]{BCCD-arxiv}) for a thorough discussion on these spaces.
 We provide below a short reminder along the lines of 
\cite{cec-del2020}: 
\begin{enumerate}
\item 
$\mathcal{C}^{\gamma}_{\rm WF}(\mathcal{S}_{d-1})$ consists of continuous functions  on ${\mathcal S}_{d-1}$ that are $\gamma$-H\"older continuous up to the boundary with respect to the metric associated with the 
Wright--Fisher noise;
%accordingly, 
%$\mathcal{C}^{1,\gamma}_{\rm WF}(\mathcal{S}_d)$ consists of continuous functions  on ${\mathcal S}_{d}$ that are continuously differentiable in $\textrm{\rm Int}({\mathcal S}_{d})$, with H\"older continuous derivatives up to the boundary;
%both spaces are equipped with norms $\| \cdot \|_{{\rm WF},0,\gamma}$
%and 
%$\| \cdot \|_{{\rm WF},1,\gamma}$;
\item 
$\mathcal{C}^{2+\gamma}_{\rm WF}(\mathcal{S}_{d-1})$ consists of continuous functions  on ${\mathcal S}_{d-1}$ that are twice continuously differentiable in the $(d-1)$-dimensional interior of ${\mathcal S}_{d-1}$, 
with derivatives satisfying a suitable behavior at the boundary and a suitable H\"older regularity that depends on the order of the derivative; in particular, 
the derivatives
of order 1 are H\"older continuous up to the boundary, but the derivative of order $2$ may blow up at the boundary and be only locally H\"older continuous in the interior;
% both spaces are equipped with norms $\| \cdot \|_{{\rm WF},0,2+\gamma}$
%and 
%$\| \cdot \|_{{\rm WF},1,2+\gamma}$;
\item $\mathcal{C}^{1+\gamma/2,2+\gamma}_{\rm WF}([0,T]\times\mathcal{S}_{d-1})$
is the parabolic version of 
$\mathcal{C}^{2+\gamma}_{\rm WF}(\mathcal{S}_{d-1})$; 
it consists of continuous functions  on $[0,T] \times {\mathcal S}_{d-1}$ that are continuously differentiable in 
time $t \in [0,T]$ and that are twice continuously differentiable in space in the $(d-1)$-dimensional interior of ${\mathcal S}_{d-1}$, with derivatives satisfying a suitable behavior at the boundary and a suitable H\"older regularity; in particular, the time derivative and the space derivatives of order $1$ are H\"older continuous up to the boundary but the derivative of order $2$ may blow up at the boundary. 
\end{enumerate}

\begin{prop}[Theorems 3.2 and 3.4 in \cite{BCCD-arxiv}]\label{prop:previous}
Assume that, for some $\gamma >0$,  each $f(i,\cdot)$, for $i \in \ES$, 
belongs to 
${\mathcal C}_{\textrm{\rm WF}}^{\gamma}(  {\mathcal S}_{d-1})$,
and
each $g(i,\cdot)$, for $i \in \ES$, belongs to 
${\mathcal C}_{\textrm{\rm WF}}^{2+\gamma}({\mathcal S}_{d-1})$.
Then,
for any $\varepsilon \in (0,1)$,
there exist a universal exponent $\eta \in (0,1)$ (hence independent of $\varepsilon$)
and a threshold $\kappa_{0}>0$, 
only depending 
on $\varepsilon$, $\|f\|_{\infty}$, $\| g \|_{\infty}$ and $T$, such that, for any $\kappa \geq \kappa_{0}$
and $\delta \in (0,1/(4\sqrt{d}))$, the master equation 
\eqref{eq:master:equation:intrinsic}
has a unique solution in  
$[{\mathcal C}_{\textrm{\rm WF}}^{1+\gamma'/2,2+\gamma'}([0,T] \times {\mathcal S}_{d-1})]^d$, 
for $\gamma' = \min(\gamma,\eta)/2$. 
Furthermore, 
for any (deterministic) initial condition $p_0=(p_{0,i})_{i \in \ES} \in {\mathcal S}_{d-1}$ with positive entries, the mean field game  has a unique solution and it is given by
\begin{align}
&dP_{t}^i \nonumber
=  \sum_{j \in \ES} \biggl[ P_t^j \Bigl( \varphi\bigl(P_t^i\bigr)
+ \bigl( U^j - U^i \bigr)_{+}(t,P_t) \Bigr)
-
 P_t^i \Bigl( \varphi\bigl(P_t^j\bigr)
+ \bigl( U^i - U^j\bigr)_{+}(t,P_t) \Bigr)
\biggr] dt \nonumber
\\
&\hspace{30pt} 
+ \frac{\sqrt{\varepsilon}}{\sqrt{2}}
\sum_{j \in \ES} \sqrt{P_t^iP_t^j}
d \bigl[ W_{t}^{i,j} - W_{t}^{j,i} \bigr],
\label{eq:SDE:p}
\end{align}
with $p_{0}$ as initial condition.
\end{prop}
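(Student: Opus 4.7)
The plan is to follow a Schauder-based fixed point strategy tailored to the degenerate Kimura-type operator that appears on the simplex, paralleling the approach in \cite{EpsteinMazzeo} and adapted to the mean field game nonlinearity. First I would rewrite the master equation \eqref{eq:master:equation:intrinsic} in the form $\partial_{t}U^{i} + \mathcal{L}_{\varepsilon}U^{i} = F^{i}(p,U,\mathfrak{D}U)$, where $\mathcal{L}_{\varepsilon}$ is the Wright--Fisher generator $\tfrac{\varepsilon^{2}}{2}\sum_{j,k}(p_{j}\delta_{j,k}-p_{j}p_{k})\mathfrak{d}^{2}_{j,k} + \varepsilon^{2}\sum_{j\neq i} p_{j}(\mathfrak{d}_{i}-\mathfrak{d}_{j})$ augmented with the inward drift produced by $\varphi$, and $F^{i}$ collects the Hamiltonian term $-\tfrac{1}{2}\sum_{j\neq i}(U^{i}-U^{j})_{+}^{2}$ together with $f^{i}$ and the first-order coupling terms. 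The advantage of isolating the Kimura part is that, for $\varepsilon>0$ fixed, the linear problem $(\partial_{t}+\mathcal{L}_{\varepsilon})v = h$ with terminal data in $\mathcal{C}^{2+\gamma}_{\rm WF}$ is well posed in $\mathcal{C}^{1+\gamma'/2,2+\gamma'}_{\rm WF}$ by the Schauder theory for generalized Kimura operators.

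The core of the argument is then a Picard iteration in $[\mathcal{C}^{1+\gamma'/2,2+\gamma'}_{\rm WF}]^{d}$: given a candidate $V$, define $\Phi(V)=U$ as the unique classical solution of $\partial_{t}U^{i}+\mathcal{L}_{\varepsilon}U^{i}=F^{i}(p,V,\mathfrak{D}V)$ with $U^{i}(T,\cdot)=g^{i}$. The non-increasing cutoff $\varphi$ with a large plateau value $\kappa$ supplies an inward-pointing drift at the boundary, whose role is twofold: it guarantees a stochastic-characteristic representation of $\Phi(V)$ via a process that never hits the faces of $\mathcal{S}_{d-1}$ in an integrable sense, and it produces a uniform $L^{\infty}$ bound on $U$ through a Feynman--Kac-type estimate controlled by $\|f\|_{\infty}$, $\|g\|_{\infty}$ and $T$. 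Once $\|U\|_{\infty}$ is controlled, $F^{i}$ becomes globally Lipschitz in $(V,\mathfrak{D}V)$ on the relevant ball, and one iterates WF-Schauder estimates to get a quantitative contraction of $\Phi$ in a short-time window; a standard concatenation over $[0,T]$ together with a Leray--Schauder continuation then upgrades existence to the whole interval, yielding the unique smooth solution $U$ in the stated WF parabolic class.

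Having $U$ at hand, I would next verify that the feedback $\alpha^{\ast}(t,i,p)(j):=(U^{i}(t,p)-U^{j}(t,p))_{+}$ is the unique MFG equilibrium. Well-posedness of the SDE \eqref{eq:SDE:p} follows because the Wright--Fisher diffusion coefficients are $1/2$-Hölder and the first-order drift is Lipschitz on $\mathcal{S}_{d-1}$; the $\kappa$-drift again ensures the process stays in the interior, so all expressions $\sqrt{P^{j}/P^{i}}$ encountered in \eqref{eq:weak:sde:final:2:b} are integrable. Optimality of $\alpha^{\ast}$ against a representative player with dynamics \eqref{eq:weak:sde:final:2:b} is then obtained by a verification step: applying Itô's formula to $Q_{t}\cdot U(t,P_{t})$ under a generic feedback $\beta$ and using the PDE \eqref{eq:master:equation:intrinsic}, the difference $\mathcal{J}(\beta,\boldsymbol{P})-\mathcal{J}(\alpha^{\ast},\boldsymbol{P})$ reduces to a sum of nonnegative squares coming from the convexity of the Hamiltonian, which is zero only at $\beta=\alpha^{\ast}$. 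Uniqueness of the MFG pair $(\boldsymbol{P},\alpha)$ then follows from the uniqueness of $U$: any other equilibrium feeds back into the PDE and must coincide with $U$, forcing $\alpha$ and $\boldsymbol{P}$ to agree almost everywhere.

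The main obstacle, and where the argument departs from classical MFG treatments, is the degeneracy of $\mathcal{L}_{\varepsilon}$ on $\partial \mathcal{S}_{d-1}$: standard uniformly parabolic Schauder estimates are unavailable, and one must use the anisotropic Wright--Fisher Hölder spaces where second derivatives are only controlled with a boundary-weight. Quantifying the threshold $\kappa_{0}$ is delicate because it must simultaneously (i) absorb the non-Lipschitz Hamiltonian $(U^{i}-U^{j})_{+}^{2}$ on the a priori $L^{\infty}$-ball, and (ii) ensure that the characteristic process associated with $\mathcal{L}_{\varepsilon}$ plus the $\varphi$-drift spends enough time away from the boundary for the WF-Schauder constants to close; this is precisely the calibration already performed in \cite{BCCD-arxiv}, and it is what pins down the dependence of $\kappa_{0}$ on $\varepsilon$, $\|f\|_{\infty}$, $\|g\|_{\infty}$ and $T$.
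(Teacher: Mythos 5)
First, a point of order: the paper does not prove Proposition \ref{prop:previous} at all --- it is imported verbatim from the companion work \cite{BCCD-arxiv} (Theorems 3.2 and 3.4 there) and used here as a black box. So there is no in-paper argument to measure your proposal against; what follows compares your sketch with the strategy of the cited work as this paper describes it. Your outline is in the right family --- Wright--Fisher H\"older spaces \`a la \cite{EpsteinMazzeo}, the inward drift $\varphi$ with $\kappa$ large to keep the dynamics away from $\partial\mathcal{S}_{d-1}$, an $L^\infty$ bound to tame the quadratic Hamiltonian, and an It\^o/verification step for optimality.

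The genuine gap sits at the center of your existence argument: the passage from a short-time contraction to all of $[0,T]$. ``Standard concatenation'' and Leray--Schauder continuation both require an a priori bound on the $\mathcal{C}^{1+\gamma'/2,2+\gamma'}_{\rm WF}$ norm of the solution that is uniform in time; without it the contraction window shrinks at each step and the iteration need not reach $t=0$, and uniqueness without monotonicity certainly does not follow. This is precisely where the smoothing effect of the common noise is indispensable in \cite{BCCD-arxiv}, and it is the source of the \emph{universal} exponent $\eta$ in the statement: the cited proof first establishes a structural a priori H\"older estimate (with exponent $\eta$ not inherited from the data) for the value function of the underlying MFG system, and only then bootstraps with Kimura--Schauder estimates. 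Your scheme offers no origin for $\eta$ and no explanation of why the final regularity is $\gamma'=\min(\gamma,\eta)/2$ rather than $\gamma$; a pure Picard/Schauder fixed point would never produce such an exponent, which signals that the scheme as written does not close. Relatedly, uniqueness of the MFG cannot be dispatched by saying ``any other equilibrium feeds back into the PDE'': one must show that an arbitrary equilibrium flow $\boldsymbol{P}$ is regular enough, and stays sufficiently far from the boundary with high probability (this is where the constraint $\delta<1/(4\sqrt{d})$ and the $\varepsilon$-dependence of $\kappa_0$ enter), for the decoupling identity between the equilibrium value and $U(t,P_t)$ to be propagated by It\^o's formula. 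That step is absent from your argument.
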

Observe that when $\varepsilon=0$, we reduce to the finite state MFG without common noise analyzed in \cite{bay-coh2019,cec-pel2019}.
Also, in the sequel, it is implicitly understood that
\textbf{the assumptions of Proposition 
\ref{prop:previous}
are in force}. In particular, $f$ and $g$ have the same regularity as therein, for some $\gamma \in (0,1)$. 
Also, the parameter 
 $\kappa$ in Definition 
\ref{eq:varphi}
of $\varphi$ is indeed assumed to be large enough. Depending on our needs, we may even require the value of $\kappa$ to be larger in some of our 
statements.

Now, we are ready to present the $N$-player game and the main results.

\subsection{Finite player version}
\label{subse:2:2}
We now introduce the 
finite player analogue of the mean field game.
Not only should we assign a time-dependent state with each player in the finite game 
but also a weight. Throughout, we use the following generic notations: $N$ is the number of players in the finite game; 
for each index $l \in \EN:=\{1,\cdots,N\}$ and at each time 
$t \in [0,T]$, $X_{t}^l$ denotes the state of player $l$ at time $t$
and $Y_{t}^l$ denotes its \textit{weight}, which is a non-negative rational number. 
The tuple $(X_{t}^1,\cdots,X_{t}^N)$ is written with a boldface letter ${\boldsymbol X}_{t}$ and similarly 
for the weights.
The weighted empirical measure is denoted by 
$$\mu^N_t := \mu^N_{{\bm X}_{t},{\bm Y}_{t}}= 
\frac1N \sum_{l=1}^N Y^l_t \delta_{X^l_t}.$$

\subsubsection{Controlled dynamics}
\label{subsubse:2:2:1}

 The dynamics of $({\boldsymbol X}_{t},{\boldsymbol Y}_{t})_{0 \le t \le T}$ are constructed on the same probability space 
$(\Omega,{\mathcal F},{\mathbb P})$ as before. Consistently with the set-up used for the formulation of the mean field game, they are subjected to a tuple of controls, formalized in the form of a collection of transition rates chosen by each player. 
We may indeed denote by ${\mathbb A}$ the collection of tuples $(\alpha^i[j])_{i,j \in \ES: i \not =j}$ with coordinates in ${\mathbb R}_{+}$. 
A control is then a function ${\boldsymbol \alpha} : [0,T] \times \ES^N \times {\mathbb Q}_{+}^N \rightarrow 
{\mathbb A}^N$ which maps $(t,{\boldsymbol x},{\boldsymbol y}) \in [0,T] \times \ES^N 
\times {\mathbb Q}_{+}^N$ onto ${\boldsymbol \alpha}(t,{\boldsymbol x},{\boldsymbol y}) 
= ((\alpha^l(t,(i,\blx^{-l}),\bly)[j])_{i,j \in \ES : i \not =j})_{l\in\NN}\in {\mathbb A}^N$. 
In other words, $\alpha^l :  [0,T] \times \ES^{N-1} \times {\mathbb Q}_{+}^N \ni (t,\blx^{-l},\bly) \mapsto
(\alpha^l(t,(i,\blx^{-l}),\bly)[j])_{i,j \in \ES : i \not = j}
\in  
{\mathbb A}$ is the feedback function chosen by player $l$; for sure, 
another (but equivalent) way to do is to consider $\alpha^l$ as the application $\alpha^l : 
[0,T] \times \ES^{N} \times {\mathbb Q}_{+}^N \ni (t,\blx,\bly) \mapsto
(\alpha^l(t,\blx,\bly)[j])_{j \in \ES : j \not = x^l}$
which no longer takes its values in ${\mathbb A}$ but in $({\mathbb R}_{+})^{d-1}$. We mostly adopt the latter point of view in the sequel and hence reserve the notation $\alpha^l$ for the $({\mathbb R}_{+})^{d-1}$-valued mapping. For simplicity, we assume that the feedback functions are bounded and measurable.

For a given control ${\boldsymbol \alpha}$, and for the same function $\varphi$ and the same viscosity parameter $\varepsilon \in (0,1]$ as in the previous paragraph, the population evolves according to a continuous time Markov chain with transition rates  given by
\begin{equation}
\label{eq:markov:transition:1}
\begin{split}
&\P\left(\left.X^l_{t+h}=j,\bm{X}^{-l}_{t+h}=\bm{x}^{-l},\bm{Y}_{t+h}=\bm{y}\right| \bm{X}_t=\bm{x}, \bm{Y}_t=\bm{y}\right)
\\
&= \Bigl({\varphi(\mu^N_{\bm{x},\bm{y}}[j])}+\alpha^l(t,\bm{x},\bm{y})[j]\Bigr) h+o(h),
\end{split}
\end{equation}
whenever $j \not = x^l$, and

\begin{equation}
\label{eq:markov:transition:2}
\begin{split}
&\P\biggl(\bm{X}_{t+h}=\bm{x},Y^l_{t+h}=y^l\frac{k^{x^l}}{N\mu^N_{\bm{x},\bm{y}}[x^l]} \  \forall l\in \NN
\, 
\bigg| \, \bm{X}_t=\bm{x}, \bm{Y}_t=\bm{y} \biggr)
\\
&= \varepsilon
{\mathcal M}_{N,\mu^N_{\blx,\bly}}({\boldsymbol k}) N h+o(h),
\end{split}
\end{equation}
where 
${\boldsymbol k}$ is a $d$-tuple of integers $(k^{1},\cdots,k^{d})$ with $k^{1}+\cdots+k^{d}=N$.
In fact, the above definition requires some care: $(i)$ The ratio $k_{x^l}/(N\mu^N_{\bm{x},\bm{y}}[x^l])$ is treated as 1 if 
$\mu^N_{\bm{x},\bm{y}}[x^l]=0$; 
so, on the left-hand side, we 
should write the value of $Y^l_{t+h}$ in the form 
$Y^l_{t+h}=y^l k_{x^l}/(N\mu^N_{\bm{x},\bm{y}}[x^l]) \one_{\{\mu^N_{\bm{x},\bm{y}}[x^l]\neq 0\}}$; (ii) The multinomial distribution on the right-hand side is well-defined if 
$\mu^N_{{\boldsymbol x},{\boldsymbol y}}$ is a probability measure on $\ES$, meaning that 
$y^1 + \cdots + y^N =N$.

In the sequel, we thus always assume that the Markov chain is initialized from a point $({\boldsymbol x},{\boldsymbol y})$
such that $y^1 + \cdots + y^N=N$, namely $\bly \in \setQ$. In order to guarantee that the Markov chain is well-defined, it is then needed to check that   
the latter condition is preserved by the dynamics, meaning that, at any time $t >0$, 
$Y_{t}^1 + \cdots + Y_{t}^N=N$. To do so, it suffices to check that, for any $(\bm{x},\bm{y}) \in \ES^N \times 
\setQ$ and for any ${\boldsymbol k} \in {\mathbb N}^d$ with 
$k^{1}+\cdots+k^d=N$, it holds that 
\begin{equation}
\label{eq:sum:proba}
\sum_{l \in \NN} y^l \frac{k^{x^l}}{N\mu^N_{\bm{x},\bm{y}}[x^l]} = N.
\end{equation}
The above is well-checked. Indeed, by 
\eqref{eq:muNi},
\begin{equation*}
\begin{split}
\sum_{l \in \NN} y^l \frac{k^{x^l}}{N\mu^N_{\bm{x},\bm{y}}[x^l]}
= \sum_{i \in \ES} \sum_{l \in \EN} \one_{\{x^l=i\}} y^l \frac{k^{i}}{N\mu^N_{\bm{x},\bm{y}}[i]}
= \sum_{i \in \ES} k^{i}=N. 
\end{split}
\end{equation*}
Another important point related to the dynamics of $\bm{Y}$ is that 
$0$ is an absorption point for each coordinate, meaning that $Y^l$ remains in $0$ once it has reached it. 

Last but not least, it is implicitly understood that there are no other possible jumps for the whole system than those described in 
\eqref{eq:markov:transition:1}
and
 \eqref{eq:markov:transition:2}, meaning that 
 \begin{equation}
%\label{eq:markov:transition:3}
\begin{split}
&\P\Bigl( \bm{X}_{t+h}=\bm{x},\bm{Y}_{t+h}=\bm{y}\big| \bm{X}_t=\bm{x}, \bm{Y}_t=\bm{y}\Bigr)
\\
&= 1 - \sum_{l=1}^N \sum_{j \not = x^l} \Bigl({\varphi(\mu^N_{\bm{x},\bm{y}}[j])}+\alpha^l(t,\bm{x},\bm{y})[j]\Bigr) h - \varepsilon Nh +o(h). 
\end{split}
\end{equation}
Below, we make an intense use of the generator of the process $(\bm{X},\bm{Y})$. It is given by
\[ 
\begin{split}
\mathcal{L}^N_t v(\bm{x},\bm{y})&= 
\sum_{l \in \NN} \sum_{j \in \ES} \left({\varphi(\mu^N_{\bm{x},\bm{y}}[j])}+\alpha^l(t,\bm{x},\bm{y})[j]\right)\bigl[v\bigl((j,\bm{x}^{-l}),\bm{y}\bigr)-v(\bm{x},\bm{y})\bigr]
\\
&+ \varepsilon N \sum_{k\in \NN^d}
{\mathcal M}_{N,\mu^N_{\blx,\bly}}({\boldsymbol k})
\biggl[ v\biggl(\bm{x}, y^1\frac{k_{x^1}}{N\mu^N_{\bm{x},\bm{y}}[x^1]},\dots, y^N\frac{k_{x^N}}{N\mu^N_{\bm{x},\bm{y}}[x^N]}\biggr)
 -v(\bm{x},\bm{y})
\biggr],
\end{split}
\]
with the same convention as before for the ratios in the second line whenever one of the denominators cancels. 
Using our notations, this can be written as 
\be 
\label{gen}
\begin{split}
\mathcal{L}^N_t v(\bm{x},\bm{y})&= 
\sum_{l=1}^N  \left( {\varphi(\mu^N_{\bm{x},\bm{y}}[\bullet])}+\alpha^l(t,\bm{x},\bm{y})[\bullet]\right)\cdot \Delta^l v(\bm{x},\bm{y})[\bullet] \\
&+ \varepsilon N \EE 
\biggl[ v\biggl(\bm{x}, y^1\frac{S_{\mu^N_{\bm{x},\bm{y}}}[x^1]}{N\mu^N_{\bm{x},\bm{y}}[x^1]},\dots, y^N\frac{S_{\mu^N_{\bm{x},\bm{y}}}[x^N]}{N\mu^N_{\bm{x},\bm{y}}[x^N]}\biggr)
 -v(\bm{x},\bm{y})
\biggr],
\end{split}
\ee
where 
the inner product in the first line is in ${\mathbb R}^d$ 
and where we recall that 
$\Delta^l v(\bm{x},\bm{y})[\bullet] \in\R^d$ is defined by 
$\Delta^l v(\bm{x},\bm{y}) [j] = v((j,\bm{x}^{-l}),\bm{y})-v(\bm{x},\bm{y})$.

\subsubsection{Cost functional} 
With the same coefficients $f,g : \ES \times {\mathcal S}_{d-1} \rightarrow {\mathbb R}$ as before, we can now assign a cost with each player. 
For each $l \in \EN$, player $l$ aims at minimizing the cost\footnote{Notice that, in the formula below, we should write 
$\alpha^l(t,\bm{X}_t,\bm{Y}_t)[\bullet]$; for convenience, we remove the bullet $\bullet$.}
\be 
\label{cost}
J^l(\bm{\alpha})= \E \biggl[
\int_0^T Y^l_t\left({L}(X^l_t,\alpha^l(t,\bm{X}_t,\bm{Y}_t))+f(X^l_t,\mu^N_t)\right)dt +Y^l_T g(X^l_T,\mu^N_T)
\biggr],
\ee
where $\bm{\alpha} =(\alpha^1,\dots,\alpha^N)$ is the tuple of controls chosen by the $N$ players. As usual in game theory, 
the cost to player $l$ hence implicitly depends on the controls chosen by the others. Above, the function $L$ denotes the same Lagrangian as in the previous subsection, namely $L(i,\alpha)=\frac12\sum_{j\neq i} |\alpha(j)|^2$,
for $i \in \ES$ and $\alpha =(\alpha(j))_{j \in \ES : j \not =i} \in ({\mathbb R}_{+})^{d-1}$. Hence, we may rewrite
the cost as
\begin{equation*}
\begin{split}
J^l(\bm{\alpha})&= \sum_{i \in \ES} \E \biggl[
\int_0^T Y^l_t \one_{\{X^l_t=i\}}\biggl(\frac12\sum_{j\neq i} \bigl\vert \alpha^l\bigl(t,(i,\bm{X}^{-l}_t),\bm{Y}_t\bigr)[j] \bigr\vert^2 +f(i,\mu^N_t)\biggr)dt
 +Y^l_T \one_{\{X^l_T=i\}}g(i,\mu^N_T)
\biggr].
\end{split}
\end{equation*}
Of course, the occurrence of the weights in the definition of the cost functional (which is one of the unusual feature of our model) is reminiscent of the formula used in the mean field limit. Obviously, this is our objective to make the connection rigorous. 
Accordingly, we can introduce the Hamiltonian
\be
\label{eq:def:Hamiltonian}
H(i,u)= -\frac12\sum_{j\neq i} (u^i-u^j)^2_+ =\inf_{\alpha\in (\R_+)^d} \bigg\{\sum_j\alpha[j] (u^j-u^i) +  \frac12\sum_{j\neq i} |\alpha[j]|^2\bigg\},
\ee
whose argmin is $a^*(i,u)[j]=(u^i-u^j)_+$. (Notice that, in the above infimum, $\alpha$ is $d$-dimensional, whilst controls have been regarded as being $(d-1)$-dimensional so far. Obviously, this is for notational convenience only.)

%$\alpha^l$ is the control used by player $l$, $S_{\mu^N_{\bm{x},\bm{y}}}$ is a multinomial random variable with parameters $N$ and $\mu^N_{\bm{x},\bm{y}}= 
%\big(\mu^N_{\bm{x},\bm{y}}[1],\dots,\mu^N_{\bm{x},\bm{y}}[d]\big)$, constructed in another probability space $(\mathbf{\Omega}, \mathbf{\mathcal{A}},\PP)$, and $k=(k_1,\dots,k_d)$.
%\red{The ratio $\frac{k_{x^l}}{N\mu^N_{\bm{x},\bm{y}}[x^l]}$ is treated as 1 if 
%$\mu^N_{\bm{x},\bm{y}}[x^l]=0$}

\subsubsection{More about our choice of common noise}
The reader might object that, despite the terminology used throughout the paper, 
our model does not coincide with the standard Wright--Fisher 
one. Although our model is indeed different, the common noise shares some similarities, hence 
our choice to call it ``Wright--Fischer''. 
To wit, it is easy to see from 
\eqref{eq:markov:transition:2}
that, at any random time $t$ when the common noise rings (or jumps), 
\begin{equation*}
\mu^N_{t}[i] = \frac1N \sum_{l \in \NN} Y_{t}^{l} \one_{\{X_{t-}=i\}}
=  \frac1N \sum_{l \in \NN} Y_{t-}^{l} \frac{S_{\mu_{t-}^N}[i]}{N \mu_{t-}^N[i]} \one_{\{X_{t-}=i\}}
=  \frac{S_{\mu_{t-}^N}[i]}{N}, \quad i \in \ES,
\end{equation*}
where, conditional on the observations up to $t-$,  
$S_{\mu_{t-}^N}$ is a multinomial distribution of parameters $N$ and $(\mu_{t-}^N[e])_{e \in \ES}$. 
The way the weighted empirical distribution is hence updated is thus similar to the way the empirical distribution is updated in the standard 
Wright--Fisher model. 

To make the comparison even stronger, it is interesting to observe that there is in fact a more direct way to introduce a Wright--Fisher common noise in the game. Indeed, consistently with the very definition of the Wright--Fisher model, we could require that, at any jump time of the common noise, all the players in the game resample their own state (or location in $\ES$), independently of the others and of the past, according to the uniform empirical distribution of the system. Here, what we mean by ``uniform empirical distribution'' of the system is 
the standard empirical distribution, obtained by assigning the weight $1/N$ to each player. As a result, this model would not feature any additional
weight process $({\boldsymbol Y}^l)_{l \in \NN}$. Even though it is very appealing, this approach has however a very strong drawback, which makes it useless for our own purpose: Because of the resampling
(which would occur very often under the same intensity as in \eqref{eq:markov:transition:2}), the empirical measure
would be strongly attractive; in turn, the latter would preclude any interesting deviating phenomenon.
This is in contrast to our model: Since the common noise acts on the weights, players may really deviate 
from the empirical measure of the whole population. This is the rationale for assigning two attributes to each player. This is also the main 
conceptual innovation of our model.

\subsubsection{Application to models with heterogeneous influences}
\label{subse:apps:influences}
{ As we already pointed out, one of the main feature of the finite game that we have just introduced is that the weights 
$(Y^1_t,\cdots,Y^N_t)_{t \geq 0}$ of the 
players in the flow of empirical measures $(\mu_t^N)_{t \geq 0}$ may be different. 
From the practical point of view, the weight $Y^l_t$ that is carried out by player $l$ at time $t$ may be regarded as the instantaneous influence of 
player $l$ onto the whole collectivity. In this sense, this type of game may be used in order to describe models with heterogeneous influences. 
Notice that this interpretation is consistent with the one that we gave 
earlier in the text. 
Following 
\cite{BCCD-arxiv}, we indeed explained that the weight to player $l$ could be regarded as
describing the own perception 
of the world  
by player $l$. 
What we are saying here is that the way player $l$ perceives the world can be determined by the way player $l$ is in fact perceived by the others: in other words, the collectivity is acting as a mirror.

Generally speaking, influences here evolve with time 
according to the transitions 
\eqref{eq:markov:transition:2}, the very interpretation 
of which is as follows. 
Intuitively, influences within the population are updated from time to time, when the bell of the common noise rings. 
Equivalently, common noise defines random times at which polls (or elections) are made  in order to determine the current influences of 
the players. 
In this regard,
\eqref{eq:markov:transition:2}
is reminiscent of models in population genetics:
When the common noise rings, every player in the population chooses at random 
a feature in $\ES$ 
according to the current weighted empirical measure. Obviously, features are sampled independently. 
The new influence of player $l$ is then recomputed 
by multiplying the earlier one by the ratio equal to `the number of outcomes for the feature carried by $l$ divided by the expected number of outcomes for this feature', 
with the later denominator in the ratio being also understood as the total influence of the given feature.
In the mean, the influence keeps constant, but, obviously, some fluctuations may occur.
We then recover the aforementioned notion of `perception' by returning to 
the formula 
\eqref{cost}
for the cost. Intuitively, an action may not have the 
same impact 
whether a player is very or poorly influent. For instance, 
a strong influencer may gain or loose a lot of followers after a good or bad decision, whence the presence of the weight 
$Y^l$ as a density in \eqref{cost}.

In order to clarify the exposition, we provide two economic and social phenomena which fit this concept. Typically, we may consider a network with $N$ agents, each of them aiming at selling a product, which can be of one out of $d$ types. 
Consistently with the previous description,
 any player has her own influence/importance in the market: The position $X^l_t\in\ES$ of player $l$ is thus understood as the type she sells, while the weight $Y^l_t$ is seen as her influence. Moreover, there are external random shocks which, at random times, resample the influences of players according to 
\eqref{eq:markov:transition:2}. Any player then chooses her control, which is the rate at which she changes her type, in order to maximize a reward given by (the opposite of) \eqref{cost}. Therein, a quadratic cost has to be paid in order to trade another type of product; moreover, the rewards $-f$ and $-g$ include the weighted empirical measure $\mu^N_t = \frac1N \sum_{l=1}^N Y^l_t \delta_{X^l_t}$. The component 
$\mu^N_t [i] = \frac1N \sum_{l=1}^N Y^l_t \mathbbm{1}_{\{X^l_t =i \}}$ is then understood as the total influence of the type $i$.
 Hence, $f$ and $g$ can be taken e.g.  monotone if it is more advantageous to sell products with a small influence, while they can be antimonotone if it is
 better to sell products with a large influence.
 The two rewards $-f$ and $-g$ in 
 \eqref{cost}
are multiplied by 
 $Y^l$
in order to account for the fact that the reputation of 
 a trade mark 
 may have a direct impact on the selling price. 
 Similarly, a highly reputed trade mark may pay a higher price for 
 switching from one type of product to another one, which may justify why the 
 quadratic cost in
  \eqref{cost}
is also multiplied by 
 $Y^l$. 

As our second social phenomenon, let us focus on a social voter model where $N$ people can choose one out of two opposite opinions. Similarly to \cite{cecdaifispel}, we assume that $X^l_t = \pm 1$. In addition, any player has her own influence $Y^l_t$ which can be thought, for instance,  as the number of followers in a social network. Again, external random shocks might be thought as elections that shuffle people's influence.  People are rational and want to minimize the cost 
\[
J^l(\bm{\alpha})= \E \biggl[
\int_0^T Y^l_t  \frac{|\alpha^l(t,\bm{X}_t,\bm{Y}_t)[-X^l_t]|^2}{2} dt -Y^l_T X^l_T \mathcal{M}(\mu^N_T) \bigg], 
\]
where $\alpha^l(t,\bm{X}_t,\bm{Y}_t)[-X^l_t]$ is the switching rate of player $l$, from $X^l_t$ to $-X^l_t$, and $\mathcal{M}(\mu^N_T)$ denotes the mean of the measure, i.e. $\mathcal{M}(\mu^N_T) = \mu^N_T[1]-\mu^N_T[-1] = 
\frac1N \sum_{l=1}^N Y^l_t \mathbbm{1}_{\{X^l_t =1 \}} 
- \frac1N \sum_{l=1}^N Y^l_t \mathbbm{1}_{\{X^l_t =-1 \}}$. Therefore the terminal cost is antimonotone and, morally, if the total influence of opinion $1$ is larger than the total influence of $-1$, then the mean is positive. In turn, any player wants to be in 1 as she has to minimize the cost. Still, there is a cost to pay to change one's opinion, which is larger when the influence is large. Thus any player wishes to imitate  the majority, but, differently from \cite{cecdaifispel}, this is here a weighted majority which takes into account the influence of people. This is a typical model with \textit{heterogeneous influences}. 

\subsection{Existence of Nash equilibria}
%\red{{\it (consider cancelling this subsection title and stay with the subsection below)}}

%\subsubsection{Nash system}
We are now able to state the \emph{Nash system}, which 
is an equation for the equilibrium values of the game; equivalently, 
the Nash system 
gives the cost to each player when all of them play \textit{the} Nash equilibrium (we prove below that the latter is indeed unique in a relevant sense).
It is a system of $N$ functions indexed by $l\in\NN$, $\bm{x} \in \ES^N$ and $\bm{y} \in \setQ$, which formally writes
(with $(v^{N,l}(t,\bm{x},\bm{y}))_{l \in \EN}$ as unknown)
\be 
\label{nash}
\begin{split}
&\frac{d}{dt} v^{N,l}
+
\sum_{m \in \EN } 
\varphi(\mu^N_{\bm{x},\bm{y}}[\bullet])
\cdot \Delta^m v^{N,l}[\bullet] 
+\sum_{m\neq l} a^*\Big(x^m, \frac{1}{y^m} v^{N,m}_{\bullet}\Big)\cdot \Delta^m v^{N,l}[\bullet] 
\\
& \qquad 
+y^l H\Big(x^l, \frac{1}{y^l}v^{N,l}_\bullet \Big) + y^lf(x^l, \mu^N_{\bm{x},\bm{y}})
\\
& \qquad +\varepsilon N \EE 
\biggl[ v^{N,l}\biggl(t,\bm{x}, y^1\frac{S_{\mu^N_{\bm{x},\bm{y}}}[x^1]}{N\mu^N_{\bm{x},\bm{y}}[x^1]},\dots, y^N\frac{S_{\mu^N_{\bm{x},\bm{y}}}[x^N]}{N\mu^N_{\bm{x},\bm{y}}[x^N]}\biggr)
 -v^{N,l}(t,\bm{x},\bm{y})
\biggr]=0,
\\
& v^{N,l}(T,\bm{x},\bm{y})= y^l g(x^l, \mu^N_{\bm{x},\bm{y}}),
\end{split}
\ee
where $v^{N,l}_\bullet$ denotes the vector in $\mathbb{R}^d$ given by
$(v^{N,l} (t, \bm{x},\bm{y})[j]= v^{N,l} (t, (j, \bm{x}^{-l}),\bm{y}))_{j \in \ES}$.

In a word, the Nash system here reads as a countable system of ordinary differential equations, which makes it a bit more subtle than the Nash system that arises in the analysis of $N$-player games over a finite state space (as it is the case in the study of the convergence problem for finite state mean field games without common noise, see for instance 
\cite{bay-coh2019,cec-pel2019}). 
Also (and this is another difficulty, very specific to our setting), none of the above equations makes sense whenever any of the coordinates of 
${\boldsymbol y}$ vanishes because of the ratio $v^{N,m}_{\bullet}/y^m$ in 
$a^*$ and of the ratio $v^{N,l}_{\bullet}/y^l$ in the Hamiltonian.

Our first main result in this regard comes as a verification argument, the proof of which is postponed to Appendix \ref{subse:proof:verif}:

\begin{prop}[Verification Argument]
\label{prop:verification:Nash}
Consider 
 a collection
  of measurable functions
  $(a^l : [0,T] \times \ES^N \times \setQ
\ni (t,\blx,\bly)
 \mapsto a^l(t,(i,\blx,\bly)[j])_{j \in \ES : j \not =x^l} \in ({\mathbb R}_{+})^{d-1})_{l \in \EN}$ taking values in a compact domain.
Assume that $(v^{N,l})_{l \in \EN}$ solves the Nash system \eqref{nash} with the special features that 
\begin{enumerate}
\item 
$a^*(x^l,v^{N,l}_{\bullet}(t,\bm{x},\bm{y})/y^{l})$ is understood as $a^l(t,\bm{x},\bm{y})$
whenever $y^l=0$;
\item the Hamiltonian $y^l H(x^l,v^{N,l}_{\bullet}/y^l)$ is understood as $0$
whenever $y^l=0$.
\end{enumerate}
 Then, the feedback strategy vector $\bm{\alpha}^*=(\alpha^{*,1},\dots,\alpha^{*,N})$ given by 
\be 
\label{optcon}
\begin{split}
\alpha^{*,l}(t,\bm{x},\bm{y})[j]=a^*\bigg(x^l, \frac{1}{y^l} v^{N,l}_\bullet(t,\bm{x},\bm{y})\bigg)[j] :&= \frac{1}{y^l}\Bigl(v^{N,l}\big(t,\bm{x},\bm{y})- v^{N,l}\bigl(t,(j,\bm{x}^{-l}),\bm{y}\bigr)\Big)_+
\\
&= \frac{1}{y^l}\Bigl(- \Delta^l v^{N,l}\big(t,\bm{x},\bm{y})[j]\Big)_+,
\end{split}
\ee 
for $l \in \EN$ such that $j \not = x^l$ and $y^l >0$,
and by 
\be 
\label{optcon:2}
\alpha^{*,l}(t,\bm{x},\bm{y})[j]=a^l(t,\bm{x},\bm{y})[j],
\ee
for $l \in \EN$
such that $j \not = x^l$ and $y^l =0$, 
defines a Nash equilibrium in Markov feedback form for the $N$-player game. Moreover, the functions $(v^{N,l})_{l \in \EN}$ are the values of the equilibrium, i.e. 
\be 
v^{N,l}(t,\bm{x},\bm{y}) = J^l\bigl(t,\bm{x},\bm{y}, \bm{\alpha^{*}} \bigr)
=\inf_{\beta} J^l\bigl(t,\bm{x},\bm{y}, \beta,\bm{\alpha}^{*,-l}\bigr),
\ee
where $J^l(t,\bm{x},\bm{y},\bm{\alpha})$ denotes the cost when the process $(\bm{X},\bm{Y})$ starts at time $t$ with $(\bm{X}_t,\bm{Y}_t)= (\bm{x},\bm{y})$.
\end{prop}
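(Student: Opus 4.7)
The plan is the classical verification argument for jump-process games, adapted to the two-level $(X,Y)$ dynamics and the weighted cost. Fix $l \in \EN$ and let $\beta$ be any bounded admissible feedback for player $l$, while the remaining players use $\alpha^{*,-l}$. Denote by $(\bm{X}_s,\bm{Y}_s)_{t \leq s \leq T}$ the associated controlled Markov chain started from $(\bm{x},\bm{y})$ at time $t$, with generator obtained from \eqref{gen} upon inserting $\beta$ into the $l$-th slot and $a^*(x^m,v^{N,m}_\bullet/y^m)$ into the others (resp.\ $a^m$ on $\{y^m=0\}$ by convention (1)). Boundedness of the $a^m$, of $\beta$, of $\varphi$, and finiteness of $v^{N,m}$ on the accessible countable state space $\ES^N \times \setQ$ ensures that the chain is well-defined with only finitely many jumps on $[t,T]$ almost surely, so Dynkin's formula for $v^{N,l}$ along the chain is valid.

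Applying Dynkin between $t$ and $T$ and substituting for $\partial_s v^{N,l}$ from the Nash system \eqref{nash}, the common-noise contribution and the $\varphi$-drifts cancel identically, and so do all the $m \neq l$ drifts, since the Nash equation already uses the feedbacks $a^*(x^m,v^{N,m}_\bullet/y^m)$ coinciding with $\alpha^{*,m}$. On $\{y^l>0\}$ what remains inside the integrand is
\begin{equation*}
\bigl(\partial_s + \mathcal{L}_s\bigr) v^{N,l} = \beta \cdot \Delta^l v^{N,l}[\bullet] - y^l H\bigl(x^l,v^{N,l}_\bullet/y^l\bigr) - y^l f(x^l,\mu^N_{\bm{x},\bm{y}}).
\end{equation*}
By the variational representation \eqref{eq:def:Hamiltonian}, $y^l H(x^l,v^{N,l}_\bullet/y^l) \le \beta \cdot \Delta^l v^{N,l}[\bullet] + y^l L(x^l,\beta)$ for every nonnegative $\beta$, with equality precisely at $\beta = a^*(x^l,v^{N,l}_\bullet/y^l) = \alpha^{*,l}$. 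Hence
\begin{equation*}
\bigl(\partial_s + \mathcal{L}_s\bigr) v^{N,l} + y^l L(x^l,\beta) + y^l f \geq 0,
\end{equation*}
with equality at $\beta = \alpha^{*,l}$. Reinserting into the Dynkin identity, using the terminal condition $v^{N,l}(T,\cdot,\cdot) = y^l g$, and comparing with \eqref{cost} yields $v^{N,l}(t,\bm{x},\bm{y}) \leq J^l(t,\bm{x},\bm{y},\beta,\bm{\alpha}^{*,-l})$, with equality at $\beta = \alpha^{*,l}$. Arbitrariness of $\beta$ simultaneously identifies $v^{N,l}$ with $J^l(\cdot,\bm{\alpha}^*)$ and shows that $\alpha^{*,l}$ is a best response, which is the Nash property.

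The degenerate set $\{y^l = 0\}$ calls for a separate look. Since $0$ is absorbing for each coordinate of $\bm{Y}$ and the multinomial update $Y^l_{t+} = Y^l_t\, S_{\mu^N_t}[X^l_t]/(N\mu^N_t[X^l_t])$ vanishes as soon as $Y^l_t = 0$, the set $\{\bm{y}\in\setQ : y^l = 0\}$ is invariant under the chain for any admissible control. On this invariant set the Nash equation reduces, via conventions (1)--(2), to a closed linear backward system with terminal condition $y^l g = 0$, whose unique solution is $v^{N,l} \equiv 0$; the cost $J^l$ along any trajectory emanating from such a point is also zero, so $v^{N,l} = J^l$ holds trivially. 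I expect the principal obstacle in a full write-up to be purely bookkeeping: matching the Nash equation term-by-term against $\mathcal{L}^{(\beta,\alpha^{*,-l})} v^{N,l}$ while respecting conventions (1)--(2), and verifying that the feedback $\alpha^{*,l}$, defined piecewise by \eqref{optcon}--\eqref{optcon:2}, remains an admissible bounded feedback on the whole state space — this is precisely why the auxiliary bounded rates $a^l$ had to be introduced in the statement.
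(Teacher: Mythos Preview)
Your argument is correct and follows essentially the same route as the paper's: Dynkin's formula plus the variational characterization \eqref{eq:def:Hamiltonian} of the Hamiltonian, with the degenerate set $\{y^l=0\}$ handled via the absorption property of $Y^l$. The only cosmetic difference is that the paper first establishes $v^{N,l}\equiv 0$ on $\{y^l=0\}$ by a direct Dynkin computation (rather than by appealing to uniqueness of the reduced linear backward system) and then uses this fact inside the main verification inequality to dispose of the indicator $\one_{\{y^l\neq 0\}}$.
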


The indetermination when $y^l=0$ is well-understood: In that case, the
coordinate $Y^l$ remains stuck in $0$ and the running and terminal costs become zero whatever the choice of the 
strategy.
In the sequel, we circumvent part of the indetermination by 
restricting uniqueness 
of $a^l$ in 
\eqref{optcon:2}
to triples $(t,\blx,\bly)$ such that $y^l>0$: 
%There is a unique one that is continuous with respect to the parameter ${\boldsymbol y}$. Our result is

\begin{prop}[Existence and uniqueness of equilibria]
\label{prop:existence:!:continuous:eq}

The Nash system \eqref{nash} has a solution $(v^{N,l})_{l \in \EN}$ such that, for each $l \in \NN$, 
the function
$(t,\blx,\bly) \mapsto 
v^{N,l}(t,\blx,\bly)/y^l$, which is a priori defined 
on the set 
$\{ (t,\blx,\bly) \in [0,T] \times \ES^N \times \setQ : y^l >0\}$, extends to 
$[0,T] \times \ES^N \times \setQ$ into a function 
$[0,T] \times \ES^N \times \setQ \ni (t,\blx,\bly) \mapsto w^{N,l}(t,\blx,\bly)$
that is bounded by $T \| f\|_{\infty} + \| g \|_{\infty}$. It satisfies, 
for any 
$l,m \in \NN$, any $\bly \in \Y$ such that $y^l >0$ and 
$y^n=0$ and any $(t,\blx) \in [0,T] \times \ES^N$,
\begin{equation}
\label{eq:insensitivity}
\Delta^n w^l(t,\bm{x},\bm{y}) [j] = 0, \quad j \in \ES. 
\end{equation}

Accordingly, 
${\boldsymbol \alpha}^*$, as given by \eqref{optcon}, extends to 
tuples $\bly$ satisfying $y^l=0$ for some $l \in \EN$, replacing therein 
$v^{N,l}(t,\blx,\bly)/y^l$ by $w^{N,l}(t,\blx,\bly)$, 
and hence defines an equilibrium in Markov feedback form. 
Any other equilibrium
in Markov feedback form 
 $(a^l : [0,T] \times \ES^N \times \setQ
\ni (t,\blx,\bly)
 \mapsto a^l(t,(i,\blx,\bly)[j])_{j \in \ES : j \not =x^l} \in ({\mathbb R}_{+})^{d-1})_{l \in \EN}$
 satisfies
\begin{equation}
\label{eq:uniqueness:nash}
\begin{split}
a^l(t,(i,\blx,\bly)[j] = \frac{1}{y^l}\Bigl(- \Delta^l v^{N,l}\big(t,\bm{x},\bm{y})[j]\Big)_+,
\end{split}
\end{equation} 
for $l \in \EN$ such that $j \not = x^l$ and $y^l >0$. 
\end{prop}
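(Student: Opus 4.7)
My plan is to stratify the state space by the alive set $A(\bly) := \{l \in \NN : y^l > 0\}$, exploiting that $0$ is absorbing for each $Y^l$, so $A(\cdot)$ only shrinks along trajectories. I will solve \eqref{nash} by induction on $|A|$. Direct inspection of \eqref{nash} shows that the ansatz $v^{N,l} \equiv 0$ on the stratum $S_A := \{(\blx,\bly) : A(\bly) = A\}$ for $l \notin A$ is consistent with the equation (every term carrying a factor $v^{N,l}$ vanishes under the conventions of Proposition \ref{prop:verification:Nash}), so the genuine unknowns on $S_A$ are $(v^{N,l})_{l \in A}$. The base case $|A|=1$ reduces to a single-player control problem: $\mu^N$ is a Dirac mass and the common noise leaves the weights invariant (the multinomial degenerates to a point mass). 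For the inductive step with $|A|=k$, the Nash system on $S_A$ is a coupled finite ODE system in $t$, parametrized by $\blx \in \ES^N$ and by the finitely many $\bly$'s compatible with $A$; its common-noise forcing involves $v^{N,l}(\cdot,\bly')$ either with $\bly' \in S_A$ (handled by the ODE coupling) or with $\bly' \in S_B$ for $B \subsetneq A$ (known from the induction). The singular Hamiltonian $y^l H(x^l, v^{N,l}_\bullet/y^l) = -(2y^l)^{-1}\sum_{j \neq x^l}(-\Delta^l v^{N,l}[j])_+^2$ is locally Lipschitz in $v$ on $\{y^l > 0\}$, so Picard--Lindel\"of gives local existence backward from $T$.

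The crucial a priori bound $|v^{N,l}(t,\blx,\bly)| \leq y^l(T\|f\|_\infty + \|g\|_\infty)$ prevents blow-up and immediately yields the claimed bound on $w^{N,l}$ on $\{y^l > 0\}$. I obtain it by bootstrapping through Proposition \ref{prop:verification:Nash}: any local solution of \eqref{nash} on $[T-\tau, T]$ is, by verification, the equilibrium value of player $l$ on that subinterval, hence dominated by the cost of any comparison strategy; taking $\alpha^l \equiv 0$ gives a cost bounded by $\|f\|_\infty \int_{t}^T \EE[Y^l_s]\,ds + \|g\|_\infty \EE[Y^l_T]$. Now $Y^l$ is a martingale: applying the common-noise component of the generator \eqref{gen} to the coordinate function $y \mapsto y^l$ and using the first-moment identity $\EE[S_{N,\mu}[i]] = N\mu[i]$ for the multinomial yields zero, so $\EE[Y^l_s] = y^l$ for all $s$. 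To extend $w^{N,l}$ to $\{y^l = 0\}$, I use that when $y^l = 0$ the coordinate $X^l$ is fully decoupled from the rest of the system: it does not enter $\mu^N$, it does not enter the weight transition \eqref{eq:markov:transition:2} (since $y^l k_{x^l}/(N\mu^N[x^l])$ remains $0$), and it does not enter any other player's cost. I accordingly define $w^{N,l}(t,\blx,\bly)$ for $y^l = 0$ as the value of an auxiliary single-player control problem for a zero-mass ghost $l$ against the alive players' equilibrium dynamics; this value is bounded by $T\|f\|_\infty + \|g\|_\infty$ by the same martingale argument.

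The insensitivity \eqref{eq:insensitivity} (at points with $y^l > 0$, $y^n = 0$) is immediate from the decoupling just described: $x^n$ enters neither the dynamics nor the costs when $y^n = 0$, so $v^{N,l}$ cannot depend on $x^n$, whence $\Delta^n w^{N,l} = \Delta^n v^{N,l}/y^l = 0$. For uniqueness \eqref{eq:uniqueness:nash}, let $(a^l)$ be any Markov-feedback equilibrium with associated costs $(\tilde v^{N,l})$; the best-response HJB for player $l$ with the other feedbacks frozen shows $\tilde v^{N,l}$ solves an ODE in which $a^l$ achieves the Hamiltonian pointwise on $\{y^l > 0\}$, and strict convexity of the quadratic Lagrangian identifies the unique minimizer as $a^*(x^l, \tilde v^{N,l}_\bullet/y^l)$, so $(\tilde v^{N,l})$ satisfies \eqref{nash}. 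A Gronwall argument in the ODE system (using the same Lipschitz and a priori estimates) yields $\tilde v^{N,l} = v^{N,l}$, whence \eqref{eq:uniqueness:nash}. I expect the main technical obstacle to be the careful handling of the singular Hamiltonian $v^{N,l}/y^l$ as $y^l \downarrow 0$: the stratification combined with the value-function a priori bound is precisely what keeps the ODE away from the singular set at each inductive step and lets the extension to $\{y^l = 0\}$ be done consistently with \eqref{optcon}.
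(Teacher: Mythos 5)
Your stratification by the alive set and the martingale bound are a genuinely different route from the paper (which normalizes by $y^l$ first, solves the resulting system \eqref{newnash} by a fixed point over the whole countable state space, and reads the uniform bound off an auxiliary game). But there is a gap at the single most delicate point of the statement: the extension of $w^{N,l}=v^{N,l}/y^l$ to $\{y^l=0\}$. You define it as ``the value of an auxiliary single-player control problem for a zero-mass ghost $l$ against the alive players' equilibrium dynamics.'' The correct extension is \emph{not} the value of the ghost against the original dynamics: it must be the continuous limit of $v^{N,l}/y^l$ as $y^l\downarrow 0$, equivalently a solution of the normalized equation \eqref{newnash}, whose common-noise term is $\varepsilon N\,\EE[\,\tfrac{S_{\mu}[x^l]}{N\mu[x^l]}(w^{N,l}(\ldots)-w^{N,l})\,]$ --- the ratio multiplies the \emph{increment}. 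Probabilistically, dividing by $y^l$ amounts to reweighting the path measure by the density $Y^l_T/y^l$ (your own martingale observation), and under that change of measure the common-noise transition of \emph{all} the weights is tilted by the factor $k^{x^l}/(N\mu[x^l])$ as in \eqref{eq:markov:transition:3}/\eqref{gen2}. A ghost facing the untilted dynamics gives a different value, so the resulting extension would not solve the Nash system at $y^l=0$ and the extended $\alpha^*$ of \eqref{optcon} would not be an equilibrium. You have the right tool in hand (the martingale property of $Y^l$) but do not carry it through to identify the tilt, which is the structural heart of the paper's Section on the auxiliary game.

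Two secondary points. First, the strata $S_A$ are countably infinite, not finite: with $|A|=2$ the admissible weights $(q,N-q)$ range over all rationals $q\in(0,N)$, so ``parametrized by the finitely many $\bly$'s compatible with $A$'' is false; you still face a countable ODE system and need a uniform (in $\bly$) contraction, which is exactly what the normalization $v^{N,l}/y^l$ buys. Second, the insensitivity \eqref{eq:insensitivity} is not ``immediate from decoupling'': player $m$'s equilibrium feedback is $a^*(x^m,w^{N,m}_\bullet)$, so showing that $w^{N,l}$ does not depend on $x^n$ when $y^n=0$ presupposes that no $w^{N,m}$ does --- a fixed-point property that must be propagated through the iteration (as the paper does in the appendix), and which is also needed \emph{before} your Gronwall comparison for uniqueness, since at points with $y^n=0$ the two Nash systems differ by the undetermined terms $\hat\alpha^n\cdot\Delta^n v^{N,l}$, which only vanish once insensitivity is known.
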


Interestingly, 
property 
\eqref{eq:insensitivity}
should read as an insensitivity property: It says that, whenever 
player $l$ has a non-zero mass and implements 
the equilibrium strategy given by 
\eqref{optcon}, it is insensitive to the peculiar state of 
any player $n$ with a zero mass $y^n$ at the same time. 
As explained right below, this feature 
implies a form of uniqueness of the hence constructed Nash equilibrium.

Indeed, although 
Proposition 
\ref{prop:existence:!:continuous:eq}
does not give uniqueness of an equilibrium in Markov feedback form 
(because of the indetermination 
raised by Proposition \ref{prop:verification:Nash} at points 
at which the mass of one player vanishes), 
\eqref{eq:uniqueness:nash}
shows that the equilibrium feedback function chosen by player 
$l$ at any point $a^l(t,\blx,\bly)$ 
with $y^l>0$ is in fact uniquely determined. Accordingly, the equilibrium state process $(X^l,Y^l)$ of player
$l$ is uniquely defined up to the first time when 
its mass $Y^l$ hits $0$. Once the mass process $Y^l$
has touched $0$, it remains in $0$; the position $X^l$ may still vary according to the feedback 
function 
$a^l$ but this has no influence on the remaining expected cost
of player $l$ itself (because the running and terminal costs in 
\eqref{cost} are multiplied by the mass)
nor on the  
remaining expected cost
of the other players. 
The latter feature is a bit subtle and is a consequence of the following two facts: 
\begin{enumerate}
\item
The first one is that, at any time $t$, 
the player $l$ has no influence
on the running and terminal costs to any player $m \not =l$
  if its mass at time $t$ is zero; indeed, 
its contribution to 
the empirical measure $\mu^N_{t}$ is then null;
\item 
The second fact is that, by the insensitivity property
\eqref{eq:insensitivity}, 
the player $l$ has no influence
on the feedback of any player $m \not =l$
 at any time when its mass is zero.
\end{enumerate}
%In summary, 
%Proposition 
%\ref{prop:existence:!:continuous:eq} should be seen as a uniqueness result. 

The proofs of 
Propositions 
\ref{prop:verification:Nash}
and
\ref{prop:existence:!:continuous:eq}
are partially postponed to 
Appendix 
\ref{subse:heuristic:master equation}, since 
only a part of those former two results 
is needed in the core of our analysis. 
In fact, what we really need is the existence of 
the functions $(w^{N,l})_{l \in \NN}$ quoted in the statement 
 of Proposition 
\ref{prop:existence:!:continuous:eq}
and, most of all, the fact that those functions can be bounded independently 
of $N$. 
The latter facts are addressed in 
Section \ref{sec:3}, while the remaining claims are just checked in the 
Appendix 
\ref{subse:heuristic:master equation}.

\subsection{Convergence results} 
\label{sec:convres}
%We now provide the main statements regarding the convergence of the equilibria. The first theorem provides a bound for the difference between the normalization of the value function in the $N$-player game $v^{n,l}$ and the solution of the master equation $U$. The second theorem shows that the dynamics of the $N$-player under equilibrium converges to the solution of the MFG.
Consider the 
family of solutions $((v^{N,l})_{l \in \NN})_{N \geq 1}$ 
given by  
Proposition 
\ref{prop:existence:!:continuous:eq}.  Accordingly, 
set, 
for any $N\in\mathbb{N} \setminus \{0\}$ and $l \in\NN$, the real-valued functions $\v^{N,l},\u^{N,l}$ on $[0,T]\times\ES^N \times\Y$ by
\begin{align}\label{wz}
\v^{N,l}(t,\blx,\bly)&:=\frac{1}{y^l}v^{N,l}(t,\blx,\bly),\qquad
\u^{N,l}(t,\blx,\bly):=%\frac{1}{y_l}
U^{x^l}(t,\mu^N_{\blx,\bly}),
\end{align}
where $U\in[\mathcal{C}^{1+\gamma'/2,2+\gamma'}_{\rm WF}([0,T] \times \mathcal{S}_{d-1})]^d$, for some 
$\gamma' \in (0,1)$ (depending on $\gamma$, as mentioned in the statement of Proposition 
\ref{prop:previous}), is the classical solution to the master equation \eqref{eq:master:equation:intrinsic}
(recalling that $\kappa$ is implicitly required to be large enough, which is in any case 
explicitly recalled in our main statements below). 
%\gre{[A: In Prop. \ref{prop:previous}, the exponent for $U$ is $\gamma'=\min(\gamma,\eta)/2$.]}

Our first main result provides a bound for the difference between $w^{N,l}$ and $z^{N,l}$:
\begin{thm}[Distance between the $N$-player and mean field value functions]\label{thm:convergence_value}
Under the assumption
of Proposition 
\ref{prop:previous},  
we can find a constant $\overline \kappa_{0}$, 
only depending on $\varepsilon$, $T$,
$\| f \|_{\infty}$
and $\|g \|_{\infty}$,  such that, for
any 
$\kappa \geq \overline \kappa_{0}$, 
there exist an exponent $\chi>0$, only depending on $\gamma$, and a constant $C$, 
depending on $\delta$, $\kappa$, $T$, $\|f\|_{\infty}$, $\|g\|_{\infty}$ and $d$, 
 such that, for any $N\in\mathbb{N} \setminus \{0\}$, $l \in\NN$, ${\bm x} \in\ES^N$, ${\bm y} \in{\mathbb Y}$,  
%\begin{equation}
%\label{eq:mathcal:TN}
%({\bm x} ,{\bm y} )\in{\mathcal T}_{N}:=\Bigl\{(\blx,\bly) \in \ES^N \times \setQ : 
%\min_{i\in\dd} \overline \mu^N_{\blx,\bly}[i] \geq N^{-\epsilon}, 
%\
%\max_{l\in\NN} y^l \leq \frac12 N^{1-\epsilon}\Bigr\},
%\end{equation}
%%for some $\epsilon \in (0,1)$. 
one has
\begin{align}\label{bound1}
 \big\vert (z^{N,l}-w^{N,l}) \bigr\vert(t,\bm{x},\bm{y}) 
 \leq C  N^{-\chi}
 \biggl( 
 \prod_{i \in \ES}
\frac1{N^{-\epsilon}+\mu_{\bm{x},\bm{y}}^N[i]}
\biggr)^{1/(2d)} 
\biggl( \frac1N \sum_{m \in \EN} \vert y^m  \vert^{\ell} \biggr)^{1/2},
\end{align}
with $\epsilon=1/8$ and $\ell=3$. \end{thm}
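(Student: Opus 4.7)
The plan is to follow the master-equation strategy pioneered in \cite{CardaliaguetDelarueLasryLions} and adapted to finite-state MFGs in \cite{bay-coh2019,cec-pel2019}: treat $z^{N,l}(t,\blx,\bly) = U^{x^l}(t,\mu^N_{\blx,\bly})$ as an approximate solution of the normalized Nash system satisfied by $w^{N,l}$, isolate the resulting remainder $R^{N,l}$, and close the loop via a Feynman--Kac representation along the equilibrium Markov chain $(\bm{X},\bm{Y})$. Dividing \eqref{nash} by $y^l$, and using Proposition \ref{prop:existence:!:continuous:eq} together with the insensitivity property \eqref{eq:insensitivity} to treat the set $\{y^l=0\}$, the function $w^{N,l}$ satisfies an equation of the form
\begin{equation*}
\partial_t w^{N,l} + \mathcal{L}^{N,*}_t w^{N,l} + H(x^l,w^{N,l}_\bullet) + f(x^l,\mu^N_{\blx,\bly}) = 0,
\end{equation*}
where $\mathcal{L}^{N,*}_t$ is the generator \eqref{gen} evaluated at the equilibrium feedback \eqref{optcon}. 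Plugging $z^{N,l}$ in place of $w^{N,l}$ and subtracting \eqref{eq:master:equation:intrinsic} at the empirical point $\mu^N_{\blx,\bly}$ then produces $R^{N,l}$ as the discrepancy between the finite-player operators acting on $z^{N,l}$ and the Kimura operator acting on $U$.

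Concretely, $R^{N,l}$ decomposes into three pieces. First, for every player $m$, the finite difference
\begin{equation*}
\Delta^m z^{N,l}(t,\blx,\bly)[j] = U^{x^l}\bigl(t,\mu^N_{\blx,\bly} + \tfrac{y^m}{N}(\delta_j - \delta_{x^m})\bigr) - U^{x^l}(t,\mu^N_{\blx,\bly})
\end{equation*}
equals $\tfrac{y^m}{N}(\fd_j - \fd_{x^m})U^{x^l}(t,\mu^N_{\blx,\bly})$ plus a second-order Taylor remainder controlled by $\fD^2 U$, so that the sum of the transport contributions reproduces the first-order part of \eqref{eq:master:equation:intrinsic} modulo an error of size $\sum_m (y^m/N)^2 \|\fD^2 U\|$. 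Second, when the common noise rings, the updated empirical measure is $S_{\mu^N}/N$, a multinomial with mean $\mu^N_{\blx,\bly}$ and covariance $N^{-1}(\mathrm{diag}(\mu^N_{\blx,\bly}) - \mu^N_{\blx,\bly} (\mu^N_{\blx,\bly})^{\top})$; a second-order Taylor expansion of $U^{x^l}$, multiplied by the intensity $\varepsilon N$, exactly reproduces the Kimura diffusion $\tfrac{\varepsilon^2}{2}\sum_{j,k}(p_j\delta_{j,k} - p_j p_k)\fd^2_{j,k} U^{x^l}$, up to a Hölder-type remainder of size $\varepsilon N \cdot O(N^{-(2+\gamma')/2}) = O(N^{-\gamma'/2})$, which is the critical contribution and dictates the shape of the exponent $\chi$. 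Third, comparing $y^l H(x^l, z^{N,l}_\bullet/y^l) = y^l H(x^l, \tfrac{1}{y^l}\Delta^l U^{x^l}(t,\mu^N_{\blx,\bly}))$ with the Hamiltonian term of \eqref{eq:master:equation:intrinsic} again amounts to a Taylor linearization and crucially uses the uniform a priori bound on the equilibrium feedback delivered by the auxiliary game of Section \ref{sec:3}.

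The main obstacle is to bound $\E[\int_t^T |R^{N,l}(s,\bm{X}_s,\bm{Y}_s)|\,ds]$ uniformly in $(\blx,\bly)$, and this is where the two ingredients of Subsection \ref{sec:4} enter. On the one hand, $U$ only lies in the Wright--Fisher Hölder space $\mathcal{C}^{1+\gamma'/2,2+\gamma'}_{\rm WF}$, so $\fD^2 U$ may blow up as $p$ approaches $\partial\mathcal{S}_{d-1}$; the Taylor error bounds above must therefore be refined into weighted estimates in which the blow-up is compensated by the vanishing of the Kimura diffusion coefficient. On the other hand, the common-noise resampling introduces factors of the form $y^m S_{\mu^N}[x^m]/(N\mu^N[x^m])$, whose control requires concentration estimates for $\mu^N_t$ away from the faces of the simplex together with moment bounds on $Y^m_t$. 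Once these preliminaries are granted, applying Cauchy--Schwarz along the equilibrium trajectory yields exactly \eqref{bound1}: the boundary-avoidance product $\prod_i(N^{-\epsilon}+\mu^N_{\blx,\bly}[i])^{-1/(2d)}$ arises from distributing the $1/\mu^N[i]$ singularities via Hölder's inequality, the factor $(\tfrac{1}{N}\sum_m |y^m|^\ell)^{1/2}$ collects the weight moments generated by the Taylor remainders, and $N^{-\chi}$ is obtained by optimising the cutoff $N^{-\epsilon}$ against the Hölder error $N^{-\gamma'/2}$. The requirement $\kappa \geq \overline{\kappa}_0$ is exactly what guarantees that the inward-pointing drift $\varphi$ is strong enough for the boundary estimates of Subsection \ref{sec:4} to close the scheme, consistently with the analogous condition in \cite{BCCD-arxiv}.
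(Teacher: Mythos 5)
Your outline of the first half of the argument — expanding $z^{N,l}=U^{x^l}(t,\mu^N_{\blx,\bly})$ into the Nash system, identifying the transport, common-noise and Hamiltonian discrepancies, and controlling the second-order Taylor/H\"older remainders despite the boundary blow-up of $\fD^2 U$ — is essentially Proposition \ref{prop:u} and its lemmas, and is correct in spirit. The gaps are in the closing argument, which you compress into ``applying Cauchy--Schwarz along the equilibrium trajectory yields exactly \eqref{bound1}''; this step does not work as stated, for two reasons. First, the process you propose to run Feynman--Kac along is the equilibrium chain $(\bm{X},\bm{Y})$ with generator \eqref{gen}, but that generator represents the \emph{unnormalized} value $v^{N,l}$. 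After dividing by $y^l$, the normalized system \eqref{newnash} carries the tilted jump term $\frac{S_{\mu}[x^l]}{N\mu[x^l]}(\cdots)$, so the correct stochastic representation of $w^{N,l}$ (and of $z^{N,l}$ as an approximate solution) is along the auxiliary process $(\widetilde{\bm{X}}^{\sqbullet,n},\widetilde{\bm{Y}}^{\sqbullet,n})$ of Subsection \ref{subse:interpretation:nash}, whose common-noise rate is reweighted by $k^{x^n}/(N\mu[x^n])$. Without this change of process the It\^o expansion does not reproduce the equation satisfied by $w^{N,l}-z^{N,l}$.

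Second, a direct bound on $\E\int|R^{N,l}|$ followed by Cauchy--Schwarz cannot produce \eqref{bound1}, because the remainder estimate of Proposition \ref{prop:u} is only valid on the good set $\mathcal{T}_N$ and because the sum over $m\neq l$ of the feedback differences $(\alpha^{w,m}-\alpha^{z,m})\cdot\Delta^m z^{N,l}$ is \emph{a priori} of order $N$ times the pointwise error, which would destroy any Gronwall loop. The paper instead works with the weighted squared difference $\Phi^N(\bm{y})\,(z^{N,n}-w^{N,n})^2$ times the boundary product (the quantity $\theta^n_t$ of \eqref{eq:thetan:t}), multiplies by the exponential weight $e_t=\exp(\int c/\mumin)$ to absorb the $1/\mu[i]$ singularities via Theorem \ref{thmexpbound}, localizes at the stopping time $\tilde\tau_N$, and — crucially — uses the nonnegative ``carr\'e du champ'' terms generated by the jumps of the squared difference (the terms $\alpha^{\v,l,n}\cdot[\Delta^l(\u-\v)\odot\Delta^l(\u-\v)]$ and $J^n_s$ in \eqref{eq:T5+T6}) to absorb both the feedback-difference error, via the Cauchy--Schwarz-in-$l$ estimate \eqref{eq:deltalzn:deltalwn}, and the convexity error of $\bm{y}\mapsto\Phi^N(\bm{y})$ under the multinomial resampling. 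The factors $\prod_i(N^{-\epsilon}+\mu[i])^{-1/(2d)}$ and $(N^{-1}\sum_m|y^m|^\ell)^{1/2}$ in \eqref{bound1} do not arise from distributing singularities inside the remainder bound; they are the weights built into $\theta^n_t$, undone at the end using the moment and hitting-time estimates of Propositions \ref{thm:bound_Y_tau} and \ref{thm:bound_Y_tau:-1}. Your proposal does not contain the quadratic structure needed for this absorption, so as written the scheme does not close.
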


\begin{rem}
	\label{rem2}
\begin{enumerate}
%\item  Notice that, whenever ${\bm y}=(1,\ldots,1)\in\R^N$, the right-hand side in \eqref{bound1} simplifies as the last factor therein is then equal to 1. 

\item We will be using the the bound in \eqref{bound1}  along a sequence $(\bm{x}^N,\bm{y}^N)_{N \geq 1}$ 
(with each $(\blx^N,\bly^N)$ in $\ES^N \times {\mathbb Y}_{N}$,
where we put an additional index $N$ in the notation ${\mathbb Y}$ since the latter obviously depends on $N$) 
that satisfies the following 
\begin{equation}\label{xNyN}
\sup_{N \geq 1} \biggl\{
 \max_{i \in \ES} \frac1{N^{-\epsilon}+\mu^N_{\bm{x}^N,\bm{y}^N}[i]}
 +
 \frac1N 
 \sum_{m \in \NN} \vert y^{N,m} \vert^{\ell}
 \biggr\} < \infty.
\end{equation}
Importantly, the bound in \eqref{bound1} is not uniform in $(\bm{x}, \bm{y})$.  The reason for this is that
the proof 
requires 
  a moment estimate on the weights, but this estimate  explodes as $\mu^N_{\bm{x},\bm{y}}$ approaches the boundary of the simplex; see \eqref{boundY_magenta} below. 
 The fact that singularities may emerge near the boundary can be anticipated from the dynamics \eqref{eq:markov:transition:2} where $1/\mu^N_{\bm{x}, \bm{y}}$ appears. Actually, this observation is also consistent with our previous result Theorem \ref{prop:previous}, which establishes uniqueness of mean field game solutions only for initial distributions in the interior of the simplex, and requires the optimal process to be sufficiently far away from the boundary with high probability.
Instead, without common noise (i.e. $\varepsilon=0$), the convergence rate is uniform -- if the master equation possesses a smooth solution-- because the weights are then constantly 1.

\item The value of $\chi$ could be made explicit in terms of $\gamma$. 
For sure, this would require an additional effort in the proof to track the dependence of 
$\chi$ upon $\gamma$. In fact, we have felt easier not to 
address this question since the value of $\gamma$, as given by \cite[Theorem 2.10]{BCCD-arxiv}, is itself implicit; 
so, the resulting interest for having an explicit formula for $\chi$ in terms of $\gamma$ seems rather limited. 

In the same spirit, it is worth noticing that $\overline{\kappa}_{0}$ in
the statement of 
Theorem \ref{thm:convergence_value}
may differ from 
$\kappa_{0}$ in 
the statement 
of Proposition 
\ref{prop:previous}; in other words, we are not able to work with the same\footnote{In fact, there are subtle questions here since the parameter $\varepsilon$ in $\overline{\kappa}_{0}$ only comes through $\kappa_{0}$ itself and does not show up explicitly in our own computations. In other words, the construction of 
$\overline{\kappa}_{0}$ follows
from constraints that are not the same as those used to define $\kappa_{0}$.} $\kappa_{0}$ as in the statement of 
Proposition  
\ref{prop:previous}.
\item The reader will notice that, in the proof below, we pay a heavy price for the fact that the model is not elliptic. 
If the transition rates were assumed to be bounded from below by a positive constant on the whole domain (in our setting, the rates are just lower bounded in the neighborhood of the boundary thanks to the function $\varphi$), then the arguments 
would simplify.  
\end{enumerate}

\end{rem}

Convergence of the empirical measure 
is directly proved by means of a diffusion approximation theorem.
In particular, it is worth noting that, differently from 
\cite{CardaliaguetDelarueLasryLions}, we do not 
address the distance between the equilibrium particle system 
$(\bm{X},\bm{Y})$ and the auxiliary particle system 
obtained by replacing the feedback function $\Delta w^{N,l}$
by $\Delta z^{N,l}$. Indeed, 
if ever we were willing to do so, then we would have 
to invoke a similar diffusion approximation result
but for the convergence of the auxiliary 
particle system; the proof would be exactly the same. We hence feel more straightforward 
to apply such a diffusion approximation argument to the equilibrium particle system. 
Also, it must be stressed that 
a peculiar interest of the auxiliary particle system is to allow for refined 
convergence results for the fluctuations and the deviations of the 
finite Nash equilibrium. For instance, this idea has been developed in 
\cite{bay-coh2019,cec-pel2019,del-lac-ram2019,DelarueLackerRamananLDP}, but it 
requires a sufficiently strong rate of convergence 
for the difference 
$\Delta w^{N,l}-\Delta z^{N,l}$. Here, 
the bound obtained in 
the statement of Theorem 
\ref{thm:convergence_value}
is rather weak and is below the threshold that would be needed for this approach
(even though the value for $\chi$ is not explicit, it is clear from the proof that it is small).

In the statement below, 
we use the notation 
$(\mu_{t}:=\mu_{\bm{X}_{t}^N,\bm{Y}_{t}^N}^N)_{0 \leq t \leq T}$
for the empirical measure of the equilibrium particle system with $N$ players.
%\gre{\sout{ 
%in words, 
%$(\bm{X}^N,\bm{Y}^N)$ solves 
%\eqref{tilde_processes}
%with $\iota=0$.}} 
\begin{thm}[Convergence of the empirical measure]
\label{thm:convergence}
For the same regime of parameters as in the assumption of 
Theorem \ref{thm:convergence_value}, 
consider, for each $N \geq 1$, an initial condition 
$(\bm{x}^N,\bm{y}^N)=((x^{N,l})_{l \in \NN},(1,\dots,1)_{l \in \NN}) \in \ES^N \times 
\setQ$ that satisfies
%\textcolor{blue}{\eqref{xNyN}} \textcolor{purple}{[F.: CHANGE I had to add this condition. To me, we also need some bound on the empirical moments of $\bly$, but it seems that you have another opinion on this point.]} \textcolor{blue}{together with} 
\begin{equation}
\label{conv:init}
\lim_{N \rightarrow \infty} \mu^N_{\bm{x}^N,\bm{y}^N}[i]=p_{0}^i>0,
\end{equation}
for all $i \in \ES$ and
for some $p_{0}\in{\mathcal P}(\ES)$. 
Then, the sequence 
$(\mu^N_{t})_{0 \le t \le T}$ (seen as random elements taking values in the Skorokhod
space $\mathcal{D}([0,T],\mathbb{R}^d)$) converges in the weak sense on ${\mathcal D}([0,T];\R^d)$, equipped with 
the $J1$ Skorokhod topology, to the solution $(P_{t})_{0 \le t \le T}$ of 
the SDE \eqref{eq:SDE:p}.
%\begin{align}
%&dp_{t}[i] \nonumber
%\\
%&=  \sum_{j \in \ES} \biggl[ p_{t}[j] \Bigl( \varphi\bigl(p_{t}[i]\bigr)
%+ \bigl( U^j - U^i \bigr)_{+}(t,p_{t}) \Bigr)
%-
% p_{t}[i] \Bigl( \varphi\bigl(p_{t}[j]\bigr)
%+ \bigl( U^i - U^j\bigr)_{+}(t,p_{t}) \Bigr)
%\biggr] dt \nonumber
%\\
%&\hspace{15pt} 
%+ \frac{\sqrt{\varepsilon}}{\sqrt{2}}
%\sum_{j \in \ES} \sqrt{p_{t}[i]p_{t}[j]}
%d \bigl[ W_{t}^{i,j} - W_{t}^{j,i} \bigr],
%\label{eq:SDE:p}
%\end{align}
%with $p_{0}$ as initial condition, where
%$((W_{t}^{i,j})_{0 \le t \le T}\bigr)_{i,j \in \ES}$
%are independent $1d$ Brownian motions. 
\end{thm}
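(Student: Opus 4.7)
\medskip

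\noindent\textbf{Proof plan for Theorem \ref{thm:convergence}.} The overall strategy is to identify the limit of $(\mu^N_t)_{0\le t\le T}$ via a martingale problem on $\mathcal{S}_{d-1}$ and then conclude by the uniqueness of the MFG solution given in Proposition~\ref{prop:previous}. Concretely, for $\phi\in\mathcal{C}^2(\mathcal{S}_{d-1})$, I would start from the identity
\begin{equation*}
\phi(\mu^N_t) = \phi(\mu^N_0) + \int_0^t \mathcal{L}^N_s \phi(\mu^N_{\bm{X}_s,\bm{Y}_s})\, ds + M^N_t(\phi),
\end{equation*}
where $M^N_t(\phi)$ is a martingale and $\mathcal{L}^N_s$ is the generator \eqref{gen} under the Nash feedback produced by Proposition~\ref{prop:existence:!:continuous:eq}. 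I would decompose $\mathcal{L}^N_s\phi(\mu^N)$ into a \emph{controlled jump part} (due to individual transitions of the $X^l$'s) and a \emph{common-noise part}. For the controlled part, a jump of player $l$ from $x^l$ to $j$ increments $\mu^N$ by $(y^l/N)(\delta_j-\delta_{x^l})$; a first-order Taylor expansion of $\phi$ in that increment, followed by summation over $l$ using the identity $\frac1N\sum_{l} y^l \mathbf{1}_{\{x^l=i\}}=\mu^N[i]$, produces a discrete analog of the drift in \eqref{eq:SDE:p}. For the common-noise part, which fires at rate $\varepsilon N$ and replaces $\mu^N$ by $S_{\mu^N}/N$, a second-order expansion combined with the multinomial identities $\mathbf{E}[S_{\mu^N}[i]]=N\mu^N[i]$ and $\mathrm{Cov}(S_{\mu^N}[i],S_{\mu^N}[j])=N(\mu^N[i]\delta_{ij}-\mu^N[i]\mu^N[j])$ gives, up to lower-order terms, the Kimura-type second-order operator appearing in \eqref{eq:master:equation:intrinsic}.

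\medskip

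The second step is to replace the Nash feedback $\alpha^{*,l}[j]=(w^{N,l}(t,\bm{x},\bm{y})-w^{N,l}(t,(j,\bm{x}^{-l}),\bm{y}))_+$, which enters the controlled part of the generator, by the MFG feedback $(U^{x^l}(t,\mu^N)-U^j(t,\mu^N))_+$. This is precisely where Theorem~\ref{thm:convergence_value} is used: it yields $|w^{N,l}-U^{x^l}(t,\mu^N_{\bm{x},\bm{y}})|\le CN^{-\chi}$, modulated by the boundary factor $\prod_i(N^{-\epsilon}+\mu^N_{\bm{x},\bm{y}}[i])^{-1/(2d)}$ and the weight moment $(\frac1N\sum_m|y^m|^\ell)^{1/2}$. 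To convert this pointwise bound into a bound in expectation along the equilibrium trajectory, I would combine it with the boundary estimates and the weight moment controls developed earlier (see \eqref{boundY_magenta} and the estimates of Subsection~\ref{sec:4}): these ensure that, with probability tending to one, $\mu^N_t[i]$ stays above a positive threshold for every $i$, which is guaranteed by taking $\kappa$ large enough so that the drift $\varphi$ pushes $\mu^N$ back inside. A Grönwall-type argument then propagates these bounds uniformly in $t$, and the discrepancy $w^{N,l}(t,(j,\bm{x}^{-l}),\bm{y})-U^j(t,\mu^N)$ is additionally $O(1/N)$-close to $w^{N,l}(t,(j,\bm{x}^{-l}),\bm{y})-U^j(t,\mu^N+(y^l/N)(\delta_j-\delta_{x^l}))$ because $U$ is $\mathcal{C}^{1+\gamma'/2,2+\gamma'}_{\mathrm{WF}}$.

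\medskip

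The third step is tightness of $(\mu^N_t)_{0\le t\le T}$ in $\mathcal{D}([0,T];\mathbb{R}^d)$ with the $J1$ topology. Compact containment is automatic since $\mu^N$ lives in the compact simplex $\mathcal{S}_{d-1}$. For the Aldous criterion I would estimate the predictable quadratic variation of $M^N_t(\phi)$: the controlled part contributes jumps of size $O(1/N)$ at rate $O(N)$ and hence $O(1/N)$ to the bracket, while the common-noise part contributes jumps of size $O(N^{-1/2})$ at rate $\varepsilon N$, giving an $O(1)$ bracket — consistent with the non-trivial diffusion in the limit. Combining the bracket bound with the boundedness of the generator (again using the boundary estimates for the controlled part) yields the required equicontinuity in probability. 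Then, any subsequential limit of the martingale identity displayed in the first paragraph, together with the two approximation steps above, shows that every limit point solves the martingale problem associated with the generator of \eqref{eq:SDE:p}, i.e.\ the Wright--Fisher-type SDE with drift coming from the optimal MFG feedback $(U^i-U^j)_+$.

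\medskip

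The final step is uniqueness of the limit. By Proposition~\ref{prop:previous}, the SDE \eqref{eq:SDE:p} has a unique (strong) solution started from the limit initial datum $p_0$ of \eqref{conv:init}, because $U$ is the smooth solution of the master equation. Martingale-problem well-posedness for this SDE (a standard Stroock--Varadhan-type argument, available here since $U$ is $\mathcal{C}^{1+\gamma'/2,2+\gamma'}_{\mathrm{WF}}$ and the diffusion coefficient is continuous and vanishes regularly at the boundary) forces every subsequential limit to coincide with the law of $(P_t)_{0\le t\le T}$, so the full sequence converges. The main obstacle is the \emph{uniform-in-time boundary control}: Theorem~\ref{thm:convergence_value} is pointwise and degrades as $\mu^N$ approaches $\partial\mathcal{S}_{d-1}$, so one must show that, along the equilibrium flow, the contribution of the bad event $\{\min_i \mu^N_t[i]\le N^{-\epsilon}\}$ is negligible uniformly on $[0,T]$; this is achieved by choosing $\kappa$ large enough in $\varphi$, in the spirit of the analysis carried out in \cite{BCCD-arxiv}.
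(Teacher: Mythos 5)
Your strategy is, in substance, the same as the paper's: the paper also proceeds by diffusion approximation, localizing via a stopping time $\sigma^N_K$ controlled by the boundary and moment estimates of Subsection~\ref{sec:4}, writing the semimartingale decomposition of $\mu^N_t[i]$ from the Poisson-measure representation, substituting $\Delta^l z^{N,l}$ for $\Delta^l w^{N,l}$ in the drift via Theorem~\ref{thm:convergence_value}, and computing the compensator of the common-noise martingale from the multinomial covariances. The only packaging difference is that the paper checks the hypotheses of the packaged diffusion-approximation theorem \cite[Chapter 7, Theorem 4.1]{EthierKurtz} on the coordinate process directly, whereas you redo the martingale problem by hand with test functions, Aldous tightness, and identification of limit points; both routes work.

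Two points in your write-up need repair. First, your justification of uniqueness of the limiting martingale problem by ``a standard Stroock--Varadhan-type argument'' does not apply: the diffusion coefficient of \eqref{eq:SDE:p} is the degenerate Kimura/Wright--Fisher one, which vanishes on $\partial\mathcal{S}_{d-1}$, so the ellipticity-based Stroock--Varadhan uniqueness is unavailable. The correct argument is that \eqref{eq:SDE:p} is strongly well-posed by Proposition~\ref{prop:previous} (this is what the paper relies on, and what Ethier--Kurtz's theorem requires), so weak uniqueness follows by a Yamada--Watanabe-type argument rather than by ellipticity. Second, since the prelimit processes jump and the limit is continuous, convergence in the $J1$ topology requires showing that the maximal jump $\sup_{t\le T\wedge\sigma^N_K}|\mu^N_t-\mu^N_{t-}|$ vanishes in probability; your Aldous/bracket estimates do not by themselves rule out rare large jumps of the common-noise resampling. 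The paper proves this separately (see \eqref{eq:convergence:jumps}) by bounding the number of resampling times through a Poisson counting argument and applying Hoeffding's inequality to $|S_\mu[i]/N-\mu[i]|$ on the event $\min_e\mu^N[e]>1/K$; you should add this step.
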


\section{Equilibria with uniformly bounded feedback functions}
\label{sec:3}
One of the purposes of this section is to 
prove 
the first part of
Proposition 
\ref{prop:existence:!:continuous:eq}, 
namely the fact that we can construct solutions
$(v^{N,l})_{l \in \EN}$
to the Nash system \eqref{nash}
such that 
$(v^{N,l}(t,\blx,\bly)/y^l)_{l \in \EN}$ is bounded, independently of $N$. Actually, we kill here two birds with one stone: Not only we prove the former boundedness property, but we also manage to establish an interpretation of the normalized value functions $(v^{N,l}(t,\blx,\bly)/y^l)_{l \in \EN}$
as the value functions of another game. This is precisely this new interpretation that permits to prove the former uniform bound in $N$. This uniform bound plays a key role in our subsequent analysis of the convergence problem; it is the main ingredient from 
Proposition 
\ref{prop:existence:!:continuous:eq}
that is used in the sequel.

\subsection{Normalized Nash system}
%\subsection{Uniform boundedness of the Nash equilibrium strategy}
Following
\eqref{wz}, we are willing to study 
\be\label{def_w}
w^{N,l}(t,\bm{x},\bm{y}):= \frac{1}{y^l} v^{N,l}(t,\bm{x},\bm{y}).
\ee
Indeed, dividing \eqref{nash} by $y^l$, we obtain
(pay attention to the fact that the term on the third line
is heavily impacted by the change of variable)
\[ 
\begin{split}
&\frac{d}{dt}w^{N,l}
+
\sum_{m \in \EN } 
\varphi(\mu^N_{\bm{x},\bm{y}}[\bullet])
\cdot \Delta^m w^{N,l}[\bullet] 
+\sum_{m\neq l} a^*\Big(x^m, w^{N,m}_{\bullet}\Big)\cdot \Delta^m w^{N,l} 
\\
&\ + H\Big(x^l, w^{N,l}_\bullet \Big) + f(x^l, \mu^N_{\bm{x},\bm{y}})
\\
&\  + \varepsilon N \EE 
\biggl[ \frac{S_{\mu^N_{\bm{x},\bm{y}}}[x^l]}{N\mu^N_{\bm{x},\bm{y}}[x^l]}
w^{N,l}\biggl(t,\bm{x}, y^1\frac{S_{\mu^N_{\bm{x},\bm{y}}}[x^1]}{N\mu^N_{\bm{x},\bm{y}}[x^1]},\dots, y^N\frac{S_{\mu^N_{\bm{x},\bm{y}}}[x^N]}{N\mu^N_{\bm{x},\bm{y}}[x^N]}\biggr)
 -w^{N,l}(t,\bm{x},\bm{y})
\biggr]=0,
%& w^{N,n}(T,\bm{x},\bm{y})= g(x^n, \mu^N_{\bm{x},\bm{y}}),
\end{split}
\]
which, since $\EE[{S_{\mu^N_{\bm{x},\bm{y}}}[x^l]}/{(N\mu^N_{\bm{x},\bm{y}}[x^l]})]=1$, can be rewritten as
\begin{align}
&\frac{d}{dt}w^{N,l}
+\sum_{m \in \EN } 
\varphi(\mu^N_{\bm{x},\bm{y}}[\bullet])
\cdot \Delta^m w^{N,l}[\bullet] 
+\sum_{m\neq l} a^*\Big(x^m, w^{N,m}_{\bullet}\Big)\cdot \Delta^m w^{N,l} 
\nonumber
\\
&\ + H\Big(x^l, w^{N,l}_\bullet \Big) + f(x^l, \mu^N_{\bm{x},\bm{y}})
\label{newnash}
\\
&\ +\varepsilon N \EE 
\biggl[ \frac{S_{\mu^N_{\bm{x},\bm{y}}}[x^l]}{N\mu^N_{\bm{x},\bm{y}}[x^l]}
\biggl(
w^{N,l}\biggl(t,\bm{x}, y^1\frac{S_{\mu^N_{\bm{x},\bm{y}}}[x^1]}{N\mu^N_{\bm{x},\bm{y}}[x^1]},\dots, y^N\frac{S_{\mu^N_{\bm{x},\bm{y}}}[x^N]}{N\mu^N_{\bm{x},\bm{y}}[x^N]}\biggr)
 -w^{N,l}(t,\bm{x},\bm{y})
\biggr)\biggr]=0, \nonumber
\end{align}
with the terminal boundary condition
\be
w^{N,l}(T,\bm{x},\bm{y})= g(x^l, \mu^N_{\bm{x},\bm{y}}).
\ee
%Notice that we solve here one equation only (and not a system). 

Our first result guarantees that 
\eqref{newnash} is well-posed. 
\begin{prop}
\label{prop:nash:system:posedness}
The system of equations \eqref{newnash} has a unique solution among all the 
$N$-tuples of 
bounded functions $(w^{N,l})_{l \in \EN}$ of the three variables $t \in [0,T]$, $\bm{x} \in \ES^N$ and $\bm{y} \in \setQ$, namely
\begin{equation*} 
\max_{l=1,\cdots,N} \sup_{t,\bm{x},\bm{y}} \bigl\vert w^{N,l}(t,\bm{x},\bm{y}) \bigr\vert < \infty,
\end{equation*}
the supremum being taken over $t \in [0,T]$, $\bm{x} \in \ES^N$ and $\bm{y} \in \setQ$. 
%Moreover, the functions $(w^{N,l})_{l \in \EN}$ are continuous in $(t,\bm{y})$. 
\end{prop}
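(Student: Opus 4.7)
My plan is to prove Proposition~\ref{prop:nash:system:posedness} through a Picard iteration argument in a suitable Banach space, supplemented by an a priori bound supplied by an auxiliary-game interpretation of the normalized value functions. For fixed $N$, I view the unknown $(w^{N,l})_{l \in \NN}$ as a map from $t \in [0,T]$ into the Banach space $\mathcal{B} := \ell^{\infty}(\NN \times \ES^N \times \Y)$ equipped with the sup norm, and I rewrite \eqref{newnash} as the backward ODE $\tfrac{d}{dt} w + F_N(t,w) = 0$ with terminal data $g(x^l,\mu^N_{\bm{x},\bm{y}})$. The operator $F_N$ aggregates the drift coming from $\varphi$, the Hamiltonian $H(x^l,w^{N,l}_\bullet)$, the coupling term $\sum_{m\neq l} a^*(x^m,w^{N,m}_\bullet)\cdot\Delta^m w^{N,l}$, the source $f$, and the common-noise integral term. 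Crucially, although the density $S_{\mu^N}[x^l]/(N\mu^N[x^l])$ is not uniformly bounded in $\bm{y}$, the identity $\EE[S_{\mu^N}[x^l]/(N\mu^N[x^l])]=1$ combined with $S\le N$ makes the common-noise term a bounded linear operator on $\mathcal{B}$ with operator norm at most $2\varepsilon N$, uniformly in $\bm{y}$.

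\textbf{Picard iteration, a priori bound, and uniqueness.} On the ball $\{\|w\|_\infty\le R\}$, both $w\mapsto H(x^l,w^{N,l}_\bullet)$ and $w\mapsto a^*(x^m,w^{N,m}_\bullet)\cdot\Delta^m w^{N,l}$ are locally Lipschitz in $w$ with constant of order $NR$. Combined with the bounded common-noise operator this makes $F_N$ locally Lipschitz on $\mathcal{B}$, and a standard backward Picard iteration from $t=T$ yields a unique maximal bounded solution on a subinterval $(\tau^*,T]$. To extend it to $[0,T]$ I invoke the auxiliary-game interpretation announced in the section title: given any fixed bounded feedbacks $(\beta^m)_{m\neq l}$ for the other players, $w^{N,l}$ is the value of an optimal control problem for player $l$ in which the weight $y^l$ has been absorbed into the probability law through the likelihood-ratio reweighting $S_{\mu^N}[x^l]/(N\mu^N[x^l])$ of the common-noise transitions (a legitimate change of measure on the countable reachable set, precisely because $\EE[S/(N\mu)]=1$). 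Using the admissible control $\alpha^l\equiv 0$ in the reweighted problem then yields the a priori bound $|w^{N,l}|\le T\|f\|_\infty+\|g\|_\infty$, valid uniformly in $(t,\bm{x},\bm{y})$, in $l$, and in $N$. This blocks any blow-up at $\tau^*$ and produces a global solution on $[0,T]$. For uniqueness, let $w_1,w_2$ be two bounded solutions with common sup-norm bound $R$; the difference $\Delta=w_1-w_2$ satisfies a linear backward equation with zero terminal data and coefficients of order $NR+\varepsilon N$ (inherited from the local Lipschitz property of $H$ and $a^*\cdot\Delta^m$ and from the bounded common-noise operator), so a backward Gronwall inequality in $\mathcal{B}$ forces $\Delta\equiv 0$.

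\textbf{Main obstacle.} The delicate point is the presence of the density $S_{\mu^N}[x^l]/(N\mu^N[x^l])$ in the common-noise integral: pointwise it can be as large as $1/\mu^N[x^l]$ and hence of order $N$. At fixed $N$ this unboundedness is harmless for all estimates above, but it would be fatal to any $N$-uniform control obtained by pointwise bounds on the kernel. The saving grace for Proposition~\ref{prop:nash:system:posedness} is that boundedness of the reweighted value function is governed by the \emph{total mass} of the density (equal to $1$) rather than by its pointwise sup norm, which is precisely what allows the $\alpha^l\equiv 0$ comparison in the auxiliary game to deliver an $N$-independent a priori bound. Making the auxiliary game rigorous at configurations where some coordinate $y^l$ vanishes further requires adopting the natural conventions for the indeterminate ratios already introduced in Proposition~\ref{prop:verification:Nash}; checking that these conventions are compatible with the reweighted dynamic programming, and hence preserve both existence and the sup-norm bound, is the subtle structural ingredient of the argument.
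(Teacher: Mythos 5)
Your proof is correct, but it takes a genuinely different route from the paper's. The paper's argument freezes only the common-noise term at the input $w$, so that each iterate of the fixed-point map $\Phi$ solves a finite-state Nash system that decouples over $\bm{y}$ and is handled by citing \cite{cec-pel2019}; boundedness of the iterates is obtained from the stochastic-game representation of that \emph{frozen} system (giving only an exponential-in-$T$ bound, propagated through the class ${\mathcal E}$), and existence and uniqueness follow from showing that $\Phi^{\circ \ell}$ is a contraction. You instead run a single Picard iteration on the full vector field in $\ell^\infty$, exploiting exactly the right observation that ${\mathbf E}[S_{\mu}[x^l]/(N\mu[x^l])]=1$ makes the common-noise term a bounded linear operator of norm at most $2\varepsilon N$, and then you close the continuation argument with the sharp a priori bound $T\|f\|_\infty+\|g\|_\infty$ coming from the reweighted (likelihood-ratio) control representation. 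That representation is precisely the auxiliary game of Subsection \ref{subse:interpretation:nash} and Proposition \ref{prop:tilde:Nash}, which the paper develops only \emph{after} Proposition \ref{prop:nash:system:posedness}; your use of it on the maximal interval $(\tau^*,T]$ is not circular, since it is an a priori estimate on any bounded solution, but it does front-load the construction of the reweighted jump process (with rates still bounded by $\varepsilon N$, as noted after \eqref{eq:markov:transition:3}) and the attendant Dynkin/verification argument. The trade-off is clear: your route is more self-contained on the analytic side (no appeal to the well-posedness of the sub-problem from \cite{cec-pel2019}) and delivers the $N$-uniform bound already within the existence proof, whereas the paper keeps the existence proof lighter by postponing the auxiliary-game machinery. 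One small point worth making explicit in your write-up: the reweighted tuples $h(S,\bm{x},\bm{y})$ remain in $\setQ$ (rational entries summing to $N$, by \eqref{eq:sum:proba}), so the common-noise operator genuinely acts on functions defined on the stated domain.
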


\begin{proof}
\textit{Well-posedness.}
We argue by using a standard fixed point argument. Given an input $w$ in the form of 
an $N$-tuple of 
bounded functions $(w^{l})_{l \in \EN}$ of the three variables $t$, $\bm{x}$ and $\bm{y}$ 
as in the statement, we consider the system 
\begin{align}
&\frac{d}{dt} \widetilde w^{l}
+
\sum_{m \in \EN } 
\varphi(\mu^N_{\bm{x},\bm{y}}[\bullet])
\cdot \Delta^m \widetilde w^{N,l}[\bullet]
+\sum_{m\neq l} a^*\Big(x^m, \widetilde w^{m}_{\bullet}\Big)\cdot \Delta^m \widetilde w^{l} + H\Bigl(x^l, \widetilde w^{l}_\bullet \Bigr) + f\bigl(x^l, \mu^N_{\bm{x},\bm{y}}\bigr) \nonumber
\\
&+\varepsilon N \EE 
\biggl[ \frac{S_{\mu^N_{\bm{x},\bm{y}}}[x^l]}{N\mu^N_{\bm{x},\bm{y}}[x^l]}
\biggl(
w^{l}\biggl(t,\bm{x}, y^1\frac{S_{\mu^N_{\bm{x},\bm{y}}}[x^1]}{N\mu^N_{\bm{x},\bm{y}}[x^1]},\dots, y^N\frac{S_{\mu^N_{\bm{x},\bm{y}}}[x^N]}{N\mu^N_{\bm{x},\bm{y}}[x^N]}\biggr)
 -w^{l}(t,\bm{x},\bm{y})
\biggr)\biggr]=0, \label{newnashtilde}
\\
& \widetilde w^{l}(T,\bm{x},\bm{y})= g\bigl(x^l, \mu^N_{\bm{x},\bm{y}}\bigr). \nonumber
\end{align}
Following 
\cite{cec-pel2019}, we know that \eqref{newnashtilde} has a unique bounded solution $(\widetilde{w}^l)_{l\in\NN}$(notice that the presence of ${\boldsymbol y}$ counts for nothing whenever the input $w$ is frozen: We may then solve the above equation ${\boldsymbol y}$ per ${\boldsymbol y}$). 
This creates a mapping $\Phi$ that sends $w=(w^l)_{l \in \EN}$ onto 
$\widetilde w=(\widetilde w^l)_{l \in \EN}$. 
System \eqref{newnashtilde}
can be regarded as the Nash system
of a (quite standard) stochastic game on $\ES$
with $g$ on the last line as terminal cost and with $f$ on the first line plus the 
whole $w^l$ term on the second line as running cost, see for instance \cite[Proposition 1]{cec-pel2019}.
This allows to represent $\widetilde w^l$, for each $l$, as the equilibrium cost to player $l$ in this auxiliary game.
Bounding each of the cost coefficients therein, we easily deduce that there exists a constant $C$ such
that
\begin{equation*}
\max_{l \in \EN}
\sup_{\bm{x},\bm{y}} \vert \tilde w^l(t,\bm{x},\bm{y})
\vert \leq C 
\biggl( 1 + \int_{t}^T 
\max_{l \in \EN}
\sup_{\bm{x},\bm{y}} \vert w^l(s,\bm{x},\bm{y})
\vert ds \biggr). 
\end{equation*}
In turn, we get that, whenever the input $w$ satisfies 
$\max_{l \in \EN}
\sup_{\bm{x},\bm{y}} \vert w^l(s,\bm{x},\bm{y}) \vert \leq C\exp( C(T-t))$ for any $t \in [0,T]$, the same holds 
for the output $\Phi(w)$. We call ${\mathcal E}$ the class of such inputs and we then prove that, for any two inputs 
$w^{(1)}=(w^{(1),l})_{l \in \EN}$ and $w^{(2)}=(w^{(2),l})_{l \in \EN}$ in ${\mathcal E}$, 
\begin{equation*}
\max_{l \in \EN} 
\sup_{\bm{x},\bm{y}} \bigl\vert 
\widetilde w^{(1),l}(t,\bm{x},\bm{y})
-\widetilde w^{(2),l}(t,\bm{x},\bm{y})
\bigr\vert
\leq C \int_{t}^T 
\max_{l \in \EN} 
\sup_{\bm{x},\bm{y}} \bigl\vert 
 w^{(1),l}(t,\bm{x},\bm{y})
- w^{(2),l}(t,\bm{x},\bm{y})
\bigr\vert
ds,
\end{equation*}
for a possibly new value of the constant $C$. 
The end of the proof is standard: $\Phi^{\circ \ell}$ creates a contraction for a large enough integer $\ell$, which shows the existence of a solution within the class ${\mathcal E}$. Uniqueness over bounded solutions (that are not \textit{a priori} assumed to be in ${\mathcal E}$) is proven in the same way. 
\end{proof}

\subsection{Interpretation of the renormalized Nash system}
\label{subse:interpretation:nash}
The idea is to show that the functions $(w^{N,l})_{l \in \EN}$, as given by 
Proposition 
\ref{prop:nash:system:posedness}, 
are the value functions of a new differential game, with new dynamics, but 
with an equilibrium whose feedback functions are the same as those given by 
Proposition 
\ref{prop:existence:!:continuous:eq}. 
Although this auxiliary game has no real purpose from the modeling point of view, 
the resulting formulation of the functions 
$(w^{N,l})_{l \in \EN}$ as the values of this new game 
gives almost for free some very useful bounds on the 
$(w^{N,l})_{l \in \EN}$'s, see
Proposition 
\ref{prop:tilde:Nash}
below, and 
in turn on the feedback 
strategies 
of the original game, 
as identified in 
\eqref{optcon}.
From a mathematical point of view, the intuition behind the construction of such a new game is nothing but recognizing in Eq. \eqref{newnash} the generator of another process.

However, the definition of this new game requires some care since the state variable of a given player is no longer
an element of  
$\ES \times {\mathbb Q}_{+}$ but of $\ES^N \times ({\mathbb Q}_{+})^N$. In other words,
the state process writes  
as a tuple of processes $(\widetilde{\boldsymbol X}^l,\widetilde{\boldsymbol Y}^l)_{l \in \EN}$, each 
$(\widetilde{\boldsymbol X}^l,\widetilde{\boldsymbol Y}^l)$ writing itself 
as a tuple $(\widetilde{X}^{l,n},\widetilde{Y}^{l,n})_{n \in \EN}=((\widetilde{X}^{l,n}_{t},\widetilde{Y}^{l,n}_{t})_{t \in [0,T]})_{n \in \EN}$ with values in $\ES^N$. 
For each $l \in \EN$, 
$(\widetilde{\boldsymbol X}^l,\widetilde{\boldsymbol Y}^l)$ is the state process associated with player $l$. Below, we must  
distinguish $(\widetilde{\boldsymbol X}^{l},\widetilde{\boldsymbol Y}^l)=(\widetilde{X}^{l,n},\widetilde{Y}^{l,n})_{n \in \EN}$ from 
$(\widetilde{\boldsymbol X}^{{\sqbullet},n},\widetilde{\boldsymbol Y}^{{\sqbullet},n}):=(\widetilde{X}^{l,n},\widetilde{Y}^{l,n})_{l \in \EN}$. 
The interpretation of the latter 
is made clear in the next lines, but say right now that each 
 $\widetilde{\boldsymbol Y}^{{\sqbullet},n}$ is required to take values in 
 $\setQ$ (which is similar to what we required from ${\boldsymbol Y}$ in the original game). 

As before, each player chooses a feedback function.
The subtlety is that, even though the state space has been enlarged to 
$\ES^N$, the feedback function to player $l \in \EN$ is still regarded as a function 
$\alpha^l : [0,T] \times \ES^N \times \setQ \ni (t,\blx,\bly) \mapsto 
(\alpha^l(t,\blx,\bly)[j])_{j \in \ES : j \not =x^l} \in ({\mathbb R}_{+})^{d-1}$.
In particular, the feedback function to player $l \in \EN$ does not see the additional index $n$ we used 
for enlarging the state variable. 
Now, we postulate that, for each $n \in \EN$, the state process 
$(\widetilde{\boldsymbol X}^{{\sqbullet},n},\widetilde{\boldsymbol Y}^{{\sqbullet},n})$
is a Markov process with values in 
$\ES^N \times {\mathbb Y}$
with generator 
\be 
\label{gen2}
\begin{split}
\widetilde{\mathcal{L}}^{n,N}_t v(\bm{x},\bm{y})&= 
\sum_{l=1}^N  \left( {\varphi(\mu^N_{\bm{x},\bm{y}}[\bullet])}+\alpha^l(t,\bm{x},\bm{y}) {[\bullet]}\right)\cdot \Delta^l v(\bm{x},\bm{y}) {[\bullet]} 
\\
&\hspace{-45pt} + \varepsilon N \EE 
\biggl[ \frac{S_{\mu^N_{\bm{x},\bm{y}}}[x^n]}{N\mu^N_{\bm{x},\bm{y}}[x^n]}
\biggl(v\biggl(\bm{x}, y^1\frac{S_{\mu^N_{\bm{x},\bm{y}}}[x^1]}{N\mu^N_{\bm{x},\bm{y}}[x^1]},\dots, y^N\frac{S_{\mu^N_{\bm{x},\bm{y}}}[x^N]}{N\mu^N_{\bm{x},\bm{y}}[x^N]}\biggr)
 -v(\bm{x},\bm{y})\biggr)
\biggr],
\end{split}
\ee
the second line of which differs substantially from the second line of \eqref{gen} (and explicitly depends on the index $n$ appearing on the left-hand side). Equivalently, similar to 
\eqref{eq:markov:transition:2}, 
the corresponding 
jumps of the weight process obey the transitions:
\be
\label{eq:markov:transition:3}
\begin{split}
\P& \biggl( \widetilde{\bm{X}}^{\sqbullet,n}_{t+h}=\bm{x},\widetilde{Y}^{l,n}_{t+h}=y^l\frac{k^{x^l}}{N\mu^N_{\bm{x},\bm{y}}[x^l]} \ \forall l\in \NN\bigg| \widetilde{\bm{X}}_t^{\sqbullet,n}=\bm{x}, \widetilde{\bm{Y}}_t^{\sqbullet,n}=\bm{y}\biggr) \\
&\qquad= \frac{k^{x^n}}{N\mu^N_{\bm{x},\bm{y}}[x^n]}
{\mathcal M}_{N,\mu^N_{\blx,\bly}}({\boldsymbol k}) \varepsilon N h+o(h).
\end{split}
\ee
As already explained, the ratio $k^{x^n}/N\mu^N_{\bm{x},\bm{y}}[x^n]$ is defined as 1 if $\mu^N_{\bm{x},\bm{y}}[x^n]=0$.
Also, 
observe that, by definition of the multinomial distribution $\mathcal{M}_{N,\mu}$, the transition rate $
\varepsilon N( k^{x^n}/ N\mu^N_{\bm{x},\bm{y}}[x^n])
{\mathcal M}_{N,\mu^N_{\blx,\bly}}({\boldsymbol k}) $ is equal to $\varepsilon N \mathcal{M}_{N-1,\mu} (k^{x^n}-1, \bm{k}^{-n})$, which in particular implies that this transition rate is still bounded by $\varepsilon N$.
Of course, the transitions of the $\bm{x}$-variable on $\ES$ are the same as in \eqref{eq:markov:transition:1}.

For sure, the reader may worry about the correlations between the various Markov processes 
$((\widetilde{\boldsymbol X}^{{\sqbullet},n},\widetilde{\boldsymbol Y}^{{\sqbullet},n}))_{n \in \EN}$, but in fact they do not matter. The reason is that the cost functional to player $l \in \EN$ is defined as  
\be 
\label{cost2}
\widetilde{J}^l(t,\bm{x},\bm{y}, \bm{\alpha}) 
:= \E \biggl[
\int_t^T \biggl(L\bigl(\widetilde{X}^{l,l}_s,\alpha^l(s,\widetilde{\bm{X}}^{\sqbullet,l}_s,\widetilde{\bm{Y}}^{\sqbullet,l}_s)\bigr)+f\bigl(\widetilde{X}^{l,l}_s,\widetilde{\mu}^{\sqbullet,l}_s\bigr)\biggr)ds + g\bigl(\widetilde{X}^l_T,\widetilde{\mu}^{\sqbullet,l}_T\bigr)
\biggr],
\ee
where $(t,\bm{x},\bm{y})$ belongs to $[0,T] \times \ES^N \times \setQ$ and is understood as the initial condition of all 
the processes 
$(\widetilde{\bm{X}}^{\sqbullet,n},\widetilde{\bm{Y}}^{\sqbullet,n})_{n \in \EN}$
(all of them being thus required to start from the same initial condition). In the right hand side, $\widetilde{\mu}^{\sqbullet,n}$
is defined as
$\widetilde{\mu}_{s}^{\sqbullet,n}=\mu^N_{\widetilde{X}_{s}^{\sqbullet,n},\widetilde{Y}_{s}^{\sqbullet,n}}$. 
We must insist once again on the difference between \eqref{cost2}
and \eqref{cost}: In \eqref{cost2}, the dynamics of 
the weights of the (other) players are computed with respect to transitions that truly depend on the index $l$ (through the second line in \eqref{gen2}), which explains the rather unusual 
formulation of the game.

Following the proof of the verification argument (see the proof of Proposition 
\ref{prop:verification:Nash} in the appendix), we can prove that the $N$-tuple of 
feedback functions $({\alpha}^{*,l} : [0,T] \times \ES^N \times \setQ \rightarrow ({\mathbb R}_{+})^{d-1})_{l \in \EN}$, 
defined by 
${\alpha}^{*,l}(t,\bm{x},\bm{y})=a^*(x^l,w^{N,l}_{\bullet}(t,\bm{x},\bm{y}))$, 
is a Nash equilibrium of the new game and that 
the $N$ functions 
$(w^{N,l})_{l \in \EN}$ are the value functions of this equilibrium\footnote{Certainly, we could prove a form of uniqueness of this equilibrium, but it would be rather useless for us. For this reason, we feel better not to address it.}. 
There is however a difference with Proposition \ref{prop:verification:Nash},
since, for any $l \in \EN$, ${\alpha}^{*,l}(t,\bm{x},\bm{y})$ is well-defined even if $y^l=0$.
In fact, $(\alpha^{*,l})_{l \in \EN}$ coincides with the equilibrium given by  
Proposition 
\ref{prop:existence:!:continuous:eq} constructed in the proof of Section 
\ref{sec:proof:prop2} below. 

Importantly, the interpretation of $(w^{N,l})_{l \in \EN}$ as the values of the Nash equilibrium
permits to get a bound, independently of $N$:
\begin{prop}
\label{prop:tilde:Nash}
The value functions $(w^{N,l})_{l \in \EN}$ are uniformly bounded by the constant $T\|f\|_\infty  + \|g\|_\infty$.
Accordingly the feedback functions 
given by 
$${\alpha}^{*,l}(t,\bm{x},\bm{y})[j]=
a^*\bigl(x^l,w^{N,l}_{\bullet}(t,\bm{x},\bm{y})\bigr)[j]
= 
\Bigl(w^{N,l}\big(t,\bm{x},\bm{y})- w^{N,l}\bigl(t,(j,\bm{x}^{-l}),\bm{y}\bigr)\Bigr)_+ ,
$$
for $l \in \EN$, 
are bounded by $2T\|f\|_\infty  + 2\|g\|_\infty$
 
\end{prop}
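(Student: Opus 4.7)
The strategy is to exploit the interpretation of $(w^{N,l})_{l \in \EN}$ already established in Subsection \ref{subse:interpretation:nash}: namely, that $w^{N,l}(t,\bm{x},\bm{y})$ equals the equilibrium value $\widetilde{J}^l(t,\bm{x},\bm{y},\bm{\alpha}^*)$ of the auxiliary game with dynamics driven by the generator $\widetilde{\mathcal{L}}^{n,N}_t$ and cost functional \eqref{cost2}. Once this identification is in hand, the bound is obtained by a standard two-sided comparison: an upper bound by deviation to a trivial strategy, and a lower bound from the non-negativity of $L$ and the supremum norms of $f$ and $g$.

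For the upper bound, I would fix a player $l$ and consider the admissible strategy $\bm{\beta}=(\alpha^{*,1},\dots,\alpha^{*,l-1},0,\alpha^{*,l+1},\dots,\alpha^{*,N})$, where player $l$ simply chooses the null control. By the Nash property of $\bm{\alpha}^*$ for the auxiliary game,
\begin{equation*}
w^{N,l}(t,\bm{x},\bm{y}) = \widetilde{J}^l(t,\bm{x},\bm{y},\bm{\alpha}^*) \leq \widetilde{J}^l(t,\bm{x},\bm{y},\bm{\beta}).
\end{equation*}
With $\alpha^l \equiv 0$, the Lagrangian term $L(\widetilde{X}^{l,l}_s,0)=0$ vanishes, so the right hand side reduces to
\begin{equation*}
\E\Bigl[ \int_t^T f\bigl(\widetilde{X}^{l,l}_s,\widetilde{\mu}^{\sqbullet,l}_s\bigr)\, ds + g\bigl(\widetilde{X}^{l,l}_T,\widetilde{\mu}^{\sqbullet,l}_T\bigr) \Bigr] \leq T\|f\|_\infty + \|g\|_\infty.
\end{equation*}
For the lower bound, since $L \geq 0$, any admissible strategy (in particular $\bm{\alpha}^*$) satisfies
\begin{equation*}
\widetilde{J}^l(t,\bm{x},\bm{y},\bm{\alpha}^*) \geq \E\Bigl[\int_t^T f\bigl(\widetilde{X}^{l,l}_s,\widetilde{\mu}^{\sqbullet,l}_s\bigr)ds + g\bigl(\widetilde{X}^{l,l}_T,\widetilde{\mu}^{\sqbullet,l}_T\bigr)\Bigr] \geq -T\|f\|_\infty - \|g\|_\infty,
\end{equation*}
which yields the desired two-sided bound.

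The second assertion is immediate from the explicit formula \eqref{optcon} for $\alpha^{*,l}$: each component is the positive part of a difference $w^{N,l}(t,\bm{x},\bm{y})-w^{N,l}(t,(j,\bm{x}^{-l}),\bm{y})$, hence is bounded above by $2(T\|f\|_\infty+\|g\|_\infty)$.

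The only potentially delicate point is to make sure that the identification of $w^{N,l}$ with the value of the auxiliary game is rigorously justified, i.e.\ that the verification argument in the spirit of Proposition \ref{prop:verification:Nash} applies in this enlarged state setting. However, since $w^{N,l}$ solves the normalized Nash system \eqref{newnash} and since \eqref{newnash} is exactly the HJB equation associated with the generator $\widetilde{\mathcal{L}}^{n,N}_t$ in \eqref{gen2} and the cost \eqref{cost2} (with $n=l$), this is a direct application of It\^o's formula for pure jump Markov processes together with Proposition \ref{prop:nash:system:posedness} (boundedness making all stochastic integrals true martingales). Thus no genuine obstacle arises, and the bound $T\|f\|_\infty+\|g\|_\infty$ is obtained uniformly in $N$, $\bm{x}$, $\bm{y}$ and $l$.
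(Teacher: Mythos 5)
Your proof is correct and follows essentially the same route as the paper: the upper bound by deviating player $l$ to the null control (which kills the Lagrangian term), the lower bound from the non-negativity of $L$ together with the sup-norms of $f$ and $g$, and the feedback bound as an immediate consequence. The identification of $w^{N,l}$ with the value of the auxiliary game, which you rightly flag as the only delicate point, is exactly what the paper establishes in the paragraph preceding the proposition via the verification argument.
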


\begin{proof}
The upper bound 
for $w^{N,l}$
is found by playing the strategy $\alpha^l\equiv 0$, while the lower bound follows by the sign of $\ell$.  
The bound for the feedback functions is obvious.
\end{proof}

\subsection{Proof of Proposition \ref{prop:existence:!:continuous:eq}}
\label{sec:proof:prop2}
Here comes now the proof of  
Proposition \ref{prop:existence:!:continuous:eq}. 
Existence of a solution 
%for which 
%the associated strategy, as given by
%\eqref{optcon}--\eqref{optcon:2}, is continuous 
follows from 
Proposition 
\ref{prop:nash:system:posedness}. It suffices to let $v^{N,l}(t,\blx,\bly)=y^l w^{N,l}(t,\blx,\bly)$. Then, identity 
\eqref{optcon}
becomes
\begin{equation*}
\alpha^{*,l}(t,\bm{x},\bm{y})[j]= \Bigl(w^{N,l}\big(t,\bm{x},\bm{y})- w^{N,l}\bigl(t,(j,\bm{x}^{-l}),\bm{y}\bigr)\Big)_+,
\end{equation*} 
for $l \in \EN$ such that $j \not = x^l$ and $y^l >0$. 
The bound for $(w^{N,l})_{l \in \NN}$
directly follows from Proposition 
\ref{prop:tilde:Nash}. 

The uniqueness result (which is not really needed in the rest of the paper) is proven in appendix.

\begin{rem}
Let us comment on the information that each player uses in case there is or there is not common noise. Starting with the latter case, let us compare between \cite{cec-pel2019} and \cite{bay-coh2019}. In \cite{cec-pel2019} admissible strategies for the $N$-player games are Markov feedback controls with full state information, which is represented by $\boldsymbol{x}$. The characterization of the equilibrium is given by a system of $N$ equations, where its solution $(\bar v^{N,l}(t,\boldsymbol{x}))_{l=1}^N$ stands for the value functions for the players at time $t$, given that at this time the states of the players are described by the vector $\boldsymbol{x}\in\mathbb{R}^N$. On the other hand, in \cite{bay-coh2019}, each player knows the current state and the empirical distribution of the other players. The equilibrium is now characterized by a single ODE, the solution of which is denoted by  $\bar V^N(t,x,\eta)$. This is the value at time $t$ of a representative player, whose state at this time is $x$ while the empirical distribution of the other players is $\eta\in\mathcal{P}^{n-1}(\ES)$. One can show that for any $l\in\EN$, $\bar v^{N,l}(t,\boldsymbol{x})=\bar V^N(t,x^l,(1/(N-1))\sum_{n,n\ne l}\delta_{x_n})$. This means even if the players have access to the entire configuration $\boldsymbol{x}$, each player may use, instead of the full information the more concise information: private state and empirical distribution. This should not come as a surprise because the game is symmetric and the players are anonymous in the sense that the cost for each player depends on its private state and the empirical distribution.  Hence, it is reasonable that this information is sufficient to describe the equilibrium.

In contrast, in the model studied here (with common noise), the problem would change if strategies were restricted to time, players' own states, and the weighted empirical measure of the system. While players are anonymous, in the sense that the identities of the other players do not matter, players still need to keep track of the weights of the others. A sufficient piece of information for each of the players is the private state and weight, and the {\it distribution} of the weights of the players {\it within} each state. As above, this observation can be supported by showing that the value function of a representative player in the equilibrium with more concise information coincides with the value functions of the players with the full information $(\boldsymbol{x},\boldsymbol{y})$. Now, recall that whenever a player moves between states it carries its weight with it to the new state. Hence, knowing the distribution of the weights of the players within each state is not equivalent to merely knowing the {\it total} weight of the players in each state, i.e., $\mu^N_{\boldsymbol{x},\boldsymbol{y}}$. In other words,
$\mu^N_{\boldsymbol{x},\boldsymbol{y}}$ does not provide enough information on the weights of the others. As an example, there is a difference between the case when there is a state that is occupied with a single player or two players, in each case, having the same collective mass. 
\end{rem}

%\section{Analysis of the weight process}\label{sec:4}
\section{Auxiliary results for the proofs of Theorems \ref{thm:convergence_value} and \ref{thm:convergence}}
\label{sec:4:b}
The proofs of Theorems 
\ref{thm:convergence_value} and \ref{thm:convergence}
rely on several intermediary results, which are stated in this section: 
In Subsection \ref{sec:4}, we collect several estimates on the weight process
${\boldsymbol Y}^N$; Subsection \ref{sec:5} provides a first connection between the solution 
$U$ of the master equation, as defined in 
\eqref{eq:master:equation:intrinsic}, and the Nash system
\eqref{nash}, very much in the spirit of 
\cite{CardaliaguetDelarueLasryLions}. Since the proofs of those results are rather lengthy and technical, we feel better to postpone them to Sections 
\ref{sec:6:b}
and \ref{se:7:b}
respectively, as otherwise they could distract the reader from the main line of the text. 

\subsection{Analysis of the weight process}\label{sec:4}
As we already alluded to several times, the main new point in our model is the weight process ${\boldsymbol Y}^N$. 
In this regard, we need to establish first some preliminary estimates of ${\boldsymbol Y}^N$ before we address the convergence problem itself. 
A simple look at 
\eqref{eq:markov:transition:2}
shows
that this might be rather involved: The transitions of the weight process are determined by the empirical measure $\mu^N_{t}$, but, in turn, the latter is itself 
defined in terms of the weights; and most of all, the weights show up in 
\eqref{eq:markov:transition:2} 
through the inverse quantities $(1/\mu^N_{t}[i])_{i \in \ES}$. For sure, it is worth recalling that, whenever 
$\mu^N_{t}[i]$ is zero, the ratio
$k^{i}/(N\mu^N_{\bm{x},\bm{y}}[i])$ in the definition of the transition probability 
is understood as 1 and is thus well defined; but, this conventional rule cannot prevent us from a careful analysis of the boundary behavior of the empirical measure and in particular of the reachability of the boundary of the simplex ${\mathcal P}(\ES)$. Note in this regard that, even though $\mu^N_t[i]$ is positive for some time $t \in [0,T)$, it may jump to $0$ after an infinitesimal time. The good point is that, whenever $\mu^N_{t}[i]$ is sufficiently far away from zero, this may happen with a small probability only. As a result, we manage to prove below that
the coordinates of 
the empirical measure can hardly touch $0$,
 provided  
the latter start sufficiently far away from it and the constant $\kappa$ in \eqref{eq:varphi} is large enough. In fact, this result is fully consistent with the analysis performed in 
\cite[\S 2.2.1]{BCCD-arxiv}, where we proved that the equilibria of the limiting mean field game cannot touch the boundary of the simplex.

Throughout the subsection, the number of players $N$ is fixed. Moreover, all the results stated in the subsection are proved in 
Section \ref{sec:6:b}.

\subsubsection{General setting}
\label{subse:4:1}
Actually, not only we need to prove that the empirical measure associated with 
$({\boldsymbol X},{\boldsymbol Y})$ remains away from the boundary of the simplex with 
high probability, but we also need to prove it for the process $(\widetilde{\boldsymbol X}^{\sqbullet,n},\widetilde{\boldsymbol Y}^{\sqbullet,n})_{n \in \EN}$ introduced in the previous section, see 
Subsection \ref{subse:interpretation:nash}. 

In order to make the statement as general as possible, we thus assume that we are given an 
$\ES^N \times \setQ$-valued  process
$(\overline{\bm{X}},\overline{\bm{Y}})=(\overline{X}^l,\overline{Y}^l)$
 (which must be thought 
of as $(\bm{X},\bm{Y})$ itself or as one of 
the $(\widetilde{\bm{X}}^{\sqbullet,n},\widetilde{\bm{Y}}^{\sqbullet,n})$'s, for some 
$n \in \EN$) satisfying the analogue of 
\eqref{eq:markov:transition:1} (with $(\bm{X},\bm{Y})$ replaced by
$(\overline{\bm X},\overline{\bm Y})$) for some feedback function 
${\boldsymbol \alpha}=(\alpha^l)_{l \in \EN}$, bounded by $2 (T \|f \|_{\infty} + \| g\|_{\infty})$, 
together with 
the analogue of either 
\eqref{eq:markov:transition:2}
or
\eqref{eq:markov:transition:3}.

While the meaning for the analogue of 
\eqref{eq:markov:transition:1}
should be clear, 
we feel useful to write down explicitly the analogue of 
\eqref{eq:markov:transition:2}
or
\eqref{eq:markov:transition:3} in the following form:
\be
\label{eq:markov:transition:4}
\begin{split}
\P& \biggl( \overline{\bm{X}}_{t+h}=\bm{x},\overline{Y}^{l}_{t+h}=y^l\frac{k^{x^l}}{N\mu^N_{\bm{x},\bm{y}}[x^l]} \ \forall l\in \NN\bigg| \overline{\bm{X}}_t=\bm{x}, \overline{\bm{Y}}_t=\bm{y}\biggr) \\
&\qquad= \varepsilon N \Bigl( \frac{k^{x^n}}{N\mu^N_{\bm{x},\bm{y}}[x^n]}
\Bigr)^{\iota}
{\mathcal M}_{N,\mu^N_{\blx,\bly}}({\boldsymbol k})  h+o(h),
\end{split}
\ee
where $\iota \in \{0,1\}$ and $n\in \NN$ are fixed once for all in the dynamics. 

In fact, following 
\cite{Cecchin2017}, it is convenient to represent the dynamics of $(\overline{ \bm{X}},\overline{ \bm{Y}})$ as
the solutions of SDEs driven by Poisson random measures. To do so, we let $\N^0,\N^1,\dots, \N^N$ be independent Poisson random measures with respective intensity measures $\nu^0$ on $[0,\varepsilon N]^{\NN^d}$ and $\nu^l$ on $[0,M]^d$, for $l\in\NN$, with 
\begin{align}\label{eq:M}
M:= \kappa +2 (T \|f \|_{\infty} + \| g\|_{\infty}).
\end{align} Intuitively, the $(\N^l)_{l\in\NN}$'s can be thought of as the idiosyncratic noises and $\N^0$ as the common noise. 
Also, 
\begin{equation}
\label{eq:intensity:measures}
\begin{split}
\forall l \in \EN, \quad &d\nu^l(\theta) = \sum_{j \in \ES}
\one_{[0,M]^d}(\theta)
 d \theta^j d \delta_{{0}_{{\mathbb R}^{d-1}}}(\theta^{-j}), 
 \\
&d\nu^0(\theta) = \sum_{{\boldsymbol k} \in \NN^d} 
\one_{[0,\varepsilon N]^{\NN^d}}(\theta)
d \theta^{\bm{k}}d \delta_{{0}_{{\mathbb R}^{N^d-1}}}(\theta^{-\bm{k}}),
\end{split}
\end{equation}
where $\theta^{-j}$ in the first line is the $(d-1)$-tuple $(\theta^{1},\cdots,\theta^{j-1},\theta^{j+1},\cdots)$
and similarly for $\theta^{-\bm{k}}$ on the second line. 
In particular, all the $N$ measures $\nu^1,\cdots,\nu^N$ are in fact the same and can be just denoted by 
$\nu$. Notice also that $\mathcal{N}^0$ depends on $N$.
Then, the dynamics of $(\overline{X}^l,\overline{Y}^l)$ can be written, for any $l\in\NN$, as
\be \label{tilde_processes}
\begin{split}
&d\overline X_t^l = \int_{[0,M]^d} \sum_{j\in \dd} (j-\overline X_{t-}^l)\one_{(0,\beta_{t-}^l(j)]}(\theta^j)\N^l(d\theta, dt) ,\\
&d \overline Y_t^l 
\\
&= \int_{[0,\varepsilon N]^{\NN^d}} \sum_{\bm{k}\in \NN^d} \biggl( \overline Y^{l}_{t-} \frac{k^{\overline{X}^{l}_{t-}}}{N \overline\mu_{t-} [\overline X^{l}_{t-}]}
 - \overline Y^{l}_{t-}\biggr)
 \one_{\{\overline \mu_{t-}[\overline X^{l}_{t-}] \neq 0\}}
 \one_{(0,\beta_{t-}^0({\boldsymbol k})]} (\theta^{\bm{k}}) \N^0(d\theta, dt),
\end{split}
\ee
where 
$$\overline \mu_{t}
:= \mu^N_{\overline {\bm{X}}_{t},\overline {\bm{Y}}_{t}}$$ and 
\begin{equation}
\label{eq:beta:overline:process}
\begin{split}
&\beta_{t}^l(j) :=
\beta^l\bigl(t,\overline{\bm{X}}_{t},\overline{\bm{Y}}_{t}\bigr)[j] \ ; \quad
\beta^l (t,\bm{x},\bm{y})[j] :=
\varphi(\mu^N_{\bm{x},\bm{y}}[j])+\alpha^l(t,\bm{x},\bm{y})[j], 
\quad l \in \EN,
\\
&\beta_t^0({\boldsymbol k}):=
\beta^0\bigl(t,\overline{\bm{X}}_{t},\overline{\bm{Y}}_{t}\bigr)[{\boldsymbol k}] \ ; \quad
\beta^0(t,\bm{x},\bm{y})[{\boldsymbol k}] :=
\varepsilon N \Bigl( \frac{k^{x^{n}}}{N  \mu^N_{\bm{x},\bm{y}}[x^n]} \Bigr)^{\iota}
{\mathcal M}_{N,\mu^N_{\bm{x},\bm{y}}}({\boldsymbol k}).
\end{split}
\end{equation}
The formulation 
\eqref{tilde_processes}
prompts us to let
\begin{equation}
\label{eq:f,g:overline:process}
\begin{split}
\overline f^{l}(t,\bm{x},\bm{y},\theta) &=
\sum_{j\in \dd} (j-x^{l})\one_{(0,\beta^l(t,\bm{x},\bm{y})[j]]}(\theta^j),
\\
\overline g^l(t,\bm{x},\bm{y},\theta) &=
\sum_{\bm{k}\in \NN^d} \Bigl( y^{l} \frac{k^{x^{l}}}{N \mu^N_{\bm{x},\bm{y}} [x^{l}]}
 - y^{l} \Bigr)
 \one_{\{\mu^N_{\bm{x},\bm{y}}[x^l] \neq 0\}}
 \one_{(0,\beta^0(t,\bm{x},\bm{y})[{\boldsymbol k}]]} (\theta^{\bm{k}}).
\end{split}
\end{equation}
We then have, for any test function $v$ of the two variables $\bm{x}$ and $\bm{y}$, 
\be 
\label{integral}
\begin{split}
&\int_{[0,M]^d} \Bigl[ v\Bigl(\bigl( x^l+\overline f^{l}(t,\bm{x},\bm{y},\theta),\bm{x}^{-l}\bigr),\bm{y}\Bigr) - v(\bm{x},\bm{y})\Bigr] \nu(d\theta)
\\
&\quad =  \sum_{j\in \dd} \left(\varphi(\mu^N_{\bm{x},\bm{y}}[j])+\alpha^l(t,\bm{x},\bm{y})[j]\right)\bigl[v\bigl((j,\bm{x}^{-l}),\bm{y}\bigr)-v(\bm{x},\bm{y})\bigr],
\\
& \int_{[0,\varepsilon N]^{\NN^d}}\Bigl[ v\Bigl(\bm{x},y^1+\overline g^{1}(t,\bm{x},\bm{y},\theta), \dots, y^N+\overline g^{N}(t,\bm{x},\bm{y},\theta)\Bigr)- v(\bm{x},\bm{y}) \Bigr] \nu^0(d\theta) 
\\
&\quad= \varepsilon N \EE 
\biggl[ \biggl( \frac{S_{\mu^N_{\blx,\bly}[x^n]}}{N\mu^N_{\blx,\bly}[x^n]} \biggr)^{\iota} \biggl( v\biggl(\bm{x}, y^1\frac{S_{\mu^N_{\bm{x},\bm{y}}}[x^1]}{N\mu^N_{\bm{x},\bm{y}}[x^1]},\dots, y^N\frac{S_{\mu^N_{\bm{x},\bm{y}}}[x^N]}{N\mu^N_{\bm{x},\bm{y}}[x^N]}\biggr)
 -v(\bm{x},\bm{y})\biggr)
\biggr].
\end{split}
\ee
In the rest of the paper, we then make an intense use of It\^o's formula 
for the process $(\overline{\bm{X}},\overline{\bm{Y}})$. It writes, for a general test function $v$, 
as
\begin{equation} 
\label{eq:ito:generic}
\begin{split}
&dv\bigl(\overline{\bm{X}}_t,\overline{\bm{Y}}_t\bigr)
=\sum_{l\in\NN} \int_{[0,M]^d} \biggl( v\Bigl(\bigl(\overline X^{l}_{t-}+\overline f^{l}_{t-}(\theta),\overline{\boldsymbol  X}^{-l}_{t-}\bigr),\overline {\bm{Y}}_{t-}\Bigr) - v\bigl(\overline{\bm{X}}_{t-},\overline{\bm{Y}}_{t-}\bigr) \biggr) \N^l(d\theta, dt)
 \\
&\hspace{15pt} + \int_{[0,\varepsilon N]^{\NN^d}} \biggl( v\Bigl(\overline {\bm{X}}_{t-},\overline Y^{1}_{t-}+\overline g^{1}_{t-}(\theta), \dots, \overline{Y}^{N}_{t-}+\overline{ g}^{N}_{t-}(\theta)\Bigr)- v\bigl(\overline{\bm{X}}_{t-},\overline{\bm{Y}}_{t-}\bigr)  \biggr) \N^0(d\theta,dt),
\end{split}
\end{equation}
with the notations
\begin{equation}
\label{eq:overline:f:overline:g}
\overline{f}_{t}^l(\theta)= \overline f^l\bigl(t,\overline{\bm{X}}_{t},\overline{\bm{Y}}_{t},\theta\bigr),
\quad
\overline{g}_{t}^l(\theta)= \overline g^l\bigl(t,\overline{\bm{X}}_{t},\overline{\bm{Y}}_{t},\theta\bigr).
\end{equation}

%\blue{Theorems \ref{thmexpbound} and \ref{thm:bound_Y_tau} are needed for both $(\bX,\bY)$ and $(\wt\bX^n,\wt\bY^n)$. Since we know they hold for the non-tilde processes, they are given here for $(\wt\bX^n,\wt\bY^n)$. It would be nice if we can give one proof for both. The rationale behind the need for these theorems for both the tilde and non-tilde processes emerges from the fact that the equilibrium control for player $n$ uses the function $\Delta^nw^{N,n}$ and not $\Delta^nv^{N,n}$. The primer is the value function associated with the tilde processes. Hence, we need to use the tilde processes in order to establish a bound for the difference $\Delta^n(z^{N,n}-w^{N,n})$, where $z^{N,n}(t,\bx,\by):=U(t,x^n,\mu^N_{\bx,\by})$. Here we use Theorems \eqref{thmexpbound} and \eqref{thm:bound_Y_tau} for the tilde processes. Then we need to plug in the equilibrium dynamics inside the control, that is, to plug in $(\bX_t,\bY_t)$ in $\Delta^n(z^{N,n}-w^{N,n})$. So, we need the bound in \eqref{bound2} and in order to prove it we need to use Theorems \eqref{thmexpbound} and \eqref{thm:bound_Y_tau} for the non-tilde processes.
%The {\bf blue} stuff indicate the changes in the text below due to the tilde processes. See the \green{green} equation on page 28.} \magenta{The adaptations in {\bf magenta} are independent of the blue parts and they are being used in the proof of Theorem \ref{thm:convergence_value}.} \red{The color {\bf red} is used for comments w.r.t.~the original version, and stand for either typos, questions, or minor comments.} 

\subsubsection{Integrability of the inverse of the empirical measure}
Recall now the parameter $\kappa$ provided in \eqref{eq:varphi}. Together with the above notation, we have the following theorem, which plays a crucial role in our subsequent analysis. Its proof is given in Subsection \ref{subse:6:1}.
\begin{thm}
\label{thmexpbound}
Under the above setting 
(which comprises in particular 
 some feedback function 
${\boldsymbol \alpha}=(\alpha^l)_{l \in \EN}$
that is bounded by  {$2(T \|f \|_{\infty}+ \|g \|_{\infty})$}), assuming in addition that the initial condition 
$(\overline{\bm X}_{0},\overline{\bm Y}_{0})$ is deterministic, 
let, for any $\epsilon \in (0,1/4)$, 
\be
\label{deftau}
\overline \tau_N := \inf \Big\{ t\geq 0 : \min_{i\in\dd} \overline \mu_t[i] < N^{-\epsilon} \mbox{ or } 
\max_{l\in\NN} \overline Y^{l}_t > \frac12 N^{1-\epsilon}
\Big\} \wedge T.
\ee
Then, for any $\lambda\geq 1$, there exists a constant $\overline \kappa_{0}$, depending only on 
$\lambda$, such that, for
any 
$\kappa \geq \overline \kappa_{0}$
and 
 any $i\in\dd$,
\be 
\label{expboundmu}
\E\bigg[\exp\bigg\{\int_0^{\overline{\tau}_N} \frac{\lambda }{\overline \mu_t[i]} dt \bigg\}\bigg] \leq \frac{C}{N^{-\epsilon}+\overline \mu_0[i]},
\ee 
for a constant $C$ depending on $\delta, %\gre{\varepsilon}, 
\kappa,T,d,M$, but independent of $N$
and of the initial condition. Even more, for any $t\in[0,T]$,
\be 
\label{eq1/mu}
\E \bigg[  \frac{1}{N^{-\epsilon}+\overline \mu_{t\wedge \overline{\tau}_N}[i]} 
\exp \biggl( \int_{0}^{t\wedge \overline \tau_{N}}
\frac{\lambda}{\overline \mu_{s}[i]}
ds\biggr)
\bigg] \leq \frac{C}{N^{-\epsilon}+\overline\mu_0[i] }.
\ee
\end{thm}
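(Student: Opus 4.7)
The plan is to apply It\^o's formula to the product $F_t H_t$, where
\[
F_t := \frac{1}{N^{-\epsilon}+\overline{\mu}_t[i]}, \qquad H_t := \exp\!\Bigl(\int_0^t \frac{\lambda}{\overline\mu_s[i]}\,ds\Bigr),
\]
and show that for $\kappa$ large enough (depending on $\lambda,\varepsilon,d,M,\delta$), the drift of $F_{t\wedge\overline\tau_N} H_{t\wedge\overline\tau_N}$ is bounded above by a constant times $H_{t\wedge\overline\tau_N}$. Since $F_t$ is bounded below by $1/(1+N^{-\epsilon}) \geq 1/2$, this will imply \eqref{eq1/mu} via Gronwall, and then \eqref{expboundmu} follows from \eqref{eq1/mu} by using $H_t \le 2F_t H_t$. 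The whole difficulty is thus to bound $\mathcal{L}F(\overline\mu_t) + \lambda F(\overline\mu_t)/\overline\mu_t[i]$ on $[0,\overline\tau_N]$, where the generator $\mathcal{L}=\mathcal{L}^{\rm id}+\mathcal{L}^{0}$ is split into idiosyncratic and common-noise parts.

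For $\mathcal{L}^{\rm id}F$, the constraint $\overline Y^l_t \le N^{1-\epsilon}/2$ built into $\overline\tau_N$ gives $\overline Y^l_t/N \le N^{-\epsilon}/2 \le \overline\mu_t[i]/2$, so each jump increment of $\overline\mu_t[i]$ is at most $\overline\mu_t[i]/2$ in absolute value. I would separate the players entering state $i$ (rate at least $\varphi(\overline\mu_t[i])$, which equals $\kappa$ as soon as $\overline\mu_t[i]\le\delta$, decreasing $F$) from those leaving state $i$ (rate at most $d(\kappa+M)$, increasing $F$). Using $\sum_{l:X^l\neq i} \overline Y^l_t/N = 1-\overline\mu_t[i]$ and the convexity of $F$, one shows that on $\{\overline\mu_t[i]\le\delta\}$
\[
\mathcal{L}^{\rm id}F(\overline\mu_t) \le -\frac{\kappa}{9}\frac{F(\overline\mu_t[i])}{\overline\mu_t[i]} + 4d(\kappa+M)\,\delta\, \frac{F(\overline\mu_t[i])}{\overline\mu_t[i]},
\]
the first, dominant, term being the inward drift coming from $\varphi$.

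For $\mathcal{L}^{0}F$, the common noise sends $\overline\mu_{t-}[i]$ to $S/N$, with $S$ having (marginal) law $\mathrm{Bin}(N,\overline\mu_{t-}[i])$, up to the harmless size-biasing when $\iota=1$ that only shifts the mean by $O(1/N)$. Since $F$ is convex, Jensen forces $\mathbf{E}[F(S/N)] \ge F(\overline\mu_{t-}[i])$, so I cannot hope for any negative contribution here and must merely control the positive excess by a second-order Taylor estimate. Splitting the expectation according to the event $\{S \ge N\overline\mu_{t-}[i]/2\}$: on this good event, the bound $F''(r)\le 2/(N^{-\epsilon}+r)^3$ and $\mathbf{E}[(S/N-\overline\mu_{t-}[i])^2]\le \overline\mu_{t-}[i]/N$ yield a contribution $\le 8\overline\mu_{t-}[i]/(N(N^{-\epsilon}+\overline\mu_{t-}[i])^3)$; on the complementary event, Chernoff gives probability $\le e^{-N^{1-\epsilon}/8}$, yielding a superpolynomially small term. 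Multiplying by $\varepsilon N$ and using $\overline\mu_t[i]\ge N^{-\epsilon}$ (hence $N^{-\epsilon}+\overline\mu_t[i]\le 2\overline\mu_t[i]$) to reduce $\overline\mu_t[i]/(N^{-\epsilon}+\overline\mu_t[i])^3 \le F(\overline\mu_t[i])/\overline\mu_t[i]$, one gets $\mathcal{L}^0 F(\overline\mu_t) \le 8\varepsilon F(\overline\mu_t[i])/\overline\mu_t[i] + o(1)$.

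Summing the two contributions on $\{\overline\mu_t[i]\le\delta\}\cap[0,\overline\tau_N]$,
\[
\mathcal{L}F(\overline\mu_t) + \lambda\frac{F(\overline\mu_t)}{\overline\mu_t[i]} \le \frac{F(\overline\mu_t[i])}{\overline\mu_t[i]}\Bigl[-\tfrac{\kappa}{9} + 4d\kappa\delta + 4dM\delta + 8\varepsilon + \lambda\Bigr] + o(1),
\]
which is rendered non-positive by first choosing $\delta$ small enough that $4d\kappa\delta \le \kappa/18$ and then $\kappa \ge \overline\kappa_0(\lambda)$ large enough to absorb $\lambda, \varepsilon, M$. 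On the complementary event $\{\overline\mu_t[i]>\delta\}$, both $F$ and $1/\overline\mu_t[i]$ are uniformly bounded by $1/\delta$, so the drift is bounded by a constant depending on $\delta,\kappa,d,M$. Taking expectation of the compensated It\^o formula for $F_{t\wedge\overline\tau_N} H_{t\wedge\overline\tau_N}$ (the martingale part is a true martingale by boundedness of the stopped process and jump rates) and applying Gronwall closes the argument. The main obstacle is the analysis of $\mathcal{L}^{0}F$: the convex jump works against the desired damping, so all of it must come from the idiosyncratic repulsion produced by $\varphi$, which is precisely why the theorem requires $\kappa$ to be taken sufficiently large in terms of $\lambda$.
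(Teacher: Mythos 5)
Your overall strategy is essentially the one the paper uses: both arguments reduce the theorem to showing that the generator applied to a decreasing convex function of $N^{-\epsilon}+\overline\mu_t[i]$ (you take $r\mapsto 1/(N^{-\epsilon}+r)$ and run Gronwall on $F_tH_t$; the paper takes $r \mapsto -\log(N^{-\epsilon}+r)$ and builds an exponential supermartingale) picks up a damping $-c\,\kappa/\overline\mu_t[i]$ from the inward drift $\varphi$ near the boundary, while the common-noise jump, which by convexity can only push in the wrong direction, contributes at most $C\varepsilon/\overline\mu_t[i]$ plus superpolynomially small terms, via a second-order Taylor expansion combined with binomial concentration on the event $\{S\geq N\overline\mu_{t-}[i]/2\}$. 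Your treatment of the common-noise part is in fact slightly more streamlined than the paper's (which symmetrizes via $\psi(x)+\psi(-x)$ and a conditional Jensen argument), and your handling of the $\iota=1$ size-biasing as an $O(1/N)$ mean shift is correct.

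There is, however, one genuine flaw: the "players leaving state $i$" part of $\mathcal{L}^{\mathrm{id}}F$. You bound it by $4d(\kappa+M)\delta\,F/\overline\mu_t[i]$ and then require it to be absorbed by the damping $-\kappa F/(9\overline\mu_t[i])$, which forces $4d\kappa\delta\leq\kappa/18$, i.e.\ $\delta\leq 1/(72d)$. But $\delta$ is a fixed datum of the model, constrained only by the standing assumption $\delta\in(0,1/(4\sqrt d))$, and since the offending coefficient is proportional to $\kappa$ itself, no choice of $\kappa\geq\overline\kappa_0$ can make the bracket non-positive once $\delta>1/(36d)$. As written, your argument therefore does not cover all admissible $\delta$. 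The fix is immediate and is exactly how the paper handles its term $\mathrm{II}\geq -2(d-1)M$: the rate of leaving $i$ is at most $(d-1)M$ per player, and summing the jump sizes over the players currently in state $i$ produces precisely one factor of $\overline\mu_t[i]=N^{-1}\sum_{l:\,\overline X^l_t=i}\overline Y^l_t$, which cancels one of the two powers of $1/(N^{-\epsilon}+\overline\mu_t[i])$ in the increment of $F$; the term is thus $O(F)$, not $O(F/\overline\mu_t[i])$, and it belongs in the Gronwall (resp.\ exponential) constant, where dependence on $\kappa$, $d$ and $M$ is permitted, rather than in the bracket that must be made non-positive by taking $\kappa$ large.
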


\begin{rem}
Combining 
\eqref{expboundmu} and H\"older's inequality, we deduce that, for any $\lambda\geq 1$, there exists a constant $\overline \kappa_{0}$, depending only on $\lambda$, such that, for
any 
$\kappa \geq \overline \kappa_{0}$
and 
 any $i\in\dd$,
\be\label{bound_calE}
\E\bigg[\exp\bigg\{\sum_{i\in\dd}  \int_0^{\overline{\tau}_N} \frac{\lambda }{\overline \mu_t[i]} dt \bigg\}\bigg] \leq 
\prod_{i \in \ES}
\biggl(
\frac{C}{N^{-\epsilon}+\overline \mu_0[i]} \biggr)^{1/d} =
C \prod_{i \in \ES}
\bigl(
N^{-\epsilon}+\overline \mu_0[i] \bigr)^{-1/d},
%C\sum_{i\in\ES}\frac{1}{N^{-\epsilon}+\overline \mu_0[i]},
\ee 
for a constant $C$ depending on $\delta, %\gre{\varepsilon}, 
\kappa,T,d,M$, but independent of $N$. 

Similarly, for any $t \in [0,T]$ (and by enlarging $\overline\kappa_0$ if necessary),  
\be 
\label{eq1/muexp}
\E \bigg[ 
\prod_{i \in \ES}
 \Bigl( N^{-\epsilon}+\overline \mu_{t\wedge \overline{\tau}_N}[i]
 \Bigr)^{-1/d}
\exp \biggl( \int_{0}^{t\wedge \overline \tau_{N}}
\frac{\lambda}{\overline \mu_{s}[i]}
ds\biggr)
\bigg] \leq 
C \prod_{i \in \ES}
\bigl(
N^{-\epsilon}+\overline \mu_0[i] \bigr)^{-1/d}.
\ee
\end{rem}

Notice that all the above estimates are true if $\overline{\tau}_N=0$.

\subsubsection{Moment bounds}
\label{subse:4:2}

We can now state several bounds on the moments of $(\overline Y^{l})_{l \in \NN}$ uniformly in $N$, recalling that the index 
$n$ 
and the parameter 
$\iota \in \{0,1\}$ are fixed as in 
\eqref{eq:markov:transition:4}.
We also work with the same general feedback function 
${\boldsymbol \alpha}=(\alpha^l)_{l \in \EN}$ as in \eqref{eq:markov:transition:4}, keeping in mind that it is bounded by $2 (T \|f \|_{\infty} + \| g\|_{\infty})$.

\begin{prop}\label{thm:bound_Y_tau}
Let $\overline \tau_N$ be as in Theorem \ref{thmexpbound}. Then for any integer $\ell \geq 1$, there exists a constant $\overline \kappa_{0}$, depending only on $\ell$, such that, for
any 
$\kappa \geq \overline \kappa_{0}$ 
and
any 
$\epsilon < 1/4$  (recalling that $\epsilon$ shows up in 
\eqref{deftau}), 
%\be 
%\label{boundY}
%\cancel{\sup_{0\leq t \leq T} \E \left[ \sum_{l=1}^N |\wt Y^{n,l}_{t\wedge \tilde\tau_N}|^m \right] \leq C \sum_{\red{l\in\NN}} |\tilde y^l_0|^{m} \prod_{i\in \dd} \frac{1}{\tilde\mu^i_0}.}
%\ee
\be
\label{boundY_magenta}
\sup_{0\leq t \leq T} \E \biggl[ \frac1N \sum_{l \in \NN} |\overline Y^{l}_{t\wedge \overline \tau_N}|^\ell \biggr] \leq C
\biggl( \frac1N \sum_{{l\in\NN}}  |y^l_0|^{2\ell} \biggr)^{1/2} 
\prod_{i \in \ES}
\bigl(
N^{-\epsilon}+\overline \mu_0[i] \bigr)^{-1/(2d)},
%\biggl(\sum_{i\in\ES}\frac{1}{N^{-\epsilon}+\overline \mu_0[i]}\biggr)^{1/2}.
\ee
for a constant $C$ depending on $\delta, %\gre{\varepsilon}, 
\kappa,T,d,M$, but independent of $N$
and of the initial condition $\bar\mu_0$, the latter being assumed to be deterministic. 
%\cancel{Moreover, if $\sum_{l=\red{1}}^{n} |\tilde y^l_0|^{m} \leq N$ then }
%\be 
%\label{boundtau}
%\cancel{\P (\tilde\tau_N < T) \leq  \frac{C}{N^\epsilon} \prod_{i\in \dd} \frac{1}{\tilde\mu^i_0}. }
%\ee 
Moreover, if $\ell  \geq 3$,
%the following two bounds hold:\footnote{\label{foo:2}\magenta{The reason is that in the proof of Theorem \ref{thm:convergence_value}, I condition on some time $t_0$ and there is no reason to assume that  at this time $\sum_{l=\red{1}}^{n} |\wt Y^l_{t_0}|^{m} \leq N$; see footnote \ref{foo:5}.}
\be
\label{boundtau_magenta}
%\begin{split}
\P\bigl(\overline \tau_N <T\bigr)
\le\frac C{N^{\epsilon/d}}\bigg\{ 
\prod_{i \in \ES}
\bigl(
N^{-\epsilon}+\overline \mu_{0}[i] \bigr)^{-1/d}
+
\biggl( \frac{1}{N}\sum_{l\in\NN} |y^l_0|^{2\ell} \biggr)^{1/2} 
\prod_{i \in \ES}
\bigl(
N^{-\epsilon}+\overline \mu_{0}[i] \bigr)^{-1/(2d)}
\bigg\}.
%\end{split}
\ee
\end{prop}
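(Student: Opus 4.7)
The plan is to establish the moment bound \eqref{boundY_magenta} first, then deduce the probability estimate \eqref{boundtau_magenta} from it together with the exponential integrability results of Theorem \ref{thmexpbound}. The driving idea is a supermartingale argument for a weighted version of $\Psi_{2\ell}(\overline{\bm Y}_t) := \frac{1}{N}\sum_{l\in\NN}|\overline Y^l_t|^{2\ell}$, combined with a Cauchy--Schwarz step that trades the exponential factor for the prefactor $\prod_i (N^{-\epsilon}+\overline\mu_0[i])^{-1/(2d)}$ via \eqref{bound_calE}.

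For Step 1, since the process $\overline{\bm Y}$ only changes when the common noise fires, only the second line of \eqref{gen2} contributes to $\mathcal{L}\Psi_{2\ell}$. Writing $\tilde S_i := S_{\overline\mu}[i]/(N\overline\mu[i])$ and $\bar P_i := \frac{1}{N}\sum_{l:\overline x^l=i}|\overline y^l|^{2\ell}$, one finds
\[
\mathcal{L}\Psi_{2\ell}(\overline{\bm x},\overline{\bm y}) = \varepsilon N \sum_{i\in\ES} \bar P_i\, \EE\bigl[\tilde S_{x^n}^{\iota}(\tilde S_i^{2\ell}-1)\bigr].
\]
Using the factorial-moment expansion $\EE[S_\mu[i]^{(k)}] = N^{(k)}\mu[i]^k$, one obtains $\EE[\tilde S_i^{2\ell}]-1 = O(1/(N\overline\mu[i]))$, and analogously for the joint moments $\EE[\tilde S_{x^n}\tilde S_i^{2\ell}]-1$ (the $\iota=1$ case) by exploiting the negative covariance structure of the multinomial. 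Since $\overline\mu_t[i]\geq N^{-\epsilon}$ on $\{t\leq\overline\tau_N\}$ ensures $N\overline\mu_t[i]\geq N^{1-\epsilon}\geq 1$, these estimates combine to give, with a constant $C=C(\ell,\varepsilon)$,
\[
\mathcal{L}\Psi_{2\ell}(\overline{\bm x},\overline{\bm y}) \leq C\sum_{i\in\ES} \frac{\bar P_i}{\overline\mu[i]} \leq C\,\Psi_{2\ell}(\overline{\bm y})\sum_{i\in\ES}\frac{1}{\overline\mu[i]}.
\]
Consequently, $\Phi_t := \Psi_{2\ell}(\overline{\bm Y}_t)\exp\bigl(-C\int_0^t\sum_{i}1/\overline\mu_s[i]\,ds\bigr)$ is a supermartingale on $[0,\overline\tau_N]$. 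By Jensen, $\Psi_\ell\leq \Psi_{2\ell}^{1/2}$, and by Cauchy--Schwarz,
\[
\EE\bigl[\Psi_\ell(\overline{\bm Y}_{t\wedge\overline\tau_N})\bigr] \leq \EE\bigl[\Psi_{2\ell}e^{-C\int}\bigr]^{1/2}\,\EE\bigl[e^{C\int}\bigr]^{1/2} \leq \Psi_{2\ell}(\overline{\bm Y}_0)^{1/2}\cdot \EE\bigl[e^{C\int_0^{\overline\tau_N}\sum_i 1/\overline\mu_s[i]\,ds}\bigr]^{1/2}.
\]
Choosing $\overline\kappa_0$ large enough (depending on $\ell$) so that \eqref{bound_calE} applies with $\lambda=C$, the last factor is bounded by $C'\prod_i(N^{-\epsilon}+\overline\mu_0[i])^{-1/(2d)}$, which yields \eqref{boundY_magenta}.

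For Step 2, one decomposes $\{\overline\tau_N<T\}$ according to which condition in \eqref{deftau} is first violated:
\[
\P(\overline\tau_N<T) \leq \P\!\Bigl(\min_i\overline\mu_{\overline\tau_N}[i]\leq N^{-\epsilon}\Bigr) + \P\!\Bigl(\max_l\overline Y^l_{\overline\tau_N}\geq \tfrac12 N^{1-\epsilon}\Bigr).
\]
On the first event, since $\mu\mapsto (N^{-\epsilon}+\mu)^{-1/d}$ is bounded below uniformly on $[0,1]$ by some $c_d$, while the "bad" coordinate contributes a factor $\geq (2N^{-\epsilon})^{-1/d}$, one has $\prod_i (N^{-\epsilon}+\overline\mu_{\overline\tau_N}[i])^{-1/d}\geq c'_d N^{\epsilon/d}$; Markov's inequality combined with \eqref{eq1/muexp} at $t=T$ yields $\P(\min_i\overline\mu_{\overline\tau_N}[i]\leq N^{-\epsilon})\leq C N^{-\epsilon/d}\prod_i(N^{-\epsilon}+\overline\mu_0[i])^{-1/d}$. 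For the second event, a union bound followed by Markov at order $\ell$ gives $\P(\max_l\overline Y^l_{\overline\tau_N}\geq \tfrac12 N^{1-\epsilon}) \leq 2^\ell N^{1-\ell(1-\epsilon)}\EE[\Psi_\ell(\overline{\bm Y}_{\overline\tau_N})]$; plugging in \eqref{boundY_magenta} and using that for $\ell\geq 3$ and $\epsilon<1/4$ one has $1-\ell(1-\epsilon)\leq -5/4\leq -\epsilon/d$ delivers the second half of \eqref{boundtau_magenta}.

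The main obstacle is the generator bound of Step 1: it is essential that the contribution of each common-noise jump to $\mathcal{L}\Psi_{2\ell}$ be controllable by the first inverse power $1/\overline\mu[i]$, since any higher power $1/\overline\mu[i]^k$ with $k\geq 2$ would fail to be absorbed by the exponential estimate \eqref{bound_calE}. The multinomial factorial-moment identity, together with the confinement $N\overline\mu[i]\geq N^{1-\epsilon}\geq 1$ valid up to $\overline\tau_N$, makes the remainder terms harmless, while the $\iota=1$ case requires separating diagonal ($i=x^n$) and off-diagonal contributions to exploit the negative multinomial covariance.
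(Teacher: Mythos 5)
Your proof is correct and follows essentially the same route as the paper's: an exponential weight $\exp(-C\int_0^t\sum_i 1/\overline\mu_s[i]\,ds)$ turning $\frac1N\sum_l|\overline Y^l|^{2\ell}$ into a supermartingale via the multinomial moment bounds of Lemma \ref{lem:aux:moments:binomial}, Cauchy--Schwarz against \eqref{bound_calE} to trade the exponential for the factor $\prod_i(N^{-\epsilon}+\overline\mu_0[i])^{-1/(2d)}$, and Markov-type estimates at $\overline\tau_N$ for \eqref{boundtau_magenta}. The only (harmless) deviations are that you absorb the higher-order multinomial remainders directly into the leading $1/(N\overline\mu[i])$ term on $\{t<\overline\tau_N\}$ where the paper keeps them and closes with Gronwall, and that you bound $\P(\overline\tau_N<T)$ by two separate Markov inequalities where the paper uses a single combined lower bound.
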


As for the inverse, we have 
\begin{prop}\label{thm:bound_Y_tau:-1}
Let $\overline \tau_N$ be as in Theorem \ref{thmexpbound}. Then for any $\ell \geq 1$,
there exists a constant $\overline \kappa_{0}$, depending only on $\ell$, such that, for
any 
$\kappa \geq \overline \kappa_{0}$ 
and
any 
$\epsilon < 1/4$  
(with $\epsilon$ showing up in 
\eqref{deftau}), 
%\be 
%\label{boundY}
%\cancel{\sup_{0\leq t \leq T} \E \left[ \sum_{l=1}^N |\wt Y^{n,l}_{t\wedge \tilde\tau_N}|^m \right] \leq C \sum_{\red{l\in\NN}} |\tilde y^l_0|^{m} \prod_{i\in \dd} \frac{1}{\tilde\mu^i_0}.}
%\ee
\be
\label{boundY_magenta:-1}
\begin{split}
&\sup_{0\leq t \leq T} \E \biggl[\biggl( \frac1N \sum_{l \in \NN} |\overline Y^{l}_{t\wedge \overline \tau_N}|^\ell\biggr)^{-1} \biggr] 
\\
&\leq C
\biggl[\biggl(\frac1N
 \sum_{l \in \EN} \vert y_{0}^l \vert^{\ell} \biggr)^{-1/2}
 + \exp(-c N^{1-2\epsilon}) \biggr] 
\prod_{i \in \ES}
\bigl(
N^{-\epsilon}+\overline \mu_{0}[i] \bigr)^{-1/(2d)},
\end{split}
\ee
for a constant $C$ depending on $\delta,\kappa, %\textcolor{brown}{\varepsilon},
T,d,M$, but independent of $N$
and of the initial condition $\bar\mu_0$, the latter being assumed to be deterministic. 
\end{prop}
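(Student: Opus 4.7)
The plan is to combine a trivial deterministic bound arising from mass conservation, a concentration-based sharpening, and a Cauchy--Schwarz step importing the boundary integrability from Theorem~\ref{thmexpbound}.

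The starting point is the conservation identity $\sum_{l \in \NN} \overline Y^l_t = N$, preserved by both families of transitions in Subsection~\ref{subse:4:1} (cf.\ \eqref{eq:sum:proba}). Since $\ell \geq 1$, Jensen's inequality applied to the convex function $x \mapsto x^\ell$ at the equidistributed measure on $(\overline Y^l_t)_{l \in \NN}$ yields the a.s.\ deterministic lower bound
$$\frac{1}{N}\sum_{l \in \NN}|\overline Y^l_t|^\ell \geq \Bigl(\frac{1}{N}\sum_{l \in \NN}\overline Y^l_t\Bigr)^\ell = 1,$$
so that $A_{t\wedge \overline\tau_N}^{-1} \leq 1$ pointwise, where $A_s := \tfrac{1}{N}\sum_l |\overline Y^l_s|^\ell$. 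The task is then to upgrade this crude bound to one scaling like $A_0^{-1/2}$ in the initial data.

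The key idea is to bound $\E[A_{t \wedge \overline\tau_N}^{-2}]$ sharply and then use $\E[A^{-1}] \leq \sqrt{\E[A^{-2}]}$ together with $\sqrt{a+b} \leq \sqrt{a}+\sqrt{b}$: a bound of the form $\E[A_{t \wedge \overline\tau_N}^{-2}] \leq C(A_0^{-1} + \exp(-cN^{1-2\epsilon}))$ yields exactly the bracketed factor in the RHS. To obtain such a bound on $\E[A^{-2}]$, I would introduce a good event $\mathcal{G}_T$ on which the multinomial samples $S_{\overline\mu_t}[i]$ concentrate around $N\overline\mu_t[i]$ throughout $[0, \overline\tau_N]$. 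Bernstein's inequality for the multinomial, combined with a union bound over the $O(\varepsilon NT)$ common-noise jumps on $[0, \overline\tau_N]$ and the floor $\overline\mu_t[i] \geq N^{-\epsilon}$ valid on this interval, would yield $\P(\mathcal{G}_T^c \cap \{\overline\tau_N = T\}) \leq C\exp(-cN^{1-2\epsilon})$; on $\mathcal{G}_T$ one would then obtain $A_{t \wedge \overline\tau_N} \geq c\sqrt{A_0}$ (hence $A^{-2} \leq c^{-2} A_0^{-1}$) by tracking the compounded multiplicative factors $S_{\overline\mu_t}[x^l]/(N\overline\mu_t[x^l])$ through the successive common-noise jumps. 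The boundary factor $\prod_i (N^{-\epsilon}+\overline\mu_0[i])^{-1/(2d)}$ finally enters via a Cauchy--Schwarz step
$$\E\bigl[A_{t \wedge \overline\tau_N}^{-1}\bigr] \leq \sqrt{\E\bigl[A_{t \wedge \overline\tau_N}^{-2} P_{t \wedge \overline\tau_N}^{1/d}\bigr]} \cdot \sqrt{\E\bigl[P_{t \wedge \overline\tau_N}^{-1/d}\bigr]}$$
with $P_t := \prod_i (N^{-\epsilon}+\overline\mu_t[i])$: the second factor is controlled by H\"older's inequality and the exponential bound \eqref{eq1/muexp} from Theorem~\ref{thmexpbound}, while the first factor is handled using $P \leq 2^d$ together with the $\E[A^{-2}]$ estimate just described.

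The main obstacle will be justifying the lower bound $A_{t \wedge \overline\tau_N} \geq c\sqrt{A_0}$ on $\mathcal{G}_T$. Three technical subtleties need care: propagating the Bernstein concentration through the $O(\varepsilon NT)$ compounded multiplicative updates without degrading the exponent $1-2\epsilon$; incorporating the tilt $\iota=1$ of \eqref{eq:markov:transition:4}, which biases common-noise jumps towards the distinguished player $n$ and breaks the symmetry of the multinomial concentration estimate; and coping with the state transitions \eqref{eq:markov:transition:1}, whose rates can diverge as some $\overline\mu_t[i]$ approaches the floor $N^{-\epsilon}$ and which redistribute weights across the coordinates driving the multinomial.
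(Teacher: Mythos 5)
Your setup is sound in places --- the deterministic bound $A_t:=\tfrac1N\sum_l|\overline Y^l_t|^\ell\geq 1$ from mass conservation and Jensen is exactly what the paper also uses, and the final Cauchy--Schwarz against the boundary quantity controlled by \eqref{eq1/muexp} is the right way to produce the factor $\prod_i(N^{-\epsilon}+\overline\mu_0[i])^{-1/(2d)}$. But the central step, the pathwise lower bound $A_{t\wedge\overline\tau_N}\geq c\sqrt{A_0}$ on a concentration event $\mathcal G_T$, is a genuine gap and I do not believe it can be repaired in the form you propose. Each common-noise jump multiplies $|Y^l|^\ell$ by $(S_{\overline\mu}[x^l]/(N\overline\mu[x^l]))^\ell$, and on your good event this factor only lies in $[(1-\eta)^\ell,(1+\eta)^\ell]$ for a \emph{fixed} $\eta>0$ (Hoeffding/Bernstein with failure probability $\exp(-cN^{1-2\epsilon})$ forces $\eta$ to be of constant order when $\overline\mu[i]\sim N^{-\epsilon}$). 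Compounding over the $O(\varepsilon NT)$ jumps gives a worst-case multiplicative loss of $(1-\eta)^{\ell\varepsilon NT}$, which is exponentially small in $N$, not a constant. The true control comes from the martingale structure: the factors have mean one and variance $O(1/(N\overline\mu[i]))$, so the accumulated ``variance'' over $[0,T]$ is of order $\varepsilon T/\min_i\overline\mu_t[i]$, which before $\overline\tau_N$ can still be as large as $N^{\epsilon}$. No pathwise constant-factor (let alone square-root) bound survives this; there is also no mechanism in your argument that would single out the exponent $1/2$ on $A_0$.

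The paper resolves precisely this difficulty by working with the weighted quantity $\mathcal E_{t}^{-1}A_t^{-1}$, where $\mathcal E_t=\exp\{\lambda\int_0^t\sum_i\overline\mu_s[i]^{-1}ds\}$: It\^o's formula plus the moment estimates of Lemma \ref{lem:aux:moments:binomial} show that the singular drift generated by the multinomial updates (of size $C/(N\min_e\overline\mu_t[e])$ per unit time, times the jump rate $\varepsilon N$) is absorbed by the compensating term $\lambda\sum_i\overline\mu_t[i]^{-1}$ once $\lambda$ is large, yielding $\E[\mathcal E_{t\wedge\overline\tau_N}^{-1}A_{t\wedge\overline\tau_N}^{-1}]\leq A_0^{-1}+C\exp(-cN^{1-2\epsilon})$ by Gronwall. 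The square root and the boundary product then both appear in a single Cauchy--Schwarz step, $\E[A^{-1}]\leq\E[\mathcal E^{-1}A^{-2}]^{1/2}\E[\mathcal E]^{1/2}\leq\E[\mathcal E^{-1}A^{-1}]^{1/2}\E[\mathcal E]^{1/2}$ (using $A\geq1$), with $\E[\mathcal E]$ controlled by Theorem \ref{thmexpbound}; this is where the requirement that $\kappa$ be large enters. The complementary event $E^{\complement}$ is handled exactly as you suggest (Hoeffding plus $A\geq1$), but only jump by jump inside the It\^o expansion, never via a union bound over all jumps. In short: you need the exponential weight to trade the accumulated multiplicative variance for the boundary integrability of Theorem \ref{thmexpbound}; a pathwise good-event argument cannot do this.
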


Notice that, here as well, all the above estimates are true if $\overline{\tau}_N=0$.
The proofs 
of Proposition 
\ref{thm:bound_Y_tau}
and
\ref{thm:bound_Y_tau:-1} 
are given in Subsections \ref{subse:6:3}
and \ref{subse:6:4} respectively. 
Both
rely on the following lemma, which will be also used in Section \ref{sec:6} and the proof of which is given in Subsection \ref{subse:6:2}:
\begin{lem}
\label{lem:aux:moments:binomial}
For any integer $\ell \geq 1$, we can find a constant $C$ such that, 
for any probability measure $\mu \in \cP(\ES)$ with
$\mu[i]>0$ 
for all $i \in \ES$, the following two inequalities hold true
for all $i,j \in \ES$ and $N \geq 1$:
\begin{equation}
\label{eq:aux:moments:binomial:1}
\begin{split}
&\biggl\vert \EE\biggl[ \biggl( \frac{S_{\mu}[i]}{N \mu[i]} \biggr)^{\ell} -1\biggr]
\biggr\vert \leq 
 \frac{\ell(\ell-1)}{2N \mu[i]} + C \sum_{k=3}^{\ell}\frac{1}{N^{k/2} \mu[i]^{k-1}},
\\
&\biggl\vert \EE\biggl[ 
 \frac{S_{\mu}[j]}{N \mu[j]}
\biggl\{
\biggl( \frac{S_{\mu}[i]}{N \mu[i]} \biggr)^{\ell} -1\biggr\}\biggr]
\biggr\vert \leq 
\frac{\ell(\ell+1)}{2 N \min_{e \in \ES} \mu[e]}   + C \sum_{k=3}^{\ell+1}\frac{1}{N^{k/2} \min_{e \in \ES}\mu[e]^{k-1}},
\end{split}
\end{equation}
where we use the convention that $\sum_{k =3}^2=0$. 
Moreover, for any $p \geq 2$, we can find another constant $C$ such that, 
for any $i \in \ES$ and $N \geq 1$, 
\begin{equation}
\label{eq:aux:moments:binomial:2}
\EE\biggl[\biggl\vert  \biggl( \frac{S_{\mu}[i]}{N \mu[i]} \biggr)^{\ell} -1 \biggr\vert^p \biggr]^{1/p}
\leq 
C 
%\biggl[ 
%\bigl( N \mu[i]  \bigr)^{1/p}
%\sum_{k=1}^{m}  
%\bigl(N \mu[i]\bigr)^{-k}
%+
\sum_{k=1}^{\ell}  
\bigl(N \mu[i]\bigr)^{-k/2}.
%\biggr].
\end{equation}
Lastly, for any $\eta >0$, there exists a constant $c>0$ such that, for all $i \in \ES$, 
\begin{equation}
\label{eq:ldp:moments:binomial:-1}
{\mathbf P} \biggl(
\biggl\vert  \biggl( \frac{S_{\mu}[i]}{N \mu[i]} \biggr)^{\ell} -1 \biggr\vert
\geq \eta  \biggr) \leq C \exp \bigl(- c N (\mu[i])^2\bigr).  
\end{equation}
\end{lem}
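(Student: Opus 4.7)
\textbf{Proof plan for Lemma \ref{lem:aux:moments:binomial}.} Throughout, write $Z_i := S_{\mu}[i]/(N\mu[i])$ and $W_i := Z_i - 1 = (S_{\mu}[i] - N\mu[i])/(N\mu[i])$, so that $\mathbf{E}[W_i] = 0$ and, using the multinomial covariances $\mathrm{Cov}(S_\mu[i], S_\mu[j]) = N\mu[i](\delta_{ij} - \mu[j])$,
\begin{equation*}
\mathbf{E}[W_i^2] = \frac{1-\mu[i]}{N\mu[i]}, \qquad \mathbf{E}[W_i W_j] = -\frac{1}{N} \text{ for } i \neq j.
\end{equation*}
The main workhorse is a Rosenthal/Marcinkiewicz--Zygmund estimate applied to $S_\mu[i] - N\mu[i] = \sum_{m=1}^N (\mathbbm{1}_{\{U_m = i\}} - \mu[i])$, a sum of independent centered $[-1,1]$-valued variables with variance $\mu[i](1-\mu[i])$. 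It yields, for every integer $k \geq 1$,
\begin{equation*}
\mathbf{E}\bigl[|W_i|^k\bigr] \leq C_k \bigl[ (N\mu[i])^{-k/2} + N^{-(k-1)}\mu[i]^{-(k-1)} \bigr] \leq C_k' \, N^{-k/2} \mu[i]^{-(k-1)} \qquad (k \geq 2),
\end{equation*}
where the last inequality uses $\mu[i] \leq 1$ and $N^{k/2} \leq N^{k-1}$ for $k \geq 2$.

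\textbf{First inequality of \eqref{eq:aux:moments:binomial:1}.} Expand $Z_i^{\ell} - 1 = (1 + W_i)^{\ell} - 1 = \sum_{k=1}^{\ell} \binom{\ell}{k} W_i^k$. Taking expectation kills the $k=1$ term; the $k=2$ term gives exactly $\binom{\ell}{2}(1-\mu[i])/(N\mu[i]) \leq \ell(\ell-1)/(2N\mu[i])$, and for $k \geq 3$ the Rosenthal bound gives $|\mathbf{E}[W_i^k]| \leq C \, N^{-k/2} \mu[i]^{-(k-1)}$. Summing yields the claim.

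\textbf{Second inequality of \eqref{eq:aux:moments:binomial:1}.} Split $\mathbf{E}[Z_j(Z_i^{\ell} - 1)] = \mathbf{E}[Z_i^{\ell} - 1] + \mathbf{E}[W_j(Z_i^{\ell} - 1)]$. The first summand is controlled by the first inequality (worth $\ell(\ell-1)/(2N\mu[i])$ plus higher-order corrections). For the second, expand again to get
\begin{equation*}
\mathbf{E}\bigl[W_j(Z_i^{\ell}-1)\bigr] = \ell \, \mathbf{E}[W_j W_i] + \sum_{k=2}^{\ell} \binom{\ell}{k} \mathbf{E}[W_j W_i^k].
\end{equation*}
The $k=1$ term has absolute value at most $\ell/(N\min_e \mu[e])$ in both cases $j=i$ (where it equals $\ell(1-\mu[i])/(N\mu[i])$) and $j \neq i$ (where it equals $-\ell/N$). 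The remaining terms are handled by Cauchy--Schwarz combined with $\mathbf{E}[W_j^2]^{1/2} \leq (N\mu[j])^{-1/2}$ and $\mathbf{E}[W_i^{2k}]^{1/2} \leq C \, N^{-k/2} \mu[i]^{-(k-1)/2} \cdot \mu[i]^{-1/2}$, producing contributions absorbed in $C \sum_{k=3}^{\ell+1} N^{-k/2} (\min_e \mu[e])^{-(k-1)}$. The leading constants add to $\tfrac{\ell(\ell-1)}{2} + \ell = \tfrac{\ell(\ell+1)}{2}$, as desired.

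\textbf{Proof of \eqref{eq:aux:moments:binomial:2}.} By Minkowski's inequality applied to the same expansion,
\begin{equation*}
\mathbf{E}\bigl[|Z_i^{\ell} - 1|^p\bigr]^{1/p} \leq \sum_{k=1}^{\ell} \binom{\ell}{k} \, \mathbf{E}\bigl[|W_i|^{kp}\bigr]^{1/p}.
\end{equation*}
The Rosenthal bound at exponent $kp$ gives $\mathbf{E}[|W_i|^{kp}]^{1/p} \leq C \bigl[ (N\mu[i])^{-k/2} + (N\mu[i])^{1/p - k}\bigr]$; since $p \geq 2$ and $k \geq 1$ imply $1/p \leq 1/2 \leq k/2$, the second term is dominated by the first, yielding the stated bound.

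\textbf{Proof of \eqref{eq:ldp:moments:binomial:-1}.} The key algebraic observation is that on the event $\{Z_i \leq 2\}$ one has $|Z_i^{\ell} - 1| = |Z_i - 1| \cdot |1 + Z_i + \cdots + Z_i^{\ell-1}| \leq \ell \, 2^{\ell-1} \, |Z_i - 1|$. Hence $\{|Z_i^{\ell}-1| \geq \eta\} \subset \{Z_i > 2\} \cup \{|Z_i - 1| \geq \eta/(\ell 2^{\ell-1})\}$, and Hoeffding's inequality applied to the $[0,1]$-valued sum $S_\mu[i]$ yields
\begin{equation*}
\mathbf{P}\bigl(|Z_i - 1| \geq \eta'\bigr) = \mathbf{P}\bigl(|S_\mu[i] - N\mu[i]| \geq \eta' N \mu[i]\bigr) \leq 2 \exp\bigl(-2 (\eta')^2 N (\mu[i])^2\bigr),
\end{equation*}
and similarly $\mathbf{P}(Z_i > 2) \leq \exp(-2 N(\mu[i])^2)$. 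Combining gives the claim.

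\textbf{Main obstacle.} The only place where care is needed is in matching the explicit leading constants $\ell(\ell-1)/2$ and $\ell(\ell+1)/2$ in \eqref{eq:aux:moments:binomial:1}; this requires isolating the $k=2$ contribution in the first inequality and both the $k=1$ cross term and the transplanted $\mathbf{E}[Z_i^{\ell}-1]$ in the second, while verifying that all other contributions fit into the advertised correction sums.
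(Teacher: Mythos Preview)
Your proof is correct and follows essentially the same approach as the paper's: the binomial expansion $(1+W_i)^\ell-1=\sum_k\binom{\ell}{k}W_i^k$, the isolation of the $k=2$ (resp.\ $k=1$ cross) term to get the explicit leading constants, Rosenthal's inequality for the higher moments, Minkowski for \eqref{eq:aux:moments:binomial:2}, and Hoeffding for \eqref{eq:ldp:moments:binomial:-1} are exactly what the paper does. The only cosmetic difference is that the paper works with the uncentered quantities $S_\mu[i]-N\mu[i]$ throughout and expands $S_\mu[j]\{(\cdot)^\ell-1\}$ in one shot rather than splitting off $\mathbf{E}[Z_i^\ell-1]$ first, but the resulting estimates and constants are identical.
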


\subsection{The master equation as an approximation of the normalized Nash system}\label{sec:5}

The purpose of this subsection is to formulate the analogue of 
\cite[Proposition 6.1.3]{CardaliaguetDelarueLasryLions} (for continuous state case, and \cite[Proposition 4]{cec-pel2019} for finite state space), namely to 
regard finite dimensional projections of classical solutions to the master equation \eqref{eq:master:equation:intrinsic}
as almost solutions of the Nash system \eqref{nash}.

To make it clear, recall that we defined  in \eqref{wz} $z^{N,l}(t,\bm{x},\bm{y}) = U^{x^l}(t, \mu^N_{\bm{x},\bm{y}})$, where $U\in[\mathcal{C}^{1+\gamma'/2,2+\gamma'}_{\rm WF}([0,T] \times \mathcal{S}_{d-1})]^d$, for 
the same $\gamma' \in (0,1)$ as in 
\eqref{prop:previous}, is the classical solution to the master equation. Then, we want to show that 
\begin{align}\label{eq:uN}
u^{N,l}(t,\bm{x},\bm{y}):= y^l 
z^{N,l}(t,\bm{x},\bm{y}) \ %; \quad 
%z^{N,l}(t,\bm{x},\bm{y}) := U^{x^l}(t, \mu^N_{\bm{x},\bm{y}})
\end{align}
almost solves the Nash system \eqref{nash}, at least when% $(\blx,\bly)$ belongs to ${\mathcal T}_{N}$, which is gi%was defined in \eqref{eq:mathcal:TN} as 
\begin{equation}
\label{eq:mathcal:TN}
(\blx,\bly)\in{\mathcal T}_{N}:=\Bigl\{(\blx,\bly) \in \ES^N \times \setQ : 
\min_{i\in\dd} \overline \mu^N_{\blx,\bly}[i] \geq N^{-\epsilon}, 
\
\max_{l\in\NN} y^l \leq \frac12 N^{1-\epsilon}\Bigr\},
\end{equation}
for some $\epsilon \in (0,1/4)$, which has to be understood as the same 
$\epsilon$ as in 
\eqref{deftau}.
In comparison with the proof performed in 
\cite[Proposition 6.1.3]{CardaliaguetDelarueLasryLions}, 
one difficulty comes from the definition of $\mathcal{C}^{1+\gamma'/2,2+\gamma'}_{\rm WF}$, as the second-order derivatives of elements of the latter space may be singular at the boundary of the simplex.

Our result takes the following form: 
\begin{prop}\label{prop:u}
Let $\epsilon < 1/4$  be as 
in the definition of 
\eqref{eq:mathcal:TN}
Then, the function $u^{N,l}$ defined in \eqref{eq:uN} solves
\be 
\label{eq:approximate:nash:u}
\begin{split}
&\frac{d}{dt}u^{N,l}
+
\sum_{m \in \EN } 
\varphi(\mu^N_{\bm{x},\bm{y}}[\bullet])
\cdot \Delta^m u^{N,l}[\bullet] 
+\sum_{m\neq l} a^*\Big(x^m, \frac{1}{y^m} u^{N,m}_{\bullet}\Big)\cdot \Delta^m u^{N,l} [\bullet] 
\\
&\quad +y^l H\Big(x^l, \frac{1}{y^l}u^{N,l}_\bullet \Big) + y^lf(x^l, \mu^N_{\bm{x},\bm{y}})
\\
& \quad +\varepsilon N \EE 
\biggl[ u^{N,l}\biggl(t,\bm{x}, y^1\frac{S_{\mu^N_{\bm{x},\bm{y}}}[x^1]}{N\mu^N_{\bm{x},\bm{y}}[x^1]},\dots, y^N\frac{S_{\mu^N_{\bm{x},\bm{y}}}[x^N]}{N\mu^N_{\bm{x},\bm{y}}[x^N]}\biggr)
 -u^{N,l}(t,\bm{x},\bm{y})
\biggr]
\\
&= y^l r^{N,l}(t, \bm{x}, \bm{y}),  
\\
& u^{N,l}(T,\bm{x},\bm{y})= y^l g(x^l, \mu^N_{\bm{x},\bm{y}}),
\end{split}
\ee
and there exist
an exponent $\eta$, only depending on $\gamma'$ and $\epsilon$, and 
a constant $C$, only depending 
on $d$ and on
  the norm of $U$ in the space 
  $[\mathcal{C}^{1+\gamma'/2,2+\gamma'}_{\rm WF} ([0,T] \times \mathcal{S}_{d-1})]^d$, such that the rest
  $r^{N,l}(t, \bm{x}, \bm{y})$ is bounded as follows
\be
\label{eq:remainder:estimate:nash:u} 
|r^{N,l}(t, \bm{x}, \bm{y})| \leq \frac{C}{N^\eta}, 
%+ C \frac{y^l }{N},
%   + C \sum_{j\in\NN} \frac{(y^j)^{1+\gamma}}{N^{1+\gamma}},
\ee
  for  $t \in [0,T]$ and $(\blx,\bly) \in {\mathcal T}_{N}$.
\end{prop}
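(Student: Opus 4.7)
The plan is to substitute the ansatz $u^{N,l}(t,\bm{x},\bm{y}) = y^l U^{x^l}(t,\mu^N_{\bm{x},\bm{y}})$ into the left-hand side of \eqref{eq:approximate:nash:u} and to show that, up to an error of order $N^{-\eta}$, the result coincides with $y^l$ times the master equation \eqref{eq:master:equation:intrinsic} evaluated at $(t,\mu^N_{\bm{x},\bm{y}})$, which vanishes by Proposition \ref{prop:previous}. The mechanism is the standard one (as in \cite{CardaliaguetDelarueLasryLions} and \cite{cec-pel2019}): Taylor-expand $U^{x^l}(t,\cdot)$ in the measure variable around $\mu^N_{\bm{x},\bm{y}}$ for every single-particle displacement, and perform a second-order expansion combined with Lemma \ref{lem:aux:moments:binomial} to handle the conditional expectation in the common-noise term. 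The role of the set $\mathcal{T}_N$ is to guarantee that those expansions are valid quantitatively: the weights control the step size of the Taylor expansion in the measure, and the positivity of $\mu^N_{\bm{x},\bm{y}}$ dominates the boundary singularities allowed in the Wright--Fisher space $\mathcal{C}^{2+\gamma'}_{\rm WF}$.

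\textbf{Drift terms.} For $m \neq l$, the displacement $\mu^N_{(j,\bm{x}^{-m}),\bm{y}} - \mu^N_{\bm{x},\bm{y}} = (y^m/N)(\delta_j - \delta_{x^m})$ has norm at most $y^m/N \leq \tfrac12 N^{-\epsilon}$ on $\mathcal{T}_N$, and a first order Taylor expansion of $U^{x^l}(t,\cdot)$ gives
\begin{equation*}
\Delta^m u^{N,l}(t,\bm{x},\bm{y})[j] = \frac{y^l y^m}{N}\bigl(\mathfrak{d}_{j}U^{x^l} - \mathfrak{d}_{x^m}U^{x^l}\bigr)(t,\mu^N_{\bm{x},\bm{y}}) + R^{(1)}_{m,j},
\end{equation*}
with $|R^{(1)}_{m,j}| \leq C y^l (y^m/N)^{1+\gamma'}$ thanks to the H\"older regularity of $\mathfrak{D}U$ up to the boundary. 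Summing $\varphi(\mu^N_{\bm{x},\bm{y}}[\bullet]) + a^*(x^m, U_\bullet(t,\mu^N_{\bm{x},\bm{y}}))$ against these increments over $m\neq l$ and using $\tfrac1N\sum_{m} y^m \delta_{x^m} = \mu^N_{\bm{x},\bm{y}}$ reproduces the drift line $\sum_{j,k} p_k[\varphi(p_j) + (U^k - U^j)_+](\mathfrak{d}_j U^{x^l} - \mathfrak{d}_k U^{x^l})$ of \eqref{eq:master:equation:intrinsic}, modulo a $1/N$-diagonal correction (missing $m=l$ mass) which is absorbed in $r^{N,l}$. In parallel, the $m = l$ contribution $\sum_j \varphi(\mu^N[j]) y^l[U^j(t,\mu^N_{(j,\bm{x}^{-l}),\bm{y}}) - U^{x^l}(t,\mu^N_{\bm{x},\bm{y}})]$ produces the pointwise $\sum_j \varphi(p_j)[U^j - U^{x^l}]$ term of \eqref{eq:master:equation:intrinsic}, again with a $y^l/N$-size first-order residue in the measure; and since $u^{N,l}_\bullet(t,\bm{x},\bm{y})[j]/y^l = U^j(t,\mu^N_{(j,\bm{x}^{-l}),\bm{y}})$, the Hamiltonian $y^l H(x^l, u^{N,l}_\bullet/y^l)$ yields the $-\tfrac12 \sum_{j\neq x^l}(U^{x^l} - U^j)_+^2$ term up to a remainder of the same order.

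\textbf{Common-noise term.} Under the reshuffling $\widetilde{y}^k := y^k S_{\mu^N}[x^k]/(N\mu^N_{\bm{x},\bm{y}}[x^k])$ the weighted empirical measure becomes $\mu^N_{\bm{x},\widetilde{\bm{y}}} = S_{\mu^N}/N$, so setting $\xi := S_{\mu^N}/N - \mu^N_{\bm{x},\bm{y}}$ (which satisfies $\sum_j \xi[j]=0$, $\EE[\xi]=0$ and $\EE[\xi[j]\xi[k]] = N^{-1}(\mu^N[j]\delta_{jk} - \mu^N[j]\mu^N[k])$),
\begin{equation*}
u^{N,l}(t,\bm{x},\widetilde{\bm{y}}) - u^{N,l}(t,\bm{x},\bm{y}) = y^l\Bigl[\tfrac{S_{\mu^N}[x^l]}{N\mu^N[x^l]}\,U^{x^l}(t,\mu^N + \xi) - U^{x^l}(t,\mu^N_{\bm{x},\bm{y}})\Bigr].
\end{equation*}
Expanding $U^{x^l}(t,\mu^N + \xi)$ to second order in the intrinsic sense and taking expectation, the $0$-order piece cancels, the first-order piece combines with $\EE[\xi[j](S_{\mu^N}[x^l]/(N\mu^N[x^l]) - 1)]$ to produce, upon multiplication by $\varepsilon N$, the inward drift $\varepsilon^2 \sum_{j\neq x^l} p_j(\mathfrak{d}_{x^l}U^{x^l} - \mathfrak{d}_j U^{x^l})$, and the quadratic piece $\tfrac12 \varepsilon N \sum_{j,k}\mathfrak{d}^2_{jk}U^{x^l}\EE[\xi[j]\xi[k]]$ yields the Kimura operator $\tfrac{\varepsilon^2}{2}\sum_{j,k}(p_j\delta_{jk}-p_jp_k)\mathfrak{d}^2_{jk}U^{x^l}$. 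The higher-order remainder is of the form $\varepsilon N\,\EE[|\xi|^3\|\mathfrak{d}^2 U\|_{\rm osc}]$, bounded via \eqref{eq:aux:moments:binomial:2} by $C N(N\min_i \mu^N[i])^{-3/2} \leq C N^{-1/2 + 3\epsilon/2}$ on $\mathcal{T}_N$.

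\textbf{Error bounds and main obstacle.} The remainder $r^{N,l}$ is a sum of three types of residues: (i) first-order H\"older remainders from the drift step, of size $y^l(y^m/N)^{1+\gamma'}$, harmless under $y^l, y^m \leq \tfrac12 N^{1-\epsilon}$; (ii) second-order Taylor residues in the drift ($m=l$ and Hamiltonian pieces) and in the quadratic expansion of the common noise, which involve $\mathfrak{d}^2 U$; (iii) the third-order moment of $\xi$ bounded by Lemma \ref{lem:aux:moments:binomial}. Balancing the explicit powers of $N$, after plugging in $\min_i \mu^N[i] \geq N^{-\epsilon}$, produces a bound of the form $C N^{-(c_1 - c_2\epsilon)}$, and $\eta$ is any positive value obtained by choosing $\epsilon$ small compared with $\gamma'$. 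The \emph{main obstacle} lies precisely in family (ii): the Wright--Fisher space $\mathcal{C}^{2+\gamma'}_{\rm WF}$ allows $\mathfrak{d}^2 U$ to blow up at $\partial \mathcal{S}_{d-1}$, so the boundary behavior recalled in \S\,\ref{subsub:derivatives:simplex} and the quantitative estimates from \cite{EpsteinMazzeo} (as used in \cite{BCCD-arxiv}) must be translated into an explicit bound of the form $|\mathfrak{d}^2 U(t,p)| \leq C (\min_i p_i)^{-\theta}$ for some $\theta$; only then can we insert $\min_i p_i \geq N^{-\epsilon}$ and conclude that the second-derivative residues are $O(N^{-\eta})$ for $\epsilon$ small enough.
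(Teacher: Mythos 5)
Your overall strategy is the same as the paper's: substitute $u^{N,l}=y^lU^{x^l}(t,\mu^N_{\bm{x},\bm{y}})$, Taylor-expand in the measure variable for the drift and Hamiltonian terms (your first-order expansion with remainder of size $y^l(y^m/N)^{1+\gamma'}$ is, up to the exponent being $1+\gamma'/2$ because Wright--Fisher H\"older continuity of order $\gamma'$ only gives Euclidean H\"older continuity of order $\gamma'/2$, exactly the paper's Lemma \ref{lem:exp:first:order:variation}), and use the identity $\mu^N_{\bm{x},\widetilde{\bm{y}}}=S_{\mu^N}/N$ together with the multinomial moment formulas to recover the Kimura operator and the inward drift from the common-noise term. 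All of that part is correct and matches the paper's Lemmas \ref{prop:nash:1st:order}, \ref{lem:expansion:H} and the first two terms of Lemma \ref{Nash:approximate:common:noise}.

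The genuine gap is in the step you yourself flag as the ``main obstacle,'' and your proposed resolution is not the right tool. You suggest translating the $\mathcal{C}^{2+\gamma'}_{\rm WF}$ membership into a pointwise blow-up bound $|\fd^2_{j,k}U(t,p)|\le C(\min_i p_i)^{-\theta}$ and then inserting $\min_i p_i\ge N^{-\epsilon}$. A pointwise bound of this type cannot close the argument: the problematic Taylor remainder in the common-noise term is not $\varepsilon N\,\EE[|\xi|^3\,\|\fd^2U\|_{\rm osc}]$ with a uniformly controlled oscillation, but rather $\varepsilon N\,\EE[|\xi|^2\int_0^1(1-r)(\fd^2_{j,k}U(t,\mu+r\xi)-\fd^2_{j,k}U(t,\mu))\,dr]$, and since $\fd^2U$ is only \emph{locally} H\"older in the interior with a modulus that degenerates at the boundary, one cannot bound this difference by $|\xi|^{\gamma'}$ times a constant. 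What the space $\mathcal{C}^{2+\gamma'}_{\rm WF}$ actually provides --- and what the paper exploits --- is that the \emph{weighted} derivatives $\sqrt{p^jp^k}\,\fd^2_{j,k}U^i$ are bounded and $\gamma'$-H\"older continuous up to the boundary in the Wright--Fisher metric. The paper's proof therefore multiplies and divides the integrand by $Q_{j,k}(r)=\sqrt{(\mu^j+r(S^j/N-\mu^j))(\mu^k+r(S^k/N-\mu^k))}$, splits the remainder into a piece controlled by the H\"older continuity of $Q_{j,k}(r)\fd^2_{j,k}U(\mu+r\xi)$ and a piece controlled by $|Q_{j,k}(r)-\sqrt{\mu^j\mu^k}|\cdot|\fd^2_{j,k}U(\mu)|$, lower-bounds $Q_{j,k}(r)\ge(1-r)\sqrt{\mu^j\mu^k}$ to absorb the denominator, and only then applies Rosenthal/H\"older inequalities and $\min_i\mu^N[i]\ge N^{-\epsilon}$. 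Without this weighted-derivative mechanism your family (ii) of residues is not actually estimated, so as written the proof of \eqref{eq:remainder:estimate:nash:u} is incomplete at precisely the point where the Wright--Fisher degeneracy enters.
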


\begin{rem}
 Importantly,
Proposition 
\ref{prop:u}
may be reformulated in a similar result for the function $z^{N,l}$ 
defined in 
\eqref{wz}. In short, $z^{N,l}$ solves \eqref{newnash} plus the same rest $r^{N,l}$
as in \eqref{eq:approximate:nash:u} (but without the leading factor 
$y^l$).
We felt better to formulate 
Proposition 
\ref{prop:u}
as 
it makes a direct connection with the Nash system \eqref{nash}, but
we mostly use 
the version of 
\eqref{eq:approximate:nash:u}
for $z^{N,l}$
in the core of the proof of Theorem 
\ref{thm:convergence_value}. Anyway, the reader must be convinced that there is no difficulty in passing from one version 
to the other.
\end{rem}

%\gre{[A: Maybe we should write the statement and change the proofs using only $z^{N,l}$ instead of $u^{N,l} = y^l z^{N,l}$, because this is what we use in the proof in \S 6.1 ]}\textcolor{brown}{[F.: I put the remark above]}

\subsubsection{Recovering the common noise}
The strategy of proof of 
Proposition \ref{prop:u} consists in identifying the various terms of the master equation with the terms of 
\eqref{nash}.
In this respect, the most subtle term to deal with is certainly the term associated with the common noise in 
\eqref{eq:approximate:nash:u}. The following lemma makes with the connection between the latter and the second-order term in the master equation, the proof 
of this connection being given in Subsection \ref{subse:7:1}.  

%In the latter, we first identify the part of the generator of $({\boldsymbol X},{\boldsymbol Y})$
%associated with the common noise. 

\begin{lem}
\label{Nash:approximate:common:noise}
Under the assumption of 
Proposition \ref{prop:u}, we have, 
for $l \in \NN$, 
  \be
  \label{eq:5.4}
  \begin{split}
  &\frac{1}{y^l}  N \EE  
\biggl[ u^{N,l}\biggl(\bm{x}, y^1\frac{S_{\mu^N_{\bm{x},\bm{y}}}[x^1]}{N\mu^N_{\bm{x},\bm{y}}[x^1]},\dots, y^N\frac{S_{\mu^N_{\bm{x},\bm{y}}}[x^N]}{N\mu^N_{\bm{x},\bm{y}}[x^N]}\biggr)
 -u^{N,l}(t,\bm{x},\bm{y})
\biggr]
\\
&= \sum_{j\in\dd} \bigl(\delta_{j,x_l} -\mu^N_{\bm{x},\bm{y}}[j]\bigr) \fd_{j} U\bigl(t,x^l, \mu^N_{\bm{x},\bm{y}}\bigr)
\\
&\quad
+\tfrac{1}{2}
\sum_{j,k\in\dd} \bigl(\mu^N_{\bm{x},\bm{y}}[j] \delta_{j,k} -\mu^N_{\bm{x},\bm{y}}[j]\mu^N_{\bm{x},\bm{y}}[k]\bigr)
\fd^2_{j,k} U\bigl(t,x^l, \mu^{N}_{\bm{x},\bm{y}}\bigr)
+   r_{1}^{N,l}\bigl(t,\bm{x},\bm{y}\bigr),
\end{split}
  \ee
  where the rest $r_{1}^{N,l}$ is such that,
  for  $t \in [0,T]$ and $(\blx,\bly) \in {\mathcal T}_{N}$,
  \be \label{RNl}
 \bigl| r_{1}^{N,l}\bigl(t,\bm{x},\bm{y}\bigr) \bigr| \leq \frac{C}{N^\eta},  
  \ee
  for a constant $C$, only depending on $d$, 
  the norm of $U$ in the space 
  $[\mathcal{C}^{1+\gamma'/2,2+\gamma'}_{\rm WF} ([0,T] \times \mathcal{S}_{d-1})]^d$,
   and on an exponent $\eta$, which in turn only depending on $\gamma'$ and $\epsilon$.
%  if $\bm{x}$ and $\bm{y}$ are such that $\min_{k\in\dd}\mu^N_{\bm{x},\bm{y}} [k] \geq N^{-\epsilon}$.
\end{lem}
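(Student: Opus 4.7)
The first step is to identify the weighted empirical measure after a common-noise jump. Writing $\bly' = (y^1 \tfrac{S_{\mu}[x^1]}{N\mu[x^1]}, \dots, y^N \tfrac{S_{\mu}[x^N]}{N\mu[x^N]})$ with $\mu := \mu^N_{\blx,\bly}$, a direct grouping by state gives
\begin{equation*}
\mu^N_{\blx,\bly'}[j] = \frac{1}{N}\sum_{l \in \NN} y^l \frac{S_{\mu}[x^l]}{N\mu[x^l]} \one_{\{x^l=j\}} = \frac{S_{\mu}[j]}{N\mu[j]}\cdot \mu[j] = \frac{S_{\mu}[j]}{N},
\end{equation*}
so that the post-jump measure $\mu' := \mu^N_{\blx,\bly'}$ depends only on $S_{\mu}$, not on the weights $\bly$ themselves. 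Consequently, setting $R := S_{\mu}[x^l]/(N\mu[x^l])$ (well-defined since $\mu[x^l] \geq N^{-\epsilon}>0$ on $\mathcal{T}_N$), we have
\begin{equation*}
u^{N,l}(t,\blx,\bly') = y^l \, R \, U^{x^l}(t,\mu'),
\end{equation*}
and using $\EE[R]=1$, the quantity to estimate on the left of \eqref{eq:5.4} equals $N\EE\bigl[R\bigl(U^{x^l}(t,\mu') - U^{x^l}(t,\mu)\bigr)\bigr]$.

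The second step is a Taylor expansion of $U^{x^l}(t,\cdot)$ at $\mu$ in the direction $\delta := \mu'-\mu$, using the intrinsic derivatives $\mathfrak{d}$ and $\mathfrak{d}^2$ on the simplex. Since $U \in \mathcal{C}^{1+\gamma'/2,2+\gamma'}_{\mathrm{WF}}$, I obtain
\begin{equation*}
U^{x^l}(t,\mu') - U^{x^l}(t,\mu) = \sum_{j} \mathfrak{d}_j U^{x^l}(t,\mu)\,\delta[j] + \tfrac{1}{2}\sum_{j,k} \mathfrak{d}^2_{j,k} U^{x^l}(t,\mu)\,\delta[j]\delta[k] + \rho(\mu,\mu'),
\end{equation*}
where the remainder $\rho$ satisfies $|\rho(\mu,\mu')| \leq C\,|\delta|^{2+\gamma''}\,\omega(\mu,\mu')$ for the WF-H\"older modulus $\omega$ of the second derivatives and for $\gamma''$ depending on $\gamma'$. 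Multiplying by $R$ and taking expectation, I need the three moments $\EE[R\,\delta[j]]$, $\EE[R\,\delta[j]\delta[k]]$ and $\EE[|R|\,|\delta|^{2+\gamma''}]$.

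The third step consists of computing these moments explicitly from the multinomial structure. Using $\EE[S_{\mu}[i]]=N\mu[i]$, $\mathrm{Var}(S_{\mu}[i])=N\mu[i](1-\mu[i])$ and $\mathrm{Cov}(S_{\mu}[i],S_{\mu}[j])=-N\mu[i]\mu[j]$, a short calculation yields
\begin{equation*}
N\,\EE[R\,\delta[j]] = \delta_{j,x^l} - \mu[j], \qquad N\,\EE[\delta[j]\delta[k]] = \mu[j]\delta_{j,k}-\mu[j]\mu[k],
\end{equation*}
the first identity being exact and thus producing precisely the drift term on the right-hand side of \eqref{eq:5.4}. For the second moment, one replaces $R$ by $R = 1 + (R-1)$ and controls $N\,\EE[(R-1)\delta[j]\delta[k]]$ by Lemma \ref{lem:aux:moments:binomial} together with H\"older's inequality, using $\|R-1\|_{L^p} = O((N\mu[x^l])^{-1/2})$ and $\|\delta[j]\|_{L^p} = O((N\mu[j])^{-1/2}\cdot \mu[j]^{1/2})$; on $\mathcal{T}_N$ this gives a bound $O(N^{-1/2+C\epsilon})$.

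The remaining and main obstacle is to control $N\,\EE[|R|\cdot |\delta|^{2+\gamma''}]$. The difficulty is that the WF-regularity allows the second derivatives to blow up near $\partial \mathcal{S}_{d-1}$, so I cannot simply bound $\omega$ by a constant; instead I use that, on $\mathcal{T}_N$, $\mu[i]\geq N^{-\epsilon}$ forces $\mu$ (and, with high probability, $\mu'$) to remain in a compact of $\mathrm{Int}(\mathcal{S}_{d-1})$ on which the WF-norm of the second derivatives is bounded polynomially in $N^{\epsilon}$. Combining this with the $L^{p}$ estimate $\EE[|\delta[j]|^{p}]^{1/p} = O(N^{-1/2}\mu[j]^{1/2})$ from \eqref{eq:aux:moments:binomial:2} and splitting on the rare event $\{\mu' \notin \mathcal{T}_N\}$ (controlled by the concentration estimate \eqref{eq:ldp:moments:binomial:-1}), I obtain a bound $N\,\EE[|R|\cdot|\delta|^{2+\gamma''}] \leq C N^{-\eta}$ for some $\eta = \eta(\gamma',\epsilon) > 0$, provided $\epsilon$ is chosen small enough. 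Adding all contributions yields \eqref{eq:5.4} together with the remainder estimate \eqref{RNl}.
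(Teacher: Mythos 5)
Your plan follows the same skeleton as the paper's proof: you correctly identify the post-jump weighted empirical measure with $S_{\mu}/N$ (via the mass-conservation identity \eqref{eq:sum:proba}), reduce the left-hand side to $N\,\EE[R\,(U^{x^l}(t,S_\mu/N)-U^{x^l}(t,\mu))]$ with $R=S_\mu[x^l]/(N\mu[x^l])$, Taylor-expand with the intrinsic derivatives, and recover the drift and diffusion terms exactly from the multinomial covariance (the paper evaluates the $R$-weighted second moment via the explicit third moments of the multinomial, getting $O(1/N)$, where your H\"older-based bound gives the slightly weaker but sufficient $O(N^{-1/2+C\epsilon})$). The only genuine divergence is in the remainder. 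The paper does \emph{not} split on a rare event: it writes the second-order Taylor rest with the integral weight $(1-r)$, multiplies and divides by $Q_{j,k}(r)=\sqrt{(\mu^j+r(S^j/N-\mu^j))(\mu^k+r(S^k/N-\mu^k))}$, and exploits the pathwise lower bound $Q_{j,k}(r)\geq (1-r)\sqrt{\mu^j\mu^k}$ so that the Taylor weight exactly cancels the possible degeneracy of $\fd^2_{j,k}U$ along the interpolation segment; the WF-H\"older continuity of $p\mapsto\sqrt{p^jp^k}\,\fd^2_{j,k}U(p)$ and Rosenthal-type moment bounds then give $C N^{-\eta}$ directly. Your route — bounding the second derivatives by a power of $N^{\epsilon}$ on the good region plus concentration on the bad event — can be made to work, but you should be explicit about one point: the interpolation points $\mu+r(S_\mu/N-\mu)$ can approach $\partial\mathcal{S}_{d-1}$ as $r\to 1$ whenever some $S^j=0$, so membership of the two \emph{endpoints} in a compact of the interior does not by itself control the integrand along the whole segment; you need either the lower bound $(1-r)\mu^j+rS^j/N\geq(1-r)\mu^j\geq(1-r)N^{-\epsilon}$ together with the integrable weight $(1-r)$ to get a crude deterministic bound on the bad event, or the restriction to the event $\{S^j/N\geq\tfrac12\mu^j\ \forall j\}$ (on whose complement Hoeffding gives an exponentially small probability) so that the entire segment stays at distance $\gtrsim N^{-\epsilon}$ from the boundary. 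With that clarification, your argument closes; the paper's $Q_{j,k}(r)$ device is essentially a cleaner, splitting-free way of doing the same thing.
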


\subsubsection{Other terms}
Back to the statement of 
Proposition \ref{prop:u}, we now address the terms of the master equation that are not associated with the common noise. 
In this respect, we have the following sequence of lemmas, the first of which is of independent interest as it allows to connect first-order variations and first-order derivatives on the simplex. 
All these three lemmas are proved in Subsection \ref{subse:7:2}.

\begin{lem} 
\label{lem:exp:first:order:variation}
Under the assumption of 
Proposition \ref{prop:u}, we have
\begin{equation}
\label{deltanl}  
\begin{split}
\frac{1}{y^l} \Delta^m u^{N,l}(t,\bm{x}, \bm{y})[j] &= \frac{y^m}{N} \Big(\fd_{j} U^{x^l}(t, \mu^N_{\bm{x},\bm{y}}) - \fd_{{x^m}} U^{x^l}(t, \mu^N_{\bm{x},\bm{y}}) \Big) 
 + \varrho^{N,l,m}(t,\bm{x},\bm{y})[j],
 \quad m\neq l,
\end{split}
\end{equation}
and
\begin{equation}
\label{deltall}
\begin{split}
\frac{1}{y^l} \Delta^l u^{N,l}(t,\bm{x}, \bm{y})[j] &=
U^{j}(t, \mu^N_{\bm{x},\bm{y}}) - U^{x^l}(t, \mu^N_{\bm{x},\bm{y}}) 
\\
&+ \frac{y^l}{N} \Big(\fd_{j} U^{x^l}(t, \mu^N_{\bm{x},\bm{y}}) - \fd_{{x^l}} U^{x^l}(t, \mu^N_{\bm{x},\bm{y}}) \Big) + \varrho^{N,l,l}(t,\bm{x},\bm{y})[j], 
\end{split}
\end{equation}
where, for 
$t \in [0,T]$, $(\bm{x},\bm{y}) \in \ES^N \times \setQ$
and $m,l\in\NN$,
\be
\label{eq:conclusion:lem:exp:first:order:variation} 
\sup_{j \in \ES}|\varrho^{N,l,m}(t,\bm{x},\bm{y})[j]| \leq C \frac{ (y^m)^{1+\gamma'/2}}{N^{1+\gamma'/2}},
\ee
  for a constant $C$, only depending on 
  $d$ and on
  the norm of $U$ in the space 
  $[\mathcal{C}^{1+\gamma'/2,2+\gamma'}_{\rm WF} ([0,T] \times \mathcal{S}_{d-1})]^d$.
\end{lem}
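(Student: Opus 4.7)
The idea is to obtain both identities by a first-order Taylor expansion of $U^i(t,\cdot)$ along the measure perturbation induced by changing a single coordinate of $\bm{x}$, using the Wright--Fisher H\"older regularity of the first intrinsic derivatives guaranteed by Proposition~\ref{prop:previous} to control the remainder. Writing $\mu:=\mu^N_{\bm{x},\bm{y}}$, the key geometric observation is that replacing $x^m$ with $j$ shifts $\mu$ by
\[
h \,:=\, \tfrac{y^m}{N}\bigl(\delta_j-\delta_{x^m}\bigr),
\]
a tangent vector to the simplex (its coordinates sum to zero) whose Euclidean length is of order $y^m/N$.

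For $m\neq l$ the state $x^l$ is unchanged, so $\Delta^m u^{N,l}(t,\bm{x},\bm{y})[j] = y^l\bigl[U^{x^l}(t,\mu+h)-U^{x^l}(t,\mu)\bigr]$. Writing this difference as an integral of $\nabla_p U^{x^l}(t,\mu+sh)\cdot h$ for $s\in[0,1]$, and invoking the identity $\partial_{p_i}-\partial_{p_k} = \fd_i-\fd_k$ valid on $\mathcal{S}_{d-1}$ and recalled in the notation section, the value at $s=0$ of the integrand produces the leading contribution $\tfrac{y^m}{N}\bigl[\fd_j U^{x^l}(t,\mu)-\fd_{x^m}U^{x^l}(t,\mu)\bigr]$, and the remainder involves the difference of the first intrinsic derivatives of $U^{x^l}$ evaluated at $\mu$ and at $\mu+sh$.

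For $m=l$ I would decompose
\[
U^j(t,\mu+h')-U^{x^l}(t,\mu)
\;=\; \bigl[U^j(t,\mu)-U^{x^l}(t,\mu)\bigr] \;+\; \bigl[U^j(t,\mu+h')-U^j(t,\mu)\bigr],
\]
with $h'=\tfrac{y^l}{N}(\delta_j-\delta_{x^l})$. The first bracket furnishes the leading $U^j-U^{x^l}$ term of \eqref{deltall}; the second is treated exactly as in the previous step, yielding a $\tfrac{y^l}{N}\bigl[\fd_j-\fd_{x^l}\bigr]$ correction plus a remainder of the same type. The natural intrinsic-derivative leading term carries $U^j$ rather than $U^{x^l}$, but swapping them costs $\tfrac{y^l}{N}$ multiplied by a quantity bounded by the $\mathcal{C}^{1}$-norm of $U$ on the simplex, which is absorbed into $\varrho^{N,l,l}$.

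The main obstacle is producing the sharp remainder bound $C(y^m)^{1+\gamma'/2}/N^{1+\gamma'/2}$. The Euclidean length of $h$ is only of order $y^m/N$, whereas the H\"older modulus of the first intrinsic derivatives built into $[\mathcal{C}^{1+\gamma'/2,2+\gamma'}_{\rm WF}([0,T]\times\mathcal{S}_{d-1})]^d$ is measured in the Wright--Fisher metric, which along coordinate directions behaves like the square root of the Euclidean distance. Consequently the H\"older increment of $\fd_\bullet U^i$ between $\mu$ and $\mu+sh$ is of order $\bigl(\sqrt{y^m/N}\bigr)^{\gamma'}=(y^m/N)^{\gamma'/2}$; multiplying by the Euclidean factor $y^m/N$ coming from the Taylor integral yields exactly the stated exponent $1+\gamma'/2$, with a constant $C$ depending only on $d$ and on the $\mathcal{C}^{1+\gamma'/2,2+\gamma'}_{\rm WF}$-norm of $U$. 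Here I would invoke \cite[\S 3.1.2]{BCCD-arxiv} (cited in the paper) to translate the abstract WF H\"older seminorm of $U$ into the concrete square-root-type estimate required above.
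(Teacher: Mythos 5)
Your treatment of \eqref{deltanl} is exactly the paper's argument: write $\tfrac{1}{y^l}\Delta^m u^{N,l}[j]=U^{x^l}\bigl(t,\mu+\tfrac{y^m}{N}(\delta_j-\delta_{x^m})\bigr)-U^{x^l}(t,\mu)$, integrate $\tfrac{y^m}{N}\bigl(\fd_j-\fd_{x^m}\bigr)U^{x^l}$ along the segment using the identity $\partial_{p_j}-\partial_{p_k}=\fd_j-\fd_k$, freeze the integrand at $s=0$ to extract the leading term, and control the remainder by the $\gamma'$-H\"older continuity of $\fD U^{x^l}$ in the Wright--Fisher metric, which is $\gamma'/2$-H\"older continuity in the Euclidean metric; this gives $(y^m/N)\cdot(y^m/N)^{\gamma'/2}$ as claimed. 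The paper proves only this case in detail and dismisses \eqref{deltall} as ``similar''.

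The one step that does not hold up is your absorption of the superscript swap in \eqref{deltall}. As you correctly observe, expanding $U^j(t,\mu+h')-U^j(t,\mu)$ produces the leading term $\tfrac{y^l}{N}\bigl(\fd_j U^{j}-\fd_{x^l}U^{j}\bigr)(t,\mu)$, whereas \eqref{deltall} carries $\tfrac{y^l}{N}\bigl(\fd_j U^{x^l}-\fd_{x^l}U^{x^l}\bigr)(t,\mu)$. The difference between these two candidates is $\tfrac{y^l}{N}$ times an $O(1)$ quantity, i.e.\ of order $y^l/N$; but since $y^l\le N$ one has $(y^l/N)^{1+\gamma'/2}\le y^l/N$, so an $O(y^l/N)$ error is in general \emph{larger} than the bound \eqref{eq:conclusion:lem:exp:first:order:variation} claimed for $\varrho^{N,l,l}$, and it cannot be absorbed into that remainder as you assert. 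The honest output of your (correct) computation is \eqref{deltall} with $U^j$ in place of $U^{x^l}$ in the gradient terms, or equivalently \eqref{deltall} as written but with the $m=l$ remainder only bounded by $Cy^l/N$. This is immaterial downstream, since in Lemmas \ref{prop:nash:1st:order} and \ref{lem:expansion:H} the entire $\tfrac{y^l}{N}(\cdots)$ correction is itself discarded into an $O(y^l/N)$ error; but as a proof of the lemma exactly as stated, the absorption claim is the one place where your argument breaks, and it points to an imprecision in the statement rather than to a repairable flaw in your method.
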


The following lemma permits to handle the first-order terms in 
the Nash system and in the master equation. 

\begin{lem}
\label{prop:nash:1st:order}
Under the assumption of 
Proposition \ref{prop:u}, the analogue of the drift term
in the Nash system \eqref{nash}, but for $u^{N,l}$, has the following expansion
\begin{equation}
\label{eq:nash:1st:order}
\begin{split}
&\frac{1}{y^l}  \sum_{m \in \EN } 
\varphi(\mu^N_{\bm{x},\bm{y}}[\bullet])
\cdot \Delta^m u^{N,l}[\bullet] 
+ \frac{1}{y^l} \sum_{m \neq l} a^*\Big(x^m, \frac{1}{y^m} u^{N,m}_{\bullet}\Big)\cdot \Delta^m u^{N,l}  [\bullet]
\\
&=  
\sum_{k,j \in \dd} \mu_{\bm{x},\bm{y}}^{N}[k] \varphi \bigl( \mu_{\bm{x},\bm{y}}^{N}[j] \bigr)
\big({\mathfrak d}_{j} U^{x^l}(t, \mu_{\bm{x},\bm{y}}^{N}) 
- {\mathfrak d}_{k} U^{x^l}(t, \mu_{\bm{x},\bm{y}}^{N})\big)
\\
&\hspace{5pt}+ \sum_{k,j\in\dd}  \mu_{\bm{x},\bm{y}}^{N}[k]\big(U^k(t, \mu_{\bm{x},\bm{y}}^{N})-U^j(t, \mu_{\bm{x},\bm{y}}^{N})\big)_+ \big({\mathfrak d}_{j}U^{x^l}(t, \mu_{\bm{x},\bm{y}}^{N}) 
- {\mathfrak d}_{k}U^{x^l}(t, \mu_{\bm{x},\bm{y}}^{N})\big)
\\
&\hspace{5pt} + 
\sum_{j \in \dd} 
\varphi(\mu^N_{\bm{x},\bm{y}}[j])
\bigl( U^{j}(t, \mu^N_{\bm{x},\bm{y}}) - U^{x^l}(t, \mu^N_{\bm{x},\bm{y}}) \bigr)
+
r_{2}^{N,l}(t, \bm{x}, \bm{y}),
\end{split}
\end{equation}
where 
the functions $u^{N,m}$ and $\Delta^m u^{N,l}$
are evaluated at point $(t,\bm{x},\bm{y}) \in [0,T] \times {\mathcal T}_{N}$,
and where 
\begin{equation}
\label{eq:rn2l} 
\bigl|r_{2}^{N,l}(t, \bm{x}, \bm{y})\bigr| \leq \frac{C}{N^\eta}, 
%+ C \frac{y^l }{N},
%   + C \sum_{j\in\NN} \frac{(y^j)^{1+\gamma}}{N^{1+\gamma}},
\end{equation}
 for an exponent $\eta$, only depending on $\gamma$ and $\epsilon$, and 
a constant $C$, only depending 
on $d$ and on
  the norm of $U$ in the space 
  $[\mathcal{C}^{1+\gamma'/2,2+\gamma'}_{\rm WF} ([0,T] \times \mathcal{S}_{d-1})]^d$.
\end{lem}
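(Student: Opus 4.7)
The strategy is to insert the first-order expansions of $\tfrac{1}{y^l}\Delta^m u^{N,l}$ from Lemma~\ref{lem:exp:first:order:variation} into each of the two sums on the left-hand side of \eqref{eq:nash:1st:order}, to reorganize the resulting expressions as weighted averages against $\mu^N_{\bm{x},\bm{y}}$, and finally to collect the various remainders. Throughout I exploit the defining constraints of ${\mathcal T}_N$, namely $\max_l y^l \le \tfrac{1}{2}N^{1-\epsilon}$ and $\min_{i}\mu^N_{\bm{x},\bm{y}}[i] \ge N^{-\epsilon}$, together with the boundedness of $U$ and of its first-order intrinsic derivatives on ${\mathcal S}_{d-1}$, which is built into the space $[\mathcal{C}^{1+\gamma'/2,2+\gamma'}_{\rm WF}]^d$ and in particular implies that $U$ is Lipschitz in the measure argument.

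For the $\varphi$-sum, I split the summation into the $m = l$ contribution (treated via \eqref{deltall}) and the contributions from $m \neq l$ (treated via \eqref{deltanl}). The $m = l$ piece produces exactly $\sum_{j}\varphi(\mu^N_{\bm{x},\bm{y}}[j])(U^j - U^{x^l})(t,\mu^N_{\bm{x},\bm{y}})$, i.e.\ the third line on the right-hand side of \eqref{eq:nash:1st:order}, plus a correction of order $y^l/N \le N^{-\epsilon}/2$ coming from the $\tfrac{y^l}{N}(\fd_j U^{x^l} - \fd_{x^l} U^{x^l})$ term and the remainder $\varrho^{N,l,l}$. The $m \neq l$ piece yields
\[
\sum_{j \in \ES}\varphi(\mu^N_{\bm{x},\bm{y}}[j])\sum_{m \neq l}\frac{y^m}{N}\bigl(\fd_j U^{x^l} - \fd_{x^m} U^{x^l}\bigr)(t,\mu^N_{\bm{x},\bm{y}}) + \sum_{m\neq l,j}\varphi(\mu^N_{\bm{x},\bm{y}}[j])\varrho^{N,l,m}[j].
\]
Using $\tfrac{1}{N}\sum_{m \neq l} y^m F(x^m) = \sum_k \mu^N_{\bm{x},\bm{y}}[k]F(k) - \tfrac{y^l}{N}F(x^l)$ converts the leading term into the first line of the right-hand side of \eqref{eq:nash:1st:order}, up to a further $O(N^{-\epsilon})$ correction.

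For the $a^*$-sum, the identity $u^{N,m}_\bullet[j]/y^m = U^j(t, \mu^N_{(j, \bm{x}^{-m}),\bm{y}})$ gives
\[
a^*\bigl(x^m, u^{N,m}_\bullet/y^m\bigr)[j] = \bigl(U^{x^m}(t, \mu^N_{\bm{x},\bm{y}}) - U^j(t, \mu^N_{(j, \bm{x}^{-m}),\bm{y}})\bigr)_+.
\]
Since $\lvert\mu^N_{(j, \bm{x}^{-m}),\bm{y}} - \mu^N_{\bm{x},\bm{y}}\rvert \le 2y^m/N$, the Lipschitz regularity of $U$ lets me replace the argument by $\mu^N_{\bm{x},\bm{y}}$ at cost $O(y^m/N)$ per pair $(j,m)$. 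Combining this with \eqref{deltanl} for $\tfrac{1}{y^l}\Delta^m u^{N,l}[j]$ and again invoking the weighted-average identity, the leading contribution of the $a^*$-sum becomes
\[
\sum_{j,k \in \ES}\mu^N_{\bm{x},\bm{y}}[k]\bigl(U^k - U^j\bigr)_+(t,\mu^N_{\bm{x},\bm{y}})\bigl(\fd_j U^{x^l} - \fd_k U^{x^l}\bigr)(t,\mu^N_{\bm{x},\bm{y}}),
\]
which is precisely the second line on the right-hand side of \eqref{eq:nash:1st:order}.

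It remains to estimate all remainders uniformly by $C/N^\eta$. Three sources contribute: (i) the $\varrho^{N,l,m}$ terms from \eqref{eq:conclusion:lem:exp:first:order:variation}, which sum to
\[
\sum_{m}(y^m)^{1+\gamma'/2} N^{-(1+\gamma'/2)} \le N^{-\gamma'/2}(\max_m y^m)^{\gamma'/2} \le 2^{-\gamma'/2}N^{-\epsilon\gamma'/2};
\]
(ii) the Lipschitz errors $O(y^m/N)$ in the $a^*$ approximation, multiplied against $\lvert\tfrac{1}{y^l}\Delta^m u^{N,l}[j]\rvert = O(y^m/N)$ and summed over $m$, give $\tfrac{1}{N^2}\sum_m(y^m)^2 \le \tfrac{\max_m y^m}{N} \le N^{-\epsilon}/2$; and (iii) the $m = l$ excisions, each of size $O(y^l/N) \le N^{-\epsilon}/2$. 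Choosing $\eta = \tfrac{1}{2}\min(\gamma'\epsilon,\epsilon)$ will yield \eqref{eq:rn2l}. The principal obstacle lies in step~(i): the pointwise bound \eqref{eq:conclusion:lem:exp:first:order:variation} is only linear in $y^m$, so a naive summation over $m$ would diverge as $N \to \infty$; the extra $(y^m)^{\gamma'/2}$ factor, converted via the truncation $\max_m y^m \le \tfrac{1}{2}N^{1-\epsilon}$ built into ${\mathcal T}_N$, is precisely what supplies the required polynomial decay.
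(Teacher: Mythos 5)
Your proposal is correct and follows essentially the same route as the paper: expand the discrete differences via Lemma \ref{lem:exp:first:order:variation}, convert the sums over players into averages against $\mu^N_{\bm{x},\bm{y}}$, and absorb the remainders using $\sum_m y^m = N$ together with $\max_m y^m \le \tfrac12 N^{1-\epsilon}$ to turn $\sum_m (y^m/N)^{1+\gamma'/2}$ into $O(N^{-\epsilon\gamma'/2})$. The only cosmetic difference is that you linearize $a^*(x^m, u^{N,m}_\bullet/y^m)$ by a direct Lipschitz argument on $U$, whereas the paper reuses the expansion \eqref{deltall} applied to player $m$; the two are equivalent first-order Taylor arguments.
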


It now remains to deal with the Hamiltonian part of the Nash system. 
\begin{lem}
\label{lem:expansion:H}
We fix $t \in [0,T]$, $(\bm{x},\bm{y}) \in {\mathcal T}_{N}$
and $l \in \NN$. 
Under the assumption of 
Proposition \ref{prop:u}, we have
	\be
	H\Big(x^l, \frac{1}{y^l}u^{N,l}_\bullet \Big) = H\bigl(x^l,U (t,\mu^N_{\bm{x},\bm{y}}) \bigr) + r^{N,l}_{3}(t, \bm{x},\bm{y}),
	\ee 
	where
	\[
	 \bigl|r^{N,l}_{3} (t, \bm{x},\bm{y})\bigr| \leq  C \frac{y^l}{N},
	\]
for	a constant $C$ that only depends 
on $d$ and on
  the norm of $U$ in the space 
  $[\mathcal{C}^{1+\gamma'/2,2+\gamma'}_{\rm WF} ([0,T] \times \mathcal{S}_{d-1})]^d$.	
\end{lem}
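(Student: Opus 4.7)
The strategy is to leverage the first-order expansion of Lemma \ref{lem:exp:first:order:variation} to show that $\frac{1}{y^l} u^{N,l}_\bullet$ is within $O(y^l/N)$ of the vector $U(t,\mu^N_{\bm{x},\bm{y}}) = (U^j(t,\mu^N_{\bm{x},\bm{y}}))_{j \in \ES}$, and then invoke the local Lipschitz regularity of $H(x^l,\cdot)$.

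First, I would recall that, from the definition \eqref{eq:uN}, $u^{N,l}(t,\bm{x},\bm{y}) = y^l U^{x^l}(t,\mu^N_{\bm{x},\bm{y}})$. Writing $u^{N,l}_\bullet[j] = u^{N,l}(t,\bm{x},\bm{y}) + \Delta^l u^{N,l}(t,\bm{x},\bm{y})[j]$ and applying formula \eqref{deltall} yields, for $j \neq x^l$,
\begin{equation*}
\frac{1}{y^l} u^{N,l}_\bullet[j]
= U^j(t,\mu^N_{\bm{x},\bm{y}})
+ \frac{y^l}{N}\bigl(\fd_{j} U^{x^l}(t,\mu^N_{\bm{x},\bm{y}}) - \fd_{x^l} U^{x^l}(t,\mu^N_{\bm{x},\bm{y}})\bigr)
+ \varrho^{N,l,l}(t,\bm{x},\bm{y})[j],
\end{equation*}
while for $j=x^l$ the equality $\frac{1}{y^l} u^{N,l}_\bullet[x^l] = U^{x^l}(t,\mu^N_{\bm{x},\bm{y}})$ holds exactly. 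Since the intrinsic first derivatives of $U$ are uniformly bounded by the $[\mathcal{C}^{1+\gamma'/2,2+\gamma'}_{\rm WF}]^d$-norm of $U$, and since \eqref{eq:conclusion:lem:exp:first:order:variation} gives $|\varrho^{N,l,l}(t,\bm{x},\bm{y})[j]| \le C (y^l)^{1+\gamma'/2}/N^{1+\gamma'/2} \le C y^l/N$ (using $y^l \leq N$), we obtain the uniform bound
\begin{equation*}
\max_{j \in \ES} \Bigl| \tfrac{1}{y^l} u^{N,l}_\bullet[j] - U^j(t,\mu^N_{\bm{x},\bm{y}}) \Bigr| \leq C \frac{y^l}{N}.
\end{equation*}

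Finally, the Hamiltonian $H(x^l,u) = -\tfrac12\sum_{j \neq x^l}(u^{x^l}-u^j)_+^2$ is a smooth function of the $d$-vector $u$, with derivatives uniformly bounded on any ball of fixed radius. Both arguments $\tfrac{1}{y^l}u^{N,l}_\bullet$ and $U(t,\mu^N_{\bm{x},\bm{y}})$ lie in a ball of radius comparable to $\|U\|_\infty$ (which is finite by Proposition \ref{prop:previous}), so $H(x^l,\cdot)$ is Lipschitz on this ball with a constant controlled by $\|U\|_\infty$. Combining the Lipschitz estimate with the pointwise bound above gives
\begin{equation*}
\Bigl| H\bigl(x^l, \tfrac{1}{y^l} u^{N,l}_\bullet\bigr) - H\bigl(x^l, U(t,\mu^N_{\bm{x},\bm{y}})\bigr) \Bigr| \leq C \frac{y^l}{N},
\end{equation*}
which is the claim. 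There is no real obstacle here: the hard work was already accomplished in Lemma \ref{lem:exp:first:order:variation}; the only minor care is to note that the local Lipschitz constant of $H$ is controlled uniformly by $\|U\|_\infty$ together with the $[\mathcal{C}^{1+\gamma'/2,2+\gamma'}_{\rm WF}]^d$-norm of $U$.
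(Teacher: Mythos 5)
Your proof is correct and follows essentially the same route as the paper: both rest entirely on the expansion \eqref{deltall} from Lemma \ref{lem:exp:first:order:variation} together with the boundedness of $U$ and $\fD U$; the paper expands the squares $(\cdot)_+^2$ directly and collects the cross terms, while you package the same estimate as local Lipschitz continuity of $H(x^l,\cdot)$ on a ball controlled by $\|U\|_\infty$ — a purely cosmetic difference.
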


\subsubsection{Conclusion}   
We now complete the proof of 
Proposition
\ref{prop:u} by 
replacing the various terms on the left-hand side of 
\eqref{eq:approximate:nash:u}
by the expansions obtained in 
Lemmas 
\ref{Nash:approximate:common:noise}, 
\ref{prop:nash:1st:order}
and
\ref{lem:expansion:H}. 
Using the fact that $y^l \leq N^{1-\epsilon}$, we easily complete the proof, thanks to the fact that $U$ satisfies the master equation \eqref{eq:master:equation:intrinsic}.

\section{Proofs of Theorems \ref{thm:convergence_value} and \ref{thm:convergence}}
\label{sec:6}

We now prove the main results of the paper.

\subsection{Proof of Theorem \ref{thm:convergence_value}}
{ \ }
%\begin{proof}[Proof of Theorem \ref{thm:convergence_value}] 
\vskip 5pt

\noindent We elaborate on the idea developed in 
\cite{CardaliaguetDelarueLasryLions} 
on a continuous state space, and then employed in \cite{bay-coh2019, cec-pel2019} for finite state spaces, paying attention to the fact that our setting here requires some care. 
The main noticeable difference with these references -- as explained in Remark \ref{rem2} \textit{(1)}-- is that we cannot provide a direct estimate for\footnote{ We here use the letter $n$ to denote the generic label of a player in the population, whilst we have used $l$ so far. 
This is to stay consistent with the notations introduced in 
Subsection 
\ref{subse:interpretation:nash}, which is used systematically in the sequel of this section. } 
\begin{equation*}
\sup_{\bm{x},\bm{y}}
\vert (\u^{N,n} - \v^{N,n})(t,\bm{x},\bm{y}) \vert, \quad t \in [0,T],\ n \in \NN,
\end{equation*}
the supremum being taken 
over $\bm{x} \in \ES^N$ and 
$\bm{y} \in \mathbb{Y}$. 
%\gre{The reason for this is that the moment estimate on the weights \eqref{boundY_magenta} explodes as $\mu^N_{\bm{x},\bm{y}}$ approaches the boundary of the simplex, while without common noise the weights are costantly 1. This is consistent with the dynamics \eqref{eq:markov:transition:2} where $1/\mu^N_{\bm{x}, \bm{y}}$ appears and, in fact, also with our previous result Theorem \ref{prop:previous}, which establishes uniqueness of mean field game solutions only for initial distribution in the interior of the simplex, and requires the optimal process to be sufficiently far away from the boundary with high probability. }

Here, instead, we must introduce a suitable weight and focus on the 
normalized quantity:
\begin{equation}
\label{eq:thetan:t}
\theta^n_{t}
:=
\sup_{\bm{x},\bm{y}}
\Bigl[ \prod_{i \in \ES}
\bigl(
N^{-\epsilon}+\mu_{\bm{x},\bm{y}}^N[i]
\bigr)^{1/d} \Phi^N(\bm{y}) \bigl(z^{N,n}-w^{N,n} \bigr)^2(t,\bm{x},\bm{y}) 
\Bigr],
\quad t \in [0,T], \ n \in \NN,
\end{equation}
with $\epsilon=1/8$ and $\ell=3$, 
where 
\begin{equation*}
\Phi^N(\bm{y}) := ( N^{-1} \sum_{m \in \EN} \vert y^m  \vert^{\ell} )^{-1}.
\end{equation*}
Observe in particular that the leading factor inside the supremum in 
the definition of 
\eqref{eq:thetan:t} decays as the 
$m$-moment of $\bm{y}$ increases or as 
the empirical distribution of one of the states decreases. 
In other words, the accuracy of our estimate for 
$\vert (\u^{N,n}_{t} - \v^{N,n}_{t})(\bm{x},\bm{y}) \vert$
becomes rather bad 
as $\mu^N_{\blx,\bly}$
gets closer to the boundary of the simplex or as
the $\ell$-moment of 
$\bly$ 
tends to $\infty$.

A key fact in the proof is that
$\Phi^N(\bm{y}) \leq 1$ for any 
$\bm{y} \in \Y$, since $\ell \geq 1$ and 
$ N^{-1} \sum_{m \in \EN}  y^m    =1$.
 \vskip 4pt

%The bound in \eqref{bound2} implies that \eqref{bound2} holds. Hence, we focus on the primer bound.
%Now, recall that $z^{N,n}$ and $w^{N,n}$ are bounded. Hence, 
%\begin{align}\notag
%\E\Big[\one_{\{\tau^N\le T\}}
%\sup_{t\in[0,T]}\big|\u^{N,n}(t,\wt\bX^n_t,\wt\bY^n_t)-\v^{N,n}(t,\wt\bX^n_t,\wt\bY^n_t)\big|^2\le C\P(\tau^N\ge T)\le \frac{C}{N^\epsilon}.
%\end{align}
%Therefore, \eqref{bound_1} follows once we show that 
%\begin{align}\notag
%&\E\Big[\onetaunT
%\sup_{t\in[0,T]}\big|\u^{N,n}(t,\wt\bX^n_t,\wt\bY^n_t)-\v^{N,n}(t,\wt\bX^n_t,\wt\bY^n_t)\big|^2
%\Big]\le \frac{C}{N^?}.
%\end{align}
%The rest of the proof is dedicated to establishing this bound.
\noindent \textit{First Step.}
For $\ell$ as before and
for
a
 fixed index $n \in \EN$, we let 
 $$\Psi(t,\bm{x},\bm{y}):=
\Phi^N(\bm{y})
 \bigl[\u^{N,l}(t,\bm{x},\bm{y})-\v^{N,l}(t,\bm{x},\bm{y})\bigr]^2.$$ 
We then 
remind the reader of the definition of 
$(\widetilde{\boldsymbol X}^{\sqbullet,l},\widetilde{\boldsymbol Y}^{\sqbullet,l})$
in 
Subsection \ref{subse:interpretation:nash} (see 
\eqref{gen2} and
\eqref{eq:markov:transition:3}),  
with $\alpha$ in \eqref{gen2} being given by 
$\alpha^*$ as in the statement of Proposition 
\ref{prop:tilde:Nash}. 
Also, we denote by $(e_{t})_{0 \le t \le T}$ some real-valued (adapted) absolutely continuous non-decreasing  process, whose precise form will be specified later on in the proof. 
Following 
\eqref{eq:ito:generic}, 
It\^o's lemma implies that, for any $t\in[0,T]$, 
\begin{align}
&d\Bigl[ e_{t}\Psi\bigl(t,\wt{\bm X}^{\sqbullet,n}_t,\wt{\bm Y}^{\sqbullet,n}_t\bigr) \Bigr]
=
\Bigl[ e_{t} \partial_t\Psi
(t,\wt{\bm X}^{\sqbullet,n}_t,\wt{\bm Y}^{\sqbullet,n}_t) +
\dot{e}_{t} \Psi
(t,\wt{\bm X}^{\sqbullet,n}_t,\wt{\bm Y}^{\sqbullet,n}_t) \Bigr]dt 
\nonumber
\\
&+ 
\sum_{l\in\NN}\int_{[0,M]^d} e_{t}\biggl( \Psi\Bigl(t,\bigl( \wt{X}^{l,n}_{t-}+\tilde f_{t-}^{l}(\theta),\wt{\bm{X}}^{-l,n}_{t-}\bigr),\wt{\bm{Y}}^{\sqbullet,n}_{t-}\Bigr)
-\Psi\Bigl(t,\wt{\bm{X}}^{\sqbullet,n}_{t-},\wt{\bm{Y}}^{\sqbullet,n}_{t-}\Bigr)\biggr) \N^l(d\theta,dt)
\nonumber
\\
&+ \int_{[0,\varepsilon N]^{\NN^d}}e_{t} \biggl(\Psi\Bigl(t,\wt{\bm{X}}^{\sqbullet,n}_{t-},\wt{Y}^{1,n}_{t-}+\tilde g^{1}_{t-}(\theta),\ldots,\wt {Y}^{N,n}_{t-}+\tilde g^N_{t-}(\theta)\Bigr)
\label{ItoPsi}
\\
&\hspace{200pt}-\Psi\bigl(t,\wt{\bm{X}}^{\sqbullet,n}_{t-},\wt{\bm{Y}}^{\sqbullet,n}_{t-}\bigr)\biggr)\N^0(d\theta,dt),
\nonumber
\end{align}
where 
we used the same representation as 
in \eqref{tilde_processes}, with $(\overline{\bm{X}},\overline{\bm{Y}})$ therein being 
understood as $(\wt{\bm{X}}^{\sqbullet,n},\wt{\bm{Y}}^{\sqbullet,n})$. 
Equivalently,  
$({\boldsymbol \beta}^{l})_{l \in \EN}$
and ${\boldsymbol \beta}^0$, as originally defined in 
\eqref{eq:beta:overline:process}, now read
\begin{equation*}
\begin{split}
&\beta_{t}^l(j) :=
\beta^l\bigl(t,\wt{\bm{X}}^{\sqbullet,n}_{t},\wt{\bm{Y}}^{\sqbullet,n}_{t}\bigr)[j] \ ; \ 
\beta^l (t,\bm{x},\bm{y})[j] :=
\varphi(\mu^N_{\bm{x},\bm{y}}[j])
+
a^*\bigl(x^l,w^{N,l}_{\bullet}(t,\bm{x},\bm{y})\bigr),
%\\
%&\hspace{210pt}+\Bigl(w^{N,l} (\bm{x},\bm{y})- w^{N,l}\bigl((j,\bm{x}^{-l}),\bm{y}\bigr)\Bigr)_+, 
\\
&\beta_t^0({\boldsymbol k}):=
\beta^0\bigl(t,\wt{\bm{X}}^{\sqbullet,n}_{t},\wt{\bm{Y}}^{\sqbullet,n}_{t}\bigr)[{\boldsymbol k}] \ ; \ 
\beta^0(t,\bm{x},\bm{y})[{\boldsymbol k}] :=
\varepsilon N \Bigl( \frac{k^{x^{n}}}{N  \mu^N_{\bm{x},\bm{y}}[x^n]} \Bigr) 
{\mathcal M}_{N,\mu^N_{\bm{x},\bm{y}}}({\boldsymbol k}),
\end{split}
\end{equation*}
$\iota$ being here equal to 1. 
Following 
\eqref{eq:f,g:overline:process}, we let
\begin{equation*}
\begin{split}
\tilde f^{l}(t,\bm{x},\bm{y},\theta) &=
\sum_{j\in \dd} (j-x^{l})\one_{(0,\beta^l(t,\bm{x},\bm{y})[j]]}(\theta^j),
\\
\tilde g^l(t,\bm{x},\bm{y},\theta) &=
\sum_{\bm{k}\in \NN^d} \Bigl( y^{l} \frac{k^{x^{l}}}{N \mu^N_{\bm{x},\bm{y}} [x^{l}]}
 - y^{l} \Bigr)
 \one_{\{\mu^N_{\bm{x},\bm{y}}[x^l] \neq 0\}}
 \one_{(0,\beta^0(t,\bm{x},\bm{y})[{\boldsymbol k}]]} (\theta^{\bm{k}}).
\end{split}
\end{equation*}
Then, 
the processes 
$(\tilde{f}_{t}^l(\theta))_{l \in \EN}$
and 
$(\tilde{g}_{t}^l(\theta))_{l \in \EN}$
in 
\eqref{ItoPsi}
are defined as (compare if needed with 
\eqref{eq:overline:f:overline:g})
\begin{equation*}
\tilde{f}_{t}(\theta)=f\bigl(t,\wt{\bm{X}}^{\sqbullet,n}_{t},\wt{\bm{Y}}^{\sqbullet,n}_{t},\theta\bigr),
\quad
\tilde{g}_{t}(\theta)=g\bigl(t,\wt{\bm{X}}^{\sqbullet,n}_{t},\wt{\bm{Y}}^{\sqbullet,n}_{t},\theta\bigr).
\end{equation*}
%=(f^l:l\in\NN)$ and $g=(g^l:l\in\NN)$. 
%\begin{align}\notag
%\wt\bX^l^{l,\theta}_t&:=\wt\bX^l_{t}+\bfe_l\sum_{j\in\ES}(j-X^l_t)\one_{\left(0,\varphi^N_t[j]+\alpha^*\big(X^m_t,\tfrac{1}{Y^m_t}\Delta^m\v^{N,m}(t,\wt\bX^l_t,\wt\bY^l_t)\big)\right]}(\theta^j),\\\notag
%\wt\bY^l^\theta_t&:=\wt\bY^l_t+\sum_{l=1}^N\sum_{k\in \NN^d} \bfe_l\left(  Y^l_t \frac{k_{ X^l_t}}{N  \mu^N_t [ X^l_t]}\one_{\{ \mu^N_t[ X^l_t] \neq 0\}} -  Y^l_t\right)\one_{\big(0,\varepsilon N\P^0\big( S_{ \mu^N_t} =k\big)\big]} (\theta_k),
%\end{align}
Now, for any $s\in[0,T]$, denote 
\begin{align*}
\Phi^n_{s} &:= \Phi^N\bigl( \wt{\bm Y}^{\sqbullet,n}_{s}\bigr),
\\
\u^n_s&:=\u^{N,n}\bigl(s,\wt{\bm{X}}^{\sqbullet,n}_s,\wt{\bm{Y}}^{\sqbullet,n}_s\bigr), \qquad \partial_t\u^n_s:=\partial_t \u^{N,n}\bigl(s,\wt{\bm{X}}^{\sqbullet,n}_s,\wt{\bm{Y}}^{\sqbullet,n}_s\bigr),
\\
\v^n_s&:=\v^{N,n}\bigl(s,\wt{\bm{X}}^{\sqbullet,n}_s,\wt{\bm{Y}}^{\sqbullet,n}_s\bigr), \qquad \partial_t\v^n_s:=\partial_t \v^{N,n}\bigl(s,\wt{\bm{X}}^{\sqbullet,n}_s,\wt{\bm{Y}}^{\sqbullet,n}_s\bigr).
\end{align*} 
Let $t_0\in[0,T]$ be the initial time of the process 
$(\wt{\bm{X}}^{\sqbullet,n},\wt{\bm{Y}}^{\sqbullet,n})$, for some initial condition 
$(\bm{x},\bm{y}) \in \ES^N \times \setQ$. 
Letting 
$(\tilde \mu_{t}^{\sqbullet,n}[i]:=\mu^N_{\wt{\bm{X}}_{t}^{\sqbullet,n},\wt{\bm{Y}}_{t}^{\sqbullet,n}}[i])_{t_{0} \le t \le T}$,  
following \eqref{deftau} with $\iota=1$ therein, 
% and then 
%\begin{equation*}
%\tilde \mu_t^{-n}[i] = \frac1{N-1} \sum_{l \not = n} \wt{Y}^{l,n}_{t} {\mathbbm 1}_{\{ 
%\wt{X}^{l,n}_{t}=i
%\}}= \frac{N}{N-1}
%\tilde \mu_t[i]
%- \frac1{N-1} 
%\wt{Y}^{n,n}_{t} {\mathbbm 1}_{\{ 
%\wt{X}^{n,n}_{t}=i\}},
%\end{equation*}
we introduce the stopping time
\begin{equation}
\label{deftilde:bb}
\tilde \tau_N := \inf \Big\{ t\geq t_{0} : \min_{i\in\dd} \tilde \mu_t[i] < N^{-\epsilon} \mbox{ or } 
\max_{l\in\NN} \tilde Y^{l,n}_t > \frac12 N^{1-\epsilon}
\Big\} \wedge T.
\end{equation}
{Letting $t\in[t_0,T]$, by} integrating both sides of \eqref{ItoPsi} on the interval $[{t\wedge \tilde \tau_N},\tilde \tau_{N}]$ and recalling that $\u^n_T=\v^n_T$, we get
\begin{align*}
&e_{\tilde{\tau}_{N}}
\Phi_{\tilde{\tau}_{N}}^n
\bigl(\u^n_{\tilde \tau_{N}}-\v^n_{\tilde \tau_{N}}\bigr)^2
%\oneTtaun
=
e_{t} \Phi_{t}^n (\u^n_t-\v^n_t)^2
\\
&+ \int_{t\wedge \tilde \tau_N}^{\tilde \tau_{N}}
\Phi^n_{s}\Bigl[
2 e_{s}  (\u^n_s-\v^n_s)(\partial_t\u^n_s-\partial_t\v^n_s)
+ \dot{e}_{s} 
(\u^n_s-\v^n_s)^2
\Bigr] ds
\\
&
+
\sum_{l\in\NN}\int_{t\wedge \tilde \tau_N}^{\tilde{\tau}_{N}}
\int_{[0,M]^d} e_{s} \Phi^n_{s-}\Big[ \big(\u^{N,n}-v^{N,n})^2
\Bigl(s, \bigl( \wt{X}^{l,n}_{s-}+\tilde f_{s-}^{l}(\theta),\wt{\bm{X}}^{-l,n}_{s-}\bigr),
\wt{\bm Y}^{\sqbullet,n}_{s-}\Bigr)
\\
&\hspace{150pt}
-(\u^n_{s-}-\v^n_{s-})^2\Big]\N^l(d\theta,ds)
\\
& + \int_{t\wedge \tilde \tau_N}^{\tilde \tau_{N}}
\int_{[0,\varepsilon N]^{\NN^d}}e_{s} \Big[ 
- \Phi^n_{s-} (\u^n_{s-}-\v^n_{s-})^2
\\
&\hspace{60pt} + 
\Phi^N \Bigl(\wt{Y}^{1,n}_{s-}+\tilde g^{1}_{s-}(\theta),\ldots \Bigr)
(\u^{N,n}-v^{N,n})^2
\Bigl(s,\wt{\bm{X}}^{\sqbullet,n}_{s-},\wt{Y}^{1,n}_{s-}+\tilde g^{1}_{s-}(\theta),\ldots\Bigr)
\Big]\N^0(d\theta,ds).
\end{align*}
We now take expectations (recalling that  
$(\wt{\bm{X}}^{\sqbullet,n}_{t_{0}},\wt{\bm{Y}}^{\sqbullet,n}_{t_{0}})=(\bm{x},\bm{y})$). We get
%multiply both sides by $\onetaunT$ and take expectations conditioned on the initial state at time $t_0$. For this, denote $\E^{(\bx,\by)}_{t_0}=\E[\;\cdot\mid(\wt\bX^n_t,\wt\bY^n_t)=(\bx,\by)]$ and $\E^{0,(\bx,\by)}_{t_0}=\E^0[\;\cdot\mid(\wt\bX^n_t,\wt\bY^n_t)=(\bx,\by)]$. The conditioning is legal since the process $(\wt\bX^n,\wt\bY^n)$ is recurrent.
\begin{align}
%(\u^n_T-\v^n_T)^2
&\E \bigl[e_{\tilde \tau_{N}} \Phi_{\tilde{\tau}_{N}}^n \bigl(\u^n_{\tilde \tau_{N}}-\v^n_{\tilde \tau_{N}}\bigr)^2
%\oneTtaun
\bigr] \nonumber
\\
&=
\E\bigl[ e_{t}  \Phi_{t}^n \bigl(\u^n_t-\v^n_t\bigr)^2\bigr]+\E \int_{t\wedge \tilde \tau_N}^{\tilde \tau_{N}}
\Phi^n_{s} \Bigl[ 2 e_{s}
(\u^n_s-\v^n_s)(\partial_t\u^n_s-\partial_t\v^n_s)
+ 
\dot{e}_{s} (\u^n_s-\v^n_s)^2 \Bigr] ds \nonumber
\\
&\quad
+
\sum_{l\in\NN}\E \int_{t\wedge \tilde \tau_N}^{\tilde \tau_{N}}
e_{s} \Phi^n_{s} \biggl[
\Bigl(\widetilde \varphi^{\sqbullet,n}_s[\bullet]+
a^*\bigl(\wt X_{s}^{l,n},w^{l}_{s}\bigr)[\bullet]
\Bigr)
\cdot\Delta^l \bigl\{ (\u^n_s-\v^n_s)^2\bigr\}[\bullet] \biggr] ds  \label{ExpPsi}
\\
 &\quad+\varepsilon N \E 
 \int_{t\wedge \tilde \tau_N}^{\tilde \tau_{N}}
 e_{s} {\mathbf E}
  \biggl\{ 
  \frac{S_{\widetilde \mu^{\sqbullet,n}_s}[\widetilde X^{n,n}_{s}]}{N \widetilde \mu^{\sqbullet,n}_s[\widetilde X^{n,n}_{s}]}
\bigg[
 - \Phi^n_{s} (\u^{n}_s-\v^{n}_s)^2 \nonumber
\\
&\hspace{60pt} +
 \Phi^N\bigl( h(S_{\widetilde \mu^{\sqbullet,n}_s},\wt{\bm{X}}^{\sqbullet,n}_s,\wt{\bm{Y}}^{\sqbullet,n}_s)\bigr)
(\u^{N,n}-\v^{N,n})^2  \bigl(s,\wt{\bm X}^{\sqbullet,n}_s, h(S_{\widetilde \mu^{\sqbullet,n}_s},\wt{\bm X}^{\sqbullet,n},\wt{\bm Y}^{\sqbullet,n})\bigr) 
\biggr]\biggr\}
ds,
\nonumber
\end{align}
where we used the notation $\widetilde \varphi^{\sqbullet,n}_s=(\widetilde \varphi^{\sqbullet,n}_s[j]:j\in\ES)$, $\widetilde \mu^{\sqbullet,n}_s=(\widetilde \mu^{\sqbullet,n}_s[j]:j\in\ES)$, with 
\be\notag
\widetilde{\mu}_{s}^{\sqbullet,n}[j]=\mu^N_{\widetilde{X}_{s}^{\sqbullet,n},\widetilde{Y}_{s}^{\sqbullet,n}}[j], 
\qquad\text{and}\qquad \widetilde{\varphi}^{\sqbullet,n}_s[j]:=\varphi\bigl(\widetilde{\mu}^{\sqbullet,n}_s[j]\bigr).
\ee
and
$$
h(k,\bm{x},\bm{y}):
=\left(\bm{y}^1\frac{k[x^1]}{N\mu^N_{\bm{x},\bm{y}}[x^1]},\ldots, \bm{y}^N\frac{k[x^N]}{N\mu^N_{\bm{x},\bm{y}}[x^N]}\right).
%=\left(\wt\bY^l_s^1\frac{S_{\mu^N_s}[X^1_s]}{N\mu^N_s[X^1_s]},\dots, \wt\bY^l_s^N\frac{S_{\mu^N_s}[X^N_s]}{N\mu^N_s[X^N_s]}\right),\qquad s\in[t,T].
$$
%For convenience (since this permits to compactify some of the formulas below),
%we also write $\Phi^N
%\bigl(s,\wt{\bm{X}}^{\sqbullet,n}_s, h(S_{\widetilde \mu^{\sqbullet,n}_s},\wt{\bm{X}}^{\sqbullet,n}_s,\wt{\bm{Y}}^{\sqbullet,n}_s)\bigr) $ for $\Phi^N\bigl( h(S_{\widetilde \mu^{\sqbullet,n}_s},\wt{\bm{X}}^{\sqbullet,n}_s,\wt{\bm{Y}}^{\sqbullet,n}_s)\bigr) $.
Recalling the notation from 
\eqref{eq:approximate:nash:u}, 
set ${r}^{n}_s:=r^{N,n}(s,\wt{\bm{X}}^n_s,\wt{\bm{Y}}^n_s)$ and 
\begin{align*}
  \alpha^{\v,l,n}_s[\bullet]&:=\widetilde{\varphi}^{\sqbullet,n}_s[\bullet]+ 
a^*\bigl(\wt X_{s}^{l,n},w^{l}_{s}\bigr)[\bullet],\qquad
H^{\v,n}_s:=H\Big(\wt X^{n,n}_s,\v^{N,n}_{\bullet}\bigl(s,\wt{\bm{X}}^{\sqbullet,n}_s,\wt{\bm{Y}}^{\sqbullet,n}_s\bigr)\Big),
\\
  \alpha^{\u,l,n}_s[\bullet]&:=\widetilde{\varphi}^{\sqbullet,n}_s[\bullet]+ 
a^*\bigl(\wt X_{s}^{l,n},\u^{l}_{s}\bigr)[\bullet]
,\qquad
H^{\u,n}_s:=H\Big(\wt X^{n,n}_s,\u^{N,n}_{\bullet}(s,\wt{\bm{X}}^{\sqbullet,n}_s,\wt{\bm{Y}}^{\sqbullet,n}_s)\Big),
\end{align*}
and using equations \eqref{nash} and \eqref{eq:approximate:nash:u}, we obtain
\begin{align}
%(\u^n_T-\v^n_T)^2
&\E \bigl[e_{\tilde{\tau}_{N}} \Phi_{\tilde{\tau}_{N}}^n \bigl(\u^n_{\tilde \tau_{N}}-\v^n_{\tilde \tau_{N}}\bigr)^2
%\oneTtaun
\bigr]
=
\E \bigl[ e_{t}  \Phi_{t}^n \bigl(\u^n_t-\v^n_t)^2\bigr]
+ 
\E \int_{t\wedge \tilde \tau_N}^{\tilde \tau_{N}} \dot{e}_{s} \Phi_{s}^n (\u^n_s-\v^n_s)^2 ds \nonumber
\\
&\quad
+2\E \biggl[\int_{t\wedge \tilde \tau_N}^{\tilde \tau_{N}} e_{s} \Phi^n_{s} (\u^n_s-\v^n_s)\Big\{\sum_{l\ne n}\Big[
\alpha^{\v,l,n}_s[\bullet] \cdot\Delta^l\v^n_s[\bullet]
-\alpha^{\u,l,n}_s[\bullet]\cdot\Delta^l\u^n_s[\bullet]%+\varphi_s\cdot\Delta^l(\v^n_s-\u^n_s)
\Big]\nonumber
\\
&\hspace{4.5cm} + \widetilde \varphi^{\sqbullet,n}_s[\bullet] \cdot\Delta^n(\v^n_s-\u^n_s) [\bullet]
+H^{\v,n}_s-H^{\u,n}_s-r^n_s
\Big\}ds\biggr]\nonumber
\\
&\quad
+
\sum_{l\in\NN}\E \biggl[\int_{t\wedge \tilde \tau_N}^{\tilde \tau_{N}}
e_{s}\Phi^n_{s} \Bigl\{
\alpha^{\v,l,n}_s[\bullet] \cdot\Delta^l(\u^n_s-\v^n_s)^2[\bullet]\Bigr\} ds\biggr]\nonumber
\\
 &\quad
+2\varepsilon N {\mathbb E} \biggl[ \int_{t\wedge \tilde \tau_N}^{\tilde \tau_{N}}
e_{s} \Phi^n_{s} {\mathbf E} \Big\{
\frac{S_{\widetilde \mu^{\sqbullet,n}_s}[\widetilde X^{n,n}_{s}]}{N \widetilde \mu^{\sqbullet,n}_s[\widetilde X^{n,n}_{s}]}
(\u^n_s-\v^n_s) \label{ExpPsi2}
\\
&\hspace{60pt}
\times \Big(
(\v^{N,n}-\u^{N,n})\bigl(s,\wt{\bm{X}}^{\sqbullet,n}_s,h(S_{\widetilde \mu^{\sqbullet,n}_s},\wt{\bm{X}}^{\sqbullet,n}_s,\wt{\bm{Y}}^{\sqbullet,n}_s)\bigr)
 -(\v^n_s-\u^n_s)\Big)\Big\}
ds
\biggr]\nonumber
\\
&\quad+\varepsilon N \E \biggl[ 
\int_{t\wedge \tilde \tau_N}^{\tilde{\tau}_{N}} e_{s} {\mathbf E} \biggl\{
\frac{S_{\widetilde \mu^{\sqbullet,n}_s}[\widetilde X^{n,n}_{s}]}{N \widetilde \mu^{\sqbullet,n}_s[\widetilde X^{n,n}_{s}]}
 \biggl[   - \Phi^n_{s} (\u^n_s-\v^n_s)^2 \nonumber
 \\
&\hspace{60pt}  +   \Phi^N\bigl( h(S_{\widetilde \mu^{\sqbullet,n}_s},\wt{\bm{X}}^{\sqbullet,n}_s,\wt{\bm{Y}}^{\sqbullet,n}_s)\bigr)
(\u^{N,n}-\v^{N,n})^2  \bigl(s,\wt{\bm X}^{\sqbullet,n}_s, h(S_{\widetilde \mu^{\sqbullet,n}_s},\wt{\bm X}^{\sqbullet,n},\wt{\bm Y}^{\sqbullet,n})\bigr)  \biggr] \biggr\} ds 
\biggr]\nonumber
\\
&=:T_{1}+T_{2}+T_{3}+T_{4}+T_{5}+T_{6}.
\nonumber
\end{align}

\noindent \textit{Second Step.}
We first treat the sum of the last two terms $T_{5}$ and $T_{6}$:
\begin{align}
&T_{5}+T_{6}
\nonumber
\\
&= 
\varepsilon N \E \biggl[ 
\int_{t\wedge \tilde \tau_N}^{\tilde{\tau}_{N}} e_{s} \Phi^n_{s} 
{\mathbf E} \biggl\{
\frac{S_{\widetilde \mu^{\sqbullet,n}_s}[\widetilde X^{n,n}_{s}]}{N \widetilde \mu^{\sqbullet,n}_s[\widetilde X^{n,n}_{s}]}
 \biggl[   (\u^{N,n}-\v^{N,n})^2  \bigl(s,\wt{\bm X}^{\sqbullet,n}_s, h(S_{\widetilde \mu^{\sqbullet,n}_s},\wt{\bm X}^{\sqbullet,n}_{s},\wt{\bm Y}^{\sqbullet,n}_{s})\bigr)
 \nonumber
 \\
&\hspace{30pt} 
- 2 
(\u^n_s-\v^n_s)
(\u^{N,n}-\v^{N,n})  \bigl(s,\wt{\bm X}^{\sqbullet,n}_s, h(S_{\widetilde \mu^{\sqbullet,n}_s},\wt{\bm X}^{\sqbullet,n}_{s},\wt{\bm Y}^{\sqbullet,n}_{s})\bigr)
+  (\u^n_s-\v^n_s)^2\biggr] \biggr\} ds 
\biggr] \nonumber
\\
&\hspace{15pt}+
\varepsilon N \E \biggl[ 
\int_{t\wedge \tilde \tau_N}^{\tilde{\tau}_{N}} e_{s} {\mathbf E} \biggl\{
\frac{S_{\widetilde \mu^{\sqbullet,n}_s}[\widetilde X^{n,n}_{s}]}{N \widetilde \mu^{\sqbullet,n}_s[\widetilde X^{n,n}_{s}]}
  \Bigl[ \Phi^N  \bigl( h(S_{\widetilde \mu^{\sqbullet,n}_s},\wt{\bm X}^{\sqbullet,n}_{s},\wt{\bm Y}^{\sqbullet,n}_{s})\bigr)
  -
\Phi^n_{s} \Bigr] \label{eq:decomposition:T5+T6}
\\
&\hspace{120pt}
\times
(\u^{N,n}-\v^{N,n})^2  \bigl(s,\wt{\bm X}^{\sqbullet,n}_s, h(S_{\widetilde \mu^{\sqbullet,n}_s},\wt{\bm X}^{\sqbullet,n}_{s},\wt{\bm Y}^{\sqbullet,n}_{s})\bigr) \biggr\} ds 
\biggr]
\nonumber
\\
&:= T_0^{5,6}+T_{1}^{5,6}. 
\nonumber
\end{align}
We start with the analysis of $T_{0}^{5,6}$. 
We get 
\begin{align}\label{eq:T5+T6}
\begin{split}
&T^{5,6}_{0} =
{\mathbb E} \biggl[ \int_{t\wedge \tilde \tau_N}^{\tilde{\tau}_{N}}
e_{s} J^n_{s} ds \biggr]
%-  {\varepsilon}
%{\mathbb E} \biggl[ \int_{t\wedge \tilde \tau_N}^{\tilde{\tau}_{N}}
%\frac{e_{s} \Phi^n_{s}}{\widetilde \mu^{\sqbullet,n}_s[\widetilde X^{n,n}_{s}]} (\u^n_s-\v^n_s)^2 ds\biggr]
%- C N^{-2+2\epsilon} {\mathbb E} \biggl[ \int_{t\wedge \tilde \tau_N}^{\tilde{\tau}_{N}}
%e_{s} ds\biggr], 
\\
&\text{with} \quad J^n_{s} := \varepsilon N \Phi^n_{s} {\mathbf E}\biggl[ 
 \frac{S_{\widetilde \mu^{\sqbullet,n}_s}[\widetilde X^{n,n}_{s}]}{N \widetilde \mu^{\sqbullet,n}_s[\widetilde X^{n,n}_{s}]}
 \\
 &\hspace{60pt} \times \biggl(  \bigl(\u^{N,n}-\v^{N,n}\bigr)\bigl(s,\wt{\bm X}^{\sqbullet,n}_s, h(S_{\widetilde \mu^{\sqbullet,n}_s},\wt{\bm X}^{\sqbullet,n}_{s},\wt{\bm Y}^{\sqbullet,n}_{s})\bigr) 
- (\u^n_s-\v^n_s) \biggr)^2 \biggr].
\end{split}
\end{align}
Back to
\eqref{eq:decomposition:T5+T6}, 
we now adress the term 
$T_{1}^{5,6}$. 
We write
\begin{equation*}
\begin{split}
&T_{1}^{5,6} = \E \int_{t\wedge \tilde \tau_N}^{\tilde \tau_{N}}
e_{s} T_{2}^{5,6}(s) ds,
\\
&\textrm{with} \quad
T_{2}^{5,6}(s) := 
 \varepsilon N \mathbf{E} \biggl[ 
\frac{S_{\widetilde \mu^{\sqbullet,n}_s}[\widetilde X^{n,n}_{s}]}{N \widetilde \mu^{\sqbullet,n}_s[\widetilde X^{n,n}_{s}]}
  \Bigl[ \Phi^N  \bigl( h(S_{\widetilde \mu^{\sqbullet,n}_s},\wt{\bm X}^{\sqbullet,n}_{s},\wt{\bm Y}^{\sqbullet,n}_{s})\bigr)
  -
\Phi^n_{s} \Bigr] 
\\
&\hspace{120pt}
\times
(\u^{N,n}-\v^{N,n})^2  \bigl(s,\wt{\bm X}^{\sqbullet,n}_s, h(S_{\widetilde \mu^{\sqbullet,n}_s},\wt{\bm X}^{\sqbullet,n}_{s},\wt{\bm Y}^{\sqbullet,n}_{s})\bigr) \biggr\}  
\biggr].
\end{split}
\end{equation*}
We now recall that $\Phi({\bm y}) = ( N^{-1} \sum_{l \in \EN} \vert y^l  \vert^{\ell} )^{-1}$. 
By the convexity of the mapping $x\mapsto x^{-1}$, it is well-checked that 
\begin{equation*}
\Phi\bigl(
h(S_{\widetilde \mu^{\sqbullet,n}_s},\wt{\bm X}^{\sqbullet,n},\wt{\bm Y}^{\sqbullet,n})
\bigr) - 
\Phi\bigl(\wt{\bm Y}_{s}^{\sqbullet,n} \bigr)
 \geq - 
\Phi^2\bigl(\wt{\bm Y}_{s}^{\sqbullet,n} \bigr) 
D  \wt{\bm Y}_{s}^{\sqbullet,n},
\end{equation*}
where
\begin{equation} 
\label{eq:DeltaY}
\begin{split}
D 
 \wt{\bm Y}_{s}^{\sqbullet,n}
 &:=  \frac1N  \sum_{l \in \EN}
\vert \wt{\bm Y}_{s}^{l,n}  \vert^{\ell}
\biggl[ \biggl(
 \frac{S_{\widetilde \mu^{\sqbullet,n}_s}[\widetilde X^{l,n}_{s}]}{N \widetilde \mu^{\sqbullet,n}_s[\widetilde X^{l,n}_{s}]}
\biggr)^{\ell}
-
1\biggr].
\end{split}
\end{equation}
from which we deduce that 
\begin{equation*}
\begin{split}
&T_{2}^{5,6}(s) 
\\
&\geq-
\varepsilon N 
 {\mathbf E}
\biggl[   \frac{S_{\widetilde \mu^{\sqbullet,n}_s}[\widetilde X^{n,n}_{s}]}{N \widetilde \mu^{\sqbullet,n}_s[\widetilde X^{n,n}_{s}]}
\bigl( z^{N,n} - w^{N,n} \bigr)^2\bigl(s,\wt{\bm{X}}^{\sqbullet,n}_s,h(S_{\widetilde \mu^{\sqbullet,n}_s},\wt{\bm{X}}^{\sqbullet,n}_s,\wt{\bm{Y}}^{\sqbullet,n}_s)\bigr)
(\Phi^n_{s})^2 
D 
 \wt{\bm{Y}}^{\sqbullet,n}_s
\biggr]
\\
&= - 
T_{2,1}^{5,6}(s) - 
T_{2,2}^{5,6}(s)+ 
T_{2,3}^{5,6}(s) ,
\end{split}
\end{equation*}
with 
\begin{align*}
&T_{2,1}^{5,6}(s) :=
\varepsilon N 
 {\mathbf E}
\biggl[ \frac{S_{\widetilde \mu^{\sqbullet,n}_s}[\widetilde X^{n,n}_{s}]}{N \widetilde \mu^{\sqbullet,n}_s[\widetilde X^{n,n}_{s}]} 
\biggl( 
\bigl( z^{N,n} - w^{N,n} \bigr)\bigl(s,\wt{\bm{X}}^{\sqbullet,n}_s,h(S_{\widetilde \mu^{\sqbullet,n}_s},\wt{\bm{X}}^{\sqbullet,n}_s,\wt{\bm{Y}}^{\sqbullet,n}_s)\bigr)
\\
&\hspace{130pt}
- 
\bigl( z^{n}_{s} - w^{n}_{s} \bigr)
\biggr)^2
(\Phi^n_{s})^2 
D 
 \wt{\bm{Y}}^{\sqbullet,n}_s
\biggr],
\\
&
T_{2,2}^{5,6}(s) :=
2 \varepsilon N 
{\mathbf E}
\biggl[ 
\frac{S_{\widetilde \mu^{\sqbullet,n}_s}[\widetilde X^{n,n}_{s}]}{N \widetilde \mu^{\sqbullet,n}_s[\widetilde X^{n,n}_{s}]} 
 \biggl( 
 \bigl( z^{N,n} - w^{N,n} \bigr)\bigl(s,\wt{\bm{X}}^{\sqbullet,n}_s,h(S_{\widetilde \mu^{\sqbullet,n}_s},\wt{\bm{X}}^{\sqbullet,n}_s,\wt{\bm{Y}}^{\sqbullet,n}_s)\bigr)
\\
&\hspace{120pt}
- 
\bigl( z^{n}_{s} - w^{n}_{s} \bigr)
\biggr)  \bigl( z^{n}_{s} - w^{n}_{s} \bigr)
(\Phi^n_{s})^2 D 
 \wt{\bm{Y}}^{\sqbullet,n}_s
\biggr]\\
&T_{2,3}^{5,6}(s) :=
3\varepsilon N 
 {\mathbf E}
\biggl[  
\frac{S_{\widetilde \mu^{\sqbullet,n}_s}[\widetilde X^{n,n}_{s}]}{N \widetilde \mu^{\sqbullet,n}_s[\widetilde X^{n,n}_{s}]}
\bigl( z^{n}_{s} - w^{n}_{s} \bigr)^2
%\bigl(s,\wt{\bm{X}}^{\sqbullet,n}_s,\wt{\bm{Y}}^{\sqbullet,n}_s\bigr)
(\Phi^n_{s})^2  D  
 \wt{\bm{Y}}^{\sqbullet,n}_s
\biggr]. 
\end{align*}
We start with the analysis of $T_{2,3}^{5,6}(s) $. 
 Denote $\mumin_s:=\min_{e\in\ES}\widetilde\mu^{\sqbullet,n}_s[e]$. 
By the definition of $D 
 \wt{\bm{Y}}^{\sqbullet,n}$, the uniform bounds of $z^n_s$ and $w^n_s$, and the second part of \eqref{eq:aux:moments:binomial:1}, we have
\begin{equation}\label{eq:T2356:bis}
\begin{split}
| T_{2,3}^{5,6}(s)| 
&\le3\varepsilon\bigl( z^n_{s} - w^{n}_{s} \bigr)^2
 (\Phi^n_{s})^2
 %\sum_{i \in \ES} 
 \sum_{l \in \EN}
\vert \wt{\bm Y}_{s}^{l,n}  \vert^{\ell}
%{\mathbbm 1}_{\{ \wt X_{s}^{l,n}=i\}}\\
%&\hspace{3cm}
%\times
\biggl|\mathbf{E}\biggl[
\frac{S_{\widetilde \mu^{\sqbullet,n}_s[\widetilde X^{n,n}_{s}]}}{N \widetilde \mu^{\sqbullet,n}_s[\widetilde X^{n,n}_{s}]}
  \biggl(\biggl(
 \frac{S_{\widetilde \mu^{\sqbullet,n}_s}[\widetilde X^{l,n}_{s}]}{N \widetilde \mu^{\sqbullet,n}_s[\widetilde X^{l,n}_{s}]}
\biggr)^{\ell}
-
1\biggr)\biggr]\biggr|
\\
&\le
c N\bigl( z^n_{s} - w^{n}_{s} \bigr)^2 \Phi^n_{s}
\biggl(
\frac{1}{N\mumin_s}+\frac{1}{N^{\frac{3}{2}-2\epsilon}}
\biggr)
\\
&\le
\frac{c}{\mumin_s} \bigl( z^n_{s} - w^{n}_{s} \bigr)^2 \Phi^n_{s}
+\frac{1}{N^{\frac{1}{2}-2\epsilon}},
\end{split}
\end{equation}
for a $c$ whose value is allowed to vary from line to line as long as it only depends on 
$T$, $\| f \|_{\infty}$ and $\|g \|_{\infty}$. 
We now turn to the analysis of $T^{5,6}_{2,1}(s)$. 
Generally speaking, the strategy is to prove that it is smaller (up to a small remainder) than a small fraction of 
$J^n_{s}$ in 
\eqref{eq:T5+T6} (since the latter is positive, it will hence permit to absorb 
$-T^{5,6}_{2,1}$ in the expansion of $T^{5,6}_{2}$). 
The proof is as follows: On the event 
\begin{equation*}
F:= \biggl\{
\biggl\vert \biggl(
 \frac{S_{\widetilde \mu^{\sqbullet,n}_s}[\widetilde X^{l,n}_{s}]}{N \widetilde \mu^{\sqbullet,n}_s[\widetilde X^{l,n}_{s}]}
\biggr)^{\ell}
-
1\biggr\vert \leq \frac12
 \;:\; l\in\{1,2,\cdots,N\}\biggr\},
\end{equation*}
we have $\vert D \widetilde{\bm Y}_{s}^{\sqbullet,n} \vert
\leq (\Phi^n_{s})^{-1}/2$, 
so that (recalling the definition of $J^n_{s}$ in \eqref{eq:T5+T6})
\begin{equation*}
\begin{split}
&T_{2,1}^{5,6}(s)
 \leq \frac12  J^n_{s}
+
c 
{\mathbf E}
\biggl[   {\mathbbm 1}_{F^{\complement}}  \frac{S_{\widetilde \mu^{\sqbullet,n}_s}[\widetilde X^{n,n}_{s}]}{N \widetilde \mu^{\sqbullet,n}_s[\widetilde X^{n,n}_{s}]} 
(\Phi^n_s)^2
\bigl\vert  D \widetilde{\bm Y}_{s}^{\sqbullet,n} 
\bigr\vert
\biggr].
\end{split}
\end{equation*}
Using 
\eqref{eq:aux:moments:binomial:2}
  and recalling from 
  \eqref{eq:ldp:moments:binomial:-1}
  that ${\mathbb P}(F^{\complement}) \leq CN\exp(-c N^{1-2 \epsilon})$, 
  we deduce that %\textcolor{blue}{[F.: Why to change to the exponent since we have the exponential decay?]}
\begin{equation}
\label{eq:T2156}
\begin{split}
&T_{2,1}^{5,6}(s)
 \leq \frac12  J^n_{s}
+
\frac{c}{{N^{1/2-2\epsilon}}}.
\end{split}
\end{equation}
 It then remains to tackle $T_{2,2}^{5,6}(s)$. 
% \sred{We write it in the form }
%\begin{equation*}
% \cancel{ \begin{split}
% T_{2,2}^{5,6}(s)
% &= 
% 2 \varepsilon N 
%{\mathbf E}
%\biggl[ 
%\frac{S_{\widetilde \mu^{\sqbullet,n}_s}[\widetilde X^{n,n}_{s}]}{N \widetilde \mu^{\sqbullet,n}_s[\widetilde X^{n,n}_{s}]} 
% \biggl( 
% \bigl( z^{N,n} - w^{N,n} \bigr)\bigl(s,\wt{\bm{X}}^{\sqbullet,n}_s,h(S_{\widetilde \mu^{\sqbullet,n}_s},\wt{\bm{X}}^{\sqbullet,n}_s,\wt{\bm{Y}}^{\sqbullet,n}_s)\bigr)
%\\
%&\hspace{120pt}
%- 
%\bigl( z^{n}_{s} - w^{n}_{s} \bigr)
%\biggr)  \bigl( z^{n}_{s} - w^{n}_{s} \bigr)
%(\Phi^n_{s})^2 D 
% \wt{\bm{Y}}^{\sqbullet,n}_s
%\biggr]
%\\
%&\hspace{15pt}
%+ 2 \varepsilon N 
%{\mathbf E}
%\biggl[ 
%\frac{S_{\widetilde \mu^{\sqbullet,n}_s}[\widetilde X^{n,n}_{s}]}{N \widetilde \mu^{\sqbullet,n}_s[\widetilde X^{n,n}_{s}]} 
%\bigl( z^{n}_{s} - w^{n}_{s} \bigr)^2 
%(\Phi^n_{s})^2   D
% \wt{\bm{Y}}^{\sqbullet,n}_s
%\biggr].
% \end{split}}
% \end{equation*}
% \red{{\it (The definition of $T^{5,6}_{2,2}$ has been modified.)}} 
 We use Young's inequality, Jensen's inequality, Cauchy--Schwartz inequality, and finally  \eqref{eq:aux:moments:binomial:2}:
 \begin{align}
\notag
  T_{2,2}^{5,6}(s) 
  &\leq \frac14  J^n(s) +  c N 
  {\mathbf E}
\biggl[
 \frac{S_{\widetilde \mu^{\sqbullet,n}_s}[\widetilde X^{n,n}_{s}]}{N \widetilde \mu^{\sqbullet,n}_s[\widetilde X^{n,n}_{s}]} \bigl( z^{n}_{s} - w^{n}_{s} \bigr)^2 
(\Phi^n_{s})^3 \bigl( D 
 \wt{\bm{Y}}^{\sqbullet,n}_s
\bigr)^2 \biggr]\\\notag
&=
\frac14  J^n(s) +  c N \bigl( z^{n}_{s} - w^{n}_{s} \bigr)^2 
\Phi^n_{s}\\\notag
&\quad\times
{\mathbf E}
\biggl[
 \frac{S_{\widetilde \mu^{\sqbullet,n}_s}[\widetilde X^{n,n}_{s}]}{N \widetilde \mu^{\sqbullet,n}_s[\widetilde X^{n,n}_{s}]}\biggl\{\sum_{l\in\EN}\frac{\tfrac 1N\vert \wt{\bm Y}_{s}^{l,n}  \vert^{\ell}
}{(\Phi^n_s)^{-1}}\sum_{i\in\ES}{\mathbbm 1}_{\{ \wt X_{s}^{l,n}=i\}}\biggl(\biggl(
 \frac{S_{\widetilde \mu^{\sqbullet,n}_s}[i]}{N \widetilde \mu^{\sqbullet,n}_s[i]}
\biggr)^{\ell}
-
1\biggr)\biggr\}^2
\biggr]
\\\notag
&\le
\frac14  J^n(s) +  c  N \bigl( z^{n}_{s} - w^{n}_{s} \bigr)^2 
\Phi^n_{s}\\\notag
&\quad\times
\sum_{l\in\EN}\frac{\tfrac 1N\vert \wt{\bm Y}_{s}^{l,n}  \vert^{\ell}
}{(\Phi^n_s)^{-1}}\sum_{i\in\ES}{\mathbf E}
\biggl[
 \frac{S_{\widetilde \mu^{\sqbullet,n}_s}[\widetilde X^{n,n}_{s}]}{N \widetilde \mu^{\sqbullet,n}_s[\widetilde X^{n,n}_{s}]}\biggl(\biggl(
 \frac{S_{\widetilde \mu^{\sqbullet,n}_s}[i]}{N \widetilde \mu^{\sqbullet,n}_s[i]}
\biggr)^{\ell}
-
1\biggr)^2
\biggr]\\\notag
&\le
\frac14  J^n(s) +  c  N \bigl( z^{n}_{s} - w^{n}_{s} \bigr)^2 
\Phi^n_{s}\\\notag
&\quad\times
\sum_{i\in\ES}{\mathbf E}
\biggl[
\biggl( \frac{S_{\widetilde \mu^{\sqbullet,n}_s}[\widetilde X^{n,n}_{s}]}{N \widetilde \mu^{\sqbullet,n}_s[\widetilde X^{n,n}_{s}]}\biggr)^2\biggr]^{1/2}{\mathbf E}
\biggl[\biggl(\biggl(
 \frac{S_{\widetilde \mu^{\sqbullet,n}_s}[i]}{N \widetilde \mu^{\sqbullet,n}_s[i]}
\biggr)^{\ell}
-
1\biggr)^4
\biggr]^{1/2}\\\label{eq:T2256}
&\le
\frac14  J^n(s) +
\frac{c}{\mumin_s} 
\Phi^n_{s}  \bigl( z^{n}_{s} - w^{n}_{s} \bigr)^2,
 \end{align}
where we indeed applied 
 \eqref{eq:aux:moments:binomial:2}
 with $p=4$ in the penultimate line  (noticing that the leading exponent of the last term therein is $1/2$).
 Moreover, 
the last inequality follows since
 $N\mumin\ge 1$ on the event $\{t\le \tilde\tau_N\}$. As a consequence,  
 invoking once again
 \eqref{eq:aux:moments:binomial:2}, we have a universal bound for
 $${\mathbf E}
\biggl[
\biggl( \frac{S_{\widetilde \mu^{\sqbullet,n}_s}[\widetilde X^{n,n}_{s}]}{N \widetilde \mu^{\sqbullet,n}_s[\widetilde X^{n,n}_{s}]}\biggr)^2\biggr]^{1/2}.$$
%Using 
% \begin{equation*}
%% \label{eq:aux:T23156:1}
%{\mathbf E} \Bigl[ \bigl\vert D  \wt{\bm{Y}}^{\sqbullet,n}_s
%\bigr\vert^4 \Bigr]^{1/4} \leq c \bigl( \Phi^n_{s} \bigr)^{-1}\Bigl\{ \Bigl( \frac{1}{N
%\wt \mu^{\sqbullet,n}_s
%[\widetilde X^{n,n}_{s}] }
%\Bigr)^{1/2}
%+ \frac{1}{N^{1-\epsilon}}
%\Bigr\},
%\end{equation*}
% we get as before
%  \begin{equation}
%\label{eq:T2256}
%\bigl\vert T_{2,2}^{5,6}(s)\bigr\vert \leq 
%\frac14 J^n(s) +
%\frac{c}{\widetilde \mu^{\sqbullet,n}_s[\widetilde X^{n,n}_{s}]} 
%\Phi^n_{s}  \bigl( z^{n}_{s} - w^{n}_{s} \bigr)^2
%+ \frac{c}{N^{1/2-\epsilon}}.
%\end{equation}
Collecting all the terms 
%\eqref{eq:T23256}, 
%\eqref{eq:T23156},
\eqref{eq:T2356:bis}, 
\eqref{eq:T2156}
and
\eqref{eq:T2256}, we finally have 
\begin{equation*}
T_{1}^{5,6} \geq
- \frac{3}{4} 
{\mathbb E} \int_{t\wedge \tilde \tau_N}^{\tilde \tau_{N}} e_{s} J^n_{s} ds
-c  
{\mathbb E} \biggl[ \int_{t\wedge \tilde \tau_N}^{\tilde{\tau}_{N}}
\frac{e_{s} \Phi^n_{s}}{\mumin_s} (\u^n_s-\v^n_s)^2 ds\biggr]
- \frac{c}{N^{1/2-2\epsilon}} {\mathbb E} \biggl[ \int_{t\wedge \tilde \tau_N}^{\tilde{\tau}_{N}}
e_{s} ds\biggr].
\end{equation*}
Combining with 
\eqref{eq:T5+T6}, we get
\begin{equation*}
T_{5}
+ T_{6} \geq
-  c
{\mathbb E} \biggl[ \int_{t\wedge \tilde \tau_N}^{\tilde{\tau}_{N}}
\frac{e_{s} \Phi^n_{s}}{\mumin_s} (\u^n_s-\v^n_s)^2 ds\biggr]
- \frac{c}{N^{1/2-2\epsilon}} {\mathbb E} \biggl[ \int_{t\wedge \tilde \tau_N}^{\tilde{\tau}_{N}}
e_{s} ds\biggr].
\end{equation*}

\noindent \textit{Third Step.}
We now simplify the sum of $T_{3}$ and $T_{4}$ in \eqref{ExpPsi2}. Simple algebraic manipulations imply that for any $l\ne n$, 
\begin{align*}
&2(\u^n_s-\v^n_s)\bigl\{\alpha^{\v,l,n}_s[\bullet]\cdot\Delta^l\v^n_s[\bullet]-\alpha^{\u,l,n}_s[\bullet]\cdot\Delta^l\u^n_s[\bullet]\bigr\}+\alpha^{\v,l,n}_s[\bullet]\cdot\Delta^l(\u^n_s-\v^n_s)^2[\bullet]
\\
&\quad=
2(\u^n_s-\v^n_s)\bigl\{\alpha^{\v,l,n}_s[\bullet]\cdot\Delta^l(\v^n_s-\u^n_s)[\bullet]+(\alpha^{\v,l,n}_s-\alpha^{\u,l,n}_s)[\bullet]\cdot\Delta^l\u^n_s[\bullet]\bigr\}
\\
&\qquad+\alpha^{\v,l,n}_s[\bullet]\cdot\Delta^l(\u^n_s-\v^n_s)^2[\bullet]
\\
&\quad =
2(\u^n_s-\v^n_s)\bigl(\alpha^{\v,l,n}_s[\bullet]-\alpha^{\u,l,n}_s[\bullet]\bigr)\cdot\Delta^l\u^n_s[\bullet]
\\
&\qquad+
\alpha^{\v,l,n}_s[\bullet]\cdot \Bigl[\Delta^l(\u^n_s-\v^n_s)[\bullet]\odot \Delta^l(\u^n_s-\v^n_s)[\bullet]\Bigr],
\end{align*}
where $a\odot b=(a_ib_i:i\in\ES)$ is the element by element product between vectors. Similarly,
\begin{align*}
\alpha^{\v,n,n}_s[\bullet] \cdot\Delta^n(\u^n_s-\v^n_s)^2[\bullet]&=\alpha^{\v,n,n}_s\cdot \Bigl[\Delta^n(\u^n_s-\v^n_s)[\bullet]\odot \Delta^n(\u^n_s-\v^n_s)[\bullet]\Bigr]\\
&\quad
+
2(\u^n_s-\v^n_s)\alpha^{\v,n,n}_s[\bullet] \cdot\Bigl[\Delta^n(\u^n_s-\v^n_s)[\bullet]%\\otimes(1,\ldots,1)
\Bigr]
\\
&\geq 
2(\u^n_s-\v^n_s)\alpha^{\v,n,n}_s[\bullet] \cdot\Bigl[\Delta^n(\u^n_s-\v^n_s)[\bullet]%\\otimes(1,\ldots,1)
\Bigr],
\end{align*}
where, in the last line, 
we used the fact that the off-diagonal components of $\alpha^{\v,n}_s$ are non-negative

Back to 
\eqref{ExpPsi2}, we obtain
\begin{equation}
\label{eq:conclusion:third:step}
\begin{split}
%(\u^n_T-\v^n_T)^2
&\E \bigl[e_{\tilde{\tau}_{N}} \Phi^n_{\tilde{\tau}_{N}} \bigl(\u^n_{\tilde \tau_{N}}-\v^n_{\tilde \tau_{N}}\bigr)^2
%\oneTtaun
\bigr]
\geq
\E \bigl[ e_{t} \Phi^n_{t} \bigl(\u^n_t-\v^n_t)^2\bigr]
+ 
\E \int_{t\wedge \tilde \tau_N}^{\tilde \tau_{N}} \dot{e}_{s} \Phi^n_{s} (\u^n_s-\v^n_s)^2 ds 
\\
&\quad
+2\E \biggl[\int_{t\wedge \tilde \tau_N}^{\tilde \tau_{N}} e_{s} \Phi^n_{s} (\u^n_s-\v^n_s)\Big\{\sum_{l\ne n}\Big[
\bigl(\alpha^{\v,l,n}_s[\bullet]-\alpha^{\u,l,n}_s[\bullet]\bigr)\cdot\Delta^l\u^n_s[\bullet]
\Big]
\\
&\hspace{2cm} + \bigl( \alpha^{\v,n,n}_s[\bullet] - \widetilde \varphi^{\sqbullet,n}_s[\bullet] \bigr) \cdot\Delta^n(\u^n_s-\v^n_s) [\bullet]
+H^{\v,n}_s-H^{\u,n}_s-r^n_s
\Big\}ds\biggr]
\\
&\quad
+ \sum_{l \ne n} \E \biggl[\int_{t\wedge \tilde \tau_N}^{\tilde \tau_{N}} e_{s}
\Phi^n_{s}
\alpha^{\v,l,n}_s[\bullet]\cdot \Bigl[\Delta^l(\u^n_s-\v^n_s)[\bullet]\odot \Delta^l(\u^n_s-\v^n_s)[\bullet]\Bigr]
ds \biggr]
\\
 &\quad
-  c  
{\mathbb E} \biggl[ \int_{t\wedge \tilde \tau_N}^{\tilde{\tau}_{N}}
\frac{e_{s} \Phi^n_{s}}{\mumin_s} (\u^n_s-\v^n_s)^2 ds\biggr]
- c \frac1{N^{1/2-2\epsilon}} {\mathbb E} \biggl[ \int_{t\wedge \tilde \tau_N}^{\tilde{\tau}_{N}}
e_{s} ds\biggr].
\end{split}
\end{equation}
\vskip 4pt
%
%Therefore,
%\begin{align*}
%%(\u^n_T-\v^n_T)^2
%&\E^{(\bx,\by)}_{t_0}\big[\onetaunT(\u^n_t-\v^n_t)^2\big]\\
%&\quad\le\E^{(\bx,\by)}_{t_0}[\onetaunT(\u^n_t-\v^n_t)^2]+\sum_{l\in\NN}\E^{(\bx,\by)}_{t_0}\Big[\onetaunT\int_{t\wedge \tilde \tau_N}^T\alpha^{\v,l}_s\cdot[\Delta^l(\u^n_s-\v^n_s)\otimes\Delta^l(\u^n_s-\v^n_s)]ds\Big]\\
%&\quad
%\le
%-2\E^{(\bx,\by)}_{t_0}\Big[\onetaunT\int_{t\wedge \tilde \tau_N}^T(\u^n_s-\v^n_s)\Big\{\sum_{l\ne n}(\alpha^{\v,l}_s-\alpha^{\u,l}_s)\cdot\Delta^l\u^n_s+H^{\v,n}_s-H^{\u,n}_s+ r^n_s\\
%&\hspace{5cm}+(\varphi^N_s+\alpha^{\v,n}_s)\cdot[\Delta^n(\u^n_s-\v^n_s)%\otimes(1,\ldots,1)
%]
%\Big\}ds\Big]\\
%&\quad\le
%2\E^{(\bx,\by)}_{t_0}\Big[\onetaunT\int_{t\wedge \tilde \tau_N}^T|\u^n_s-\v^n_s|\Big\{\sum_{l\ne n}|(\alpha^{\v,l}_s-\alpha^{\u,l}_s)\cdot\Delta^l\u^n_s|+|H^{\v,n}_s-H^{\u,n}_s|+|r^n_s|\\
%&\hspace{5cm}+|\varphi^N_s+\alpha^{\v,n}_s||\Delta^n(\u^n_s-\v^n_s)|%\otimes(1,\ldots,1)]
%\Big\}ds\Big],
%\end{align*}
%where the first inequality follows since the components of $\alpha^{\v,n}_s$ are nonnegative. We now bound the terms on the r.h.s.~throughout the rest of the proof $C$ stands for a positive constant that can change from one line to the next and which is independent of $N, t_0, t, s, \bx,\by, n$, and $l$, 
%%For this recall that on the event $\{\tau^N>T\}$, one has $\sup_{s\in[0,T]}\max_{n\in\NN}Y^n_s\le N^{1-\epsilon}/2$. 

\noindent \textit{Fourth Step.} 
We now handle the 
third expectation on the right hand side of  
\eqref{eq:conclusion:third:step}.
We start with the term on the second line
of \eqref{eq:conclusion:third:step}.
From \eqref{deltanl},
\begin{equation*}
\begin{split}
\Delta^l z^{n}_{s}[j]
&=\Delta^l\u^{N,n}\bigl(s,\wt{\bm X}^{\sqbullet,n}_s,\wt{\bm Y}^{\sqbullet,n}_s\bigr)[j]
\\
&= 
\frac{\wt{Y}^{l,n}_s}{N} \Bigl[\fd_{j} U\Bigl(t, X^{n,n}_s, \mu^N_{\wt{\bm X}^{\sqbullet,n}_s,\wt{\bm Y}^{\sqbullet,n}_s}\Bigr) - \fd_{{\wt{X}^{l,n}_s}} U\Bigl(s,X^{n,n}_s, \mu^N_{\wt{\bm X}^{\sqbullet,n}_s,\wt{\bm Y}^{\sqbullet,n}_s}\Bigr) \Bigr]
\\ 
&\hspace{15pt}+ \varrho^{N,n,l}\bigl(s,\wt{\bm X}^{\sqbullet,n}_s,\wt{\bm Y}^{\sqbullet,n}_s\bigr)[j], 
\end{split}
\end{equation*}
with $|\varrho^{N,n,l}(s,\wt{\bm X}^{\sqbullet,n}_s,\wt{\bm Y}^{\sqbullet,n}_s)|\le C(\wt Y^{l,n}_s)^{1+\gamma}/N^{1+\gamma}
\leq C \wt Y^{l,n}_s/N$, the last inequality following from the fact that 
$\wt Y^{l,n}_{s} \leq N$ and the constant $C$ depending only on the various parameters in the assumption (including $\kappa$). 
%
% for any $n,l\in\NN$, $n\ne l$, $\bx$, and $ \by$, [requires a proof]
%$$\Delta^lu^{N,n}(s,\bx,\by)= \frac{y^n}{N}\Big[DU(s,x^n,\mu^{N,n}_{\bx,\by})+\tau^{N,n,l}(s,\bx,\by)\Big], $$
%where $\tau^{N,n,l}$ is bounded (its properties require proof). 
Moreover, by the regularity of $U$, its gradient is uniformly bounded. Together with the Lipschitz property of $\alpha^*$, we obtain that, for $l\ne n$,
\begin{equation*}
\begin{split}
\Bigl\vert \bigl(\alpha^{\v,l,n}_s[\bullet]-\alpha^{\u,l,n}_s[\bullet]\bigr) \cdot\Delta^l\u^n_s[\bullet]
\Bigr\vert
&\le C  \frac {\wt Y^{l,n}_s}N \Bigl\vert \bigl(- \Delta^l \v^n_{s}[\bullet] \bigr)_{+}- \bigl(- \Delta^l \u^n_{s}[\bullet]\bigr)_{+} \Bigr\vert,
\end{split}
\end{equation*}
and then, recalling that 
$\sum_{l \not =n}
(\wt Y^{l,n}_s/N) \leq 1$, we get
(by Jensen's inequality) 
\begin{equation*}
\begin{split}
&\biggl( \sum_{l \not =n}
\Bigl\vert \bigl(\alpha^{\v,l,n}_s[\bullet]-\alpha^{\u,l,n}_s[\bullet]\bigr) \cdot\Delta^l\u^n_s[\bullet]
\Bigr\vert
\biggr)^2 
\\
&\le C \sum_{l \not =n}  
\frac {\wt Y^{l,n}_s}N
\Bigl\vert \bigl(- \Delta^l \v^n_{s}[\bullet] \bigr)_{+}- \bigl(- \Delta^l \u^n_{s}[\bullet]\bigr)_{+} \Bigr\vert^2
\\
&= 
C \sum_{l \not =n}  
\sum_{i \in \ES}
\biggl[
\frac {\wt Y^{l,n}_s}N
\Bigl\vert \bigl( - \Delta^l \v^n_{s}[i] \bigr)_{+}- \bigl(- \Delta^l \u^n_{s}[i]\bigr)_{+} \Bigr\vert^2
{\mathbbm 1}_{\{  (- \Delta^l \v^n_{s}[i] )_{+} \leq \wt Y^{l,n}_s/N \}}\biggr]
\\
&\hspace{15pt}
+ C \sum_{l \not =n}  
\sum_{i \in \ES}
\biggl[
\frac {\wt Y^{l,n}_s}N
\Bigl\vert \bigl(- \Delta^l \v^n_{s}[i] \bigr)_{+}- \bigl(- \Delta^l \u^n_{s}[i]\bigr)_{+} \Bigr\vert^2
{\mathbbm 1}_{\{  ( - \Delta^l \v^n_{s}[i] )_{+} > \wt Y^{l,n}_s/N \}}\biggr]
\\
&\leq 
C \sum_{l \not =n}  
\frac {(\wt Y^{l,n}_s)^3}{N^3}
+
C \sum_{l \not =n}  
\sum_{i \in \ES}
\biggl[
\bigl( - \Delta^l \v^n_{s}[i] \bigr)_{+}
\Bigl\vert \bigl(- \Delta^l \v^n_{s}[i] \bigr)_{+}- \bigl(- \Delta^l \u^n_{s}[i]\bigr)_{+} \Bigr\vert^2\biggr],
\end{split}
\end{equation*}
where, in order to pass from the third to the last line, we used once again the fact that 
$(- \Delta^l \u^n_{s}[i] )_{+} \leq C \wt Y^{l,n}_{s}/N$. 
Recalling that $s < \tilde{\tau}_{N}$, 
we have
\begin{equation*}
\sum_{l \not =n}  
\frac {(\wt Y^{l,n}_s)^3}{N^3}
\leq 
\Bigl( \frac{N^{1-\epsilon}}{N} \Bigr)^2
\sum_{l \not =n}  
\frac {\wt Y^{l,n}_s}{N} = N^{-2 \epsilon}.
\end{equation*}
Using in addition the upper bound $( - \Delta^l \v^n_{s}[i] )_{+}
\leq \alpha_{s}^{w,l,n}[i]$
if $i \not = \wt X_{s}^{l,n}$, we deduce that 
\begin{equation}
\label{eq:deltalzn:deltalwn}
\begin{split}
&\biggl( \sum_{l \not =n}
\Bigl\vert \bigl(\alpha^{\v,l,n}_s[\bullet]-\alpha^{\u,l,n}_s[\bullet]\bigr) \cdot\Delta^l\u^n_s[\bullet]
\Bigr\vert
\biggr)^2 
\\
&\hspace{15pt} \leq C N^{-2 \epsilon} + C 
\sum_{l \not = n}
\alpha^{\v,l,n}_s[\bullet]\cdot \Bigl[\Delta^l(\u^n_s-\v^n_s)[\bullet]\odot \Delta^l(\u^n_s-\v^n_s)[\bullet]\Bigr].
\end{split}
\end{equation}
Hence, by Young's inequality and since $\epsilon=1/8$, we deduce from 
\eqref{eq:conclusion:third:step}
that 
 \begin{equation}
\label{eq:conclusion:third:step:2}
\begin{split}
%(\u^n_T-\v^n_T)^2
&\E \bigl[e_{\tilde{\tau}_{N}} \Phi^n_{\tilde{\tau}_{N}} \bigl(\u^n_{\tilde \tau_{N}}-\v^n_{\tilde \tau_{N}}\bigr)^2
%\oneTtaun
\bigr]
\geq
\E \bigl[ e_{t} \Phi^n_{t} \bigl(\u^n_t-\v^n_t)^2\bigr]
+ 
\E \int_{t\wedge \tilde \tau_N}^{\tilde \tau_{N}} \dot{e}_{s} \Phi^n_{s} (\u^n_s-\v^n_s)^2 ds 
\\
&\quad
+2\E \biggl[\int_{t\wedge \tilde \tau_N}^{\tilde \tau_{N}} e_{s} \Phi^n_{s} (\u^n_s-\v^n_s)\Big\{
- 
C (\u^n_s-\v^n_s)
\\
&\hspace{2cm} +  \bigl( \alpha^{\v,n,n}_s[\bullet] - \widetilde \varphi^{\sqbullet,n}_s[\bullet] \bigr) \cdot\Delta^n(\u^n_s-\v^n_s) [\bullet]
+H^{\v,n}_s-H^{\u,n}_s-r^n_s
\Big\}ds\biggr]
\\
 &\quad
-   c
{\mathbb E} \biggl[ \int_{t\wedge \tilde \tau_N}^{\tilde{\tau}_{N}}
\frac{e_{s} \Phi^n_{s}}{\mumin_s} (\u^n_s-\v^n_s)^2 ds\biggr]
- C \frac1{N^{2\epsilon}} {\mathbb E} \biggl[ \int_{t\wedge \tilde \tau_N}^{\tilde{\tau}_{N}}
e_{s} ds\biggr].
\end{split}
\end{equation}
We now turn to the analysis of the
third line 
on the right hand side of \eqref{eq:conclusion:third:step:2}. 
%The analysis here is pretty subtle because of the possible degeneracy of 
%$\alpha_{s}^{w,l,n}$ on the fourth line of 
%\eqref{eq:conclusion:third:step} (the reader may observe that things would be much easier if an additional 
%ellipticity condition were in force -- here, ellipticity just holds in the neighborhood of the boundary thanks 
%to the additional forcing term $\widetilde{\varphi}^{\sqbullet,n}_s[\bullet]$). 
%In fact, we proceed in several steps. 
%
%We first study the case $z^n_{s} \leq w^n_{s}$. Then, 
%
%
%%$$\Phi^l_s:=\sup_{(\bx,\by)\in\ES\times \Y}|(\u^{N,l}-\v^{N,l})(s,\bx,\by)|
%%$$ 
Recalling that 
$\tilde \varphi^{\sqbullet,n}$, $\alpha^{\v,l,n}$, $\u^{N,n},$ and $\v^{N,n}$  are bounded, 
we get, by the definition of the Hamiltonian,
\be\notag
|H^{\v,n}_t-H^{\u,n}_t|+|\wt \varphi^{\sqbullet,n}_{s}-\alpha^{\v,n,n}_s||\Delta^n(\u^n_s-\v^n_s)|\le C|\Delta^n(\u^n_s-\v^n_s)|\le C\Theta^n_s,
\ee
where 
\begin{equation}
\label{eq:Theta:ns}
\Theta^n_s:=\sum_{j \in\ES}\bigl|\bigl(\u^{N,n}-\v^{N,n}\bigr)\bigl(s,(j,\wt{\bm X}^{\sqbullet,-n}_{s}),\wt{\bm Y}^n_s\bigr)\bigr|,
\end{equation}
where $(j,\wt{\bm X}^{\sqbullet,-n}_{s})=(\wt{X}^{1,n}_{s},\dots,\wt{X}^{n-1,n}_{s},j,\wt{X}^{n+1,n}_{s},\dots)$.
Finally, from Proposition \ref{prop:u} (with $\epsilon=1/8$ therein),
\be \notag
\bigl| r^{N,n}\bigl(s, \wt{\bm X}^n_s, \wt{\bm Y}^n_s\bigr) \bigr| \leq \frac{C}{N^\eta}.
\ee
%\sout{\textcolor{blue}{Combining
%\eqref{eq:conclusion:third:step}
%and
%\eqref{eq:deltalzn:deltalwn},}}
%and the last two bounds together with the obvious one $|\v^n_s-\u^n_s|\le\Phi^n_s$
%
%
% it follows that 
%(assuming without any loss of generality that $\eta \leq 2 \epsilon$ and that $C \geq 1$)
%\begin{equation*}
%\label{eq:conclusion:fourth:step}
%\begin{split}
%%(\u^n_T-\v^n_T)^2
%&\E \bigl[e_{\tilde{\tau}_{N}} 
%\Phi^n_{\tilde{\tau}_{N}}
%\bigl(\u^n_{\tilde \tau_{N}}-\v^n_{\tilde \tau_{N}}\bigr)^2
%%\oneTtaun
%\bigr]
%\geq
%\E \bigl[ e_{t} \Phi^n_{t} \bigl(\u^n_t-\v^n_t)^2\bigr]
%+ 
%\E \int_{t\wedge \tilde \tau_N}^{\tilde \tau_{N}} \dot{e}_{s} \Phi^n_{s} (\u^n_s-\v^n_s)^2 ds 
%\\
%&\quad
%- 2\E \biggl[\int_{t\wedge \tilde \tau_N}^{\tilde \tau_{N}} e_{s} \Phi^n_s \biggl\{\sum_{l\ne n}\Bigl\vert
%\bigl(\alpha^{\v,l,n}_s[\bullet]-\alpha^{\u,l,n}_s[\bullet]\bigr)\cdot\Delta^l\u^n_s[\bullet]
%\Bigr \vert 
%+ \Phi_{s}^n + C N^{-\gamma}
%\biggr\}ds\biggr]
%\\
%&\quad
%+C^{-1} \E \biggl[\int_{t\wedge \tilde \tau_N}^{\tilde \tau_{N}} e_{s}
%\biggl( \sum_{l \not =n}
%\Bigl\vert \bigl(\alpha^{\v,l,n}_s[\bullet]-\alpha^{\u,l,n}_s[\bullet]\bigr) \cdot\Delta^l\u^n_s[\bullet]
%\Bigr\vert
%\biggr)^2 
%ds
% \biggr]
%\\
% &\quad
%-  {\varepsilon}
%{\mathbb E} \biggl[ \int_{t\wedge \tilde \tau_N}^{\tilde{\tau}_{N}}
%\frac{e_{s}}{\widetilde \mu^{\sqbullet,n}_s[\widetilde X^{n,n}_{s}]} (\u^n_s-\v^n_s)^2 ds\biggr]
%- C N^{-2+2\epsilon} {\mathbb E} \biggl[ \int_{t\wedge \tilde \tau_N}^{\tilde{\tau}_{N}}
%e_{s} ds\biggr].
%\end{split}
%\end{equation*}
%
% 
% 
Using Young's inequality $ab\le(a^2+b^2)/2$
in 
\eqref{eq:conclusion:third:step:2}
and choosing 
\begin{equation}
\label{eq:definition:e}
e_{s} = \exp \biggl(\int_{t_{0}}^t \frac{c}{\mumin_s}
ds \biggr), 
\end{equation} 
we obtain 
\begin{equation}
\label{bound_exp_ul-vl}
\begin{split}
%(\u^n_T-\v^n_T)^2
\E \bigl[e_{\tilde{\tau}_{N}}
\Phi^n_{\tilde{\tau}_{N}} \bigl(\u^n_{\tilde \tau_{N}}-\v^n_{\tilde \tau_{N}}\bigr)^2
%\oneTtaun
\bigr]
&\geq
\E \bigl[ e_{t}
\Phi^n_{t} \bigl(\u^n_t-\v^n_t)^2\bigr] 
\\
&\hspace{-35pt} - C \E \biggl[\int_{t\wedge \tilde \tau_N}^{\tilde \tau_{N}} e_{s} \Phi^n_{s} \bigl( \Theta^n_s \bigr)^2 
ds \biggr]
- \frac{C}{N^{2\epsilon}} {\mathbb E} \biggl[ \int_{t\wedge \tilde \tau_N}^{\tilde{\tau}_{N}}
e_{s} ds\biggr] - \frac{C}{N^{2 \eta}}.
\end{split}
\end{equation}
Here, it is worth recalling that 
$c$ in 
\eqref{eq:definition:e}
only depends on 
$T$, $\| f \|_{\infty}$
and $\| g \|_{\infty}$.
\vskip 4pt

\noindent \textit{Fifth Step.}
We now proceed with the analysis of the various terms on the right-hand side of 
\eqref{bound_exp_ul-vl}. 
We start with the first term. 
By
\eqref{bound_calE}, 
\eqref{boundtau_magenta}, 
and 
\eqref{boundY_magenta:-1} 
and for $\kappa$ large enough (we comment more on this requirement right after the inequality), we observe that\footnote{Note that, even in case $\tilde\tau_N=t_0$ (meaning that the initial condition is outside the domain underpinning the definition of 
$\tilde \tau_{N}$ in \eqref{deftilde:bb}), the bound from \eqref{boundtau_magenta} still holds, which implies \eqref{eq:6thstep:term0} in this case.  } 
(using the fact that 
$\Phi^n_{\tilde{\tau}_{N}} \leq 1$)
\begin{equation}
\label{eq:6thstep:term0}
\begin{split}
&\E \bigl[e_{\tilde{\tau}_{N}}
\Phi^n_{\tilde{\tau}_{N}} \bigl(\u^n_{\tilde \tau_{N}}-\v^n_{\tilde \tau_{N}}\bigr)^2
%\oneTtaun
\bigr] 
\\
&\leq C 
\E \bigl[ \bigl( e_{\tilde{\tau}_{N}} \bigr)^4 \bigr]^{1/4}
\E \bigl[
\bigl( 
 \Phi^n_{\tilde{\tau}_{N}} \bigr)^2 \bigr]^{1/2}
 {\mathbb P} \bigl( \tilde \tau_{N} < T\bigr)^{1/4}
\\
&\leq 
C 
N^{-\epsilon/(4d)}
\prod_{i \in \ES}
\bigl(
N^{-\epsilon}+\wt \mu_{t_{0}}[i]
\bigr)^{-5/(8d)}
\biggl[\biggl(\frac1N
 \sum_{l \in \EN} \vert y_{0}^l \vert^{\ell} \biggr)^{-1/4}
 + \exp(-c N^{1-2\epsilon}) \biggr] 
 \\
 &\hspace{15pt}
 \times 
 \biggl[ 
\prod_{i \in \ES}
\bigl(
N^{-\epsilon}+\wt \mu_{t_{0}}[i] \bigr)^{-1/(8d)} +
\biggl( \frac{1}{N}\sum_{l\in\NN} |y^l_0|^{\ell} \biggr)^{1/8}
\biggr].
\end{split}
\end{equation}
Notice that, in order to get an estimate for 
 $\E [ ( e_{\tilde{\tau}_{N}} )^4 ]^{1/4}$ (using \eqref{bound_calE}), we must assume that 
   $\kappa$ is large enough
   with respect to $c$, but this makes sense since 
the constant $c$ in the definition of $e_{s}$, see \eqref{eq:definition:e}, 
depends only on $\|f \|_{\infty}$, $\|g \|_{\infty}$
and $T$ and hence
does not depend on 
$\kappa$. It also interesting to observe that the polynomial decay in ${\boldsymbol y}$ in the penultimate line permits to 
balance the polynomial growth in the last line. 

If
$(N^{-1} \sum_{l \in \EN} \vert y_{0}^l \vert^{\ell} )^{-1/4}
 \geq \exp(-c N^{1-2\epsilon})$, then, using the fact that 
$(N^{-1} \sum_{l \in \EN} \vert y_{0}^l \vert^{\ell} )^{-1/4}$
is always less than 1,
we get that  
  the right-hand side in \eqref{eq:6thstep:term0} can be bounded by 
 $C 
N^{-\epsilon/(4d)}
\prod_{i \in \ES}
(
N^{-\epsilon}+\wt \mu_{t_{0}}[i])^{-3/(4d)}$, that is 
\begin{equation}
\label{eq:6thstep:term0:2}
\begin{split}
&\E \bigl[e_{\tilde{\tau}_{N}}
\Phi^n_{\tilde{\tau}_{N}} \bigl(\u^n_{\tilde \tau_{N}}-\v^n_{\tilde \tau_{N}}\bigr)^2
%\oneTtaun
\bigr] 
\leq C 
N^{-\epsilon/(4d)}
\prod_{i \in \ES}
\bigl(
N^{-\epsilon}+\wt \mu_{t_{0}}[i]
\bigr)^{-3/(4d)}.
\end{split}
\end{equation}
If 
$ (N^{-1} \sum_{l \in \EN} \vert y_{0}^l \vert^{\ell} )^{-1/4}
 \leq \exp(-c N^{1-2\epsilon})$, that is 
 $N^{-1}
 \sum_{l \in \EN} \vert y_{0}^l \vert^{\ell}
 \geq \exp(4c N^{1-2\epsilon})$, then $\tilde \tau_{N}=t_{0}$ with probability 1.  
 But, in the latter case, 
 \begin{equation*}
 \E \bigl[e_{\tilde{\tau}_{N}}
\Phi^n_{\tilde{\tau}_{N}} \bigl(\u^n_{\tilde \tau_{N}}-\v^n_{\tilde \tau_{N}}\bigr)^2
\bigr] \leq C \Phi^n_{t_{0}}
= C 
 \biggl(\frac1N \sum_{l \in \EN} \vert y_{0}^l \vert^{\ell} \biggr)^{-1}
 \leq C 
 \exp(-4c N^{1-2\epsilon}),
 \end{equation*}
 and obviously \eqref{eq:6thstep:term0:2} also holds true. 
 So, the latter holds true in any case. 
Similarly,  by 
\eqref{bound_calE}
again and also for $\kappa$ large enough as before,
\begin{equation}
\label{eq:6thstep:term0:3}
\begin{split}
N^{-2\epsilon} {\mathbb E} \biggl[ \int_{t\wedge \tilde \tau_N}^{\tilde{\tau}_{N}}
e_{s} ds\biggr] 
&\leq C 
N^{-2\epsilon}\prod_{i \in \ES}
\bigl(
N^{-\epsilon}+\wt \mu_{t_{0}}[i]
\bigr)^{-1/d}
\\
&\leq C 
N^{-\epsilon/(4d)}
\prod_{i \in \ES}
\bigl(
N^{-\epsilon}+\wt \mu_{t_{0}}[i]
\bigr)^{-3/(4d)}.
\end{split}
\end{equation}
To finish with, we 
recall from 
\eqref{eq:thetan:t}
that
\begin{equation}
\theta^n_{t}
=
\sup_{\bm{x},\bm{y}}
\Bigl[ \prod_{i \in \ES}
\bigl(
N^{-\epsilon}+\mu_{\bm{x},\bm{y}}^N[i]
\bigr)^{1/d} \Phi^N(\bm{y}) \bigl(z^{N,n}-w^{N,n} \bigr)^2(t,\bm{x},\bm{y}) 
\Bigr],
\quad t \in [0,T],
\end{equation}
This allows us to 
write, using the notation from 
\eqref{eq:Theta:ns},
\begin{equation*}
\begin{split}
\Phi^n_{s} \bigl( \Theta^n_{s} \bigr)^2 \leq
C \sum_{j \in \ES}
\prod_{i \in \ES}
\Bigl(
N^{-\epsilon}+\mu_{
(j,\wt{\bm X}^{\sqbullet,-n}_{s}),\wt{\bm Y}^{\sqbullet,n}_s}^N[i]
\Bigr)^{-1/d}
\theta^n_{s}. 
\end{split}
\end{equation*}
Now, since $s < \tilde{\tau}_{N}$, we have, for all $i \in \ES$, 
\begin{equation*}
N^{-\epsilon}
+
\mu_{
(j,\wt{\bm X}^{\sqbullet,-n}_{s}),\wt{\bm Y}^{\sqbullet,n}_{s}}^N[i]
\geq 
N^{-\epsilon} + \mu_{
\wt{\bm X}^{\sqbullet,n}_{s},\wt{\bm Y}^{\sqbullet,n}_{s}}^N[i]
- \frac{Y_{t}^{n,n}}{N}
\geq 
\frac12 \Bigl( 
N^{-\epsilon} + 
\widetilde \mu^{\sqbullet,n}_s[i] \Bigr), 
\end{equation*}
from which we deduce that 
\begin{equation*}
\begin{split}
&\E \biggl[\int_{t\wedge \tilde \tau_N}^{\tilde \tau_{N}} e_{s} \Phi^n_{s} \bigl( \Theta^n_s \bigr)^2 
ds \biggr]
\leq 
C \E \biggl[\int_{t\wedge \tilde \tau_N}^{\tilde \tau_{N}} 
\theta^n_{s} e_{s} 
\prod_{i\in\ES} 
\bigl(
N^{-\epsilon}+\wt{\mu}_s^{\sqbullet,N}[i]
\bigr)^{-1/d}
ds \biggr].
\end{split}
\end{equation*}
By 
\eqref{eq1/muexp}, the above is less than
(again, provided that $\kappa$ is chosen large enough)  
\begin{equation}
\label{eq:6thstep:term0:4}
\begin{split}
&\E \biggl[\int_{t\wedge \tilde \tau_N}^{\tilde \tau_{N}} e_{s} \Phi^n_{s} \bigl( \Theta^n_s \bigr)^2 
ds \biggr]
\leq 
C 
\prod_{i \in \ES}
\bigl(
N^{-\epsilon}+\wt \mu_{t_{0}}[i]
\bigr)^{-1/d}
\int_{t}^{T} 
\theta^n_{s} 
ds.
\end{split}
\end{equation}
So,
taking $t=t_{0}$ in  
\eqref{bound_exp_ul-vl}, 
multiplying 
the whole 
by 
$\prod_{i \in \ES}
(
N^{-\epsilon}+\wt \mu_{t_{0}}[i]
)^{1/d}$ and 
collecting the four bounds 
\eqref{bound_exp_ul-vl},
\eqref{eq:6thstep:term0:2}, 
\eqref{eq:6thstep:term0:3}
and 
\eqref{eq:6thstep:term0:4}, 
%\blue{
%we obtain,
%\begin{align}
%&\E \bigl[ e_{t}
%\Phi^n_{t} \bigl(\u^n_t-\v^n_t)^2\bigr] 
%\\\notag
%&\quad\le\E \bigl[e_{\tilde{\tau}_{N}}
%\Phi^n_{\tilde{\tau}_{N}} \bigl(\u^n_{\tilde \tau_{N}}-\v^n_{\tilde \tau_{N}}\bigr)^2
%\bigr]
%+
%N^{-2\epsilon} {\mathbb E} \biggl[ \int_{t\wedge \tilde \tau_N}^{\tilde{\tau}_{N}}
%e_{s} ds\biggr] 
%+
%\E \biggl[\int_{t\wedge \tilde \tau_N}^{\tilde \tau_{N}} e_{s} \Phi^n_{s} \bigl( \Theta^n_s \bigr)^2 
%ds \biggr]
%+CN^{-2\eta}\\\notag
%%%
%&\quad\le C 
%N^{-\epsilon/(4d)}
%\prod_{i \in \ES}
%\bigl(
%N^{-\epsilon}+\wt \mu_{t_{0}}[i]
%\bigr)^{-3/(4d)}
%+ 
%C 
%N^{-\epsilon/(4d)}
%\prod_{i \in \ES}
%\bigl(
%N^{-\epsilon}+\wt \mu_{t_{0}}[i]
%\bigr)^{-3/(4d)}
%\\\notag
%&\qquad+C 
%\prod_{i \in \ES}
%\bigl(
%N^{-\epsilon}+\wt \mu_{t_{0}}[i]
%\bigr)^{-1/d}
%\int_{t\wedge \tilde \tau_N}^{T} 
%\theta^n_{s} 
%ds
%+CN^{-2\eta}\\\notag
%!!
%\end{align}
%}
we deduce that there exists an exponent $\chi >0$ such that, for any initial condition 
$(t_{0},\bm{x},\bm{y})$, 
\begin{equation*}
\prod_{i \in \ES}
\bigl(
N^{-\epsilon}+\wt \mu_{t_{0}}[i]
\bigr)^{1/d}
 \Phi(\bm{y}) \bigl(z^{N,n}-w^{N,n} \bigr)^2(t_{0},\bm{x},\bm{y}) 
 \leq C N^{-\chi} + 
C 
\int_{t_{0}}^{T} 
\theta^n_{s} 
ds.
\end{equation*}
Taking the supremum over $\bm{x}$ and $\bm{y}$, we
get $\theta^n_{t_{0}}$ on the left-hand side. 
Since $t_{0}$ is arbitrary, we can  complete the proof 
of \eqref{bound1}
by Gronwall's lemma.
\qed
%
%Now, \eqref{bound2}
%follows from
%\eqref{eq1/muexp}
%and
%\eqref{boundY_magenta} with $\iota=0$.  
%
%We now turn to proving \eqref{bound2}. 
%Define the process 
%\begin{align}\notag
%(\bar\bX_t,\bar\bY_t)=
%\begin{cases}
%(\bX_t,\bY_t), &t\in[0,t_0],\\
%(\wt\bX_t,\wt\bY_t), &t\in[t_0,T].
%\end{cases}
%\end{align}
%Set also, 
%$\bar\mu_t=\mu^N_{\bar\bX_t,\bar\bY_t}$ and 
%\be\notag
%\bar\tau_N^n=\bar\tau_N := \inf \Big\{ t\geq 0 : \min_{i\in\dd} \bar\mu^i_t < N^{-\epsilon} \mbox{ or } 
%\max_{n\in\NN} \bar Y^{n,n}_t > \frac12 N^{1-\epsilon}
%\Big\} \wedge T.
%\ee
%The last bound implies that a.s.,
%\begin{align*}
%\one_{\{\tilde\tau^N>t_0\}}\max_{n\in\NN}|\u^{N,n}(t_0,\wt\bX^n_{t_0},\wt\bY^n_{t_0})-\v^{N,n}(t_0,\wt\bX^n_{t_0},\wt\bY^n_{t_0})|^2\le  C\E^{(\wt\bX^n_{t_0},\wt\bY^n_{t_0})}_{t_0}[\one_{\{T\ge \tilde\tau^N\}}]
%+C M^{\wt\bY^n_{t_0}}_{t_0}
%.\end{align*}
%Taking expectation $\E^{\bx,\bfe}_0$ on both sides and using  \eqref{boundY}, we obtain \eqref{bound2}. %The proof of the bound in \eqref{bound2b} follows a similar technique and therefore, it is omitted.
%
%\end{proof}

\subsection{Proof of Theorem \ref{thm:convergence}}

Now that Theorem 
\ref{thm:convergence_value}
has been proved, 
the proof is pretty straightforward. It is based on a standard diffusion approximation theorem, 
see for instance \cite[Chapter 7, Theorem 4.1]{EthierKurtz}, 
and  
our plan is thus to check the assumption of this latter statement. 
Throughout the proof, we make use of the following stopping time
\begin{equation*}
\sigma_{K}^N := 
\inf \Bigl\{ t \in [0,T] : \min_{e \in \ES} \mu^N_{t}[e] \leq 1/K \ \text{or}
\ \frac1N \sum_{l \in \ES} \vert Y_{s}^{N,l} \vert^{\ell} \geq K \Bigr\},
\quad \inf \emptyset = +\infty,
\end{equation*}
for $\ell=3$ and any real $K \geq 0$. 
Above, we denoted by $Y^{N,l}$ the $l$th coordinate of ${\bm Y}^N$.
A key fact for us is that, by
%\textcolor{brown}{\eqref{xNyN}}, 
\eqref{eq1/muexp}
and
\eqref{boundY_magenta}, with $\iota=0$ therein,
\begin{equation*}
\lim_{K \rightarrow \infty} \sup_{N \geq 1}
{\mathbb P} \Bigl( \sigma^N_{K} \leq T \Bigr) = 0. 
\end{equation*}
Note that the initial condition satisfies \eqref{xNyN} because of the convergence in \eqref{conv:init} and because 
$\bm{y}^N=(1,\dots,1)$.
 The above result is important in our analysis. It says that we can easily localize
the various conditions appearing in the statement of 
 \cite[Chapter 7, Theorem 4.1]{EthierKurtz}
 and just check them up to the stopping time $\sigma^N_{K}$
 (which is not so different from the fact that, in the latter statement, 
 all the conditions are localized with respect to the stopping $\tau_{n}^r$, 
 using the same notation as therein).

The next step is to provide a semi-martingale expansion of 
$(\mu^N_{t}[i])_{0 \leq t \leq T}$, for any $i \in \ES$. 
This is here possible by It\^o's lemma (see for instance
the proof of 
Theorem \ref{thmexpbound}
or expansion \eqref{ItoPsi}). For convenience, we remove the exponent $N$ from most of the notations 
(for instance we merely write $({\boldsymbol X},{\boldsymbol Y})$
for $({\boldsymbol X}^N,{\boldsymbol Y}^N)$), except when 
this is clearly needed (say for instance when we take a limit over $N$). 
With this convention,  we get 
\begin{align*}
d \mu^N_{t}[i] = d \biggl( \frac1N \sum_{l \in \NN} Y_{t}^l {\mathbbm 1}_{\{X_{t}^l=i\}}
\biggr)
&= \frac1N \sum_{l \in \NN} 
\int_{[0,M]^d}
Y_{t-}^l \bigl( {\mathbbm 1}_{\{X_{t-}^l + f^l_{t-}(\theta)=i\}}
-
{\mathbbm 1}_{\{X_{t-}^l =i\}}
\bigr) {\mathcal N}^l(d \theta,dt)
\\
&\hspace{15pt}
+  \frac1N \sum_{l \in \NN} 
\int_{[0,\varepsilon N]^{\NN^d}}
g^l_{t-}(\theta) 
{\mathbbm 1}_{\{X_{t-}^l =i\}} {\mathcal N}^0(d \theta,dt)
\\
&= \frac1N \sum_{l \in \NN} 
\int_{[0,M]^d}
Y_{t-}^l \bigl( {\mathbbm 1}_{\{X_{t-}^l + f^l_{t-}(\theta)=i\}}
-
{\mathbbm 1}_{\{X_{t-}^l =i\}}
\bigr) \nu^l(d \theta) dt
\\
&\hspace{15pt}+ \frac1N \sum_{l \in \NN} 
\int_{[0,M]^d}
Y_{t-}^l \bigl( {\mathbbm 1}_{\{X_{t-}^l + f^l_{t-}(\theta)=i\}}
-
{\mathbbm 1}_{\{X_{t-}^l =i\}}
\bigr) \overline{\mathcal N}^l(d \theta,dt)
\\
&\hspace{15pt}
+  \frac1N \sum_{l \in \NN} 
\int_{[0,\varepsilon N]^{\NN^d}}
g^l_{t-}(\theta) 
{\mathbbm 1}_{\{X_{t-}^l =i\}} \overline{\mathcal N}^0(d \theta,dt)
\\
&=: dB_{t} + d M_{t} + dM^0_{t}, 
\end{align*}
where $\overline{\mathcal N}^0, \overline{\mathcal N}^1,\cdots, \overline{\mathcal N}^N$ denote
the compensated Poisson measures and where, in the penultimate line, we used the fact that 
$\int_{[0,\varepsilon N]^{\NN^d}}
g^l_{t-}(\theta) d \nu^0(\theta)=0$. 
By 
\eqref{integral}, we know that 
\begin{equation*}
\begin{split}
dB_{t} &=
 \frac1N \sum_{l \in \NN}
\sum_{j \in \ES} Y_{t-}^l  \Bigl( \delta_{i,j}
-
{\mathbbm 1}_{\{X_{t-}^l =i\}} \Bigr) \Bigl( \varphi \bigl( \mu^N_{t-}[j] \bigr) 
+ \bigl( - \Delta^l w^{l}_{t-}[j] \bigr)_{+} \Bigr) dt
\\
&= 
 \frac1N \sum_{l \in \NN}
 Y_{t-}^l  
\Bigl[
\Bigl( \varphi \bigl( \mu^N_{t-}[i] \bigr) 
+ \bigl( - \Delta^l w^{l}_{t-}[i] \bigr)_{+} \Bigr)
-
\sum_{j \in \ES} {\mathbbm 1}_{\{X_{t-}^l =i\}} 
\Bigl( \varphi \bigl( \mu^N_{t-}[j] \bigr) 
+ \bigl( - \Delta^l w^{l}_{t-}[j] \bigr)_{+} \Bigr)
\Bigr]  dt. 
\end{split}
\end{equation*}
As long as $t < \sigma_{K}$, we have that
\begin{equation}
\label{eq:convergence:bound:by:K:for:Yl}
\frac{Y_{t-}^{l}}{N} \leq \frac1N N^{1/m}
\biggl( \frac1N \sum_{n \in \NN}
\vert Y_{t-}^n \vert^{m} \biggr)^{1/m} \leq K^{1/m} N^{-1+1/m}.
\end{equation}Hence, we get 
from \eqref{deltall}, \eqref{eq:conclusion:lem:exp:first:order:variation} and  
from Theorem \ref{thm:convergence_value},
\begin{equation*}
\begin{split} 
\Delta^l w^{l}_{t-}[j]
&= \Delta^l z^{l}_{t-}[j] + O \bigl( N^{-\chi}
\bigr) 
\\
&= 
U^j(t-,\mu^N_{t-})
- 
U^{X_{t-}^l}(t-,\mu^N_{t-}) + O \bigl( N^{-\chi}
\bigr),
\end{split}
\end{equation*}
for a possibly new value of $\chi$. Above, $O(\cdot)$ is the standard 
Landau notation, it being understood that the underlying constant 
is deterministic and independent of $l$, $t$ and $j$. 

We thus end up with 
\begin{equation}
\label{eq:B:b}
\begin{split}
dB_{t}&= 
\sum_{k \in \ES} \mu_{t-}^N[k]
\Bigl( \varphi \bigl( \mu^N_{t-}[i] \bigr) 
+ \bigl( U^k - U^j
 \bigr)_{+}(t,\mu_{t-}^N) \Bigr)
\\
&\hspace{15pt}-
\sum_{j \in \ES}
\mu_{t-}^N[i]
\Bigl( \varphi \bigl( \mu^N_{t-}[j] \bigr) 
+ \bigl( U^i - U^j
  \bigr)_{+}(t,\mu_{t-}^N) \Bigr) 
\Bigr] dt + O \bigl( N^{-\chi} \bigr) dt
\\
&=: b(t, \mu_{t-}^N) dt + O \bigl( N^{-\chi} \bigr) dt
\end{split}
\end{equation}
We deduce that (we put an exponent $N$ in $B$ in order to emphasize the dependence on $N$) 
\begin{equation*}
\sup_{t \in [0,T \wedge \sigma_{K}]}
\biggl\vert B_{t}^N 
- \int_{0}^t b(s,\mu_{s}^N) ds \biggr\vert 
\end{equation*}
tends to $0$ in probability as $N$ tends to $\infty$, which fits \cite[Chapter 7, (4.6)]{EthierKurtz}. 
By the way,  notice that 
\cite[Chapter 7, (4.4)]{EthierKurtz}
follows in a straightforward manner. 

We now handle the martingale part. 
By independence of the noises ${\mathcal N}^1,\cdots,{\mathcal N}^N$, it is easy to see that 
\begin{equation*}
\lim_{N \rightarrow \infty}
{\mathbb E} \bigl[ \sup_{0 \le t \leq \sigma_{K}} \vert M_{t} \vert^2 \bigr] = 0. 
\end{equation*}
Then, the compensator of $(M_{t}[i]M_{t}[j])_{0 \le t \le T}$, which we denote by $(A^{i,j}_{t})_{0 \le t \le T}$, satisfies 
\begin{equation*}
\lim_{N \rightarrow \infty}
{\mathbb E} \bigl[ \sup_{0 \le t \leq \sigma_{K}} \vert A_t\vert \bigr] = 0. 
\end{equation*}
Next, the compensator of 
$(M^0_{t}[i]M^0_{t}[j])_{0 \le t \le T}$ is given by 
$(A^{0,i,j}_{t})_{0 \le t \le T}$, defined by 
\begin{equation*}
\begin{split}
dA_{t}^{0,i,j} &= \frac1{N^2} \sum_{l,n \in \NN}
\int_{[0,\varepsilon N]^{\NN^d}}
g_{t-}^l(\theta) 
g_{t-}^n(\theta) {\mathbbm 1}_{\{ X_{t-}^l = i\}}
{\mathbbm 1}_{\{ X_{t-}^n = j\}}
\nu^0(d\theta) dt
\\
&=\frac{\varepsilon}{N} \sum_{l,n \in \NN}
\biggl(
Y^l_{t-} Y^n_{t-}
{\mathbbm 1}_{\{ X_{t-}^l = i\}}
{\mathbbm 1}_{\{ X_{t-}^n = j\}}
{\mathbf E}
\biggl[\biggl( \frac{S_{\mu^N_{t-}}[i]}{N \mu^N_{t-}[i]} - 1
\biggr)
\biggl( \frac{S_{\mu^N_{t-}}[j]}{N \mu^N_{t-}[j]} - 1
\biggr) \biggr] \biggr) dt
\\
&= \varepsilon \Bigl(   \mu_{t-}^N[i] \delta_{i,j}
- \mu_{t-}^N[i] \mu_{t-}^N[j] \Bigr) dt. 
\end{split}
\end{equation*}
By combining the last two results, we deduce that 
the compensator of $((M_t+M^0_{t})[i](M_t+M^0_{t})[j])_{0 \le t \le T}$, which we denote by 
$(\bar A^{i,j}_{t})_{0 \le t \le T}$, satisfies
\begin{align*}
&\lim_{N\to\infty}\E\Big[\sup_{t\in[0,T\wedge\sigma_K]}|\bar A^{i,j}_t-\bar A^{i,j}_{t-}|\Big]=0,\\ 
&\lim_{N\to\infty}\E\Big[\sup_{t\in[0,T\wedge\sigma_K]}|\bar A^{i,j}_t-\int_0^t \varepsilon \Bigl(   \mu_{s-}^N[i] \delta_{i,j}
- \mu_{s-}^N[i] \mu_{s-}^N[j] \Bigr) ds|\Big]=0,
\end{align*}
which are, respectively, 
\cite[Chapter 7, (4.5)]{EthierKurtz}
and
\cite[Chapter 7, (4.7)]{EthierKurtz}. 
The last assumption we need to verify is 
\cite[Chapter 7, (4.3)]{EthierKurtz}.
Since the process 
$(\mu_{t}^N)_{0 \leq t \leq T}$
takes values in the simplex, it suffices to prove that
\begin{equation}
\label{eq:convergence:jumps}
\forall r>0, 
\quad \lim_{N \rightarrow \infty}
{\mathbb P}
\biggl( \sup_{0 \leq t \leq T \wedge \sigma_{K}^N}
\vert \mu_{t}^N - \mu_{t-}^N \vert 
\geq r \biggr) = 0. 
\end{equation}
We may split the jumps into two parts: Those that are due to 
the idiosyncratic noises
and those that are due to the common noise. 
To make it clear, for any $i \in \ES$, 
\begin{equation*}
\begin{split}
\vert \mu_{t}^N[i] - \mu_{t-}^N[i] \vert
\leq \frac1N \sum_{l \in \NN} \vert Y_{t}^l - Y_{t-}^{l}
\vert +
 \frac1N \sum_{l \in \NN} Y_{t-} \vert X_{t}^l - X_{t-}^{l} \vert.
\end{split}
\end{equation*}
Since at most one of all the $(X^l)_{l \in \NN}$ may
jump at a given time, the second term on the right-hand side can be upper bounded in the following way:
\begin{equation*}
 \frac1N \sum_{l \in \NN} Y_{t-} \vert X_{t}^l - X_{t-}^{l} \vert
 \leq \frac{d}N \max_{l \in \NN} Y_{t-}^l 
 \leq 
  K^{1/m} N^{-1+1/m},
\end{equation*}
 the last bound following from 
\eqref{eq:convergence:bound:by:K:for:Yl}.

Therefore, 
we can just focus on the jumps induced by ${\boldsymbol Y}$, which is more subtle.  
The idea is to 
represent the latter ones as follows. 
By 
\eqref{eq:markov:transition:2}, 
we may indeed 
represent the 
jump times of $({\boldsymbol Y}_{t})_{0 \le t \le T}$
through the 
jump times 
$(\varrho_{n})_{n \geq 0}$
of a Poisson process of intensity $\varepsilon N$
on the axis $[0,+\infty)$ (with $\varrho_{0}=0$). The counting process
on $[0,T]$
is denoted by  
$(R_{t} = \sum_{n\geq 0} {\mathbbm 1}_{[0,t]}(\tau_{n}))_{0 \leq t \leq T}$, which
is a Poisson process 
of intensity $\varepsilon N$. 
Then, we use the fact that, when the exponential clock rings (namely at 
some time $\varrho_{n}$), 
the jump of ${\boldsymbol Y}$ is given by 
a multinomial distribution of parameters 
$N$ and $(\mu^N_{\varrho_{n}-}[i])_{i \in \ES}$. 
 Writing
\begin{equation*}
\sup_{0 \leq t \leq T \wedge \sigma_{K}}
\frac1N \sum_{l \in \NN}
\vert Y_{t}^l - Y_{t-}^l \vert
\leq \sup_{n \in \{1,\cdots, R_{T}\}}
\biggl[ \frac1N \sum_{l \in \NN}
\vert Y_{\varrho_{n}}^l - Y^l_{\varrho_{n}-} \vert
{\mathbbm 1}_{\{ \min_{e \in \ES} \mu^N_{\varrho_{n}-}[e] >1/K\}} \biggr],
\end{equation*}
we deduce that, for any $r >0$, 
\begin{equation*}
\begin{split}
&{\mathbb P}
\Bigl( 
\sup_{0 \leq t \leq T \wedge \sigma_{K}}
\frac1N \sum_{l \in \NN}
\bigl\vert Y_{t}^l - Y_{t-}^l \bigr\vert
\geq r \Bigr)
\\
&\leq {\mathbb P}
\Bigl( R_{T} \geq N r^{-1} \Bigr)
+
{\mathbb P}
\biggl( 
\sup_{n \in \{1,\cdots, \lfloor Nr^{-1} \rfloor \}}
\biggl[ \frac1N \sum_{l \in \NN}
\bigl\vert Y_{\varrho_{n}}^l - Y^l_{\varrho_{n}-} \bigr\vert
{\mathbbm 1}_{\{ \min_{e \in \ES} \mu^N_{\varrho_{n}-}[e] >1/K\}} \biggr]
\geq r \biggr)
\\
&\leq \varepsilon T r + Nr^{-1} \sup_{\mu : 
\min_{e \in \ES} \mu[e] >1/K} {\mathbb P} \biggl( \sup_{i \in \ES} \Bigl\vert \frac{S_{\mu}[i]}{N}
- \mu[i] \Bigr\vert \geq r \mu[i] \biggr) 
\\
&\leq \varepsilon T r + 2Nr^{-1} \exp \bigl( - 2 N r^2 K^{-2} \bigr), 
\end{split}
\end{equation*}
the last line following from Hoeffding's inequality. The above bound tends to $0$ as 
$N$ tends to $\infty$ first and then $r$ tends to $0$, from which we deduce that, 
for any $r>0$, the term on the first line tends to $0$ as $N$ tends to $\infty$. 
We get 
\eqref{eq:convergence:jumps}, which completes the proof. 
\qed

\section{Proofs of the auxiliary estimates of the weight process}
\label{sec:6:b}
In this section, we prove the results stated in Subsection \ref{sec:4}.
\subsection{Proof of Theorem \ref{thmexpbound}}
\label{subse:6:1}
{ \ }
\vskip 5pt

\noindent \textit{First Step.}
Recall It\^o's formula
\eqref{eq:ito:generic}
together with the notation
%We make use of It\^o's formula 
%for the process $(\overline{\bm{X}},\overline{\bm{Y}})$, 
%which writes, for a general test function $v$, 
%as
%\begin{equation*} 
%\begin{split}
%&dv\bigl(\overline{\bm{X}}_t,\overline{\bm{Y}}_t\bigr)
%=\sum_{l\in\NN} \int_{[0,M]^d} \biggl( v\Bigl(\bigl(\overline X^{l}_{t-}+\overline f^{l}_{t-}(\theta),\overline{\boldsymbol  X}^{-l}_{t-}\bigr),\overline {\bm{Y}}_{t-}\Bigr) - v\bigl(\overline{\bm{X}}_{t-},\overline{\bm{Y}}_{t-}\bigr) \biggr) \N^l(d\theta, dt)
% \\
%&\hspace{15pt} + \int_{[0,\varepsilon N]^{\NN^d}} \biggl( v\Bigl(\overline {\bm{X}}_{t-},\overline Y^{1}_{t-}+\overline g^{1}_{t-}(\theta), \dots, \overline{Y}^{N}_{t-}+\overline{ g}^{N}_{t-}(\theta)\Bigr)- v\bigl(\overline{\bm{X}}_{t-},\overline{\bm{Y}}_{t-}\bigr)  \biggr) \N^0(d\theta,dt),
%\end{split}
%\end{equation*}
%with the notations
%\begin{equation}
\eqref{eq:overline:f:overline:g}.
%\overline{f}_{t}^l(\theta)=f^l\bigl(t,\overline{\bm{X}}_{t},\overline{\bm{Y}}_{t},\theta\bigr),
%\quad
%\overline{g}_{t}^l(\theta)=g^l\bigl(t,\overline{\bm{X}}_{t},\overline{\bm{Y}}_{t},\theta\bigr).
%\end{equation}
Given $\varpi>0$, we apply this formula to the function $v(\bm{x},\bm{y})=\log(\varpi +   \mu^N_{\bm{x},\bm{y}}[i])$. 
We get, for each $i\in\dd$, 
letting 
$\overline\Lambda_t :=
\log \bigl(\varpi + \overline\mu_t[i]\bigr) $, 
\begin{align*}
&d \overline{\Lambda}_{t} 
= \sum_{l\in\NN} \int_{[0,M]^d} 
\biggl[ \log\left(\varpi + \overline \mu_{t-}[i] + \frac1N \overline Y^{l}_{t-} \Big(\one_{\{\overline X^{l}_{t-}+ \overline f_{t-}^{l}(\theta)=i\}} - \one_{\{\overline X^{l}_{t-}=i\}}\Big)\right)
\\
&\hspace{15pt} -\log\bigl(\varpi + \overline \mu_{t-}[i]\bigr) \biggr] \N^l(d\theta,dt)
 \\
&\quad+ \int_{[0,\varepsilon N]^{\NN^d}} 
\biggl[
\log\bigg(\varpi + \overline\mu_{t-}[i] + \frac1N  \sum_{l\in\NN} \overline g^{l}_{t-}(\theta)  \one_{\{\overline X^{l}_{t-}=i\}}\bigg)-\log\bigl(\varpi + \overline \mu_{t-}[i]\bigr)
\biggr]
 \N^0(d\theta,dt)
 \\
 &= \sum_{l\in\NN} \int_{[0,M]^d} 
\log\Biggl(1+\frac{ \frac1N \overline Y^{l}_{t-} \Big[\one_{\{\overline X^{l}_{t-}+ \overline f^{l}_{t-}(\theta)=i\}} - \one_{\{\overline X^{l}_{t-}=i\}}\Big]}{\varpi + \overline \mu_{t-}[i]}\Biggr) \N^l(d\theta,dt)
 \\
&\quad+ \int_{[0,\varepsilon N]^{\NN^d}} 
\log\Bigg(1 + \frac{\frac1N  \sum_{l\in\NN} \overline g^{l}_{t-}(\theta)  \one_{\{\overline X^{l}_{t-}=i\}}}{\varpi + \overline\mu_{t-}[i]}\Bigg)
 \N^0(d\theta,dt),
\end{align*}
and we have
\begin{align*}
-d\overline\Lambda_t
 &= \sum_{l\in\NN} \int_{[0,M]^d} 
\log\Bigg(\frac{ \varpi + \overline\mu_{t-}[i] }{\varpi + \overline \mu_{t-}[i]+ \frac1N \overline Y^{l}_{t-} \big[\one_{\{\overline X^{l}_{t-}+ \overline f^{l}_{t-}(\theta)=i\}} - \one_{\{ \overline X^{l}_{t-}=i\}}\big]}\Bigg) \N^l(d\theta,dt)
 \\
&\quad+ \int_{[0,\varepsilon N]^{\NN^d}} 
\log\Bigg(\frac{ \varpi + \overline \mu_{t-}[i] }{\varpi + \overline \mu_{t-}[i] + \frac1N  \sum_{l\in\NN} \overline g^{l}_{t-}(\theta)  \one_{\{\overline X^{l}_{t-}=i\}}}\Bigg)
 \N^0(d\theta,dt)
 \\
 &= \sum_{l\in\NN} \int_{[0,M]^d} 
\log\Bigg(1-\frac{ \frac1N \overline Y^{l}_{t-} \big[ \one_{\{\overline X^{l}_{t-}+ \overline f^{l}_{t-}(\theta)=i\}} - \one_{\{\overline X^{l}_{t-}=i\}}\big] }{\varpi + \overline\mu_{t-}[i] + \frac1N {\overline Y^{l}_{t-}} \big[\one_{\{\overline X^{l}_{t-}+ \overline f^{l}_{t-}(\theta)=i\}} - \one_{\{X^{l}_{t-}=i\}}\big]}\Bigg) \N^l(d\theta,dt)
 \\
&\quad+ \int_{[0,\varepsilon N]^{\NN^d}} 
\log\Bigg(1- \frac{  \frac1N  \sum_{l\in\NN} {\overline g^{l}_{t-}(\theta)}  \one_{\{\overline X^{l}_{t-}=i\}} }{\varpi + \overline\mu_{t-}[i] + \frac1N  \sum_{l\in\NN} {\overline g^{l}_{t-}(\theta)}  \one_{\{\overline X^{l}_{t-}=i\}}}\Bigg)
 \N^0(d\theta,dt).
\end{align*}
This prompts us to let 
\begin{align*}
\overline\lambda_t &:= \sum_{l\in\NN} \int_{[0,M]^d} 
-\frac{ \frac1N {\overline Y^{l}_{t}} \big[\one_{\{\overline X^{l}_{t-}+ \overline f^{l}_{t}(\theta)=i\}} - \one_{\{\overline X^{l}_t=i\}}\big] }{\varpi + \overline \mu_{t-}[i]+ \frac1N {\overline Y^{l}_{t-}} \big[\one_{\{\overline X^{l}_{t}+ \overline f^{l}_{t}(\theta)=i\}} - \one_{\{\overline X^{l}_t=i\}}\big]}
 \nu(d\theta) 
 \\
&\quad+ \int_{[0,\varepsilon N]^{\NN^d}} 
- \frac{  \frac1N  \sum_{l\in\NN} {\overline g^{l}_{t}(\theta)}  \one_{\{\overline X^{l}_t=i\}} }{\varpi + \overline \mu_t[i] + \frac1N  \sum_{l\in\NN} {\overline g^{l}_{t}(\theta)}  \one_{\{\overline X^{l}_t=i\}}}
 \nu^0(d\theta),
\end{align*}
As a result, we claim that $(Z_t= \exp \{ -\overline \Lambda_t - \int_{0}^t \overline\lambda_s ds \})_{0\leq t\leq T}$ is a supermartingale. This follows from the fact that, given a tuple of bounded predictable processes $((F_{t}^l)_{0 \leq t \leq T})_{l=0,\cdots,N}$, the process 
\begin{equation*}
\begin{split}
Z_t &= \exp \Biggl\{ 
\sum_{l=1}^N \int_0^t \int_{[0,M]^d}\log (1+  F^l_{t}) \N^l(d\theta,dt) +
\int_0^t \int_{[0,\varepsilon N]^{\NN^d}}\log (1+  F^0_{t}) \N^0(d\theta,dt)
\\
&\hspace{45pt} -  
\sum_{l=1}^N 
 \int_0^t \int_{[0,M]^d} F_{t}^l \nu(d\theta) dt 
- 
 \int_0^t \int_{[0,\varepsilon N]^{\NN^d}}  F^0_{t} \nu^0(d\theta)dt
 \Biggr\},
\end{split}
\end{equation*}
is a supermartingale since it solves the SDE
\[ 
d Z_t = \sum_{l=1}^N 
\int_{[0,M]^d} Z_{t-} F_{t}^l \big[ {\mathcal N}^l(d\theta, dt) - \nu(d\theta) dt\big]
+ 
\int_{[0,\varepsilon N]^{\NN^d}} Z_{t-} F_{t}^0 \big[ {\mathcal N}^0(d\theta, dt) - \nu^0(d\theta) dt\big].
\]
%and takes a finite number of values.
%\gre{[ A: I am not sure: takes a countable number of variables and $F^l$ are not bounded. In any case, $Z$ is a supermartingale and this is enough to conclude.]}
 In particular, %\textcolor{blue}{by a standard localization argument, 
 $\E[Z_t] \leq (\varpi+\bar\mu^i_0)^{-1}$ for any $t$. 
\vskip 5pt

\noindent \textit{Second Step.}
By definition of $\Lambda$
%\[
%\log\bigg( \frac{\eta+ \mu^i_{\tau_N}}{\eta+ \mu^i_0 }  \bigg) + \int_0^{\tau_N} (-\Lambda_t -\lambda_t)dt = \int_0^{\tau_N} -\lambda_t dt
%\]
\be 
\label{exp}
(\varpi+ \overline\mu_{t\wedge \overline\tau_N}[i])\exp\bigg( 
- \overline{\Lambda}_{t\wedge \overline\tau_N}
-
\int_0^{t\wedge \overline\tau_N} \overline\lambda_s ds\bigg) = \exp\bigg( \int_0^{t\wedge \overline \tau_N} -\overline
\lambda_s ds \bigg),
\ee
and then, using $0<\varpi<1$ and $\overline\mu_{\overline \tau_N}[i] \leq 1$, taking expectation and letting 
$t$ tend to $T$, we obtain
\be 
\label{eqexp}
\E \bigg[ \exp\bigg\{ - \int_0^{ \overline \tau_N} \overline \lambda_t dt \bigg\}\bigg] \leq \frac{2}{\varpi + 
\overline \mu_0[i]},
\ee
thus it remains to bound $-\overline\lambda_t$ from below. By \eqref{eq:intensity:measures}, we obtain, letting 
$\alpha_{t}^l(j) :=
\alpha^l(t,\bm{\overline X}_{t},\bm{\overline Y}_{t})[j]$,
\begin{align}
-\overline \lambda_t &= \frac 1N \sum_{l\in\NN} \sum_{j\in\dd}  
\bigl[\varphi\bigl(\overline \mu_t[j]\bigr) + \alpha_{t}^{l}[j]\bigr]\frac{  {\overline Y^{l}_t} \big(\one_{\{j=i\}} - \one_{\{\overline X^{l}_t=i\}}\big) }{\varpi + \overline \mu_t[i] + \frac1N {\overline Y^{l}_t} \big(\one_{\{j=i\}} - \one_{\{\overline X^{l}_t=i\}}\big)} \nonumber
 \\
&\quad+ \varepsilon N  \sum_{{\boldsymbol k}\in \NN^d} 
\Bigl( {\frac{k^{\overline X^{n}_t}}{N\overline\mu_t[\overline X^{n}_t]}}
\Bigr)^{\iota}
{\mathcal M}_{N,\overline \mu_{t}}({\boldsymbol k}) 
\frac{  \frac1N  \sum_{l\in\NN} \overline Y^{l}_t \one_{\{\overline X^{l}_t=i\}} \one_{\{\overline \mu_t[i] \neq 0\}} 
\Big( \frac{k^i}{N\overline\mu_t[i]} - 1\Big)
}{\varpi + \overline\mu_t[i] + \frac1N  \sum_{l\in\NN} \overline Y^{l}_t  \one_{\{\overline X^{l}_t=i\}} \Big( \frac{k^i}{N\overline\mu_t[i]} - 1\Big) }  \nonumber
\\
&= \frac 1N \sum_{l\in\NN} \overline Y^{l}_t \sum_{j\in\dd}  \one_{\{\overline X^{l}_t=j\}}
\frac{\varphi(\overline \mu_t[i]) + \alpha_t^l[i]}{ \varpi + \overline \mu_t[i]+ \frac1N {\overline Y^{l}_t}  - \frac1N \overline Y^{l}_t \one_{\{j=i\}}}  \nonumber
\\
&\quad -  \frac 1N \sum_{l\in\NN} \overline Y^{l}_t \one_{\{\overline X^{l}_t=i\}} \sum_{j\in\dd}  \frac{\varphi(\overline\mu_t[j]) + \alpha_t^{l}[j]}{ \varpi+ \overline \mu_t[i]+ \frac1N {\overline Y^{l}_t} \one_{\{j=i\}}  - \frac1N \overline Y^{l}_t } \nonumber
\\
&\quad + \varepsilon N  \one_{\{\overline \mu_t[i] \neq 0\}}  \EE  \bigg[ \biggl(  {\frac{S_{\overline \mu_t}[\overline X^{n}_t]}{N\overline \mu_t[\overline X^{n}_t]}}
\biggr)^{\iota}
\frac{  N^{-1}{S_{{\overline \mu_t}}[i]} - \overline\mu_t[i]}
{\varpi + \overline{\mu}_t[i] + N^{-1} S_{\overline\mu_t}[i] - \overline \mu_t[i]} \bigg]  \nonumber
\\
&= \frac 1N \sum_{l\in\NN} \overline Y^{l}_t \sum_{j\neq i} \bigg( \one_{\{\overline X^{l}_t=j\}}
\frac{\varphi(\overline \mu_t[i]) + \alpha_t^l[i]}{ \varpi + \overline \mu_t[i]+ \frac1N \overline Y^{l}_t  } 
- \one_{\{\overline X^{l}_t=i\}}  \frac{\varphi(\overline\mu_t[j]) + \alpha_t^{l}[j]}
{ \varpi + \overline \mu_t[i] - \frac1N \overline Y^{l}_t } \bigg)  \nonumber
\\
&\quad + \varepsilon   \one_{\{\overline \mu_t[i] \neq 0\}}  \sum_{j\in\ES}
%\one_{\{\overline \mu_t[\textcolor{blue}{i}] \neq 0\}} 
\one_{\{\overline X^{n}_t=j\}}
\EE  \bigg[ \biggl( 
{\frac{S_{\overline\mu_t}[j]}{N\overline \mu_t[j]}}
\biggr)^{\iota}
\frac{ S_{\overline \mu_t}[i] - N \overline \mu_t[i]}
{\varpi + N^{-1} S_{{\overline \mu_t}}[i]}    \bigg]  \nonumber
\\
&=: \bigl( \mbox{I} + \mbox{II} \bigr) +  \mbox{III}, \label{eq:I+II+III}
\end{align}
with the same convention as before that the ratio 
$S_{\overline\mu_t}[j]/(N\overline \mu_t[j])$ is understood as $1$
if $\overline \mu_t[j]=0$. Importantly, note that the denominators in expressions I and II are positive; indeed, on 
the event $\{ \overline X_{t}^l =i\}$, $\overline \mu_t[i] - N^{-1} \overline Y^{l}_t
= \overline\mu_t[i] - N^{-1} \overline Y^{l}_t \one_{\{\overline X^{l}_t=i\}} = 
N^{-1} \sum_{j\neq l} \overline Y^{j}_t \one_{\{\overline X^{j}_t=i\}}$, which is non-negative. 
\vskip 5pt

\noindent \textit{Third Step.}
We let $\varpi= N^{-\epsilon}$ and consider times $t < \overline\tau_N$,
%\red{[In some parts of the proof we should include an appropriate indicator to account for this, no?]} 
so that $\overline \mu_t[i] \geq N^{-\epsilon}$ and $\overline Y^{l}_t \leq \tfrac12 N^{1-\epsilon}$ for any $l\in\NN$. We have 
\begin{align*}
&\varpi + \overline \mu_t[i] + \frac1N \overline Y^{l}_t \leq N^{-\epsilon} + \overline \mu_t[i] + \frac12 N^{-\epsilon} \leq \frac52 \overline\mu_t[i],
\\
& \varpi + \overline \mu_t[i] - \frac1N \overline Y^{l}_t \geq  \overline \mu_t[i] - \frac12 N^{-\epsilon} \geq \frac12 \overline\mu_t[i],
\end{align*}
and thus, using the  {definition} of $\varphi$ and the bounds $0\leq\alpha_t^{i,j} \leq 2(T\|f\|_\infty + \|g\|_\infty)$, we obtain
(recall \eqref{eq:M} for the definition of $M$)
\begin{equation}
\label{eq:I+II}
\begin{split}
\mbox{I} &\geq  
\frac{2\kappa}{5} \frac{1}{\overline \mu_t[i]}
  \one_{\{\overline\mu_t[i] \leq \delta\}}\sum_{j\neq i} \overline\mu_t[j] 
\geq \frac25 \kappa (1-\delta) \one_{\{\overline\mu_t[i] \leq \delta\}} \frac{1}{ \overline \mu_t[i]}, 
\\
\mbox{II} &\geq - (d-1) M \overline \mu_t[i]  \frac{2}{\overline \mu_{t}[i]}  = - 2 (d-1) M. 
\end{split}
\end{equation}
As to the {$j$-th} term {in} III, we note that, for any $l \in \NN$, ${S_{\overline\mu_t}[l]} \sim \mathrm{Bin}(N,\overline\mu_t[l])$ and, applying Hoeffding's inequality, we get
\be \label{Hoeffding}
\PP\bigg( \bigl| S_{\overline{\mu}_{t}}[l]-N\overline \mu_t[l]\bigr| \geq \frac12 N\overline\mu_t[l] \bigg) \leq 2\exp \bigg\{ -\frac{N (\overline \mu_t[i])^2}{2} \bigg\} \leq 2\exp \bigg\{-\frac{N^{1-2\epsilon}}{2} \bigg\}.
\ee  
For any $j \in \ES$, we let
\begin{equation*}
\mbox{III}_{j}
= \varepsilon N
\EE  \bigg[ \biggl( 
{\frac{S_{\overline\mu_t}[j]}{N\overline \mu_t[j]}}
\biggr)^{\iota}
\frac{ S_{\overline \mu_t}[i] - N \overline \mu_t[i]}
{\varpi + N^{-1} S_{{\overline \mu_t}}[i]}    \bigg].
\end{equation*}
Observing that, whether $\iota=0$ or $\iota=1$, it holds that 
$\EE  [ ( {S_{\overline\mu_t}[j]}/{N\overline \mu_t[j]}
)^{\iota} ]=1$, 
we may define the new probability measure 
\begin{equation*}
\label{eq:overline:pj}
\overline{\mathbf P}^j := \biggl( 
{\frac{S_{\overline\mu_t}[j]}{N\overline \mu_t[j]}}
\biggr)^{\iota} \cdot {\mathbf P},
\end{equation*}
and, then denoting by $\overline {\mathbf E}^j$ 
the related expectation, we obtain
\begin{equation}
\label{eq:IIIj}
\begin{split} 
\mbox{III}_{j}
&=\varepsilon N
\overline\EE^j  \bigg[ 
\frac{ S_{\overline \mu_t}[i] - N \overline \mu_t[i]}
{\varpi + N^{-1} S_{\overline \mu_t}[i]}    \biggr] 
\\
&= \varepsilon N \overline\EE^j   \bigg[
\frac{ S_{\overline \mu_t}[i] - N \overline\mu_t[i]}
{ N^{1-\epsilon}   + S_{\overline \mu_t}[i] }  \one_{\{ | S_{\overline \mu_{t}}[i]-N\overline \mu_t[i]| > \frac12 N\overline \mu_t[i] \}}\bigg]
\\ 
&\hspace{15pt}+ 
 \varepsilon N \overline\EE^j \bigg[
 \frac{ S_{\overline \mu_t}[i] - N \overline \mu_t[i]}
{  N^{1-\epsilon} + S_{\overline \mu_t}[i]}  \one_{\{ | S_{\overline \mu_{t}}[i]-N\overline \mu_t[i]| \le \frac12 N\overline\mu_t[i] \}}\bigg].
\end{split}
\end{equation}
Noticing that ${\mathbf E}[ 
( {S_{\overline\mu_t}[j]}/{N\overline \mu_t[j]}
)^{2\iota}
] \leq 1 + (1-\overline \mu_{t}[j])/(N \overline \mu_{t}[j])
\leq 1 + N^{\epsilon-1}\le 2$ 
(the worst case is $\iota=1$). In particular, for any event $A \in (\Xi,{\mathcal G},{\mathbf P})$, 
$\overline{\mathbf P}^j(A) \leq \sqrt{2} {\mathbf P}(A)^{1/2}$ and, in particular, 
we have a variant of 
\eqref{Hoeffding}
under $\overline{\mathbf P}^j$ (up to a multiplicative constant). 

Now, 
the first term in \eqref{eq:IIIj} can be bounded as follows 
\begin{equation}
\label{eq:IIIjb}
\begin{split}
&\varepsilon N  \biggl\vert \overline\EE^j   \bigg[
\frac{ S_{\overline \mu_t}[i] - N \overline\mu_t[i]}
{ N^{1-\epsilon}   + S_{\overline \mu_t}[i] }  \one_{\{ | S_{\overline \mu_{t}}[i]-N\overline \mu_t[i]| > \frac12 N\overline \mu_t[i] \}}\bigg] \biggr\vert
\\
&\le 
\varepsilon N \frac{N}{N^{1-\varepsilon}}\overline \PP^j \Bigl( | S_{\overline \mu_{t}}[i] -N\overline \mu_t[i]| \geq \frac12 N\overline \mu_t[i] \Bigr)
\\
&\leq  2\varepsilon N^{1+{\epsilon}} \exp \Bigl\{-\frac{  N^{1-2 \epsilon}}{4} \Bigr\}\leq C \varepsilon,
%\quad \gre{or \quad C \frac\varepsilon N ?} \textcolor{blue}{[F.: Useful ???]} 
\end{split}
\end{equation}
with $C$ as in the statement. 
%\red{[The following minor comment is relevant also for the original proof: do we need $N^{-2}$, or $N^{-1}$ is sufficient?]}
\vskip 5pt

\noindent \textit{Fourth Step.}
In order to bound the second term in III$_j$, 
we denote, for $x\geq -N\overline \mu_t[i] $,
\[
\psi (x) = \frac{x}{  (N^{-\epsilon} +\overline \mu_t[i]) N + x}.
\]
We note that $\psi$ is increasing and concave, and split
\begin{align}
\varepsilon N \overline\EE^j &\Big[ \psi\bigl( S_{{\overline \mu_t}}[i] - N\overline\mu_t[i]\bigr) \one_{\{ | {S_{\overline \mu_t}[i]}-N\overline \mu_t[i]| \leq \frac12 N\overline \mu_t[i] \}} \Big] \nonumber
\\
&= \varepsilon N \overline\EE^j \Big[\Bigl( \psi\bigl( S_{{\overline \mu_t}}[i] - N\overline \mu_t[i]\bigr) + \psi\bigl(   N\overline \mu_t[i]- S_{\overline \mu_t}[i]\bigr) \Bigr) \one_{\{ | {S_{\overline \mu_t[i]}-N\overline \mu_t[i]| \leq \frac12 N\overline\mu_t[i] \}}} \Big]
\nonumber
\\
&\quad+\varepsilon N \overline\EE^j  \Big[ - \psi\bigl(   N\overline \mu_t[i] - S_{\overline \mu_t}[i]\bigr) \one_{\{ | \{S_{\overline \mu_t[i]}-N\overline \mu_t[i]| \leq \frac12 N\overline \mu_t[i] \}} \Big] \nonumber
\\
&=: (A) + (B). \label{eq:A+B}
\end{align}
Notice that, on the event 
$\{ | {S_{\overline \mu_t}[i]}-N\overline \mu_t[i]| \leq \frac12 N\overline \mu_t[i] \}$, 
it holds that  
$N\overline \mu_{t}[i] - {S_{\overline \mu_t}[i]} \geq -\tfrac12 N\overline \mu_t[i]$, which 
makes licit the composition by $\psi$. 
Thus Jensen's inequality, under the conditional probability given $\{
| {S_{\overline \mu_t}[i]}-N\overline \mu_t[i]| \leq \frac12 N\overline \mu_t[i] 
\}$, gives
\begin{align*}
\overline \EE^j &\Big[ - \psi\bigl(   N\overline \mu_t[i] - S_{\overline \mu_t[i]}\bigr) \Big| 
| {S_{\overline \mu_t}[i]}-N\overline \mu_t[i] | \leq \tfrac12 N\overline \mu_t[i]  \Big]
\\
&\geq  - \psi\biggl(   \overline \EE^j \Big[ N\overline \mu_t[i] - S_{{\overline \mu_t}}[i] \Big| 
| {S_{\overline \mu_t}[i]}-N\overline \mu_t[i] | \leq \tfrac12 N\overline \mu_t[i]  \Big] \biggr) 
\\
&= -\psi\Bigg( \frac{\overline \EE^j\big[ \bigl(N\overline \mu_t[i] - S_{{\overline \mu_t}}[i]\bigr) \one_{\{| {S_{\overline \mu_t[i]}-N\overline \mu_t[i]| \leq \tfrac12 N\overline \mu_t[i]\}}} \big]}{\overline \PP^j\big(| {S_{\overline \mu_t[i]}-N\overline \mu_t[i]| \leq \tfrac12 N\overline \mu_t[i]\big)}}    \Bigg).
\end{align*}
Now, notice that 
\begin{equation*}
\begin{split}
&\overline \EE^j\big[ N\overline \mu_t[i] - S_{{\overline \mu_t}}[i]\bigr]
 = \EE\Big[
\Bigl( {\frac{S_{\overline\mu_t}[j]}{N\overline \mu_t[j]}}\Bigr)^{\iota} \Bigl( N\overline \mu_t[i] - S_{{\overline \mu_t}}[i] \Bigr)\Bigr]
\end{split}
\end{equation*}
If $\iota=0$, then both terms in the above formula are obviously $0$. 
Otherwise, $\iota=1$ and then,  at least for $i \not =j$, 
\begin{equation*}
\begin{split}
&\overline \EE^j\big[ N\overline \mu_t[i] - S_{{\overline \mu_t}}[i]\bigr]
\\
&= \EE\Big[
  {\frac{S_{\overline\mu_t}[j]}{N\overline \mu_t[j]}} \Bigl( N\overline \mu_t[i] - S_{{\overline \mu_t}}[i] \Bigr)\Bigr]
\\
&= \frac1{N\overline \mu_t[j]}
\EE\Big[
\Bigl( S_{\overline\mu_t}[j] - N\overline \mu_t[j]\Bigr)  \Bigl( N\overline \mu_t[i] - S_{{\overline \mu_t}}[i] \Bigr)\Bigr]
= \frac{N \overline \mu_t[j]\overline \mu_t[i]}{N\overline \mu_t[j]}= \overline \mu_t[i]. 
\end{split}
\end{equation*}
Noticing that the first term on the third line is negative if $i=j$ and recalling 
our variant of 
\eqref{Hoeffding},
we get in any case (whether 
$i =j$ or not):
\begin{align*}
&\overline \EE^j\big[ \bigl(N\overline \mu_t[i] - S_{{\overline \mu_t}}[i]\bigr) \one_{\{| {S_{\overline \mu_t[i]}-N\overline \mu_t[i]| \leq \tfrac12 N\overline \mu_t[i]\}}} \big]
\\
&= \overline \EE^j\big[ \bigl(N\overline \mu_t[i] - S_{{\overline \mu_t}}[i]\bigr) \big]
-
\overline \EE^j\big[ \bigl(N\overline \mu_t[i] - S_{{\overline \mu_t}}[i]\bigr) \one_{\{| {S_{\overline \mu_t[i]}-N\overline \mu_t[i]| > \tfrac12 N\overline \mu_t[i]\}}} \big]
\\
&\leq \overline \mu_t[i]
-
\overline \EE^j\big[ \bigl(N\overline \mu_t[i] - S_{{\overline \mu_t}}[i]\bigr) \one_{\{| {S_{\overline \mu_t[i]}-N\overline \mu_t[i]| > \tfrac12 N\overline \mu_t[i]\}}} \big]
\\
&\leq
\overline \mu_t[i]
+ 2N 
\overline {\mathbf P}^j\Bigl( \bigl| {S_{\overline \mu_t[i]}-N\overline \mu_t[i]\bigr| > \tfrac12 N\overline \mu_t[i]\}} \Bigr)
\\
&\leq
\overline \mu_t[i]
+
2 N \exp \Big\{-\frac{ N^{1-2\epsilon}}4 \Bigr\}, 
\end{align*}
so that 
\begin{equation*}
\begin{split}
\frac{\overline \EE^j\big[ \bigl(N\overline \mu_t[i] - S_{{\overline \mu_t}}[i]\bigr) \one_{\{| {S_{\overline \mu_t[i]}-N\overline \mu_t[i]| \leq \tfrac12 N\overline \mu_t[i]\}}} \big]}{\overline \PP^j\big(| {S_{\overline \mu_t[i]}-N\overline \mu_t[i]| \leq \tfrac12 N\overline \mu_t[i]\big)}}
 &\leq \frac{\overline \mu_t[i]
+
2 N \exp \big\{-\tfrac14  N^{1-2\epsilon} \big\}}{1-2 N \exp \big\{-\tfrac14  N^{1-2\epsilon} \big\}}
\\
&\leq C \overline \mu_t[i] + \frac{C}{N^2},
\end{split}
\end{equation*}
at least for $N$ large enough (the underlying rank upon which the above bound is true 
only depending $\epsilon$); by noticing that the left-hand side is upper bounded by $N$, we can change the constant 
$C$ accordingly such that the above is always true (even for $N$ small). 
We deduce that 
\begin{equation*}
\begin{split}
&\overline \EE^j \Big[ - \psi\bigl(   N\overline \mu_t[i] - S_{\overline \mu_t[i]}\bigr) \Big| 
| {S_{\overline \mu_t}[i]}-N\overline \mu_t[i] | \leq \tfrac12 N\overline \mu_t[i]  \Big]
 \geq - \psi \Bigl( C \overline \mu_t[i] + \frac{C}{N^2} \Bigr).
\end{split}
\end{equation*}
Now, allowing the constant $C$ to vary from one inequality to another, 
\begin{equation*}
\begin{split}
 \psi \Bigl( C \overline \mu_t[i] + \frac{C}{N^2} \Bigr)
 &\leq 
C  \frac{\overline \mu_t[i] + N^{-2}}{  (N^{-\epsilon} +\overline \mu_t[i]) N + C \overline \mu_t[i]} 
\leq \frac{C}{N}.
\end{split}
\end{equation*}
%
% 
%\begin{align*}
%\overline \EE^j &
%\\
%&=- \psi\Bigg( -\frac{\EE\Big[ (N\tilde\mu^i_t - S_{\red{\tilde\mu^N_t}}^i) \one_{\{| \red{S_{\tilde\mu^N_t}^i}-N\tilde\mu^i_t| > \frac12 N\tilde\mu^i_t\}} \Big]}{\PP\Big(| \red{S_{\tilde\mu^N_t}^i}-N\tilde\mu^i_t| \leq \frac12 N\tilde\mu^i_t\Big)}    \Bigg)\\
%&\geq - \psi\Bigg( \frac{4N  \exp\big\{-N^{1-2\epsilon}/2 \big\}}{1- 2\exp\big\{-N^{1-2\epsilon}/2 \big\}}    \Bigg) 
%\geq -\psi\bigg( \frac{C}{N^2}\bigg)\\
%&= - \frac{\frac{C}{N^2}}{(\magenta{a}N^{-\epsilon}+\tilde\mu^i_t)N + \frac{C}{N^2}} 
%\geq - \frac{C}{\magenta{(1+a)\xout{2}}N^{3-\epsilon}} \geq -\frac{C}{N^2}
%\end{align*}
%for $N$ large enough, where we used $\EE[ \red{S_{\tilde\mu^N_t}^i}]= N\tilde\mu^i_t$ to pass from the third to the fourth line, and hence
As a result (recall \eqref{eq:A+B} for the definition of $(A)$ and $(B)$),
\begin{align*}
(B) &= \varepsilon N \overline\EE^j  \Big[ - \psi\bigl(   N\overline \mu_t[i] - S_{\overline \mu_t}[i]\bigr) \one_{\{ | \{S_{\overline \mu_t[i]}-N\overline \mu_t[i]| \leq \frac12 N\overline \mu_t[i] \}} \Big] 
\\
& =  \varepsilon N \overline \EE^j \Big[ 
- \psi\bigl(   N\overline \mu_t[i] - S_{\overline \mu_t}[i]\bigr) \Big|
 | \{S_{\overline \mu_t[i]}-N\overline \mu_t[i]| \leq \frac12 N\overline \mu_t[i] \}
 \Bigr] 
 \\
&\hspace{15pt} \times \overline \PP^j 
\Bigl( 
S_{\overline \mu_t[i]}-N\overline \mu_t[i]| \leq \frac12 N\overline \mu_t[i]
\Bigr)
\\
& \geq -\varepsilon N \frac{C}{N} = -C\varepsilon.
\end{align*}
The term $(A)$ is instead
\begin{align*}
(A)&=     
\varepsilon N \EE  \bigg[ \bigg(
\frac{ S_{{\overline \mu_t}}[i] - N \overline\mu_t[i]}
{  ({N^{-\epsilon}} +  \overline\mu_t[i]) N  + S_{{\overline\mu_t}}[i] - N\overline \mu_t[i] } 
\\
&\hspace{15pt}+ \frac{  N \overline \mu_t[i] - S_{{\overline \mu_t}}[i]}
{  ({N^{-\epsilon}} +  \overline \mu_t[i]) N   + N \overline \mu_t[i] - S_ {\overline \mu}[i]}   \bigg)
 \one_{\{ | {S_{\overline \mu_t}[i]}-N\overline \mu_t[i]| \leq \frac12 N\overline \mu_t[i] \}}\bigg] 
 \\
 &= \varepsilon N \EE  \bigg[ -
\frac{ 2(S_{{\overline\mu_t}}[i] - N \overline \mu_t[i])^2}
{  ({N^{-\epsilon}} +  \overline \mu_t[i])^2 N^2  - (S_{{\overline \mu_t}}[i] - N\overline\mu_t[i])^2 } 
 \one_{\{ | {S_{\overline \mu_t}[i]}-N\overline \mu_t[i]| \leq \frac12 N\overline \mu_t[i] \}}\bigg] 
 \\
 & \geq - \varepsilon N \EE  \bigg[
\frac{ 2 (S_{{\overline\mu_t}}[i] - N \overline \mu_t[i])^2}
{   \frac34 (\overline \mu_t[i])^2 N^2  } 
 \one_{\{ | S_{\overline\mu_t}[i]-N\overline \mu_t[i]| \leq \frac12 N\overline \mu_t[i] \}}\bigg] 
 \\
& \geq - \varepsilon N \frac83 \frac{N \overline \mu_t[i](1-\overline \mu_t[i])}{N^2 (\overline \mu_t[i])^2} 
 \geq - \frac83 \varepsilon   \frac{1}{\overline \mu_{t}[i]} 
 \geq  -  \frac83 \varepsilon  \frac{\one_{\{\overline \mu_{t}[i]\leq\delta\}}}{\overline \mu_{t}[i]} -\varepsilon \frac{8 }{3\delta}, 
\end{align*}
and so, combining with the lower bound for $(B)$, we obtain from 
\eqref{eq:IIIj}
and
\eqref{eq:IIIjb}:
\[
\mbox{III} \geq -C\varepsilon  
-  \frac83 \varepsilon \frac{\one_{\{\overline \mu_{t}[i]\leq\delta\}}}{\overline \mu_t[i]} 
-\varepsilon \frac{8}{3\delta} \geq -C_0 -  \frac83 \varepsilon \frac{\one_{\{\overline \mu_t[i]\leq\delta\}}}{\overline \mu_t[i]},
\]
for a  constant $C_0$ as in the statement, 
%depending only on $\delta$, 
using $\varepsilon<1$. 
\vskip 5pt

\noindent \textit{Conclusion.}
Putting things together (see
\eqref{eq:I+II+III}
and 
\eqref{eq:I+II}), we obtain
\[
-\overline \lambda_t \geq \bigg(\frac25 (1-\delta) \kappa  - \frac83 \varepsilon  \bigg)\frac{\one_{\{\overline \mu_t[i]\leq\delta\}}}{\overline \mu_t[i]} -C_1,
\]
for another $C_1$ as in the statement. This inequality, applied in \eqref{eqexp}, gives
\[
\E \bigg[ \exp\bigg\{ \int_0^{\overline \tau_N} \bigg(\frac25 (1-\delta) \kappa  -  \frac83 \varepsilon\bigg)\frac{\one_{\{\overline\mu_t[i]\leq\delta\}}}{\overline\mu_t[i]} dt \bigg\}\bigg] \leq \frac{2}{N^{-\epsilon}+\overline\mu_0[i]} \exp(C_1 T).
\]
Therefore \eqref{expboundmu} follows if we choose $\kappa$ such that
\[
\frac25 (1-\delta) \kappa  - \varepsilon \frac83 \geq \lambda,
\]
and then use another value of $C$. 

In order to prove \eqref{eq1/mu}, we exploit \eqref{exp} to derive
\begin{equation*}
\begin{split}
& \frac{1}{N^{-\epsilon}+ \overline \mu_{t\wedge \overline \tau_N}[i]} \exp\biggl(
 \int_0^{ t\wedge \overline \tau_N}  \frac{\lambda}{\overline \mu_{s}[i]} {\mathbbm 1}_{\{\overline \mu_{s}[i] \leq \delta \} } ds
 - C_{1} T 
 \biggr)
 \leq 
 \exp\bigg(- \overline{\Lambda}_{ t\wedge \overline \tau_N} - \int_0^{ t\wedge \overline \tau_N}  \overline\lambda_s ds\bigg). 
\end{split}
\end{equation*}
Taking expectation, we conclude by recalling that the exponential on the right-hand side has expectation bounded by $1/(N^{-\epsilon}+ \overline \mu_{0}[i])$. \qed

\subsection{Proof of Lemma \ref{lem:aux:moments:binomial}}
\label{subse:6:2}
{ \ }
\vskip 5pt

\noindent \textit{First Step.}
We start with the first line in 
\eqref{eq:aux:moments:binomial:1}. By standard algebra, we get 
\begin{equation}
\label{eq:step:1:aux:moments:binomial:1}
\begin{split}
\EE\biggl[ \biggl( \frac{S_{\mu}[i]}{N \mu[i]} \biggr)^{\ell} -1\biggr] &=
 \frac{1}{N^{\ell}\mu[i]^{\ell}} \EE\Bigl[ \Bigl( S_{\mu}[i]- N\mu[i] +N\mu[i]\Bigr)^{\ell} \Bigr] -1
 \\
 &= \frac{1}{N^{\ell}  \mu[i]^{\ell}}\sum_{k=1}^{\ell} \binom{\ell}{k} \EE\Bigl[\Bigl(S_{\mu}[i]- N  \mu[i]\Bigr)^k\Bigr] \bigl(N \mu[i]\bigr)^{\ell-k}
 \\
 &= \binom{\ell}{2}\frac{N   \mu [i] (1-   \mu [i] )}{\bigl(N    \mu [i]\bigr)^2 } + \sum_{k=3}^{\ell} \binom{\ell}{k} \frac{\EE\bigl[\bigl(S_{ \mu }[i]- N  \mu [i]\bigr)^k\bigr]}{\bigl(N   \mu[i]\bigr)^k}.
\end{split} 
 \end{equation}
Recall Rosenthal's inequality (see \cite[Theorem 2.9]{petrov}) for independent integrable random variables $\{Z_k\}_{k=1}^n$ with $\E[Z_k]=0$ and for $p\ge 2$:
\begin{align*}
\E\Big|\sum_{k=1}^nZ_k\Big|^p\le c(p)\left(\sum_{k=1}^n\E[|Z_k|^p]+\Big(\sum_{k=1}^n\E[|Z_k|^2]\Big)^{p/2}\right),
\end{align*}
where $c(p)$ is a positive constant depending only on $p$.
Applying it to the centered sum $ S_{ \mu}[i]- N  \mu[i]$, we get, that for each real $p \geq 2$, 
 \begin{equation}
 \label{eq:rosenthal}
 \begin{split}
& \EE\bigl[\bigl\vert S_{ \mu}[i]- N  \mu[i]\bigr\vert^p\bigr] 
\\
&\leq C_{p}
 N \bigl[ \mu[i]^p\bigl(1- \mu[i] \bigr) + \bigl(1-  \mu [i]\bigr)^p  \mu [i]\bigr] +C_{p} N^{p/2}  \mu [i]^{p/2}\bigl(1-
  \mu[i]\bigr)^{p/2}
\\
&\leq C_{p}
 N  \mu [i] +C_{p} N^{p/2}  \mu [i]^{p/2}
\\
&\leq C_{p}
  N^{p/2}  \mu[i]. 
\end{split} 
 \end{equation}
 for a constant $C_{p}$ depending on $p$, the value of which may vary from line to line.  
And then, by combining the last two inequalities, we obtain the first line in 
\eqref{eq:aux:moments:binomial:1}. 

We now turn to the second line in 
\eqref{eq:aux:moments:binomial:1}.
Following the analysis of the first line, we have 
\begin{align*}
& \EE\biggl[ 
 \frac{S_{\mu}[j]}{N \mu[j]}
\biggl\{
\biggl( \frac{S_{\mu}[i]}{N \mu[i]} \biggr)^{\ell} -1\biggr\}\biggr]
%\\
%&= 
%\frac{1}{N^{\ell+1}  \mu[i]^{\ell} \mu[j]}\sum_{k=1}^{\ell} \binom{\ell}{k} \EE\Bigl[
% S_{\mu}[j] 
%\Bigl(S_{\mu}[i]- N \mu[i]\Bigr)^k\Bigr] \bigl(N  \mu[i]\bigr)^{\ell-k}
\\
&= 
\frac{1}{N^{\ell+1}  \mu[i]^{\ell} \mu[j]}\sum_{k=1}^{\ell} \binom{\ell}{k} \EE\Bigl[
 \Bigl( S_{\mu}[j] 
 - N \mu[j] \Bigr)
\Bigl(S_{\mu}[i]- N \mu[i]\Bigr)^k\Bigr] \bigl(N  \mu[i]\bigr)^{\ell-k}
\\
&\hspace{15pt}+
\frac{1}{N^{\ell}  \mu[i]^{\ell}}\sum_{k=2}^{\ell} \binom{\ell}{k} \EE\Bigl[
\Bigl(S_{\mu}[i]- N \mu[i]\Bigr)^k\Bigr] \bigl(N  \mu[i]\bigr)^{\ell-k}.
\end{align*}
When $k=1$ in the first sum, the expectation therein is given by the correlation matrix of the multinomial distribution, 
namely 
$\EE[(S_{\mu}[j]- N \mu[j])(S_{ \mu}[i]- N  \mu[i])]=
N  \mu [i] \delta_{i,j}
- N   \mu[i]
  \mu[j]
$; when $k = 2$ in the second sum, the expectation therein is given by 
$\EE[(S_{\mu}[i]- N \mu[i])^2]=
N  \mu [i] (1- \mu[i])$. As for the other terms  (whatever the sum),  we may just invoke Cauchy--Schwarz inequality and then \eqref{eq:rosenthal} in order to bound the corresponding expectation. As a result, we can find a constant $C$, only depending on $\ell$, such that
\begin{equation*}
\begin{split}
&\biggl\vert \EE\biggl[ 
 \frac{S_{\mu}[j]}{N \mu[j]}
\biggl\{
\biggl( \frac{S_{\mu}[i]}{N \mu[i]} \biggr)^{\ell} -1\biggr\}\biggr]
\biggr\vert
\\
&\leq 
\frac{\ell}{N \mu[j]}
+  
\frac{\ell(\ell-1)}{2 N \mu[i]}
+
\frac{C}{N^{\ell+1} \mu[i]^{\ell}  \mu[j]}\sum_{k=2}^{\ell} N^{(k+1)/2} 
 \bigl(  \mu[i] \bigr)^{1/2} \bigl(  \mu[j] \bigr)^{1/2}
\bigl(N  \mu[i]\bigr)^{\ell-k}
\\
&\hspace{15pt}
+
\frac{C}{N^{\ell} \mu[i]^{\ell}}\sum_{k=3}^{\ell} N^{k/2} 
 \mu[i]   
\bigl(N  \mu[i]\bigr)^{\ell-k}
\\
&\leq 
\frac{\ell(\ell+1)}{2 N \min_{e \in \ES} \mu[e]}
 +
{C}\sum_{k=2}^{\ell} 
\frac1{
N^{(k+1)/2} 
  ( \mu[i]  )^{k-1/2}  ( \mu[j] )^{1/2}}
  +
 C \sum_{k=3}^{\ell} 
 \frac{1}{N^{k/2} 
 \bigl(N  \mu[i]\bigr)^{k-1}}
  \\
&\leq 
\frac{\ell(\ell+1)}{2 N \min_{e \in \ES} \mu[e]}
 +
{C}\sum_{k=3}^{\ell+1} 
\frac1{
N^{k/2} 
  \min_{e \in \ES} \mu[e]^{k-1}},
\end{split}
\end{equation*}
which fits the announced inequality. 
\vskip 4pt

\noindent \textit{Second Step.}
We now prove 
\eqref{eq:aux:moments:binomial:2}. 
Following 
\eqref{eq:step:1:aux:moments:binomial:1}, we have 
\begin{equation}
\label{eq:step:10:aux:moments:binomial:1}
\begin{split}
\biggl\vert \biggl( \frac{S_{\mu}[i]}{N \mu[i]} \biggr)^{\ell} -1\biggr\vert &= 
 \frac{1}{N^{\ell}\mu[i]^{\ell}} \Bigl\vert \Bigl( S_{\mu}[i]- N\mu[i] +N\mu[i]\Bigr)^{\ell} -N^{\ell}\mu[i]^{\ell}\Bigr\vert
 \\
 &\leq C \sum_{k=1}^{\ell}  
 \Bigl\vert S_{\mu}[i]- N  \mu[i] \Bigr\vert^k \bigl(N \mu[i]\bigr)^{-k}.
\end{split} 
 \end{equation}
By the third line in 
 \eqref{eq:rosenthal}, we get 
\begin{equation*}
\begin{split}
{\mathbf E}\biggl[ \biggl\vert \biggl( \frac{S_{\mu}[i]}{N \mu[i]} \biggr)^{\ell} -1\biggr\vert^{p} \biggr]^{1/p} &\leq C \sum_{k=1}^{\ell}  
\bigl( N \mu[i] + N^{kp/2} \mu[i]^{kp/2} \bigr)^{1/p}
\bigl(N \mu[i]\bigr)^{-k}
\\
&\leq C 
\biggl[ 
\bigl( N \mu[i]  \bigr)^{1/p}
\sum_{k=1}^{\ell}  
\bigl(N \mu[i]\bigr)^{-k}
+
\sum_{k=1}^{\ell}  
\bigl(N \mu[i]\bigr)^{-k/2}
\biggr], 
\end{split} 
 \end{equation*}
which completes the proof, since $p \geq 2$. 

It now remains to address the inequality \eqref{eq:ldp:moments:binomial:-1}. 
With $\eta>0$ as in the statement and with $C$ as in 
\eqref{eq:step:10:aux:moments:binomial:1}, choose 
$\eta'=\min(1,\eta/(\ell C))$ and deduce that
$
 \vert \left( {S_{\mu}[i]}/{N \mu[i]} \right)^{\ell} -1 \vert \leq \eta,
$
on the event 
$
E = \left\{\left\vert  {S_{\mu}[i]}/{N \mu[i]} - 1 \right\vert \leq \eta' \right\}. 
$
Now, 
Hoeffding's inequality 
says that 
${\mathbf P} ( E^{\complement}) \leq 
2 \exp (-  2 N \mu[i]^2 (\eta')^2)$, which completes the proof.  \qed

\subsection{Proof of Proposition \ref{thm:bound_Y_tau}}
\label{subse:6:3}
{\ }\vskip 5pt

\noindent For a given $\lambda \geq 1$, 
consider the exponential
\[ 
\mathcal{E}_t:=\exp\bigg\{\lambda \int_0^t \sum_{i\in\ES} \frac{ \one_{\{\overline \mu_s[i]\neq 0\}}}{\overline \mu_s[i]} ds \bigg\}, \quad t \in [0,T]. 
\]
Applying It\^o's formula to $(\mathcal{E}_{t \wedge \overline \tau_N}^{-1} \sum_{l \in \NN} |\overline Y^{l}_{t \wedge \overline \tau_N}|^{2\ell})_{0 \le t \le T}$ (recall for instance \eqref{integral}) and taking expectation, we obtain
\begin{equation}
\label{eq:proof:moments}
\begin{split}
\frac{d}{dt}&\E\bigg[ \mathcal{E}_{t \wedge \overline \tau_N}^{-1} \sum_{l\in\NN} |\overline Y^{l}_{t \wedge \overline \tau_N}|^{2\ell} \bigg] 
+\E\bigg[ {\mathbbm 1}_{\{t < \overline \tau_{N}\}} \mathcal{E}_t^{-1}  \sum_{i\in\dd} \frac{\lambda }{\overline \mu_t[i]} \sum_{l\in\NN} |\overline Y^{l}_t|^{2\ell} \bigg]\\
&\leq 
\varepsilon N \E\bigg[ {\mathbbm 1}_{\{t < \overline \tau_{N}\}} \mathcal{E}_t^{-1} \sum_{l\in\NN} |\overline Y^{l}_t|^{2\ell} \sum_{i\in\dd}  \one_{\{\overline X^{l}_t=i\}}
\\
&\qquad\quad
\times \sum_{j\in\ES}{\mathbbm 1}_{\{\overline X^{n}_t=j\}} 
\biggl\vert
\EE \bigg\{
\biggl( {\frac{S_{\overline \mu_t}[j]}{N\overline \mu_t[j]}}
\biggr)^{\iota}
\bigg(\biggl( \frac{S_{\overline\mu_t}[i]}{N \overline\mu_t[i]}\biggr)^{2\ell} -1\bigg)\bigg\} \biggr\vert \bigg].
%\\
%&\leq 
%\varepsilon N \E\Bigg[ \mathcal{E}_t^{-1} \sum_{l\in\NN} |\wt Y^{n,l}_t|^{2m} \sum_{i\in\dd}  \one_{\{\tilde\mu^i_t\neq 0\}}\\\notag
%&\qquad\qquad
%\times{\sum_{j\in\ES}{\mathbbm 1}_{\{\tilde\mu^j_t\ne0\}}\PP(\wt X^{n,n}_t=j) }\EE \Bigg(\Bigg( \frac{S_{\tilde\mu_t}[i]^{2m}}{N^{2m} \tilde\mu_t[i]^{2m}} -1\Bigg)\blue{\frac{S_{\tilde\mu^N_t}[j]}{N\tilde\mu^N_t[j]}\Bigg| \wt X^{n,n}_t=j}\Bigg)\Bigg]
\end{split}
\end{equation}
We first handle the expectation $\EE$ in the above right-hand side. 
The key point is to notice that
it can be estimated by 
the first line in
\eqref{eq:aux:moments:binomial:1}
when $\iota=0$
and by the second line in 
\eqref{eq:aux:moments:binomial:1}
when $\iota=1$. 
%
%
%whether $\iota=0$ or $\iota=1$, the latter expectation can be written
%\begin{equation}
%\label{eq:(A)+(B):2}
%\begin{split}
%&\EE \bigg\{
%\biggl( {\frac{S_{\overline \mu_t}[j]}{N\overline \mu_t[j]}}
%\biggr)^{\iota}
%\bigg(\biggl( \frac{S_{\overline\mu_t}[i]}{N \overline\mu_t[i]}\biggr)^{2\ell} -1\bigg)\bigg\}
%\\
%&= {\iota}\EE \bigg\{
%\biggl( {\frac{S_{\overline \mu_t}[j]}{N\overline \mu_t[j]}}
%-1
%\biggr)
%\bigg(\biggl( \frac{S_{\overline\mu_t}[i]}{N \overline\mu_t[i]}\biggr)^{2\ell} -1\bigg)\bigg\}
%+\EE \bigg\{
%\biggl( \frac{S_{\overline\mu_t}[i]}{N \overline\mu_t[i]}\biggr)^{2\ell} -1\bigg\}
%\\
%&=: \iota (A) + (B). 
%\end{split}
%\end{equation}
%
%We start with $(B)$. By the first line in 
%\eqref{eq:aux:moments:binomial:1} (\textcolor{brown}{with $m$ replaced by $2\ell$}), we obtain 
% \begin{align*}
%\bigl\vert (B) \bigr\vert &\leq  \frac{\ell(2\ell-1)}{N\overline\mu_t[i]} + C_\ell \sum_{k=3}^{2\ell}\frac{1}{N^{k/2}\min_{e \in \ES} \overline\mu_t[e]^{k-1}},
% \end{align*}
% for a constant $C_{\ell}$, only depending on $\ell$ (and the value of which is allowed to vary from line to line). 
%We now handle $(A)$. 
%By the second line in 
%\eqref{eq:aux:moments:binomial:1} (\textcolor{brown}{also with $\ell$ replaced by $2\ell$}), we have 
 In any case, we have
\begin{equation*}
\begin{split}
\biggl\vert
\EE \bigg\{
\biggl( {\frac{S_{\overline \mu_t}[j]}{N\overline \mu_t[j]}}
\biggr)^{\iota}
\bigg(\biggl( \frac{S_{\overline\mu_t}[i]}{N \overline\mu_t[i]}\biggr)^{2\ell} -1\bigg)\bigg\} \biggr\vert 
& \leq 
\frac{\ell(2\ell+1)}{N \min_{e \in \ES} \mu[e]} 
 +
{C_{\ell}}\sum_{k=3}^{2\ell+1} 
\frac1{
N^{k/2} 
  \min_{e \in \ES} \overline \mu_t[e]  ^{k-1}}
\\
& \leq 
\sum_{e \in \ES}
\frac{\ell(2\ell+1)}{N  \mu[e]} 
 +
{C_{\ell}}\sum_{k=3}^{2\ell+1} 
\frac1{
N^{k/2} 
  \min_{e \in \ES} \overline \mu_t[e]  ^{k-1}},  
\end{split}
\end{equation*}
for a constant $C_{\ell}$, only depending on $\ell$ (and the value of which is allowed to vary from line to line). Integrating
\eqref{eq:proof:moments}
 from 0 to $t$,  we have 
 (allowing $C_{\ell}$ to depend on $d$)
\begin{align*}
\E\bigg[ &\mathcal{E}_{t\wedge \overline \tau_N}^{-1} \sum_{l\in\NN} |\overline Y^{l}_{t \wedge \overline \tau_N}|^{2\ell} \bigg] 
+\lambda \E\bigg[ \int_0^{t \wedge \overline \tau_N} \mathcal{E}_s^{-1} \sum_{i\in\dd} \frac{1}{\overline \mu_s[i]}  \sum_{l\in\NN} |\overline Y^{l}_s|^{2\ell}ds\bigg]
\\
&\leq \sum_{l\in\NN} |y^l_0|^{2\ell} + \varepsilon  \ell(2\ell+1)  \E\biggl[ \int_0^{t \wedge  \overline\tau_N} \mathcal{E}_s^{-1} 
\sum_{i\in\dd}   \frac{1}{\overline \mu_s[i]}
\sum_{l\in\NN} |\overline Y^{l}_s|^{2\ell}  ds\biggr]
 \\
&+  \varepsilon C_\ell  \E\bigg[ \int_0^{t \wedge \overline \tau_N}\mathcal{E}_s^{-1}
\sum_{k=3}^{2\ell+1}
\frac{1}{N^{k/2}  (\min_{e \in \ES} \overline \mu_s[e]  )^{k-1}}
  \sum_{l\in\NN} |\overline Y^{l}_s|^{2\ell}  ds\bigg],
\end{align*}
which implies,  if $\lambda\geq \varepsilon  \ell(2\ell+1)$,  
\begin{align*}
\E\bigg[ \mathcal{E}_{t \wedge \overline \tau_N}^{-1} \sum_{l\in\NN} |\overline Y^{l}_{t \wedge \overline \tau_N}|^{2\ell} \bigg] 
&\leq \sum_{{l\in\NN}} |y^l_0|^{2\ell}  
\\
&+
 \varepsilon C_\ell 
 \sum_{k=3}^{2\ell+1}
\frac{N^{\epsilon(k-1)}}{N^{k/2}}
 \E\bigg[ \int_0^{t}\mathcal{E}_{s \wedge \overline \tau_N}^{-1} \sum_{l\in\NN} |\overline Y^{l}_{s \wedge \overline \tau_{N}}|^{2\ell} ds\bigg].
\end{align*}
Hence, if $\epsilon < 1/4$, Gronwall's inequality yields 
\[
\E\bigg[ \mathcal{E}_{t \wedge \overline \tau_N}^{-1} \sum_{l\in\NN} |\overline Y^{l}_{t \wedge \overline \tau_N}|^{2\ell} \bigg] \leq C \sum_{{l\in\NN}} |y^l_0|^{2\ell},
\]
for a constant $C$ as in the statement
and whose value may vary from line to line. 
Now, by Cauchy--Schwarz inequality, 
\begin{align*}
\E\bigg[ \frac1N \sum_{l\in\NN} |\overline Y^{l}_{t \wedge \overline \tau_N}|^{\ell} \bigg] 
&\leq 
\E\bigg[{\mathcal E}_{t \wedge \overline \tau_{N}}^{-1} 
\biggl( \frac1N \sum_{l\in\NN} |\overline Y^{l}_{t \wedge \overline \tau_N}|^{\ell} \biggr)^2 \bigg]^{1/2}
\E\big[{\mathcal E}_{t \wedge \overline \tau_{N}}   \bigr]^{1/2}
\\
&\leq 
\E\bigg[{\mathcal E}_{t \wedge \overline \tau_{N}}^{-1} 
\biggl( \frac1N \sum_{l\in\NN} |\overline Y^{l}_{t \wedge \overline \tau_N}|^{2\ell} \biggr) \bigg]^{1/2}
\E \bigl[ \mathcal{E}_{t \wedge \overline \tau_N} \bigr]^{1/2}
\\
&\leq C \biggl( \frac1N \sum_{{l\in\NN}} |y^l_0|^{2\ell} \biggr)^{1/2}
\E \bigl[ \mathcal{E}_{t \wedge \overline \tau_N} \bigr]^{1/2}.
\end{align*}
%
%
%\blue{[The $2m$ on the r.h.s.~should be $4m$ when talking about the tilde processes]} 
%which gives, applying \eqref{expboundmu}, H\"older inequality and Cauchy--Schwarz inequality, for a suitable $\kappa$, 
%\[
%\cancel{\E \bigg[ \sum_{l\in\NN} |\wt Y^{n,l}_{\tilde\tau_N\wedge t}|^{m} \bigg] \leq \Bigg(\E\bigg[ \mathcal{E}_{\tilde\tau_N\wedge t}^{-1} \sum_{l\in\NN} |\wt Y^{n,l}_{\tilde\tau_N\wedge t}|^{2m} \bigg] \Bigg)^{1/2} (\E[\mathcal{E}_{\tilde\tau_N\wedge t}])^{1/2} \leq C \sum_{\red{l\in\NN}} |y^l_0|^{m} \prod_{i\in \dd} \frac{1}{\tilde\mu^i_0},}
%\]
%
%\vspace{1cm}
%\red{I guess the C--S comes first and then H\"older. Also, shouldn't it be the following?
%\[
%\sum_{l\in\NN}\E \bigg[  |\wt Y^{n,l}_{\tilde\tau_N\wedge t}|^{m} \bigg] \leq \sum_{l\in\NN}\Bigg(\E\bigg[ \mathcal{E}_{\tilde\tau_N\wedge t}^{-1}  |\wt Y^{n,l}_{\tilde\tau_N\wedge t}|^{2m} \bigg] \Bigg)^{1/2} (\E[\mathcal{E}_{\tilde\tau_N\wedge t}])^{1/2} \leq C \sum_{l\in\NN} (|y^l_0|^{2m}+1) \prod_{i\in \dd} \frac{1}{\tilde\mu^i_0},
%\]
%using $\sqrt{x}\le x+1$. }
%\blue{[The $2m$ on the r.h.s.~should be $4m$ when talking about the tilde processes, not a big difference.]} 
We invoke  \eqref{bound_calE}
(assuming throughout that $\kappa$ is large enough) in order to bound the last term on the right-hand side. 
We easily get 
\eqref{boundY_magenta}. 
%, which 
%
%By continuity in probability of the trajectories and boundedness of $Y^l_t$, if
%$\sum_{\red{l\in\NN}} |y^l_0|^{m}\leq N$, we can send $h$ to $0$ to derive that also
%\[
%\E \bigg[ \sum_{l\in\NN} |Y^l_{\tau_N}|^{m} \bigg] \leq CN \prod_{i\in \dd} \frac{1}{\mu^i_0}, 
%\]
Combining 
\eqref{eq1/muexp} with \eqref{boundY_magenta}, we get
\begin{align}
\label{bound_two_sums}
&\E \bigg[  
\prod_{i \in \ES}
\Bigl(
N^{-\epsilon}+\overline \mu_{t \wedge \overline \tau_N}[i] \Bigr)^{-1/d}
%\sum_{i\in\dd}\frac{1}{N^{-\epsilon}+\overline \mu_{t \wedge \overline \tau_N}[i]}  
+ 
\frac1N \sum_{l\in\NN} |\overline Y^{l}_{t \wedge \overline \tau_N}|^{\ell}
\bigg]
\\
\notag
&\quad\le 
C \prod_{i \in \ES}
\bigl(
N^{-\epsilon}+\overline \mu_{0}[i] \bigr)^{-1/d}
%\sum_{i\in\ES}\frac{1}{N^{-\epsilon}+\overline \mu_0[i]}
+ C \biggl( \frac1N \sum_{l\in\NN} |y^l_0|^{2\ell} \biggr)^{1/2}
\prod_{i \in \ES}
\bigl(
N^{-\epsilon}+\overline \mu_{0}[i] \bigr)^{-1/(2d)}.
%
%
%\biggl( 
%\sum_{i\in\ES}\frac{1}{N^{-\epsilon}+\overline \mu_0[i]}
%\biggr)^{1/2}.
\end{align}
Choosing $t=T$ and invoking the definition of $\overline \tau_N$ in \eqref{deftau} (together with the right continuity of the trajectories), we get
\begin{equation*}
\begin{split}
&\E \bigg[  \bigg( 
\prod_{i \in \ES}
\Bigl(
N^{-\epsilon}+\overline \mu_{\overline \tau_N}[i] \Bigr)^{-1/d}
%\sum_{i\in\dd}\frac{1}{N^{-\epsilon}+\overline \mu_{\overline \tau_N}[i]}  
+ 
\frac1N \sum_{l\in\NN} |\overline Y^{l}_{\overline \tau_N}|^{\ell} \bigg) \one_{\{\overline \tau_N <T\}}
\bigg]
\\
&\geq \P(\overline \tau_N <T) \min \bigg\{\frac1{2} N^{\epsilon/d}, 
\frac1{2^\ell}
N^{\ell-1 - \ell\epsilon}\bigg\}, 
\end{split}
\end{equation*}
where we used the fact that, for any $i \in \ES$, 
$N^{-\epsilon}+\overline \mu_{\overline \tau_N}[i] \leq 2$. 
Noticing that, if $\ell \geq 3$ and $\epsilon < 1/4$, $\ell-1-\ell \epsilon \geq 3/2-1 \geq 1/2 \geq \epsilon$,  we obtain  \eqref{boundtau_magenta}.

\subsection{Proof of Proposition \ref{thm:bound_Y_tau:-1}}
\label{subse:6:4}
{ \ }
\vskip 5pt

\noindent \textit{First Step.} We start with a similar computation to 
\eqref{eq:proof:moments}. 
\begin{align}
&\frac{d}{dt} {\mathbb E} \biggl[ {\mathcal E}_{t \wedge \overline \tau_N}^{-1} 
\biggl(
 \sum_{l \in \EN} \vert \overline Y_{t \wedge \overline \tau_N}^l \vert^{\ell} \biggr)^{-1}
\biggr]
+
{\mathbb E} \biggl[ 
{\mathbbm 1}_{\{ t < \overline \tau_{N}\}}
{\mathcal E}_{t}^{-1}
\sum_{i \in \ES}
\frac{\lambda}{\overline \mu_{t}[i]}
\biggl( \sum_{l \in \EN} \vert \overline Y_{t}^l \vert^{\ell} \biggr)^{-1}
\biggr] \nonumber
\\
&=  \varepsilon N 
\sum_{j\in \ES}
 {\mathbb E} \biggl[ 
 {\mathbbm 1}_{\{ t < \overline \tau_{N}\}}
 {\mathcal E}_{t}^{-1}
 {\mathbbm 1}_{\{ \overline X_{t}^n=j\}} 
{\mathbf E}\biggl\{  \biggl( \frac{S_{\overline \mu_{t}}[j]}{N \overline \mu_{t}[j]}
\biggr)^{\iota} 
\biggl[
\biggl( \sum_{i \in \ES} 
\sum_{l \in \EN} \vert \overline Y_{t}^l \vert^{\ell}
{\mathbbm 1}_{\{ \overline X_{t}^l=i\}}
\Bigl( \frac{S_{\overline \mu_{t}}[i]}{N \overline \mu_{t}[i]}
\Bigr)^{\ell}
 \biggr)^{-1} \nonumber
 \\
&\hspace{30pt}-
\biggl( \sum_{l \in \EN} \vert \overline Y_{t}^l \vert^{\ell} \biggr)^{-1}
\biggr]\biggr\} 
\biggr] \label{eq:theorem:inverse:y:proof:1}
\\
&= - \varepsilon N 
\sum_{j\in \ES}
 {\mathbb E} \biggl[ 
 {\mathbbm 1}_{\{ t < \overline \tau_{N}\}}
 {\mathcal E}_{t}^{-1}
 {\mathbbm 1}_{\{ \overline X_{t}^n=j\}} 
{\mathbf E}\biggl\{  \biggl( \frac{S_{\overline \mu_{t}}[j]}{N \overline \mu_{t}[j]}
\biggr)^{\iota}  \nonumber
  \biggl(
\sum_{l \in \EN} \vert \overline Y_{t}^l \vert^{\ell}
+ D \overline Y_{t}
 \biggr)^{-1}\biggl( \sum_{l \in \EN} \vert \overline Y_{t}^l \vert^{\ell} \biggr)^{-1}
D\overline Y_{t}
\biggr\}
\biggr], \nonumber
\end{align}
with
$$\displaystyle D\overline Y_{t}
:=\frac{1}{N}
 \sum_{i \in \ES} 
\sum_{l \in \EN} \vert \overline Y_{t}^l \vert^{\ell}
{\mathbbm 1}_{\{ \overline X_{t}^l=i\}}
\Bigl[ \Bigl( \frac{S_{\overline \mu_{t}}[i]}{N \overline \mu_{t}[i]}
\Bigr)^{\ell}
-1 \Bigr].$$

Then, 
\begin{equation}
\label{eq:decomposition:inverse:weight}
\begin{split}
&{\mathbf E} \biggl\{ 
\biggl( \frac{S_{\overline \mu_{t}}[j]}{N \overline \mu_{t}[j]}
\biggr)^{\iota} 
\biggl(
\sum_{l \in \EN} \vert \overline Y_{t}^l \vert^{\ell}
+ ND \overline Y_{t}
 \biggr)^{-1}\biggl( \sum_{l \in \EN} \vert \overline Y_{t}^l \vert^{\ell} \biggr)^{-1}
ND\overline Y_{t}
\biggr\}
\\
&={\mathbf E} \biggl\{ 
\biggl( \frac{S_{\overline \mu_{t}}[j]}{N \overline \mu_{t}[j]}
\biggr)^{\iota} 
\biggl(
\sum_{l \in \EN} \vert \overline Y_{t}^l \vert^{\ell}
 \biggr)^{-2}
 ND\overline Y_{t}
\biggr\}
\\
&\hspace{15pt} -
{\mathbf E} \biggl\{ 
\biggl( \frac{S_{\overline \mu_{t}}[j]}{N \overline \mu_{t}[j]}
\biggr)^{\iota} 
\biggl(
\sum_{l \in \EN} \vert \overline Y_{t}^l \vert^{\ell}
+ ND \overline Y_{t}
 \biggr)^{-1}\biggl( \sum_{l \in \EN} \vert \overline Y_{t}^l \vert^{\ell} \biggr)^{-2}
\bigl( ND\overline Y_{t}
\bigr)^2
\biggr\}.
%\\
%&\leq C 
%\biggl(
%\sum_{l \in \EN} \vert \overline Y_{t}^l \vert^{2m}
% \biggr)^{-2}
%\bigl\vert {\mathbf E} \bigl\{ 
% D\overline Y_{t}
% {\mathbbm 1}_{E}
%\bigr\}\bigr\vert
%  + C
%{\mathbf E} \biggl\{ 
%\biggl( \frac{S_{\overline \mu_{t}}[j]}{N \overline \mu_{t}[j]}
%\biggr)^{\iota} 
%\biggl( \sum_{l \in \EN} \vert \overline Y_{t}^l \vert^{2m} \biggr)^{-3}
%\bigl( D\overline Y_{t}
%\bigr)^2
%{\mathbbm 1}_{E}
%\biggr\}. 
\end{split}
\end{equation}
By the second line in 
\eqref{eq:aux:moments:binomial:1} and since $\overline\mu_t[i]\ge N^{-\varepsilon}$ for any $i\in\ES$ and $t<\overline\tau_N$, we notice that, for any $i \in \ES$, 
\begin{align}
\label{eq:conclusion:1step:y-1}
&\biggl\vert {\mathbf E} \biggl\{ 
\biggl( \frac{S_{\overline \mu_{t}}[j]}{N \overline \mu_{t}[j]}
\biggr)^{\iota} 
\biggl(
\sum_{l \in \EN} \vert \overline Y_{t}^l \vert^{\ell}
 \biggr)^{-2}
 ND\overline Y_{t}
\biggr\}\biggr\vert
\\
&= \biggl(
\sum_{l \in \EN} \vert \overline Y_{t}^l \vert^{\ell}
 \biggr)^{-2}
 \biggl\vert
{\mathbf E} \biggl\{ 
\biggl( \frac{S_{\overline \mu_{t}}[j]}{N \overline \mu_{t}[j]}
\biggr)^{\iota} 
 ND\overline Y_{t}
\biggr\}\biggr\vert
 \leq  C \biggl(
\sum_{l \in \EN} \vert \overline Y_{t}^l \vert^{\ell}
 \biggr)^{-1} \biggl( \frac{1}{N \min_{e \in \ES} \overline\mu_{t}[e]}
 + \frac1{{N^{3/2-2\epsilon}}} \biggr), \nonumber
\end{align}
for a constant $C$ only depending on $\ell$ and the value of which may vary from line to line.  
\vskip 5pt

\noindent \textit{Second Step.} We now split the expectation ${\mathbf E}\{ \cdots\}$ on the last line of 
\eqref{eq:decomposition:inverse:weight} according to the two events 
$E:=\cap_{i \in \ES}
\{ \vert S_{\overline{\mu}_{t}[i]}/N - \overline{\mu}_{t}[i] \vert \leq \eta \overline{\mu}_{t}[i]\}$
and 
$E^{\complement}=\cup_{i \in \ES}
\{ \vert S_{\overline{\mu}_{t}[i]}/N - \overline{\mu}_{t}[i] \vert > \eta \overline{\mu}_{t}[i]\}$,
for some parameter $\eta >0$ whose value is chosen right below.  

On the event $E$, we have
$\vert 
S_{\overline{\mu}_{t}[i]}/(N \overline{\mu}_{t}[i])
- 1 \vert \leq \eta$, 
and we can choose $\eta$ small enough such that 
$1/2 \leq (1-\eta)^{\ell} \leq 1 - \eta \leq 1+ \eta  \leq (1+\eta)^{\ell} \leq 3/2$, from which we deduce that, on 
$E$, 
\begin{equation*}
ND \overline{Y}_{t} \geq - \frac12  \sum_{l \in \EN} \vert \overline Y_{t}^l \vert^{\ell}. 
\end{equation*}
So, {together with Jensen's inequality,} 
\begin{equation}
\label{eq:conclusion:2step:y-1}
\begin{split}
&{\mathbf E} \biggl\{ 
\biggl( \frac{S_{\overline \mu_{t}}[j]}{N \overline \mu_{t}[j]}
\biggr)^{\iota} 
\biggl(
\sum_{l \in \EN} \vert \overline Y_{t}^l \vert^{\ell}
+ ND \overline Y_{t}
 \biggr)^{-1}\biggl( \sum_{l \in \EN} \vert \overline Y_{t}^l \vert^{\ell} \biggr)^{-2}
\bigl( ND\overline Y_{t}
\bigr)^2
{\mathbbm 1}_{E}
\biggr\}
\\
&\leq C {\mathbf E} \biggl\{ 
\biggl( \sum_{l \in \EN} \vert \overline Y_{t}^l \vert^{\ell} \biggr)^{-3}
\bigl( ND\overline Y_{t}
\bigr)^2
\biggr\}
\\
&=  C \biggl(
\sum_{l \in \EN} \vert \overline Y_{t}^l \vert^{\ell}
 \biggr)^{-1}{\mathbf E}\biggl[\biggl( 
\sum_{l \in \EN} \tfrac{\vert \overline Y_{t}^l \vert^{\ell}}{\sum_{k \in \EN} \vert \overline Y_{t}^k \vert^{\ell}}
\sum_{i \in \ES} {\mathbbm 1}_{\{ \overline X_{t}^l=i\}}
\Bigl[ \Bigl( \frac{S_{\overline \mu_{t}}[i]}{N \overline \mu_{t}[i]}
\Bigr)^{\ell}
-1 \Bigr]\biggr)^2\biggr]
 \\
&\le  C \biggl(
\sum_{l \in \EN} \vert \overline Y_{t}^l \vert^{\ell}
 \biggr)^{-1} 
\sum_{l \in \EN} \tfrac{\vert \overline Y_{t}^l \vert^{\ell}}{\sum_{k \in \EN} \vert \overline Y_{t}^k \vert^{\ell}}
\sum_{i \in \ES} {\mathbbm 1}_{\{ \overline X_{t}^l=i\}}
{\mathbf E}\biggl[ \biggl(\Bigl( \frac{S_{\overline \mu_{t}}[i]}{N \overline \mu_{t}[i]}
\Bigr)^{\ell}
-1 \biggr)^2\biggr]
 \\
&\leq  C \biggl(
\sum_{l \in \EN} \vert \overline Y_{t}^l \vert^{\ell}
 \biggr)^{-1}  \frac{1}{N \min_{e \in \ES} \overline\mu_{t}[e]},
\end{split}
\end{equation}
the proof of the last line following from 
\eqref{eq:aux:moments:binomial:2}
with $p=2$ and from the fact that $N \min_{e \in \ES}\bar \mu_{t}[e] \geq 1$
for $t <\overline\tau_N$.
\vskip 5pt

\noindent \textit{Third Step.} We now proceed on the complementary event $E^{\complement}$. 
We observe that 
\begin{equation*}
\frac1N \sum_{l \in \EN} \vert \overline Y_{t}^l \vert^{\ell}
\geq 
\biggl( \frac1N \sum_{l \in \EN}  \overline Y_{t}^l  
\biggr)^{\ell}=1, 
\end{equation*}
and the same lower bound holds true 
for 
$
\frac1N \sum_{l \in \EN} \vert \overline Y_{t}^l \vert^{\ell} + D \overline Y_{t}$ 
since the global weight is preserved by the dynamics, see
\S \ref{subsubse:2:2:1}. 
And then
\begin{equation*}
\begin{split}
&{\mathbf E} \biggl\{ 
\biggl( \frac{S_{\overline \mu_{t}}[j]}{N \overline \mu_{t}[j]}
\biggr)^{\iota} 
\biggl(
\sum_{l \in \EN} \vert \overline Y_{t}^l \vert^{\ell}
+ ND \overline Y_{t}
 \biggr)^{-1}\biggl( \sum_{l \in \EN} \vert \overline Y_{t}^l \vert^{\ell} \biggr)^{-2}
\bigl( ND\overline Y_{t}\bigr)^2 
{\mathbbm 1}_{E^{\complement}}
\biggr\}
\\
&\leq C 
N^{-1} 
{\mathbf E}\biggl\{ 
\biggl( \frac{S_{\overline \mu_{t}}[j]}{N \overline \mu_{t}[j]}
\biggr)^{\iota}
\biggl( \sum_{l \in \EN} \vert \overline Y_{t}^l \vert^{\ell} \biggr)^{-2}
\bigl( ND\overline Y_{t}
\bigr)^2 {\mathbbm 1}_{E^{\complement}}
\biggr\}. 
\end{split}
\end{equation*}
We then use the fact that $t < \overline \tau_{N}$, which implies that 
$1/\min_{j \in \ES}\overline \mu_{t}[j] \leq N^{\epsilon}$
and $\max_{l \in \EN} \overline Y_{t}^{l} \leq N^{1-\epsilon}$. 
This implies that 
$\vert ND \overline Y_{t} \vert \leq (1+ N^{\ell \epsilon}) 
 \sum_{l \in \EN} \vert \overline Y_{t}^l \vert^{\ell}$, from which we get 
\begin{equation}
\label{eq:conclusion:3step:y-1}
\begin{split}
&{\mathbf E} \biggl\{ 
\biggl( \frac{S_{\overline \mu_{t}}[j]}{N \overline \mu_{t}[j]}
\biggr)^{\iota} 
\biggl(
\sum_{l \in \EN} \vert \overline Y_{t}^l \vert^{\ell}
+ ND \overline Y_{t}
 \biggr)^{-1}\biggl( \sum_{l \in \EN} \vert \overline Y_{t}^l \vert^{\ell} \biggr)^{-2}
 \bigl( 
ND\overline Y_{t}
\bigr)^2
{\mathbbm 1}_{E^{\complement}}
\biggr\}
\leq C 
N^{(2\ell+1)\epsilon -1} 
{\mathbf P}\bigl( E^{\complement}
\bigr). 
\end{split}
\end{equation}
By Hoeffding's inequality as in the proof of 
Lemma 
\ref{lem:aux:moments:binomial}, we can find a constant $c>0$ (depending on $\eta$)
so that 
${\mathbf P} ( E^{\complement}) \leq 
C \exp (- 2 N (\min_{i \in \ES} \overline \mu_{t}[i])^2)
\leq C \exp(-2 N^{1-2\epsilon})$.  
\vskip 5pt

\noindent \textit{Conclusion.}
By combining 
\eqref{eq:theorem:inverse:y:proof:1}, 
\eqref{eq:conclusion:1step:y-1}, 
\eqref{eq:conclusion:2step:y-1}
and
\eqref{eq:conclusion:3step:y-1} (multiplying the former by $N$), we end up with
%\textcolor{brown}{[F.: We could do better for the exponent in $N$, but would it be really useful?]}
\begin{equation*}
\begin{split}
&\frac{d}{dt} {\mathbb E} \biggl[ {\mathcal E}_{t \wedge \overline \tau_{N}}^{-1} 
\biggl(\frac1N
 \sum_{l \in \EN} \vert \overline Y_{t \wedge \overline \tau_{N}}^l \vert^{\ell} \biggr)^{-1}
\biggr]
+\lambda
{\mathbb E} \biggl[ 
{\mathbbm 1}_{\{ t < \overline \tau_{N}\}}
{\mathcal E}_{t}^{-1}
\sum_{i \in \ES}
\frac{1}{\overline \mu_{t}[i]}
\biggl(\frac1N \sum_{l \in \EN} \vert \overline Y_{t}^l \vert^{\ell} \biggr)^{-1}
\biggr]
\\
&\leq 
 C \varepsilon {\mathbb E} \biggl[ 
 {\mathbbm 1}_{\{ t < \overline \tau_{N}\}}
  {\mathcal E}_{t}^{-1} \biggl( \frac1N 
\sum_{l \in \EN} \vert \overline Y_{t}^l \vert^{\ell}
 \biggr)^{-1} \biggl( \frac{1}{ \min_{e \in \ES} \overline\mu_{t}[e]}
 + \frac1{N^{(1-\epsilon)/2}} \biggr) \biggr]
 + C \exp(- N^{1-2\epsilon}).
\end{split}
\end{equation*}
Notice that, since {$\epsilon < 1/4$}, we have $1/N^{(1-\epsilon)/2} \leq 1/N^{1/3} \leq 1/N^{\epsilon} \leq 
1/ \min_{e \in \ES} \overline\mu_{t}[e]$ (recalling that $t$ is here less than 
$\overline \tau_{N}$). Hence, by choosing $\lambda$ large enough
with respect to $C$
(which is indeed possible since $C$ only depends on $m$), we deduce that 
\begin{equation*}
\begin{split}
&{\mathbb E} \biggl[ {\mathcal E}_{t \wedge \overline \tau_{N}}^{-1} 
\biggl(\frac1N
 \sum_{l \in \EN} \vert \overline Y_{t \wedge \overline \tau_{N}}^l \vert^{\ell} \biggr)^{-1}
\biggr]
\leq 
\biggl(\frac1N
 \sum_{l \in \EN} \vert y_{0}^l \vert^{\ell} \biggr)^{-1}
 + C \exp(-c N^{1-2\epsilon}).
\end{split}
\end{equation*}
It now remains to insert 
\eqref{expboundmu}--\eqref{bound_calE} ($\kappa$ being implicitly taken large enough), from which we get 
\begin{align*}
&{\mathbb E} \biggl[ 
\biggl(\frac1N
 \sum_{l \in \EN} \vert \overline Y_{t \wedge \overline \tau_{N}}^l \vert^{\ell} \biggr)^{-1}
\biggr]
\\
& \leq 
{\mathbb E} \biggl[ 
{\mathcal E}_{t \wedge \overline \tau_{N}}^{-1}
\biggl(\frac1N
 \sum_{l \in \EN} \vert \overline Y_{t \wedge \overline \tau_{N}}^l \vert^{\ell} \biggr)^{-2}
\biggr]^{1/2}
{\mathbb E} \bigl[ 
{\mathcal E}_{t \wedge \overline \tau_{N}} 
\bigr]^{1/2}
\\
&\leq 
{\mathbb E} \biggl[ 
{\mathcal E}_{t \wedge \overline \tau_{N}}^{-1}
\biggl(\frac1N
 \sum_{l \in \EN} \vert \overline Y_{t \wedge \overline \tau_{N}}^l \vert^{\ell} \biggr)^{-1}
\biggr]^{1/2}
{\mathbb E} \bigl[ 
{\mathcal E}_{t}
\bigr]^{1/2}
\\
&\leq C
\biggl[\biggl(\frac1N
 \sum_{l \in \EN} \vert y_{0}^l \vert^{\ell} \biggr)^{-1/2}
 + \exp(-c N^{1-2\epsilon}) \biggr] 
  \prod_{i \in \ES}
\bigl(
N^{-\epsilon}+\overline \mu_{0}[i] \bigr)^{-1/(2d)},
\end{align*}
where, in the second line, we used the (already proved) fact that  $N^{-1}
 \sum_{l \in \EN} \vert \overline Y_{t}^l \vert^{\ell} \geq 1$. \qed

\section{Proofs of the estimates connecting the Nash system with the master equation}
\label{se:7:b}
This section is devoted to the proofs of the various lemmas that enter the demonstration of Proposition \ref{prop:u}. We recall that, 
$U$ being defined as an element of ${\mathcal C}^{1+\gamma'/2,2+\gamma'}$, we just know that 
$\sqrt{p^j p^k} \fd^2_{i,j} U^i$ is bounded and $\gamma'$-H\"older continuous in space (for the Wright--Fischer distance), 
for each $i,j,k \in \dd$. By the way, we recall that $\gamma'$-H\"older continuity in space for the Wright--Fischer distance implies $\gamma'/2$-H\"older continuity in space for the standard Euclidean distance. 
We use the latter property quite often in the section. 
We refer if needed to the monograph \cite{EpsteinMazzeo} for a complete review on all these facts

\subsection{Proof of Lemma \ref{Nash:approximate:common:noise}}
\label{subse:7:1}
\begin{proof} 
Throughout the proof, we fix $t \in [0,T]$ and $(\bm{x},\bm{y}) \in {\mathcal T}_{N}$. This permits us to let 
$\mu = \mu^N_{\bm{x},\bm{y}}$, 
$\mu^j = \mu^N_{\bm{x},\bm{y}}[j]$ and $S^j=S_{\mu}[j]$.
Also, fixing $l \in \NN$, we may denote $x^l$ by $i$, namely 
$i:=x^l$. Then, notice that 
\[
u^{N,l}\biggl(t,\bm{x}, y^1\frac{S_{\mu^N_{\bm{x},\bm{y}}}[x^1]}{N\mu^N_{\bm{x},\bm{y}}[x^1]},\dots, y^N\frac{S_{\mu^N_{\bm{x},\bm{y}}}[x^N]}{N\mu^N_{\bm{x},\bm{y}}[x^N]}\biggr) 
= y^l \frac{S^i}{N\mu^i} U^i\Big(t, \frac{S}{N}\Big),
\]
where, on the right-hand side, we identified 
the probability measure 
$\mu^N_{\bm{x},\bm{z}}$, 
for
$\bm{z}$ being given by 
$z^l = y^l 
S_{\mu^N_{\bm{x},\bm{y}}}[x^l]/(N\mu^N_{\bm{x},\bm{y}}[x^l])$, 
with 
$S/N$, which is made licit by 
the identity \eqref{eq:sum:proba} (which guarantees that 
$\mu^N_{\bm{x},\bm{z}}$ is indeed a probability measure). 
%\sout{the following computation:} 
%\begin{equation*}
%\begin{split}
%&\cancel{\frac1N \sum_{l \in \NN} y^l \frac{S_{\mu^N_{\bm{x},\bm{y}}}[x^l]}{N\mu^N_{\bm{x},\bm{y}}[x^l]}
%\delta_{x^l} = \frac1N \sum_{l \in \NN} \sum_{j \in \ES} {\mathbbm 1}_{\{x^l=j\}}
% y^l \frac{S_{\mu^N_{\bm{x},\bm{y}}}[x^l]}{N\mu^N_{\bm{x},\bm{y}}[x^l]}
%\delta_{x^l}}
%\\
%&\cancel{= \frac1N  \sum_{j \in \ES}
%\frac{S_{\mu^N_{\bm{x},\bm{y}}}[j]}{N\mu^N_{\bm{x},\bm{y}}[j]}
%\delta_{j}
%\sum_{l \in \NN}
% {\mathbbm 1}_{\{x^l=j\}}
% y^l 
% = \frac1N  \sum_{j \in \ES}
%S_{\mu^N_{\bm{x},\bm{y}}}[j]
%\delta_{j}}.
%\end{split}
%\end{equation*}
We expand by Taylor's formula in the form 
\begin{align*}
U^i\bigl(t,\frac{S}{N}\bigr) & = U^i(t,\mu) + \sum_{j\in\dd} \fd_{j} U^i(t,\mu) \bigg(\frac{S^j}{N} -\mu^j\bigg) \\
&+ \sum_{j,k\in\dd} 
\bigg(\frac{S^j}{N} -\mu^j\bigg) \bigg(\frac{S^k}{N} -\mu^k\bigg)
\int_0^1  (1-r) \fd^2_{j,k} U^i \Bigl(t, \mu + r \Big(\frac{S}{N} -\mu\Big)\Bigr)  dr,
\end{align*}
and then, adding and subtracting  $\fd^2_{j,k} U^i(t,\mu)$ inside the integral, we obtain 
\begin{align*}
&\frac{1}{y^l}  N \EE  
\biggl[ u^{N,l}\biggl(t,\bm{x}, y^1\frac{S_{\mu^N_{\bm{x},\bm{y}}}[x^1]}{N\mu^N_{\bm{x},\bm{y}}[x^1]},\dots, y^N\frac{S_{\mu^N_{\bm{x},\bm{y}}}[x^N]}{N\mu^N_{\bm{x},\bm{y}}[x^N]}\biggr)
 -u^{N,l}(t,\bm{x},\bm{y}) \biggr]
 \\
 & = N \EE  
\biggl[ \frac{S^i}{N \mu^i} \biggl( U^i\bigl(t,\frac{S}{N}\bigr) - U^i(\mu) \biggr) \biggr]
\\
&=  N \EE  
\bigg[ \frac{S^i}{N \mu^i}  
\sum_{j\in\dd} \fd_{j} U^i(t,\mu) \bigg(\frac{S^j}{N} -\mu^j\bigg) \bigg] 
\\
&\quad + \frac12 N \EE  
\bigg[ \frac{S^i}{N \mu^i}  
\sum_{j,k\in\dd} \bigg(\frac{S^j}{N} -\mu^j\bigg) \bigg(\frac{S^k}{N} -\mu^k\bigg) \fd^2_{j,k} U^i(t,\mu) \bigg] 
\\
&\quad + N \EE  
\bigg[ \frac{S^i}{N \mu^i}  
\sum_{j,k\in\dd} \biggl\{  \bigg(\frac{S^j}{N} -\mu^j\bigg) \bigg(\frac{S^k}{N} -\mu^k\bigg)
\\
&\quad \quad \times
\int_0^1  (1-r) \biggl[ \fd^2_{j,k} U^i \Bigl(t, \mu + r \bigl(\frac{S}{N} -\mu\bigr)\Bigr) 
- \fd^2_{j,k} U^i(t,\mu) \biggr] dr
\biggr\}
\bigg] 
\\
& =: \mbox{I} + \mbox{II} + R^{N,l}.
\end{align*}
The first term becomes
\begin{align*}
\mbox{I}&=  N \sum_{j\in\dd}\EE 
\bigg[\frac{S^i- N\mu^i}{N\mu^i} \fd_{j} U^i(t,\mu) \Bigl(\frac{S^j}{N} -\mu^j\Bigr) 
+ \fd_{j} U^i(t,\mu) \Bigl(\frac{S^j}{N} -\mu^j\Bigr)
\bigg]
\\
&=   N \sum_{j\in\dd}
\frac{ \mu^i\delta_{j,i} -\mu^i\mu^j}{N \mu^i}  \fd_{j} U^i(t,\mu) + 0
\\
&= \sum_{j\in\dd}
 (\delta_{j,i} -\mu^j )\fd_{j} U^i(t,\mu),
\end{align*}
while the second term is 
\begin{align*}
\mbox{II}&= \frac{ N }{2}
\sum_{j,k\in\dd}  \fd^2_{j,k} U^i(t,\mu)
\EE\bigg[\bigg(\frac{S^j}{N} -\mu^j\bigg) \bigg(\frac{S^k}{N} -\mu^k\bigg) \bigg]
\\
&\quad + \frac{N }{2 N \mu^i}
\sum_{j,k\in\dd}  \fd^2_{j,k} U^i(t,\mu)
\EE\bigg[\left(S^i - N\mu^i\right) \bigg(\frac{S^j}{N} -\mu^j\bigg) \bigg(\frac{S^k}{N} -\mu^k\bigg) \bigg]
\\
&= \frac{1}{2}
\sum_{j,k\in\dd} \left(\mu^j \delta_{j,k} -\mu^j\mu^k\right)
 \fd^2_{j,k} U^i(t,\mu) +O\left(\frac1N\right),
\end{align*}
where in the latter term we used the property of the multinomial distribution
\[
\EE\bigl[\bigl(S_i-N\mu_i\bigr)\bigl(S^j-N\mu^j\bigr)\bigl(S^k-N\mu^k\bigr)\bigr] = \begin{cases}
N\bigl(2 (\mu^i)^3-3(\mu^i)^2+\mu^i\bigr)   & \mbox{ if } i=j=k
\\
N\mu^i\mu^j(2\mu^i-1)  & \mbox{ if } i=k\neq j
\\
2N \mu^i\mu^j\mu^k  & \mbox{ if } i\neq j \neq k
\end{cases},
\]
which is of order $N$.

It remains to estimate the rest $R^{N,l}$, which is of the form $R^{N,l} = \sum_{j,k\in \dd} R_{j,k}$ (we feel better to remove the superscripts $N,l$ in the notation). Thus, in the following, we fix $j$ and $k$ and we estimate $|R_{j,k}|$.
We will use several times Rosenthal's inequality, see 
 \eqref{eq:rosenthal}: for any real $p\geq 2$ and any $i\in \dd$, it yields
\be 
\EE \Big[ \bigl|S^i - N \mu^i \bigr|^p \Big]  \leq  N^{p/2} \mu^i. 
\ee
In order to use the properies of $[\mathcal{C}^{1+\gamma'/2,2+\gamma'}_{WF}([0,T] \times \mathcal{S}_{d-1})]^d$, we multiply and divide the integrand in $R_{j,k}$ by 
$$Q_{j,k}(r):=\sqrt{( \mu^j + r(S^j/N -\mu^j)) ( \mu^k + r(S^k/N -\mu^k) ) },$$ 
which is not zero since 
$(\bm{x},\bm{y}) \in {\mathcal T}_{N}$, 
so that we obtain 
\begin{align*}
R_{j,k} &= N \EE  
\bigg[ \frac{S^i}{N \mu^i}  
\bigg(\frac{S^j}{N} -\mu^j\bigg) \bigg(\frac{S^k}{N} -\mu^k\bigg) 
\\
&\quad \quad \times 
\int_0^1 \frac{ (1-r)}{Q_{j,k}(r)}   \biggl( Q_{j,k}(r)\fd^2_{j,k} U^i \Bigl(t, \mu + r \Bigl(\frac{S}{N} -\mu\Bigr)\Bigr) 
- \sqrt{\mu^j \mu^k} \fd^2_{j,k} U^i(t,\mu) 
\\
&\quad \quad \quad \quad -\Big(Q_{j,k}(r) - \sqrt{\mu^j \mu^k} \Big) \fd^2_{j,k} U^i(t,\mu) \biggr) dr
\Bigg] 
\\
&=: R^1_{j,k} + R^2_{j,k}.
\end{align*}
The denominator is bounded by $Q_{j,k}(r) \geq (1-r)\sqrt{\mu^j \mu^k}$, and thus applying H\"older and Rosenthal inequalities we get
\begin{align*}
|R^1_{j,k}|& \leq  
C N \EE  
\bigg[ \frac{S^i}{N \mu^i}  
\bigg|\frac{S^j}{N} -\mu^j\bigg| \bigg|\frac{S^k}{N} -\mu^k\bigg|  
\int_0^1 \frac{r}{\sqrt{\mu^j \mu^k}}  \sum_{e\in \dd} \bigg|\frac{S^e}{N} -\mu^e\bigg|^{\frac{\gamma'}2} dr \bigg]
\\
& \leq C N \frac{1}{ N \mu^i \sqrt{\mu^j \mu^k}}  \sum_{e\in \dd}
\EE  
\bigg[ |S^i -N\mu^i| 
\bigg|\frac{S^j}{N} -\mu^j\bigg| \bigg|\frac{S^k}{N} -\mu^k\bigg|  
 \bigg|\frac{S^e}{N} -\mu^e\bigg|^{\frac{\gamma'}2} \bigg] 
 \\
 &\quad \quad + CN \frac{1}{  \sqrt{\mu^j \mu^k}}  \sum_{e\in \dd}
\EE  
\bigg[ 
\bigg|\frac{S^j}{N} -\mu^j\bigg| \bigg|\frac{S^k}{N} -\mu^k\bigg|  
 \bigg|\frac{S^e}{N} -\mu^e\bigg|^{\frac{\gamma'}2} \bigg]
 \\
 &=  \frac{C}{ N^{2+\frac{\gamma'}2} \mu^i \sqrt{\mu^j \mu^k}}  \sum_{e\in \dd}
\EE  
\Big[ |S^i -N\mu^i| 
\big|S^j -N \mu^j\big| \big| S^k  - N \mu^k\big|  
 \big| S^e  -N \mu^e\big|^{\frac{\gamma'}2} \Big] 
 \\
 &\quad \quad +  \frac{C}{ N^{1+\frac{\gamma'}2} \sqrt{\mu^j \mu^k}}  \sum_{e\in \dd}
\EE  
\Big[ 
\big|S^j -N \mu^j\big| \big| S^k  - N \mu^k\big|  
 \big| S^e  -N \mu^e\big|^{\frac{\gamma'}2} \Big] 
 \\
 &\leq  \frac{C}{ N^{2+\frac{\gamma'}2} \mu^i \sqrt{\mu^j \mu^k}}  \sum_{e\in \dd}
\Big( \prod_{\ell=i,j,k} \EE  
\Big[ |S^\ell -N\mu^\ell|^{\frac{3}{1-\gamma'/6}} \Big] \Big)^{\frac{1-\gamma'/6}{3}}
 \Big( \EE | S^e  -N \mu^e \big| \Big)^{\frac{\gamma'}2} 
 \\
 &\quad \quad +  \frac{C}{ N^{1+\frac{\gamma'}2} \sqrt{\mu^j \mu^k}}  \sum_{e\in \dd}
\Big( \prod_{\ell=j,k} \EE  
\Big[ |S^\ell -N\mu^\ell|^{\frac{2}{1-\gamma'/4}} \Big] \Big)^{\frac{1-\gamma'/4}{2}}
 \Big( \EE | S^e  -N \mu^e \big| \Big)^{\frac{\gamma'}2} 
 \\
 &\leq \frac{C}{ N^{2+\frac{\gamma'}2} \mu^i \sqrt{\mu^j \mu^k}} N^{\frac{3}2 + \frac{\gamma'}4} (\mu^i \mu^j \mu^k)^{\frac{1-\gamma'/6}{3}} 
 +  \frac{C}{ N^{1+\frac{\gamma'}2} \sqrt{\mu^j \mu^k}} N^{1+ \frac{\gamma'}4}(\mu^j \mu^k)^{\frac{1-\gamma'/4}{2}} 
 \\
 &= \frac{C}{N^{\frac12 +\frac{\gamma'}{4}} (\mu_i)^{\frac{2+\gamma'/6}{3}} (\mu^j \mu^k)^{\frac{1+2\gamma'/6}{6}} } 
 + \frac{C}{N^{\frac{\gamma'}{4}} (\mu^j \mu^k)^{\frac{\gamma'}{8}}} 
 \\
 & \leq \frac{C}{N^{\frac{1+\gamma'/2}{2} - \epsilon\left(1+\frac{\gamma'}{6} \right)}}
 + \frac{C}{N^{\frac{\gamma'}{4} - \epsilon\gamma'/2}} 
 \leq \frac{C}{N^\eta},
\end{align*}
for
$\epsilon\in (0, {1/4} )$ and for $\eta>0$, depending on 
$\epsilon$ and $\gamma'$.  
% suitable positive values of \gre{$\epsilon<1/2$} and $\eta$. 

The term $R^2_{j,k}$, using $\sqrt{\mu^j\mu^k} | \fd^2_{j,k} U^i(\mu) |  \leq C$, is bounded as
\begin{align*}
|R^2_{j,k}|& \!\leq  \!
N \EE  
\Bigg[ \frac{S^i}{N \mu^i}  
\bigg|\frac{S^j}{N} -\mu^j\bigg| \bigg|\frac{S^k}{N} -\mu^k\bigg|  \cdot\\
&\quad \cdot \int_0^1 \!\frac{1}{\sqrt{\mu^j \mu^k}}  
\sqrt{\bigg| \bigg( \mu^j + r\bigg(\frac{S^j}{N} -\mu^j \!\bigg) \bigg) \bigg( \mu^k + r\bigg(\frac{S^k}{N} -\mu^k \!\bigg)\bigg) - \!\mu^j\mu^k \bigg|} |\fd^2_{j,k} U^i(\mu) |
 dr \Bigg]\\
 &\leq \frac{CN}{\mu^j \mu^k}
 \EE  
\Bigg[ \frac{S^i}{N \mu^i}  
\bigg|\frac{S^j}{N} -\mu^j\bigg| \bigg|\frac{S^k}{N} -\mu^k\bigg|   \int_0^1  
\sqrt{\bigg|  r\bigg(\frac{S^j}{N} -\mu^j \!\bigg) \mu^k + r\bigg(\frac{S^k}{N} -\mu^k \!\bigg) \mu^j \bigg|}
 dr \Bigg] \\
 &\quad +  \frac{CN}{\mu^j \mu^k}
 \EE  
\Bigg[ \frac{S^i}{N \mu^i}  
\bigg|\frac{S^j}{N} -\mu^j\bigg| \bigg|\frac{S^k}{N} -\mu^k\bigg|   \int_0^1  
r \sqrt{ \bigg|\frac{S^j}{N} -\mu^j\bigg| \bigg|\frac{S^k}{N} -\mu^k\bigg| }
 dr \Bigg] \\
 &=: (A) +(B).
\end{align*}
To estimate $(A)$, we bound the term with $r(S^j/N -\mu^j) \mu^k$ inside the square root; the other term is analogous. We have
\begin{align*}
& \frac{CN}{\mu^j \mu^k}
 \EE  
\Bigg[ \frac{S^i}{N \mu^i}  
\bigg|\frac{S^j}{N} -\mu^j\bigg| \bigg|\frac{S^k}{N} -\mu^k\bigg|   \bigg|  \frac{S^j}{N} -\mu^j \bigg|^{\frac12} \sqrt{\mu^k}  \Bigg] \\
&\leq \frac{CN}{\mu^j \sqrt{\mu^k} N \mu^i }
\EE  
\Bigg[ |S^i -N\mu^i|
\bigg|\frac{S^j}{N} -\mu^j\bigg|^{\frac32} \bigg|\frac{S^k}{N} -\mu^k\bigg| \Bigg]
+ \frac{CN}{\mu^j \sqrt{\mu^k} }
\EE  
\Bigg[ 
\bigg|\frac{S^j}{N} -\mu^j\bigg|^{\frac32} \bigg|\frac{S^k}{N} -\mu^k\bigg| \Bigg] \\
&= \frac{C}{\mu^i \mu^j \sqrt{\mu^k} N^{\frac52} } 
\EE  
\Big[ |S^i -N\mu^i| |S^j -N\mu^j|^{\frac32} |S^k -N\mu^k| \Big] \\
& \quad
 + \frac{C}{ \mu^j \sqrt{\mu^k} N^{\frac32} } 
\EE  
\Big[ |S^j -N\mu^j|^{\frac32} |S^k -N\mu^k| \Big] \\
& \leq \frac{C}{\mu^i \mu^j \sqrt{\mu^k} N^{\frac52} } 
\Big( \EE  
\Big[ |S^i -N\mu^i|^{3} \Big] \EE  
\Big[|S^j -N\mu^j|^{\frac92} \Big]
\EE  
\Big[|S^k -N\mu^k|^3 \Big] \Big)^{\frac13}\\
&\quad + \frac{C}{ \mu^j \sqrt{\mu^k} N^{\frac32} } 
\Big( \EE  
\Big[ |S^j -N\mu^j|^{3} \Big] \EE \Big |S^k -N\mu^k|^2 \Big] \Big)^{\frac12} \\
&\leq \frac{C}{\mu^i \mu^j \sqrt{\mu^k} N^{\frac52} } N^{\frac74} (\mu^i\mu^j\mu^k)^{\frac13} 
+ \frac{C}{ \mu^j \sqrt{\mu^k} N^{\frac32} } N^{\frac54} \sqrt{\mu^j\mu^k} \\
&= \frac{C}{N^{\frac34} (\mu^i \mu^j)^{\frac23} (\mu^k)^{\frac16}} 
+ \frac{C}{N^\frac14 (\mu^j)^\frac12} \\
&\leq  \frac{C}{N^{\frac34 -  \frac32\epsilon } } 
+ \frac{C}{N^{\frac14 - \frac12\epsilon} } \leq \frac{C}{N^\eta},
\end{align*}
for $\epsilon \in (0, {1/4})$  and for a possibly new value of $\eta$. And, then, $(A)\leq C/N^\eta$.
The term $(B)$ is estimated in the same way:
\begin{align*}
(B) &\leq \frac{CN}{\mu^j \mu^k}
 \EE  
\Bigg[ \frac{S^i}{N \mu^i}  
\bigg|\frac{S^j}{N} -\mu^j\bigg|^{\frac32} \bigg|\frac{S^k}{N} -\mu^k\bigg|^{\frac32}   \Bigg] \\
&\leq \frac{CN}{N \mu^i \mu^j \mu^k N^3}
 \EE  
\Big[ |S^i- N \mu^i|  |S^j- N \mu^j|^\frac32 |S^k- N \mu^k|^\frac32 \Big] \\
& \quad + \frac{CN}{ \mu^j \mu^k N^3} \EE  
\Big[  |S^j- N \mu^j|^\frac32 |S^k- N \mu^k|^\frac32 \Big] \\
&\leq \frac{C}{ \mu^i \mu^j \mu^k N^3} N^2 (\mu^i \mu^j \mu^k)^\frac13
+  \frac{C}{ \mu^j \mu^k N^2} N^{\frac32} (\mu^j \mu^k)^{\frac12} \\
& \leq \frac{C}{N^{1-2\epsilon }} 
+  \frac{C}{N^{\frac12 -\epsilon }} \leq \frac{C}{ N^\eta},
\end{align*} 
which concludes the proof.
%\red{{\it (here we are using $\epsilon<1/2)$)}}
\end{proof}

\subsection{Proofs of Lemmas \ref{lem:exp:first:order:variation}, \ref{prop:nash:1st:order} and 
\ref{lem:expansion:H}}
\label{subse:7:2}

\begin{proof}[Proof of Lemma \ref{lem:exp:first:order:variation}]
We just prove \eqref{deltanl}; the proof of \eqref{deltall} is similar.
As in the proof of Lemma \ref{Nash:approximate:common:noise}, we fix $t \in [0,T]$ and $(\bm{x},\bm{y}) \in \ES^N \times \setQ$. We then let 
$\mu = \mu^N_{\bm{x},\bm{y}}$
and, fixing $l,n \in \NN$ with $m \not =l$, we denote $x^l$ by $i$ and $x^m$ by $k$, namely 
$i:=x^l$ and $k=x^m$.

We then have the following expansion:
\begin{align*}
&\frac{1}{y^l} \Delta^m u^{N,l}(t,\bm{x}, \bm{y})[j] 
\\
&= U^{i}\Big( t,\mu+ \frac{y^m}{N}(\delta_j - \delta_{k}) \Big) - U^i( t,\mu)
\\
&= \int_0^1 
\frac{y^m}{N} \Bigl[  \fd_{j} U \Big(t, \mu + s \frac{y^m}{N}(\delta_j - \delta_{k}) \Big) 
- 
\fd_{  k} U \Big(t, \mu + s \frac{y^m}{N}(\delta_j - \delta_{k}) \Big)
\Bigr]
 ds 
\\
&= \frac{y^m}{N} \Bigl[\fd_{j} U^{i}(t, \mu) - \fd_{{k}} U^{i}(t, \mu) \Bigr] 
\\
&\hspace{15pt} + \int_0^1 \frac{y^m}{N} \Bigl[\fd_{j}  U^i \Big(t, \mu + s \frac{y^m}{N}(\delta_j - \delta_{k}) \Big) -\fd_{j} U^{i}(t, \mu) 
 - \fd_{{k}}  U^i \Big( \mu + s \frac{y^m}{N}(\delta_j - \delta_{k}) \Big)  + \fd_{{k}} U^{i}(t, \mu) \Bigr]ds,
\end{align*}
 and the last two lines may be written in the form of a rest $\varrho^{N,l,m}(t,\bm{x},\bm{y})[j]$. Since $\fD U^i$ is $\gamma'$-H\"older continuous (for the Wright--Fisher distance), this remainder is bounded by
\[
\bigl|\varrho^{N,l,m}(t,\bm{x},\bm{y})[j]\bigr|
% \leq 
% C \int_0^1 
% \frac{y^n}{N} \Big| s \frac{y^n}{N}(\delta_j - \delta_{x_n})\Big|^\gamma ds
 \leq C \frac{(y^m)^{1+\gamma'/2}}{N^{1+\gamma'/2}},
\]
which completes the proof.
\end{proof}

\begin{proof}[Proof of Lemma \ref{prop:nash:1st:order}.]
We first address the second term 
in the top line of 
\eqref{eq:nash:1st:order}, namely the term containing $a^*$. As in the proofs of the previous two statements, 
we fix $t \in [0,T]$, $(\bm{x},\bm{y}) \in {\mathcal T}_{N}$
and $l \in \NN$, 
 and we let 
$\mu = \mu^N_{\bm{x},\bm{y}}$ and $i=x^l$. 

By the definition of $a^*$ (see \eqref{eq:def:Hamiltonian}), we have
\begin{equation*}
\frac{1}{y^l}  \sum_{m\neq l} a^*\Big(x^m, \frac{1}{y^m} u^{N,m}_{\bullet}\Big)\cdot \Delta^m u^{N,l} [\bullet]
= \sum_{m\neq l} \sum_{j\in\dd} \frac{1}{y^m}
\Bigl(- \Delta^m u^{N,m} [j] \Bigr)_+ \frac{1}{y^l} \Delta^m u^{N,l} [j] 
\end{equation*}
and then by Lemma \ref{lem:exp:first:order:variation},
\begin{align*}
&\frac{1}{y^l}  \sum_{m\neq l} a^*\Big(x^m, \frac{1}{y^m} u^{N,m}_{\bullet}\Big)\cdot \Delta^m u^{N,l} [\bullet]
\\
&= \sum_{m\neq l} \sum_{j \in \ES} 
\biggl[
 \Big( \bigl( U^{x^m}- U^j \bigr) (t,\mu)  -\frac{y^m}{N} \big(
 \fd_{j} U^{x^m} - \fd_{{x^m}} U^{x^m} \big)(t,\mu) -\varrho^{N,m,m}(t,\bm{x},\bm{y})[j]  \Big)_+
 \\
 &\hspace{90pt} \times 
 \frac{y^m}{N} \Big( \bigl( \fd_{j} U^{i} - \fd_{{x^m}} U^{i}\bigr)(t, \mu) + \varrho^{N,l,m}(t,\bm{x},\bm{y})[j] \Big) 
 \biggr]
 \\
 &= \sum_{m\neq l} \sum_{j,k\in\dd}\hspace{-3pt} \one_{\{x^m=k\}} 
 \biggl[ \Big( \bigl( U^{k} -U^j\bigr)(t,\mu) -\frac{y^m}{N} \big(
 \fd_{j} U^{k} - \fd_{k} U^{k} \big)(t,\mu) -\varrho^{N,m,m}(t,\bm{x},\bm{y})[j]   \Big)_+
 \\
 &\hspace{90pt} \times  
 \frac{y^m}{N} \Big( \bigl( \fd_{j} U^{i} - \fd_{k} U^{i}\bigr)(t,\mu) + \varrho^{N,l,m}(t,\bm{x},\bm{y})[j] \Big) \biggr].
 \end{align*}
And then,
 \begin{align*}
 &\frac{1}{y^l}  \sum_{m\neq l} a^*\Big(x^m, \frac{1}{y^m} u^{N,m}_{\bullet}\Big)\cdot \Delta^m u^{N,l}[\bullet] 
\\
 &= \sum_{m\neq l} \sum_{j,k\in\dd} \one_{\{x^m=k\}} 
 \bigl( (U^{k} -U^j)(t,\mu) \bigr)_+
 \frac{y^m}{N} \big(\fd_{j} U^{i} - \fd_{k} U^{i}  \big)(t,\mu) 
 \\
 &\hspace{1pt}+ \sum_{m\neq l} \sum_{j,k\in\dd} \biggl[ \one_{\{x^m=k\}} 
  \Big( \bigl( U^{k} -U^j\bigr)(t,\mu) -\frac{y^m}{N} \big(
 \fd_{j} U^{k} - \fd_{k} U^{k} \big)(t,\mu) -\varrho^{N,m,m}(t,\bm{x},\bm{y})[j]  \Big)_+
 \\
&\hspace{60pt} \times \varrho^{N,l,m}(t,\bm{x},\bm{y})[j] \biggr]
 \\
 &\hspace{1pt}+ \sum_{m\neq l} \sum_{j,k\in\dd} \one_{\{x^n=k\}} 
\bigg[ \Big( \bigl( U^{k} -U^j \bigr)(t,\mu) -\frac{y^m}{N} \big(
 \fd_{j} U^{k} - \fd_{k} U^{k} \big)(t,\mu) -\varrho^{N,m,m}(t,\bm{x},\bm{y})[j]  \Big)_+ 
 \\
&\hspace{60pt} - \bigl((U^k -U^j)(t,\mu)\bigr)_+ \biggr]
 \frac{y^m}{N} \big(\fd_{j} U^{i} - \fd_{k} U^{i} \big)(t,\mu) 
 \\
 &:= \sum_{k,j\in\dd} \mu^k \bigl( (U^{k} -U^j)(t,\mu) \bigr)_+
  \big(\fd_j U^{i} - \fd_{k} U^{i}  \big)(t,\mu) + R_1 +R_2.
\end{align*}
Using the boundedness of $U$ and $\fD U$,  the first remainder is bounded by 
\begin{align*}
|R_1|&\leq C\sum_{m\in \EN } 
\sup_{j \in \ES}
\bigl|\varrho^{N,l,m}(t,\bm{x},\bm{y})[j]\bigr|  +C \sum_{m \in \EN} \frac{y^m}{N}
\sup_{j \in \ES}\bigl|\varrho^{N,l,m}(t,\bm{x},\bm{y})[j] \bigr|   
\\
&\hspace{15pt}  + C \sum_{m \in \EN} 
\sup_{j \in \ES} \bigl| \bigl( \varrho^{N,m,m} \varrho^{N,l,m}\bigr) (t,\bm{x},\bm{y})[j]\bigr| 
\\
&\leq C \sum_{m \in \EN} \frac{(y^m)^{1+\gamma'/2}}{N^{1+\gamma'/2}} + 
C \sum_{m \in \EN} \frac{(y^m)^{2+\gamma'/2}}{N^{2+\gamma'/2}} 
+C \sum_{m \in \EN} \frac{(y^m)^{2+2\gamma'/2}}{N^{2+2\gamma'/2}},
\end{align*}
while the second remainder is bounded by
\begin{align*}
|R_2|\leq C\sum_{m \in \EN} \Big( \frac{y^m}{N} + \sup_{j \in \ES} \bigl|\varrho^{N,m,m}(t,\bm{x},\bm{y})[j]\bigr| \Big)  \frac{y^m}{N}
\leq C \sum_{m \in \EN} \frac{(y^m)^{2}}{N^{2}} + 
C \sum_{m \in \EN} \frac{(y^m)^{2+\gamma'/2}}{N^{2+\gamma'/2}} .
\end{align*}
Finally, since $y^m \leq N$, we deduce that $\vert R_{1} \vert+ \vert R_{2} \vert$ is less than 
$C \sum_{m \in \EN} (y^m)^{1+\gamma'/2}/N^{1+\gamma'/2}$. 
By definition of ${\mathcal T}_{N}$ in  
\eqref{eq:mathcal:TN}, we then recall that 
$\max_{m \in\NN} y^m \leq N^{1-\epsilon}$, which yields
\be
\sum_{m \in \EN} \frac{(y^m)^{1+\gamma'/2}}{N^{1+\gamma'/2}}
\leq 
N^{-\gamma' \epsilon/2}
\sum_{m \in \EN} \frac{y^m}{N}
= N^{-\gamma' \epsilon/2},
\ee
where we used in the last line the fact that $\sum_{m \in \EN} y^m=N$.
This shows that the second term on the top line of 
\eqref{eq:nash:1st:order}
satisfies the bound \eqref{eq:rn2l}.

The first term on the top line of 
\eqref{eq:nash:1st:order}
is handled in the same way, using the additional
expansion 
\eqref{deltall}
to treat the case when $m$ (the index in the sum) is equal to $l$. 
\end{proof}

  \begin{proof}[Proof of Lemma \ref{lem:expansion:H}.]
   	Applying
	\eqref{eq:def:Hamiltonian}
	and
	 \eqref{deltall}, we obtain (denote $\mu^N_{\bm{x},\bm{y}} =\mu$ and $x^l=i$)
   	\begin{align*}
   	H\Big(x^l, \frac{1}{y^l}u^{N,l}_\bullet \Big) 
	&= - \frac12 
   	\sum_{j\in\dd}  \Bigl(- \frac{1}{y^l} \Delta^l u^{N,l}(t,\bm{x}, \bm{y})[j]\Bigr)_+ ^2 
	\\
   	&= - \frac12 
   	\sum_{j\in\dd} \Bigl( \bigl(U^i-U^{j}\bigr)( t,\mu)  
   - \frac{y^l}{N} \bigl(\fd_j U^{i} - \fd_{i} U^{i}\bigr)(t,\mu)  - \varrho^{N,l,l}(t, \bm{x},\bm{y})[j] \Big)_+^2 \\
   &=- \frac12 
   \sum_{j\in\dd} \Bigl( \bigl(U^i -U^{j}\bigr)(t, \mu) \Bigr)_+^2 + r_3^{N,l}.
   	\end{align*}
   	The first term is the Hamiltonian $H(i,U(t,\mu))$. Using 
	\eqref{eq:conclusion:lem:exp:first:order:variation} together with 
	the fact that $U$ and $\fD U$ are bounded and that 
	$y^l \leq N$, we can estimate the remainder by 
   	\begin{align*}
   	| r_3^{N,l} | &\leq  C 
   	\sum_{j\in\dd} \Bigl( 1 + 
    \frac{y^l}{N} + \bigl\vert \varrho^{N,l,l}(t, \bm{x},\bm{y})[j] \bigr\vert
	 \Bigr) 
\Bigl(  \frac{y^l}{N} +
	 \bigl\vert \varrho^{N,l,l}(t, \bm{x},\bm{y})[j] \bigr\vert \Bigr) 
	\\
   &\leq C \frac{y^l}{N} + C \Big( \frac{y^l}{N} \Big)^{1+\gamma'/2} \leq C \frac{y^l}{N},
   	\end{align*}
	which completes the proof. 
   \end{proof}

\section{Further prospects}\label{sec:7}
It is fair to say that, despite our long analysis, we have left open several quite important questions. This is mostly 
our deliberate choice since we want to keep the paper of a reasonable length. 

As we already alluded to in \S \ref{sec:convres}, a first point 
would be to compare the particle system driven by 
the equilibrium feedback law $\Delta w^{N,n}$
by the same particle system but driven by 
the feedback strategy $\Delta z^{N,n}$.

Another problem
would be to prove propagation of chaos. Notice indeed that our convergence result 
Theorem \ref{thm:convergence}
does not permit to prove asymptotic conditional independence of the particles. 
The usual argument to do so is based on a standard result of Sznitman, see 
\cite[Proposition 2.2]{sznit}, 
but it does not apply here because the weights 
are not equal to $1/N$ but to $(Y^l_{t}/N)_{l \in \NN}$.
In order to prove that, asymptotically, the particles just interact through the common noise, a possible approach 
would consist in showing that the distance between
$(\mu^N_{t})_{0 \le t \le T}$ 
and $({\mathbb E}[ \mu^N_{t} \vert {\mathcal N}^0])_{0 \le t \le T}$
tends to $0$. In order to do so, we may think of 
comparing $(\mu^N_{t})_{0 \le t \le T}$
and $(\widehat{\mu}^N_{t})_{0 \le t \le T}$, 
 the latter being obtained by considering the same system as \eqref{tilde_processes},
driven by the same common noise but by independent copies 
$\widehat{\mathcal N}^1,\cdots,
\widehat{\mathcal N}^N$
of the Poisson measures 
${\mathcal N}^1,\cdots,{\mathcal N}^N$. 
Alternatively, we might think of a more direct coupling argument; we refer for instance to 
\cite{kurtz-xiong-01} or to \cite[Chapter 2]{CarmonaDelarue_book_II} for such a proof of propagation of chaos
for diffusive particle systems subjected to a common noise. In this regard, we could use the 
representation of the MFG equilibrium 
in the form $P_t^i = \E[ Y_t \one_{\{X_t=i\}} | \mathcal{F}_t^W]$, where $X_{t}$ is the 
state of a tagged player within the population and $W$ is the common noise in 
\eqref{eq:weak:sde:final:2:b}; we refer to 
\cite{BCCD-arxiv} for more details on this representation.
Still, this would obviously raise subtle questions about a suitable coupling between the Brownian noise $W$
and the Poisson measures ${\mathcal N}^0$.
%; we refer to 
%for a seminal reference on this kind of questions (though in a simpler setting). }
%\gre{[A: Instead of the paragraph below, we could compare with copies of the limit position and mass processes, in the following way. In the mean field game, the position $X$ of the refencence player is driven by a Poisson measure $\mathcal{N}$, while its mass $Y$ is driven by a Brownian motion $W$, and the transition rate is given in terms of $U(t, X_t, P_t)$, with $P_t^i = \E[ Y_t \one_{\{X_t=i\}} | W]$. Considering $N$ independent copies of $\mathcal{N}$ (which are the measures $\mathcal{N}^1,\dots, \mathcal{N}^N$ introduced before), but keeping the same $W$, 
%we construct a $N$-tuple of pairs $(\bar{X}^1,\bar{Y}^1), \dots, (\bar{X}^N,\bar{Y}^N)$, allowing the control of player $l$ to depend on 
%$U(t,\bar{X}^l_t, (\E [Y_t^l \one_{\{X_t^l=i\}} | W] )_{i\in\dd} )$; such pairs $(\bar{X}^i,\bar{Y}^i)$ are identically distributed and conditionally independent given $W$. Thus conditional propagation of chaos would result in proving that the distance between the $N$-tuple just constructed and the $N$-tuple of optimal processes (when playing the Nash equilibrium) $(X^l, Y^l)_{l\in\NN}$ converges to zero as $N$ grows. ]
%}
%

Last, we may think of estimating the weak error in the convergence in law 
proved in 
Theorem 
\ref{thm:convergence}, at least for the one-dimensional 
marginals. 
It seems that, in order to do so, we can adapt 
Proposition 
\ref{prop:u}, choosing instead 
of $U$ itself, the solution $Z$ of the equation 
\begin{align}
&\partial_t Z    +
\sum_{j,k \in \ES} p_k\bigl[ \varphi(p_{j}) + (U^k-U^j)_+ \bigr] \left( \partial_{p_{j}} Z - \partial_{p_{k}} Z\right) 
  +\tfrac{\varepsilon^2}{2} \sum_{j,k \in \ES}(p_j \delta_{jk}-p_{j} p_{k}) \partial^2_{p_{j} p_{k}} Z =0,
  \nonumber
  \\
&Z(T,p)= h(p),
\label{eq:PDE:Z}
\end{align}
for $p \in \cP(\ES)$ and for a terminal boundary condition $h$. 
Provided $h$ is smooth enough, 
we know from \cite[Theorem 10.0.2]{EpsteinMazzeo}
that the above equation has a classical solution. 
Then, following the proof of 
Proposition 
\ref{prop:u}, we can show that 
the function $(t,\bm{x},\bm{y}) \mapsto Z(t,\mu^N_{\bm {x},\bm{y}})$
is nearly harmonic for the generator of $(\bm{X}^N,\bm{Y}^N)$ (which we merely denoted 
by $(\bm{X},\bm{Y})$ 
in 
\eqref{tilde_processes}
with $\iota=0$ therein). 
The fact that the latter mapping is nearly harmonic 
 makes it possible to 
prove (by It\^o's expansion) that $Z(0, \mu^N_{\bm {x},\bm{y}})$
and ${\mathbb E}[h(\mu^N_{\bm{X}_{T}^N,\bm{Y}_{T}^N})]$ 
get closer as $N$ tends to $\infty$, 
whenever $(\bm{X}^N,\bm{Y}^N)$ starts from $(\bm{x},\bm{y})$ 
at time $0$.  
This strategy has been reported 
within the more standard context of McKean--Vlasov equations in 
\cite[Subsection 5.7.4]{CarmonaDelarue_book_I}
and, in a more systematic manner, in 
the recent contribution 
\cite{chassagneux2019weak}.

Additionally, one can also construct an asymptotic equilibrium in the $N$-player game using the master equation; 
we refer for instance to 
\cite[Subsection 6.1.2]{CarmonaDelarue_book_II}
for more on this approach in the more standard diffusive setting.  
Specifically, under the conditions in Theorems \ref{thm:convergence_value} and \ref{thm:convergence}, for any sequence of initial conditions $\{(\blx^N,\bly^N)=((x^{N,l})_{l \in \NN},(1,\dots,1))\}_N$ satisfying \eqref{conv:init}, %there is a sequence $\{\delta_n\}\to0$ such that
 the feedback strategy vector  
$\bm{\hat\alpha}^*=(\hat\alpha^{*,1},\dots,\hat\alpha^{*,N})$ given by
\be 
%\label{optcon}
\begin{split}
\hat\alpha^{*,l}(t,\bm{x},\bm{y})[j]=a^*\bigg(x^l, \u^{N,l}_\bullet(t,\bm{x},\bm{y})\bigg)[j] :&= \Bigl(\u^{N,l}\big(t,\bm{x},\bm{y})- \u^{N,l}\bigl(t,(j,\bm{x}^{-l}),\bm{y}\bigr)\Big)_+
\\
&=\Bigl(- \Delta^l \u^{N,l}\big(t,\bm{x},\bm{y})[j]\Big)_+,
\end{split}
\ee 
for $l \in \EN$ such that $j \not = x^l$,
is expected to define an asymptotic Nash equilibrium in Markov feedback form for the $N$-player game. 
That means that  for any player $l\in\NN$ and any sequence of feedback controls $\{\beta^{N,l}\}_{N\ge l}$, one should have
\begin{align}\notag
\liminf_{N\to\infty}J^{N,l}\bigl(t,\blx,\bly,\beta^l,\hat{\bm \alpha}^{*,-l}\bigr)\ge \limsup_{N \to\infty}J^{N,l}\bigl(t,\blx,\bly,\hat{\bm \alpha}^{*}\bigr),
\end{align}
where $J^{N,l}(t,\bm{x},\bm{y},\bm{\alpha})$ denotes the cost when the process $(\bm{X}^{N},\bm{Y}^{N})$ starts at time $t$ with $(\bm{X}_t^{N},\bm{Y}_t^{N})= (\bm{x}^N,\bm{y}^N)$ (we here put a superscript $N$ in order to emphasize the dependence on $N$). Moreover, letting $\hat\mu^N=(\hat\mu^N_t)_{0\le t\le T}$ be empirical distribution under $\bm{\hat\alpha}^*$, $(\hat\mu^N_{t})_{0 \le t \le T}$  {is expected to} converge in the weak sense on ${\mathcal D}([0,T];\R^d)$, equipped with 
the $J1$ Skorokhod topology, to the solution $(P_{t})_{0 \le t \le T}$ of 
the SDE \eqref{eq:SDE:p}. Notice that, differently from the analysis carried out in the rest of the paper, this would force us to address the asymptotic behavior of 
the (conditional) mass of  a ``deviating player'' $(\widetilde X^{N,l},\widetilde Y^{N,l})$. The latter would read 
$(Q_{t}^{N,l}[i]:=\E[ \widetilde Y_t^{N,l} \one_{\{\widetilde X_t^{N,l}=i\}} | {\mathcal N}^0])_{0 \le t \le T}$; a peculiarity of it is that, similar to 
the process $(Q_{t})_{0 \le t \le T}$ 
in 
\eqref{eq:weak:sde:final:2:b}, it might not take values in the simplex. 

% !!
%
%
%
%
%
%
\appendix
\section{Nash equilibria of the $N$-player game}
\label{subse:heuristic:master equation}

\subsection{Proof of Proposition \ref{prop:verification:Nash}}
\label{subse:proof:verif}

{ \ }
\vskip 5pt

 \textit{Value of $v^{N,l}$ when $y^l=0$.}
We first prove that $v^{N,l}(t,\blx,\bly)=0$ when  $y^l=0$. To do so, 
given 
$\bm{\alpha}^*$ as in 
\eqref{optcon} and \eqref{optcon:2}, we call 
$(\bm{X},\bm{Y})$  the dynamics related to the strategy vector $(0,{\boldsymbol \alpha}^{*,-l})$
 and starting at time $t$ from $(\bm{X}_t,\bm{Y}_t)= (\bm{x},\bm{y})$. By a direct application of Dynkin's formula
to $(v^{N,l}(s,\bm{X}_{s},\bm{Y}_{s}))_{t \leq s \leq T}$, 
using in addition the fact that 
the process $Y^l$ remains equal to $0$ whenever starting from $0$,
we get that 
$v^{N,l}(t,\blx,\bly)=0$ when  $y^l=0$.
\vskip 5pt

\textit{Verification argument.}
Let $\beta(t,\bm{x},\bm{y})$ be another feedback control and then $(\bm{X},\bm{Y})$ now denote the dynamics related to the strategy vector $(\beta,\alpha^{*,-l})$ (still starting at time $t$ from $(\bm{X}_t,\bm{Y}_t)= (\bm{x},\bm{y})$). Fix $l\in\NN$. By  definition of the Hamiltonian and from the fact that $v^{N,l}(s,\bm{x},\bm{y})$ is zero if $y^l=0$, the Nash system gives 
\begin{align*}
&\frac{d}{dt}v^{N,l}(s,\blx,\bly)
+
\sum_{m \in \EN } 
\varphi(\mu^N_{\bm{x},\bm{y}}[\bullet])
\cdot \Delta^m v^{N,l}[\bullet] 
\\
&\qquad
+\sum_{m\neq l} \alpha^{*,m}\big(s,\bm{x}, \bm{y})\cdot \Delta^m v^{N,l}[\bullet] 
 +  \beta(s,\bm{x},\bm{y})\cdot
\Delta^lv^{N,l}
%+y^l \ell\bigl(x^l,\beta(s,\bm{x},\bm{y})\bigr) +
% y^lf(x^l, \mu^N_{\bm{x},\bm{y}})
% \\
%
\\
& \qquad 
+\varepsilon N \EE 
\biggl[ v^{N,l}\biggl(s,\bm{x}, y^1\frac{S_{\mu^N_{\bm{x},\bm{y}}}[x^1]}{N\mu^N_{\bm{x},\bm{y}}[x^1]},\dots, y^N\frac{S_{\mu^N_{\bm{x},\bm{y}}}[x^N]}{N\mu^N_{\bm{x},\bm{y}}[x^N]}\biggr)
 -v^{N,l}(s,\bm{x},\bm{y})
\biggr]
\\
&=  \frac12 \sum_{j \not = x^l} y^l \bigl\vert \alpha^{*,l}(s,\blx,\bly)[j] \bigr\vert^2 
+ 
\one_{\{y^l \not =0\}} \beta(s,\bm{x},\bm{y}) 
\cdot
\Delta^lv^{N,l}[\bullet] - y^lf(x_l, \mu^N_{\bm{x},\bm{y}})
\\
&\geq 
\frac12 \sum_{j \not = x^l} y^l \bigl\vert \alpha^{*,l}(s,\blx,\bly)[j] \bigr\vert^2 
-
\one_{\{y^l \not =0\}} \beta(s,\bm{x},\bm{y}) 
\cdot
\bigl( - \Delta^lv^{N,l} \bigr)_{+}[\bullet] - y^lf(x_l, \mu^N_{\bm{x},\bm{y}})
\\
&= \frac12 \sum_{j \not = x^l} y^l \bigl\vert \alpha^{*,l}(s,\blx,\bly)[j] \bigr\vert^2 
-
y^l \beta(s,\bm{x},\bm{y}) 
\cdot
\alpha^{*,l} (s,\bm{x},\bm{y}) 
 - y^lf(x_l, \mu^N_{\bm{x},\bm{y}})
 \\
 &=
  \frac12 \sum_{j \not = x^l} y^l \bigl\vert \alpha^{*,l}(s,\blx,\bly)[j] - \beta(s,\blx,\bly)[j] \bigr\vert^2 
- \frac12 \sum_{j \not = x^l} y^l \bigl\vert \beta(s,\blx,\bly)[j] \bigr\vert^2 
 - y^lf(x_l, \mu^N_{\bm{x},\bm{y}}), 
\end{align*}
for any $s,\bm{x},\bm{y}$. Applying Dynkin's formula, and the above inequality, we obtain
\begin{align*}
&v^{N,l}(t,\bm{x},\bm{y})= \E \bigg[ v^{N,l}(T,\bm{X}_T,\bm{Y}_T) - \int_t^T \bigg(
 \frac{d}{dt}v^{N,l} (s,\bm{X}_s,\bm{Y}_s)\\
 &\quad+\sum_{m\neq l} \alpha^{*,m}\Big(s,{\boldsymbol X}_s,\bm{Y}_s\Big)\cdot \Delta^m v^{N,l}(s,\bm{X}_s,\bm{Y}_s)+ \beta(s,\bm{X}_s,\bm{Y}_s)\cdot
\Delta^lv^{N,l}(s,\bm{X}_s,\bm{Y}_s)
 \\
&\quad  +\varepsilon N \EE 
\biggl[ v^{N,l}\biggl(s,\bm{X}_s, Y_s^1\frac{S_{\mu^N_s}[X^1_s]}{N\mu^N_{s}[X_s^1]},\dots, 
 Y_s^N\frac{S_{\mu^N_s}[X^N_s]}{N\mu^N_{s}[X_s^N]}\biggr)
 -v^{N,l}(s,\bm{X}_s,\bm{Y}_s)
\biggr]\bigg) ds \bigg]
\\
&\leq \E \biggl[ Y^l_T g(X^l_T, \mu^N_T) + \int_t^T Y_s^l \Big(
\ell(X_s^l,\beta(s,\bm{X}_s,\bm{Y}_s)) 
+ f(X_s^l, \mu^N_{s})\Big) ds \biggr]
\\
&\hspace{15pt}- \frac12 \E \biggl[  \int_t^T Y_s^l
\sum_{j \not = X^l_{s}}
\bigl\vert \alpha^{*,l}(s,\bm{X}_{s},\bm{Y}_{s})[j] - \beta(s,\bm{X}_{s},\bm{Y}_{s})[j] \bigr\vert^2 
 ds \biggr], 
\end{align*}
which shows that 
$v^{N,l}(t,\bm{x},\bm{y})
\leq J^l(t,\bm{x},\bm{y},\beta,\bm{\alpha}^{*,-l})$ (the latter being defined as the cost to $l$ 
when the system is driven by $[\beta,\bm{\alpha}^{*,-l}]$ and starts from $(\blx,\bly)$ 
at time $t$). Replacing $\beta$ by $\alpha^{*,l}$, we obtain 
$v^{N,l}(t,\bm{x},\bm{y})
=J^l(t,\bm{x},\bm{y},\bm{\alpha})$.
\qed

\subsection{Proof of insensitivity and of uniqueness of the Nash equilibrium}
We now prove 
the claims of 
Proposition 
\ref{prop:existence:!:continuous:eq}
that we left aside in Section \ref{sec:3}. It remains to prove the fact that the equilibrium is uniquely determined (in the sense 
of \eqref{eq:uniqueness:nash}) and satisfies the insensitivity property 
\eqref{eq:insensitivity}. In fact, we prove both at the same time. 

In order to proceed, we assume that we are given a strategy, say $\widehat{\bm{\alpha}}$, that defines a 
bounded equilibrium in Markov feedback form. 
\vskip 4pt

\textit{First step.} Fix a player $l$ and then identify $\widehat \alpha^{l}$ with the best response 
of the cost functional \eqref{cost} when all the feedback functions, except the $l$th one, are fixed. 
In order to do so, we may first solve the equation (which is directly inspired from
\eqref{newnash})
\begin{align}
&\frac{d}{dt}\widehat w^{l}
+\sum_{m \in \EN } 
\varphi(\mu^N_{\bm{x},\bm{y}}[\bullet])
\cdot \Delta^m\widehat w^{l}[\bullet] 
+\sum_{m\neq l} \widehat \alpha^{m}\big(t,\blx,\bly\big)\cdot \Delta^m \widehat w^{l} 
\nonumber
\\
& + H\Big(x^l, \widehat w^{l}_\bullet \Big) + f\bigl(x^l, \mu^N_{\bm{x},\bm{y}}\bigr)
\nonumber
%\label{newnash2}
\\
& +\varepsilon N \EE 
\biggl[ \frac{S_{\mu^N_{\bm{x},\bm{y}}}[x^l]}{N\mu^N_{\bm{x},\bm{y}}[x^l]}
\biggl(
\widehat w^{l}\biggl(t,\bm{x}, y^1\frac{S_{\mu^N_{\bm{x},\bm{y}}}[x^1]}{N\mu^N_{\bm{x},\bm{y}}[x^1]},\dots, y^N\frac{S_{\mu^N_{\bm{x},\bm{y}}}[x^N]}{N\mu^N_{\bm{x},\bm{y}}[x^N]}\biggr)
 - \widehat w^{l}\bigl(t,\bm{x},\bm{y}\bigr)
\biggr)\biggr]=0, \nonumber
\end{align}
with the terminal boundary condition
\be
\widehat w^{l}(T,\bm{x},\bm{y})= g(x^l, \mu^N_{\bm{x},\bm{y}}).
\ee
Notice that we solve one equation only (and not a system). 
We then let 
$\widehat v^{l}(t,\blx,\bly):=y^l \widehat w^{l}(t,\blx,\bly)$. 

Very much in the spirit of the proof of Proposition 
\ref{prop:verification:Nash} in Subsection \ref{subse:proof:verif}
(with $(\bm{X},\bm{Y})$ denoting the same dynamics, 
for some strategy vector $(\beta,\widehat \alpha^{-l})$
except that $\widehat{\bm{\alpha}}$ is now given as some equilibrium), we get 
that 
\begin{equation*}
\begin{split}
&\widehat v^{l}(t,\bm{x},\bm{y})
+
\frac12 \E \bigg[\int_t^T Y_s^l \sum_{j \not= X_{s}^l} 
\bigl\vert a^*\bigl({X}_{s}^l,\widehat w^{l}_{\bullet}(s,\bm{X}_{s},\bm{Y}_{s})\bigr)[j] - \beta(s,\bm{X}_{s},\bm{Y}_{s})[j]
\bigr\vert^2ds 
 \biggr]
\leq J^l\bigl(t,\bm{x},\bm{y},\beta,\widehat{\bm{\alpha}}^{-l}\bigr),
\end{split}
\end{equation*}
from which we deduce that $\widehat{\alpha}^{l}(t,\blx,\bly) = a^*(x^l,\widehat{w}_{\bullet}^{l}(t,\blx,\bly))$ whenever $y^l >0$. 
This prompts us to introduce the notation $\Y^l = \{ \bly \in \Y : y^l >0\}$. 

Following the proof of Proposition 
\ref{prop:nash:system:posedness}, we notice that 
\begin{equation*}
\sup_{t \in [0,T]} \max_{\blx \in \ES^N} \sup_{\bly \in \Y}
\vert \widehat{w}^{l}(t,\blx,\bly) \vert < \infty. 
\end{equation*}

\textit{Second step.}
Inspired by the proof of Proposition 
\ref{prop:nash:system:posedness}, we are to regard 
$(\widehat w^{l})_{l \in \NN}$ as the fixed point of some mapping $\widehat \Phi$. 
However, for reasons that will be made clear below, 
we construct the mapping $\widehat \Phi$ in a slightly different manner 
than the function $\Phi$ in 
the proof
Proposition 
\ref{prop:nash:system:posedness}. In order to proceed, 
we introduce a smooth bounded cut-off function $\vartheta$ from ${\mathbb R}$ into itself
with the property that 
\begin{equation*}
\vartheta \bigl( 
\widehat w^{l}(t,\blx,\bly) 
\bigr) =
\widehat w^{l}(t,\blx,\bly), \quad t \in [0,T], \ \blx \in \ES^N, \ \bly \in \Y. 
\end{equation*}
Noticing from the first step that 
$\widehat \alpha^{l}(t,\blx,\bly) 
\cdot \Delta^m \widehat w^{l}$ always writes in the form
\begin{equation*}
\widehat \alpha^{l}(t,\blx,\bly) 
\cdot \Delta^m \widehat w^{l}[\bullet] 
= 
\Bigl(\widehat \alpha^{l}(t,\blx,\bly) 
{\mathbf 1}_{\{y^l=0\}}
+
a^*\bigl(x^l,\widehat w^{l}(t,\blx,\bly)\bigr)
{\mathbf 1}_{\{y^l>0\}}
\Bigr)
\cdot \Delta^m \widehat w^{l}[\bullet],
\end{equation*}
we hence define $\widehat \Phi$ as the mapping 
 that sends an 
input $(w^{l})_{l \in \NN}$ onto the solution 
$(\widetilde w^l)_{l \in \NN}$ 
of the system
\begin{align}
&\frac{d}{dt} \widetilde w^{l}
+\sum_{m \in \EN } 
\varphi(\mu^N_{\bm{x},\bm{y}}[\bullet])
\cdot \Delta^m \widetilde w^{l}[\bullet]
 + H\Big(x^l, \vartheta\bigl(\widetilde w^{l}\bigr)_\bullet \Big) + f\bigl(x^l, \mu^N_{\bm{x},\bm{y}}\bigr) 
\nonumber
\\
&\qquad +\sum_{m\neq l}
\biggl(\widehat \alpha^{m}(t,\blx,\bly) 
\one_{\{y^m=0\}}
+
a^*\Bigl(x^m,\vartheta(w^{m})_{\bullet}\Bigr)
\one_{\{y^m>0\}}
\biggr)
\cdot \Delta^m \vartheta( w^{l} )[\bullet]
\label{newnash2}
\\
&\qquad +\varepsilon N \EE 
\biggl[ \frac{S_{\mu^N_{\bm{x},\bm{y}}}[x^l]}{N\mu^N_{\bm{x},\bm{y}}[x^l]}
\biggl(
w^{l}\biggl(t,\bm{x}, y^1\frac{S_{\mu^N_{\bm{x},\bm{y}}}[x^1]}{N\mu^N_{\bm{x},\bm{y}}[x^1]},\dots, y^N\frac{S_{\mu^N_{\bm{x},\bm{y}}}[x^N]}{N\mu^N_{\bm{x},\bm{y}}[x^N]}\biggr)
 -w^{l}\bigl(t,\bm{x},\bm{y}\bigr)
\biggr)\biggr]=0, \nonumber
\end{align}
with the obvious notation 
that $\vartheta(w^{l})_{\bullet}$
denotes 
the vector in ${\mathbb R}^d$
given by 
$(\vartheta(w^{l} )(t, \bm{x},\bm{y})[j]= \vartheta(w^{l} )(t, (j, \bm{x}^{-l}),\bm{y}))_{j \in \ES}$.
In other words, $(w^{l})_{l \in \NN}$ can be approximated by 
iterating the function $\widehat \Phi$. 
%Arguing again as in the proof of 
%Proposition 
%\ref{prop:nash:system:posedness}, 
%there exists a constant $C$ such that 
%the function 
%$\widehat \Phi$ leaves invariant the set $\widehat{\mathcal E}$ of functions 
%that are bounded by $C$. 
%Obviously, 
%the fixed point
%$(\widehat w^{l})_{l \in \NN}$
%belongs to $\widehat{\mathcal E}$.

We then consider an input 
$(w^{l})_{l \in \NN}$ with the property that, for any 
$l,n \in \NN$, any $\bly \in \Y$ such that $y^l >0$ and 
$y^n=0$ and any $(t,\blx) \in [0,T] \times \ES^N$,
\begin{equation}
\label{eq:hyp:induction}
\Delta^n w^l(t,\bm{x},\bm{y}) [j] = 0, \quad j \in \ES, 
\end{equation}
or, equivalently,
for any $\blx \in \ES^N$, 
\begin{equation*}
w^l\bigl(t,\blx,\bm{y}\bigr) 
=
w^l\bigl(t,(j,\blx^{-n}),\bm{y}\bigr), \quad j \in \ES. 
\end{equation*}
We hence check what the last three  terms in 
\eqref{newnash2} become, whenever computed at
such a  point $\bly \in \Y$ with $y^l>0$ and $y^n=0$ form some $l,n \in \NN$.  
The first point is to notice that, for any 
$\blx,\blx' \in \ES^N$ with 
$\blx^{-n}=\blx^{\prime,-n}$,
\begin{equation*}
\mu^N_{\blx,\bly}
= \frac1N \sum_{m=1}^N y^m \delta_{x^m}
= \frac1N \sum_{m \not = n} y^m \delta_{x^m}
= \mu^N_{\blx',\bly},
\end{equation*}
which shows that 
\begin{equation}
\label{eq:x:xprime:1}
f \bigl( x^l , \mu^N_{\blx,\bly} \bigr) 
= 
f \bigl( x^l , \mu^N_{\blx',\bly} \bigr). 
\end{equation}
As for the term on the second line of 
\eqref{newnash2}, we have that 
$\Delta^m \vartheta(w^l(t,\blx,\bly))[\bullet]=0$
for any $m$ such that $y^m=0$, 
 and then  
\begin{equation*}
\begin{split}
&\sum_{m\neq l}
\biggl(\widehat \alpha^{m}(t,\blx,\bly) 
\one_{\{y^m=0\}}
+
a^*\Bigl(x^m,\vartheta\bigl(w^{m}(t,\blx,\bly)\bigr)_{\bullet}\Bigr)
\one_{\{y^m>0\}}
\biggr)
\cdot \Delta^m \vartheta\bigl( w^{l}(t,\blx,\bly)\bigr)[\bullet]
\\
&= \sum_{m\neq l}
\biggl(
a^*\Bigl(x^m,\vartheta\bigl(w^{m}(t,\blx,\bly)\bigr)_{\bullet}\Bigr)
\one_{\{y^m>0\}}
\biggr)
\cdot \Delta^m \vartheta\bigl( w^{l}(t,\blx,\bly)\bigr)[\bullet].
\end{split}
\end{equation*}
By assumption, we have that 
$\vartheta\bigl(w^{m}(t,\blx,\bly)\bigr)_{\bullet}
=\vartheta\bigl(w^{m}(t,\blx',\bly)\bigr)_{\bullet}$
whenever $y^m>0$ (it suffices to replace $l$ by $m$ in 
\eqref{eq:hyp:induction}). 
By the same argument, 
$\Delta^m \vartheta\bigl( w^{l}(t,\blx,\bly)\bigr)[\bullet]
= 
\Delta^m \vartheta\bigl( w^{l}(t,\blx',\bly)\bigr)[\bullet]$. 
Therefore, 
\begin{equation}
\label{eq:x:xprime:2}
\begin{split}
&\sum_{m\neq l}
\biggl(\widehat \alpha^{m}(t,\blx,\bly) 
\one_{\{y^m=0\}}
+
a^*\Bigl(x^m,\vartheta\bigl(w^{m}(t,\blx,\bly)\bigr)_{\bullet}\Bigr)
\one_{\{y^m>0\}}
\biggr)
\cdot \Delta^m \vartheta\bigl( w^{l}(t,\blx,\bly)\bigr)[\bullet]
\\
&= \sum_{m\neq l}
\biggl(\widehat \alpha^{m}(t,\blx',\bly) 
\one_{\{y^m=0\}}
+
a^*\Bigl(x^m,\vartheta\bigl(w^{m}(t,\blx',\bly)\bigr)_{\bullet}\Bigr)
\one_{\{y^m>0\}}
\biggr)
\cdot \Delta^m \vartheta\bigl( w^{l}(t,\blx',\bly)\bigr)[\bullet].
\end{split}
\end{equation}
We then proceed in a similar manner with the last term in 
\eqref{newnash2}. 
Importantly, we recall that, in the expectation therein, the ratio 
$S_{\mu^N_{\bm{x},\bm{y}}}[x^l]/(N\mu^N_{\bm{x},\bm{y}}[x^l])$
is understood as $1$ when 
$\mu^N_{\bm{x},\bm{y}}[x^l]=0$. In particular, 
as long as $y^l$ itself cannot be zero, we always have that 
$y^l S_{\mu^N_{\bm{x},\bm{y}}}[x^l]/(N\mu^N_{\bm{x},\bm{y}}[x^l]) >0$. 
Hence, 
\begin{equation*}
w^{l}\biggl(t,\bm{x}, y^1\frac{S_{\mu^N_{\bm{x},\bm{y}}}[x^1]}{N\mu^N_{\bm{x},\bm{y}}[x^1]},\dots, y^N\frac{S_{\mu^N_{\bm{x},\bm{y}}}[x^N]}{N\mu^N_{\bm{x},\bm{y}}[x^N]}\biggr)
=
w^{l}\biggl(t,\bm{x}', y^1\frac{S_{\mu^N_{\bm{x}',\bm{y}}}[x^{\prime,1}]}{N\mu^N_{\bm{x}',\bm{y}}[x^{\prime,1}]},\dots, y^N\frac{S_{\mu^N_{\bm{x}',\bm{y}}}[x^N]}{N\mu^N_{\bm{x}',\bm{y}}[x^{\prime,N}]}\biggr),
\end{equation*}
from which we deduce that 
\begin{equation}
\label{eq:x:xprime:3}
\begin{split}
&\varepsilon N \EE 
\biggl[ \frac{S_{\mu^N_{\bm{x},\bm{y}}}[x^l]}{N\mu^N_{\bm{x},\bm{y}}[x^l]}
\biggl(
w^{l}\biggl(t,\bm{x}, y^1\frac{S_{\mu^N_{\bm{x},\bm{y}}}[x^1]}{N\mu^N_{\bm{x},\bm{y}}[x^1]},\dots, y^N\frac{S_{\mu^N_{\bm{x},\bm{y}}}[x^N]}{N\mu^N_{\bm{x},\bm{y}}[x^N]}\biggr)
 -w^{l}\bigl(t,\bm{x},\bm{y}\bigr)
\biggr)\biggr]
\\
&= 
\varepsilon N \EE 
\biggl[ \frac{S_{\mu^N_{\bm{x'},\bm{y}}}[x^{\prime,l}]}{N\mu^N_{\bm{x'},\bm{y}}[x^{\prime,l}]}
\biggl(
w^{l}\biggl(t,\bm{x}', y^1\frac{S_{\mu^N_{\bm{x}',\bm{y}}}[x^{\prime,1}]}{N\mu^N_{\bm{x}',\bm{y}}[x^{\prime,1}]},\dots, y^N\frac{S_{\mu^N_{\bm{x}',\bm{y}}}[x^N]}{N\mu^N_{\bm{x}',\bm{y}}[x^{\prime,N}]}\biggr)
 -w^{l}\bigl(t,\bm{x}',\bm{y}\bigr)
\biggr)\biggr].
\end{split}
\end{equation}
Collecting 
\eqref{eq:x:xprime:1},
\eqref{eq:x:xprime:2}
and
\eqref{eq:x:xprime:3},
 we deduce that, 
for any $l,n \in \NN$ and $\bly \in \Y$ with 
$y^l>0$ and $y^n=0$, 
the last three terms in 
\eqref{newnash2}
are the same when evaluated at $(t,\blx,\bly)$ 
and $(t,\blx',\bly)$ with $\blx^{-n}=\blx^{\prime,-n}$. 
We finally observe that, for $l,n \in \NN$, 
and for
$(\blx,\bly) \in \ES^N \times \Y$ fixed with $y^l>0$ and $y^n=0$,
\eqref{newnash2}
may be regarded as an ordinary differential equation with 
$\widetilde w^l(\cdot,\blx,\bly)$ as unique solution. 
And thus
$\widetilde w^l(\cdot,\blx,\bly)
= 
\widetilde w^l(\cdot,\blx',\bly)$. Put it differently, 
$\widetilde w^l$ satisfies 
\eqref{eq:hyp:induction}, which shows that 
\eqref{eq:hyp:induction}
is stable by $\widehat \Phi$. Therefore, 
the fixed point 
$\widehat w$ of $\widehat \Phi$ also satisfies 
\eqref{eq:hyp:induction} (for any $l \in \NN$). 
\vskip 5pt

\textit{Third step.}
We eventually identify 
$(\widehat w^l)_{l \in \NN}$
with 
the solution $(w^{N,l})_{l \in \NN}$ 
given by Proposition 
\ref{prop:nash:system:posedness}. 
The proof mostly follows from 
the argument developed in the previous step. Indeed, we now know that both solutions 
can be approximated by 
iterating the mapping $\widehat{\Phi}$
associated with 
\eqref{newnash2}, with the special feature that 
all the inputs therein are required to satisfy 
\eqref{eq:hyp:induction}. 

The analysis achieved in the second step says that, in order to 
compute $\widetilde w^l$ at tuples $(t,\blx,\bly) \in [0,T] \times \ES^N \times \Y^l$, 
the precise values of $\widehat{\alpha}^n(t,\blx,\bly)$ at indices $n$ for which 
$y^n=0$ do not matter and, moreover, 
only the knowledge of each input $w^m$
at points $(t,\blx,\bly) \in [0,T] \times \ES^N \times \Y^m$, for $m \in \NN$, does matter. 
As a result, for any $l \in \NN$, 
the approximating sequences (as given by the iteration of the mapping 
$\widehat{\Phi}$ defined through the system 
\eqref{newnash2}) 
of the two solutions 
$\widehat w^l$
and $w^{N,l}$ 
coincide at any point $(t,\blx,\bly) 
\in [0,T] \times \ES^N \times \Y^l$. 
Therefore,
\begin{equation*}
\widehat w^l(t,\blx,\bly) 
=
w^{N,l}(t,\blx,\bly), 
\quad 
(t,\blx,\bly) 
\in [0,T] \times \ES^N \times \Y^l,
\end{equation*} 
which completes the proof. 
\section*{Acknowledgement}

We thank the anonymous AE and referee for their suggestions, which helped us to improve our paper.

\bibliographystyle{abbrv}
\bibliography{references}

\end{document}